\documentclass[11pt,reqno]{amsart}
\title{Independent sets in random subgraphs of the hypercube}
\date{\today}

\author{Gal Kronenberg}
\address{Gal Kronenberg\hfill\break\indent Mathematical Institute, University of Oxford, Andrew Wiles Building, Radcliffe Observatory\break\indent Quarter, Woodstock Road, Oxford, United Kingdom.}
\email{kronenberg@maths.ox.ac.uk}
\urladdr{https://www.maths.ox.ac.uk/people/gal.kronenberg}

\author{Yinon Spinka}
\address{Yinon Spinka\hfill\break\indent
	University of British Columbia,
	Department of Mathematics,
	Vancouver, BC V6T 1Z2, Canada\break\indent Tel Aviv University, School of Mathematical Sciences, Tel Aviv 6997801, Israel.}
\email{yinon@math.ubc.ca}
\urladdr{http://www.math.ubc.ca/~yinon}

\usepackage{amsmath, amsthm, amsopn, amssymb, enumerate, xcolor}
\usepackage[english]{babel}
\usepackage[colorlinks=true,urlcolor=blue,pdfborder={0 0 0}]{hyperref}
\hypersetup{linkcolor=[rgb]{0,0,0.6}}
\hypersetup{citecolor=[rgb]{0,0.6,0}}

\setlength{\topmargin}{0in}
\setlength{\leftmargin}{0in}
\setlength{\rightmargin}{0in}
\setlength{\evensidemargin}{0in}
\setlength{\oddsidemargin}{0in}

\setlength{\textwidth}{6.5in}
\setlength{\textheight}{9in}


\usepackage[nameinlink]{cleveref}
  \crefname{theorem}{Theorem}{Theorems}
  \crefname{thm}{Theorem}{Theorems}
  \crefname{mainthm}{Theorem}{Theorems}
  \crefname{lemma}{Lemma}{Lemmas}
  \crefname{lem}{Lemma}{Lemmas}
  \crefname{remark}{Remark}{Remarks}
  \crefname{prop}{Proposition}{Propositions}
  \crefname{defn}{Definition}{Definitions}
  \crefname{corollary}{Corollary}{Corollaries}
  \crefname{cor}{Corollary}{Corollaries}
  \crefname{section}{Section}{Sections}
  \crefname{figure}{Figure}{Figures}
  \crefname{cond}{Condition}{Conditions}
  \crefname{claim}{Claim}{Claims}

\newtheorem{thm}{Theorem}[section]
\newtheorem{lemma}[thm]{Lemma}
\newtheorem{prop}[thm]{Proposition}
\newtheorem{cor}[thm]{Corollary}
\newtheorem{claim}[thm]{Claim}
\theoremstyle{definition}
\newtheorem{remark}[thm]{Remark}

\newcommand{\N}{\mathbb{N}}

\newcommand{\Z}{\mathbb{Z}}
\newcommand{\1}{\mathbf{1}}
\newcommand{\E}{\mathbb{E}}
\newcommand{\Bin}{\textnormal{\ensuremath{\textrm{Bin}}}}
\newcommand{\Var}{\textnormal{\ensuremath{\textrm{Var}}}} 
\newcommand{\Cov}{\textnormal{\ensuremath{\textrm{Cov}}}}
\renewcommand{\Pr}{\mathbb{P}}
\newcommand{\cC}{\mathcal{C}}
\newcommand{\cD}{\mathcal{D}}
\newcommand{\cE}{\mathcal{E}}
\newcommand{\cF}{\mathcal{F}}
\newcommand{\cG}{\mathcal{G}}
\newcommand{\cI}{\mathcal{I}}
\newcommand{\cO}{\mathcal{O}}
\newcommand{\cM}{\mathcal{M}}
\newcommand{\cN}{\mathcal{N}}
\newcommand{\cS}{\mathcal{S}}
\newcommand{\cZ}{\mathcal{Z}}

\newcommand{\spn}{\textnormal{\ensuremath{\textrm{span}}}}

\DeclareMathOperator\dist{dist}

\newcommand{\bbb}[1]{{\left\vert\kern-0.25ex\left\vert\kern-0.25ex\left\vert #1 \right\vert\kern-0.25ex\right\vert\kern-0.25ex\right\vert}}

\begin{document}

\begin{abstract}
Let $Q_{d,p}$ be the random subgraph of the $d$-dimensional hypercube $\{0,1\}^d$, where each edge is retained independently with probability $p$. We study the asymptotic number of independent sets in $Q_{d,p}$ as $d \to \infty$ for a wide range of parameters $p$, including values of $p$ tending to zero as fast as $\frac{C\log d}{d^{1/3}}$, constant values of $p$, and values of $p$ tending to one. The results extend to the hardcore model on $Q_{d,p}$, and are obtained by studying the closely related antiferromagnetic Ising model on the hypercube, which can be viewed as a positive-temperature hardcore model on the hypercube. These results generalize previous results by Galvin, Jenssen and Perkins on the hard-core model on the hypercube, corresponding to the case $p=1$, which extended Korshunov and Sapozhenko's classical result on the asymptotic number of independent sets in the hypercube.
\end{abstract}

\maketitle

\section{Introduction and main results}\label{sec:intro}
%

%

The problem of computing the total number of independent sets (a set of vertices containing no edges) in a graph is known to be hard. This has been studied for various graphs, including the $d$-dimensional hypercube $Q_d$, where the problem of counting independent sets is particularly interesting due to its relation to the hardcore model from statistical mechanics. Let $i(G)$ denote the number of independent sets in a graph $G$. For a survey on counting independent sets in graphs, see~\cite{samotij2015counting}.

In the early 1980s, Korshunov and Sapozhenko~\cite{Korshunov1983Th} computed the asymptotic number of independent sets in the hypercube, showing that, as $d\to\infty$,
\begin{equation}\label{eq:KS}
i(Q_d) = (1+o(1)) \cdot 2\sqrt{e} \cdot 2^{2^{d-1}} .
\end{equation}
Sapozhenko~\cite{sapozhenko1987onthen} gave an additional proof of this shortly after (see~\cite{galvin2019independent} for an exposition).
This classical result was recently refined by Jenssen and Perkins~\cite{jenssen2020independent} who gave a formula and an algorithm for computing the asymptotics of $i(Q_d)$ to arbitrary order in $2^{-d}$, yielding for example that
\begin{equation}\label{eq:JP}
 i(Q_d) = 2\sqrt{e}\cdot 2^{2^{d-1}} \left(1 + \frac{3d^2-3d-2}{8\cdot 2^d} + O \left( d^42^{-2d}\right) \right) .
\end{equation}
See \cite[Theorem 1.1]{jenssen2020independent} for a more refined form, giving the asymptotics up to $O(d^62^{-3d})$. 

\medbreak

In this paper, we extend this to the number of independent sets in a random subgraph of the hypercube. Let $Q_{d,p}$ be the random subgraph of the hypercube $Q_d$ obtained by keeping each edge independently with probability $p$. The random graph $Q_{d,p}$ has been a subject of great interest; see for example~\cite{burtin1977probability,erdos1979evolution,ajtai1982largest,bollobas1983evolution,bollobas1990complete,bollobas1992evolution,kostochka1993radius,bollobas1994diameter,soshnikov2003largest,borgs2006random,kulich2012diameter,van2017hypercube,janson2018critical,hulshof2020slightly,mcdiarmid2021component,condon2021hamiltonicity,erde2021expansion}.

Our first main result is an extension of~\eqref{eq:JP} to the random graph $Q_{d,p}$. 

\begin{thm}\label{thm:expectation}
	For $p \ge \frac{C\log d}{d^{1/3}}$,
	\[ \E i(Q_{d,p}) =
	2 \cdot 2^{2^{d-1}} \exp\left[ \tfrac12 (2-p)^d + \left(a(p)\tbinom d2 -\tfrac14 \right) 2^d (1-\tfrac p2)^{2d} + O\left(d^4 2^d (1- \tfrac p2)^{3d}\right) \right], \]
	where
	\[ a(p) := \frac{(1+(1-p)^2)^2 }{(2-p)^4} - \frac14 .\]
\end{thm}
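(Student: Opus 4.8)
The plan is to express $\E i(Q_{d,p})$ as a partition function and analyse it by a polymer / cluster-expansion argument, following the strategy Jenssen and Perkins used for $i(Q_d)$ but now for a positive-temperature (``soft'') model. Since a fixed set $I\subseteq\{0,1\}^d$ is independent in $Q_{d,p}$ with probability $(1-p)^{e(I)}$, where $e(I)$ counts the edges of $Q_d$ inside $I$, linearity of expectation gives
\[ \E i(Q_{d,p}) = \sum_{I\subseteq\{0,1\}^d}(1-p)^{e(I)}, \]
which is precisely the partition function of the antiferromagnetic Ising model on $Q_d$ at coupling $1-p$ (with vertex activity $1$). Writing $\{0,1\}^d=\cE\sqcup\cO$ for the even/odd bipartition, every edge of $Q_d$ joins $\cE$ to $\cO$, so $e(I)=e(I\cap\cE,I\cap\cO)$; summing out the $\cE$-coordinates,
\[ \E i(Q_{d,p}) = \sum_{A\subseteq\cO}\ \prod_{v\in\cE}\bigl(1+(1-p)^{|N(v)\cap A|}\bigr). \]

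The next step is the two-phase decomposition. This sum is dominated by configurations living almost entirely on one side: the $\cE$-phase, where $A$ is small, and the $\cO$-phase, where the ``$\cE$-part'' is small. The coordinate-flip automorphism of $Q_d$ swaps $\cE$ and $\cO$, so these two contributions are \emph{equal}, which is the source of the overall factor $2$; the overlap of the two phases, and all configurations in neither phase, must then be shown to contribute only a harmless multiplicative correction. Proving this reduction with an error as small as $\exp O(d^4 2^d(1-\tfrac p2)^{3d})$ \emph{and} for $p$ as small as $\tfrac{C\log d}{d^{1/3}}$, via a Sapozhenko-style container/isoperimetry argument adapted to the weighted (soft) setting, is the part I expect to be the main obstacle, and is where the hypothesis on $p$ is really used.

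Within one phase, say the $\cO$-phase, I would rewrite the contribution as $2^{2^{d-1}}\,\Xi$, where $\Xi$ is an abstract polymer partition function: the polymers are the connected subsets $\gamma$ of $\cE$ in the auxiliary graph in which $u\sim v$ iff $\dist_{Q_d}(u,v)=2$, two polymers are compatible iff they have no common neighbour in $Q_d$, and
\[ w(\gamma) := \prod_{u\in N(\gamma)}\frac{1+(1-p)^{|N(u)\cap\gamma|}}{2}. \]
One checks that this weight factorises over the connected components of a defect set, so $\Xi$ is genuinely a polymer partition function; in particular $w(\{v\})=(1-\tfrac p2)^d$, and for $\gamma=\{v,v'\}$ with $\dist_{Q_d}(v,v')=2$ (the only connected $2$-element polymers, which have exactly two common neighbours) $w(\gamma)=\bigl(\tfrac{1+(1-p)^2}{2}\bigr)^2(1-\tfrac p2)^{2d-4}$.

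Finally I would apply the cluster expansion, $\log\Xi=\sum_{\Gamma}\phi^{T}(\Gamma)\prod_{\gamma\in\Gamma}w(\gamma)$, and verify its convergence via a Koteck\'y--Preiss criterion, using that the auxiliary graph has degree $\binom d2$ while $|w(\gamma)|$ decays like $(1-\tfrac p2)^{|N(\gamma)|}$ with $|N(\gamma)|\ge d$ for any nonempty $\gamma$ — an estimate that is again delicate near $p\approx\tfrac{C\log d}{d^{1/3}}$. Collecting terms by order in $(1-\tfrac p2)^d$: the single-vertex polymers give $\sum_{v\in\cE}w(\{v\})=2^{d-1}(1-\tfrac p2)^d=\tfrac12(2-p)^d$; at the next order $2^d(1-\tfrac p2)^{2d}$ three families combine, namely the $2$-element polymers (which, after simplifying using $a(p)=\tfrac{(1+(1-p)^2)^2}{(2-p)^4}-\tfrac14$, contribute $\bigl(a(p)+\tfrac14\bigr)\binom d2\,2^d(1-\tfrac p2)^{2d}$), the clusters $\{\{v\},\{v\}\}$ of Ursell weight $-\tfrac12$ (contributing $-\tfrac14\,2^d(1-\tfrac p2)^{2d}$), and the clusters $\{\{v\},\{v'\}\}$ with $\dist_{Q_d}(v,v')=2$ of Ursell weight $-1$ (contributing $-\tfrac14\binom d2\,2^d(1-\tfrac p2)^{2d}$), for a net coefficient $a(p)\binom d2-\tfrac14$; everything of higher order is bounded by $O(d^4 2^d(1-\tfrac p2)^{3d})$ using the convergence just established. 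Multiplying the two equal phases back together and absorbing the negligible cross-terms then yields the stated formula.
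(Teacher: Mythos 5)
Your proposal is correct and follows essentially the same route as the paper: you pass to the antiferromagnetic Ising/positive-temperature partition function on $Q_d$, set up the same polymer model (your product weight $\prod_{u\in N(\gamma)}\frac{1+(1-p)^{|N(u)\cap\gamma|}}{2}$ is exactly the paper's weight $\sum_{B\subset N(\gamma)}2^{-|N(\gamma)|}(1-p)^{|E(\gamma,B)|}$, here at $\lambda=1$), verify Koteck\'y--Preiss with the same container/isoperimetry and entropy inputs for large polymers and non-polymer configurations, and your order-one and order-two cluster bookkeeping reproduces the paper's computation, including the net coefficient $a(p)\binom d2-\tfrac14$ and the size-three tail $O(d^42^d(1-\tfrac p2)^{3d})$. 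The only difference is cosmetic: the paper proves the statement for general fugacity $\lambda$ (Theorem 5.1) and specializes to $\lambda=1$.
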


A simple form of \cref{thm:expectation} is obtained by keeping only the leading term in the exponent, yielding that for $p \ge \frac{C\log d}{d^{1/3}}$,
\begin{equation}\label{eq:expectation-simple}
\E i(Q_{d,p}) = 2 \cdot 2^{2^{d-1}} \exp\left[ \tfrac12 (2-p)^d (1+o(1)) \right] .
\end{equation}
We mention that this formula fails for $p=o(\frac1d)$ (see \cref{sec:open} for a discussion).
The formula~\eqref{eq:expectation-simple} gives an asymptotic expression for the logarithm of the expected number of independent sets in $Q_{d,p}$ for $p \ge \frac{C\log d}{d^{1/3}}$. For some values of $p$, we further obtain the asymptotics of $\E i(Q_{d,p})$ itself. For example, if $p=1-\frac cd + o(\frac1d)$ for a fixed $c \ge 0$, then~\eqref{eq:expectation-simple} already yields that
\begin{equation}\label{eq:expecatation-c/d}
\E i(Q_{d,p}) = (1+o(1)) \cdot 2 e^{\frac12 e^c} \cdot 2^{2^{d-1}} .
\end{equation}
For constant $p>2-\sqrt2 \approx 0.586$, \cref{thm:expectation} implies that $\E i(Q_{d,p})$ is asymptotic to $2 \cdot 2^{2^{d-1}} e^{\frac12 (2-p)^d}$.
Similarly, for constant $p>2-2^{2/3} \approx 0.413$, the theorem implies that this expectation is asymptotic to $2 \cdot 2^{2^{d-1}} e^{\frac12 (2-p)^d + (a(p)\binom d2-\frac 14)2^d(1-\frac p2)^{2d}}$.
As with the proof of Jenssen and Perkins in \cite{jenssen2020independent}, our proof gives additional correction terms in the exponent to arbitrary accuracy, allowing to compute the expansion up to $O(d^{2n-2}2^d(1-\tfrac p2)^{nd})$ for any fixed $n>0$ (see \Cref{re:betterAccuracy}). In particular, this gives a way to obtain the asymptotics of the expectation for any constant $p$.

One natural point to consider is $p=\tfrac 12$. On the hypercube, this is in fact the critical point for various graph properties of $Q_{d,p}$ including the existence of isolated vertices, minimal degree at least two, connectivity~\cite{burtin1977probability,erdos1979evolution,bollobas1983evolution}, the existence of a perfect matching~\cite{bollobas1990complete}, and as was very recently shown, Hamiltonicity~\cite{condon2021hamiltonicity}.
 For the number of independent sets, \Cref{thm:expectation} gives 
\begin{equation}\label{eq:Expectation1/2}
 \E i(Q_{d,1/2}) = 2 \exp\left[\tfrac12 (\tfrac32)^d + \tfrac14(\tfrac98)^d \left(\tfrac{91}{9}\tbinom d2 -1\right)\right] \cdot 2^{2^{d-1}} \left(1 + O(d^4) (\tfrac{27}{32})^d \right).
\end{equation}
For the reader's convenience, we mention already now a formula for the variance, which will follow from one of our later results:
\begin{equation}
\label{eq:Variance1/2}
 \Var\, i(Q_{d,1/2}) = 4\exp\left[ (\tfrac32)^d + \tfrac12 (\tfrac54)^d + (\tfrac98)^d \left(\tfrac{91}{18}\tbinom d2 -1\right) \right] \cdot 2^{2^d} \left(1 + O(d^2) (\tfrac{15}{16})^d \right) .
\end{equation}
We point out that $\Var\, i(Q_{d,1/2}) \gg (\E i(Q_{d,1/2}))^2$.

For $p \ge \frac23$, we also have the following result on the typical number of independent sets. Recall that a sequence of random variables $(X_d)$ is \textbf{tight} if for every $\varepsilon>0$ there exists $M>0$ such that $\Pr(|X_d|>M)\leq \varepsilon$ for all $d$. Equivalently, if for any function $w=\omega(1)$, we have that $|X_d|=O(w)$ with high probability.
\begin{thm}\label{cor:fluctuation-bound}
	For  $p \ge \frac23$, 
\[ i(Q_{d,p}) = 2e^{\frac12 (2-p)^d} \cdot 2^{2^{d-1}} \left(1 + (2-\tfrac{3p}2)^{d/2} \cdot X_d \right) ,\]
where $X_d$ is tight as $d \to \infty$.
	Furthermore, if also $p \le 1-\Omega(\frac1d)$, then $|X_d|=\Omega(1)$ with probability $\Omega(1)$, and in particular $X_d$ does not converge to zero in probability.
\end{thm}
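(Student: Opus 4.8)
The plan is to identify the dominant fluctuating part of $i(Q_{d,p})$ and to analyse it through a central limit theorem; note at the outset that the naive second moment route is unavailable, since our formula for the variance (of which \eqref{eq:Variance1/2} is the case $p=\tfrac12$) gives $\Var\, i(Q_{d,p})\asymp\bigl(\E i(Q_{d,p})\bigr)^2$ for $p\ge\tfrac23$, so Chebyshev applied to $i$ directly says nothing at the relevant scale $\rho^{1/2}$, where $\rho:=(2-\tfrac{3p}2)^d$. Write $M_d:=2e^{\frac12(2-p)^d}2^{2^{d-1}}$ and let $\mathcal E,\mathcal O$ be the bipartition classes of $Q_d$. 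Starting from the identity $i(Q_{d,p})=2^{2^{d-1}}\sum_{S\subseteq\mathcal E}2^{-|N_{Q_{d,p}}(S)|}$ (sum over the even part of an independent set), the polymer/cluster expansion used to prove \cref{thm:expectation}, applied pathwise (before averaging over the edges), separates the sum into an ``$\mathcal E$-side'' part (small $S$) and an ``$\mathcal O$-side'' part ($S$ with small complement) and should yield
\[ i(Q_{d,p})=M_d\Bigl(1+\tfrac12\bigl(g_{\mathcal E}-\E g_{\mathcal E}\bigr)+\tfrac12\bigl(g_{\mathcal O}-\E g_{\mathcal O}\bigr)+o_p\bigl(\rho^{1/2}\bigr)\Bigr),\qquad g_{\mathcal E}:=\sum_{v\in\mathcal E}2^{-\deg_{Q_{d,p}}(v)}, \]
and $g_{\mathcal O}$ defined analogously. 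Establishing this with relative precision $o_p(\rho^{1/2})$ is the crux, and is exactly what the cluster expansion with explicit error terms provides.

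I would then record the relevant moments. The variables $\{2^{-\deg(v)}\}_{v\in\mathcal E}$ are independent (disjoint edge sets) and lie in $[2^{-d},1]$, with $\E\,2^{-\deg(v)}=(1-\tfrac p2)^d$ and $\E\,4^{-\deg(v)}=(1-\tfrac{3p}4)^d$; hence
\[ \E g_{\mathcal E}=\tfrac12(2-p)^d,\qquad \Var g_{\mathcal E}=\tfrac12(2-\tfrac{3p}2)^d-\tfrac12\bigl(2(1-\tfrac p2)^2\bigr)^d=\tfrac12\rho\,(1+o(1)), \]
the last step using $2(1-\tfrac p2)^2<2-\tfrac{3p}2$ on $(\tfrac23,1]$ (equivalently $(2q-1)(q-1)<0$ with $q=1-\tfrac p2$). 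The same holds for $g_{\mathcal O}$, and a direct computation gives $\Cov(g_{\mathcal E},g_{\mathcal O})=\tfrac{dp(1-p)}{8(1-p/2)^2}\bigl(2(1-\tfrac p2)^2\bigr)^d$; in particular $\Var(g_{\mathcal E}+g_{\mathcal O})=O(\rho)$ always, and $\Theta(\rho)$ whenever $1-p=\Omega(1/d)$. One also needs $\E i(Q_{d,p})=M_d(1+o(\rho^{1/2}))$, which follows from \cref{thm:expectation} after checking the elementary inequality $4(1-\tfrac p2)^4<2-\tfrac{3p}2$ on $[\tfrac23,1]$, which makes the correction term there, of order $d^2\bigl(2(1-\tfrac p2)^2\bigr)^d$, into $o(\rho^{1/2})$.

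For the tightness statement it is then enough to work on a ``good'' event $\mathcal G_K$ on which the cluster-expansion estimates hold and $|g_{\mathcal E}-\E g_{\mathcal E}|,|g_{\mathcal O}-\E g_{\mathcal O}|\le K\rho^{1/2}$: by Chebyshev for $g_{\mathcal E}$ and $g_{\mathcal O}$ together with the negligibility of the exceptional configurations, $\Pr(\mathcal G_K^{\mathrm c})\le CK^{-2}+o(1)$, while on $\mathcal G_K$ the displayed formula forces $|X_d|\le C'K$; hence $\Pr(|X_d|>C'K)\le CK^{-2}+o(1)$, which is tightness. For the anti-concentration statement I would run a Lindeberg central limit theorem for $g_{\mathcal E}$, jointly with $g_{\mathcal O}$: the summands are i.i.d.\ and bounded, and the only thing to verify is that $\Pr(\deg(v)\le\theta d)$ decays fast enough, where $\theta:=\tfrac1{2\log 2}\log\tfrac1{2-3p/2}$, a routine large-deviation bound. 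This gives that $\rho^{-1/2}(g_{\mathcal E}-\E g_{\mathcal E})$ and $\rho^{-1/2}(g_{\mathcal O}-\E g_{\mathcal O})$ are jointly asymptotically Gaussian, so $X_d\Rightarrow\mathcal N(0,\sigma_p^2)$ with $\sigma_p^2=\tfrac14\lim\rho^{-1}\Var(g_{\mathcal E}+g_{\mathcal O})$. When $1-p=\Omega(1/d)$ one has $\sigma_p^2\ge c>0$ (for fixed $p\in(\tfrac23,1)$, $\sigma_p^2=\tfrac14$; for $p=1-\tfrac cd$, $\sigma_p^2=\tfrac14\bigl(1-e^{-c}+ce^{-c}\bigr)>0$), so $\liminf_d\Pr\bigl(|X_d|\ge\tfrac12\sqrt c\,\bigr)>0$ and $X_d\not\to0$ in probability. (When $p=1$ the degrees are deterministic, $g_{\mathcal E}=g_{\mathcal O}=\tfrac12$, and $X_d\to0$; this is why the hypothesis $1-p=\Omega(1/d)$ cannot be dropped.)

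The main obstacle is the displayed representation of $i(Q_{d,p})$ itself: one must show that all contributions beyond the single-vertex defect terms $g_{\mathcal E},g_{\mathcal O}$ --- higher-order polymer clusters, and the ``medium-sized'' independent sets that are far from both $\mathcal E$ and $\mathcal O$ --- contribute, after division by $M_d$, only $o_p(\rho^{1/2})$. This is precisely the quantitative content of the cluster-expansion and container estimates built up earlier in the paper (the same input that yields \cref{thm:expectation} and the variance formula); granting it, the remaining moment and CLT steps are routine, the one mild subtlety being the Lindeberg verification via the large-deviation bound on $\Pr(\deg(v)\le\theta d)$.
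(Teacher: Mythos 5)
Your argument hinges on the displayed pathwise representation $i(Q_{d,p})=M_d\bigl(1+\tfrac12(g_{\mathcal E}-\E g_{\mathcal E})+\tfrac12(g_{\mathcal O}-\E g_{\mathcal O})+o_p(\rho^{1/2})\bigr)$, and this is exactly where the proposal has a genuine gap: you justify it by saying it ``is exactly what the cluster expansion with explicit error terms provides,'' but the cluster expansion in this paper is an \emph{annealed} expansion. Via \cref{prop:relation}, it applies to the positive-temperature model on the deterministic hypercube $Q_d$ and yields formulas for the moments $\E\, i(Q_{d,p})^k$; it is not a quenched expansion of $i(Q_{d,p})$ itself, and the container/isoperimetry/entropy machinery it rests on uses regularity and isoperimetry of $Q_d$, which fail pathwise for the irregular graph $Q_{d,p}$ (the paper explicitly flags the quenched route as problematic in \cref{sec:open}). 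One could plausibly try to recover your linearization by combining the exact identity $i(Q_{d,p})=2^{2^{d-1}}\sum_{S\subseteq\mathcal E}2^{-|N_{Q_{d,p}}(S)|}$ with Markov's inequality applied to the annealed polymer and non-polymer bounds (\cref{lem:cluster-tail}, \cref{lem:bad-configs}), but that is a nontrivial additional argument you neither give nor reduce to a stated lemma; moreover, as written the representation with error $o_p(\rho^{1/2})$ is false at $p=\tfrac23\pm O(\tfrac1d)$ (there $\rho=\Theta(1)$ and the quadratic terms in expanding $e^{g_{\mathcal E}-\E g_{\mathcal E}}$ are of order $\rho\asymp\rho^{1/2}$), yet the theorem includes $p=\tfrac23$. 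A smaller issue: your joint CLT for $(g_{\mathcal E},g_{\mathcal O})$ is not a sum of independent terms (the two share every edge), so ``routine Lindeberg for i.i.d.\ summands'' does not literally apply.

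Separately, the premise with which you dismiss the direct second-moment route is a miscalculation. \cref{thm:variance} gives $\Var\, i(Q_{d,p})\asymp(2-\tfrac{3p}2)^d(\E i(Q_{d,p}))^2$ for $p\ge\tfrac23$; this is $\asymp(\E i)^2$ only in the window $p=\tfrac23\pm O(\tfrac1d)$ and is $o((\E i)^2)$ for $p\ge\tfrac23+\omega(\tfrac1d)$. So Chebyshev at scale $\rho^{1/2}\E i$ says precisely the right thing, and this is how the paper proves tightness: combine \cref{thm:expectation} (the correction to $\E i/M_d$ is $o(\rho^{1/2})$, via the inequality $4(1-\tfrac p2)^4<2-\tfrac{3p}2$ you yourself note) with the variance upper bound from \cref{thm:variance}. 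For the second part the paper does not prove a CLT at all; it uses a Paley--Zygmund-type argument based on a lower bound on the variance together with an upper bound on the fourth central moment (\cref{thm:moments}, \cref{thm:central-moments}), which covers the whole range $\tfrac23\le p\le1-\Omega(\tfrac1d)$ including the boundary window where your linearization breaks down. Your CLT program, if the quenched linearization were actually established, would prove more than the theorem (indeed it would address an open question of the paper for constant $p$), which is a further sign that the step you are assuming is the real content, not a routine consequence of what is proved here.
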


The first part of the theorem gives an upper bound on the fluctuations of $i(Q_{d,p})$ when $p \ge \frac23$. In particular, for $p>\frac 23+\omega(\frac 1d)$, we get that with high probability \[i(Q_{d,p}) = 2e^{\tfrac12 (2-p)^d} \cdot 2^{2^{d-1}} \left(1 + o(1)\right).\]
 Recalling that the expectation also behaves like the right-hand side, we see that $i(Q_{d,p})$ is concentrated around its mean.  
Note that for $p=\frac 23$ the first part of the theorem does not give a concentration result. In fact, the second part of the theorem implies that  $i(Q_{d,p})$ is not concentrated for $p=\frac 23$. Thus the theorem implies that a change of behavior occurs around $p=\frac23$.
For larger values of $p$, the second part of the theorem gives a lower bound on the fluctuations, thereby pinpointing the order of magnitude of the fluctuations.

The first part of \cref{cor:fluctuation-bound} will be a consequence of an understanding of the variance of $X_d$, and the second part will be a consequence of an understanding of its fourth moment. For the latter part regarding the lower bound on the fluctuations, the assumption that $p \le 1-\Omega(\frac1d)$ can be relaxed to allow $p$ to approach 1 much faster, though the actual fluctuations are smaller than $(2-\frac{3p}2)^{d/2}$ for such $p$. These are determined by the variance which will be discussed in \Cref{thm:variance} below.
We point out that for all constant $p \in [\frac23,1)$, the fluctuations of $i(Q_{d,p})$ are of larger order of magnitude than the second correction term in the expectation obtained from \cref{thm:expectation}, namely $\Theta(d^22^d(1-\frac p2)^{2d})$, so that the the fluctuations overwhelm the latter correction term.
Thus, for constant $p \in [\frac23,1)$, while the leading order asymptotics is deterministic, the second order correction is already random. It is natural to ask whether $X_d$ in \cref{cor:fluctuation-bound} converges in distribution as $d \to \infty$, and if so, to what?

The next result answers this question for $p$ tending to 1.

\begin{thm}\label{thm:normal}
	For $p=1-o(1)$ such that $p \le 1 - 2^{-d/3 + \omega(\log d)}$,
	\begin{equation*}
	\frac {i(Q_{d,p})-\E i(Q_{d,p})}{\sqrt{\Var \, i(Q_{d,p})}} \implies \cN(0,1),
	\end{equation*}
	where $\implies$ denotes convergence in distribution.
\end{thm}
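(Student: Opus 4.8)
The plan is to establish a central limit theorem for $i(Q_{d,p})$ via the method of moments, showing that all centered normalized moments converge to those of a standard Gaussian. The starting point is the representation from \cref{cor:fluctuation-bound} (and, for the regime $p$ very close to $1$, the finer decomposition that underlies \cref{thm:variance}): write $i(Q_{d,p}) = 2e^{\frac12(2-p)^d}\cdot 2^{2^{d-1}}(1+Y_d)$, where $Y_d$ has mean of negligible order and variance of order $(2-\frac{3p}{2})^d$ (up to the lower-order corrections recorded in~\eqref{eq:Variance1/2}). The key structural fact, which should already be available from the polymer/cluster-expansion analysis used to prove the earlier theorems, is that the fluctuation $Y_d$ is, to leading order, a sum of \emph{nearly independent} local contributions — one for each of the $d$ "directions" or, more precisely, one for each small defect pattern (single mistaken vertex, then small clusters) in the two ground-state phases. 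First I would make this decomposition precise: show $Y_d = Z_d + \text{error}$ where $Z_d = \sum_{v} \xi_v$ is a sum over vertices $v$ of the hypercube of weights $\xi_v$ capturing whether $v$ is "out of phase", the error term is of smaller order than $\sqrt{\Var Y_d}$ in $L^2$, and the $\xi_v$ have a weak enough dependence structure (they are functions of the edge-states in a bounded neighbourhood, hence form a local dependency graph of bounded degree after conditioning appropriately) that a CLT for sums with sparse dependency applies.

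The main steps, in order, are as follows. (1) Reduce to the leading fluctuation term: using the variance asymptotics from \cref{thm:variance}/\eqref{eq:Variance1/2} and the tightness from \cref{cor:fluctuation-bound}, show that it suffices to prove a CLT for $Z_d/\sqrt{\Var Z_d}$, since the discarded terms vanish in probability after normalization — here the hypothesis $p \le 1 - 2^{-d/3+\omega(\log d)}$ is exactly what guarantees that the second-order cluster corrections are of strictly smaller order than the leading single-vertex term, so that $Z_d$ genuinely dominates. (2) Compute the moments of $Z_d$: expand $\E Z_d^k = \sum \E[\xi_{v_1}\cdots\xi_{v_k}]$ and argue, as in a standard dependency-graph CLT (e.g.\ via the Stein method or a direct combinatorial count à la the fourth-moment computation already alluded to after \cref{cor:fluctuation-bound}), that the dominant contribution comes from pairings of the $k$ indices into $k/2$ pairs of "colliding" vertices, each pair contributing $\Theta((2-\frac{3p}{2})^d \cdot (\text{something}))$, while all other index patterns are lower order; this yields $\E Z_d^k / (\Var Z_d)^{k/2} \to (k-1)!!$ for even $k$ and $\to 0$ for odd $k$. (3) Invoke the method of moments (Carleman's condition is trivially satisfied by the Gaussian) to conclude convergence in distribution of $Z_d/\sqrt{\Var Z_d}$, and then transfer back via Step (1) and Slutsky's theorem.

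The hard part will be Step (2): controlling the moments $\E[\xi_{v_1}\cdots\xi_{v_k}]$ uniformly in $d$ and $p$ across the stated range. The difficulty is twofold. First, the $\xi_v$ are not exactly a bounded-range dependent field — there are long-range interactions through larger defect clusters, and these must be shown to contribute only to the negligible error, which requires the same contour/Sapozhenko-type counting estimates that power \cref{thm:expectation}, now applied to the product of $k$ local functions simultaneously. Second, one must carefully track how the "collision gain" (the factor by which $\E[\xi_u\xi_v]$ exceeds $\E\xi_u\,\E\xi_v$ when $u,v$ are close) scales, since $p \to 1$ makes this gain and the number of relevant collision configurations both change with $p$; the constraint $p \le 1 - 2^{-d/3+\omega(\log d)}$ is precisely the threshold below which the variance is still driven by single-vertex defects rather than by the $d$-direction or cluster terms, and one must verify that the combinatorics of $k$-fold collisions respects this threshold so that no "intermediate" configuration (a cluster of size between $2$ and $k$) sneaks in at the critical order. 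I would handle this by stratifying the index multiset $\{v_1,\dots,v_k\}$ according to the connected components of its "interaction graph" and showing that any component of size $\ge 3$, or any component contributing via a cluster rather than a single vertex, costs an extra factor that is $o(1)$ relative to the pair-collision bound — making the pairings the unique surviving term and delivering the Gaussian moments.
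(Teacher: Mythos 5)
Your overall strategy (method of moments, with the Gaussian moments arising from pairings of size-two ``collisions'' and all other collision patterns shown negligible under the hypothesis on $p$) is in the right spirit, and your heuristic picture of the fluctuation as a sum of nearly independent per-vertex contributions does match what the analysis ultimately reveals. However, the foundation of your plan --- Step (1), the $L^2$-decomposition $Y_d = Z_d + \mathrm{error}$ with $Z_d=\sum_v\xi_v$ a local functional of the edge-states and the error $o(\sqrt{\Var Y_d})$ in $L^2$ --- is a genuine gap, not something ``already available from the polymer/cluster-expansion analysis.'' The cluster expansion in this paper is applied, via \cref{prop:relation}, to the \emph{deterministic} partition functions $Z_k(Q_d,\lambda,\beta)=\E\, i(Q_{d,p})^k$; it yields asymptotics for moments, not a pathwise or $L^2$ approximation of the random variable $i(Q_{d,p})$ by a local functional of $Q_{d,p}$. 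To prove your decomposition you would need to control mixed quantities of the form $\E\big[i(Q_{d,p})\,f(Q_{d,p})\big]$ for local graph functionals $f$ (so as to bound $\E[(Y_d-Z_d)^2]$), with errors that are $o(\sigma\,\E\, i(Q_{d,p}))$ --- an exponentially small relative error when $p\to1$ --- and no such estimates are established here; producing them would require essentially the same container/entropy/cluster machinery as the paper's own route, so the step cannot be waved through. The paper instead proves \cref{thm:normal} without any pathwise decomposition: it computes all central moments of $i(Q_{d,p})$ to leading order via the $k$-system cluster expansion (\cref{lem:central-moment-cluster-formula}, \cref{thm:central-moments}) and concludes by the method of moments (\cref{cor:normal-general}).

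A secondary but substantive point: you misattribute the role of the hypothesis $p\le 1-2^{-d/3+\omega(\log d)}$. It is not needed for the variance, nor is it ``the threshold below which the variance is still driven by single-vertex defects'' --- the variance asymptotics (\cref{thm:variance}) hold with no upper bound on $p$. The hypothesis enters only at the level of the third and higher central moments: the $k$-th central moment contains a term of order $2^d\alpha_k^d$ coming from all $k$ copies colliding at a single vertex (equivalently, the higher cumulants generated by rare very-low-degree vertices, i.e.\ the skewness of your $\xi_v$'s), and the condition on $p$ is exactly what makes $2^d\alpha_k^d\ll\sigma^k$ for all $k\ge3$ (\cref{lem:alpha-bounds}\eqref{lem:alpha-bound-p-near-1}); at $k=3$ this is the binding case and reproduces the exponent $d/3$. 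In your framework this is a Lyapunov-type condition on the local summands, and your moment stratification in Step (2) would have to locate it there; as written, Step (1) both relies on an unproven approximation and assigns the hypothesis to the wrong place.
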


The results above follow from an understanding of the moments of $i(Q_{d,p})$. The next result gives an asymptotic formula for the second moment.
For increased neatness, we describe the result in terms of the ratio between the second moment and the square of the expectation. Together with \cref{thm:expectation} this translates to an asymptotic formula for the variance.

\begin{thm}\label{thm:variance}
	For $p \ge \frac{C\log d}{d^{1/3}}$,
	\begin{align*}
		\frac{\E i(Q_{d,p})^2}{(\E i(Q_{d,p}))^2} &=
		\frac12 \exp\left[ \tfrac12 (2-\tfrac{3p}2)^d - \tfrac12 2^d(1-\tfrac p2)^{2d} + O(d^4) (1-\tfrac p2)^d(2-\tfrac{3p}2)^d \right] \\&+ \frac12\exp\left[ \tfrac{p(1-p)}{2(2-p)^2} d 2^d \left( 1-\tfrac p2 \right)^{2d} + O(d^4) (1-\tfrac p2)^d(2-\tfrac{3p}2)^d \right].
	\end{align*}
\end{thm}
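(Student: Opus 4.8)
The plan is to run the second moment method, realising $\E i(Q_{d,p})^2$ as a partition function of a ``two--copy'' model on $Q_d$ and then decomposing it according to which of the two extremal independent sets each copy is close to. Since a fixed $I\subseteq V(Q_d)$ is independent in $Q_{d,p}$ exactly when every edge of $Q_d$ inside $I$ is deleted,
\[
\E i(Q_{d,p})^2=\sum_{I,J\subseteq V(Q_d)}(1-p)^{\,e(I)+e(J)-e(I\cap J)},
\]
where $e(S)$ is the number of edges of $Q_d$ with both endpoints in $S$ and $-e(I\cap J)$ corrects the double count of edges inside both $I$ and $J$. First I would show, by the container and isoperimetric estimates used for \cref{thm:expectation} (a simple pointwise bound $e(I\cap J)\le\tfrac12(e(I)+e(J))$ reduces the estimate for ``bad'' pairs to the single--copy container bounds), that up to a multiplicative error absorbed by the stated error terms the sum is dominated by pairs in which each of $I,J$ is close to $\cE$ (even vertices) or to $\cO$ (odd vertices). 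By the $\cE\leftrightarrow\cO$ symmetry this gives
\[
\E i(Q_{d,p})^2=\bigl(1+o(1)\bigr)\bigl(2Z_{\mathrm{same}}+2Z_{\mathrm{opp}}\bigr),
\]
where $Z_{\mathrm{same}}$ restricts the sum to pairs with both $I,J$ close to $\cE$ and $Z_{\mathrm{opp}}$ to pairs with $I$ close to $\cE$ and $J$ close to $\cO$. Writing $Z_E$ for the first--moment sum $\sum_I(1-p)^{e(I)}$ over sets close to $\cE$, the same analysis gives $\E i(Q_{d,p})=(1+o(1))\,2Z_E$ and $Z_E=Z_O$, so after dividing,
\[
\frac{\E i(Q_{d,p})^2}{\bigl(\E i(Q_{d,p})\bigr)^2}=\bigl(1+o(1)\bigr)\left(\tfrac12\cdot\frac{Z_{\mathrm{same}}}{Z_E^2}+\tfrac12\cdot\frac{Z_{\mathrm{opp}}}{Z_EZ_O}\right).
\]

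It then remains to evaluate the two ratios, each of which is an average of the coupling factor over the appropriate product of ``defect'' measures: $\frac{Z_{\mathrm{same}}}{Z_E^2}=\E[(1-p)^{-e(I\cap J)}]$ with $I,J$ independent and distributed as $\mu_E\propto(1-p)^{e(\cdot)}$ on sets close to $\cE$, and $\frac{Z_{\mathrm{opp}}}{Z_EZ_O}=\E[(1-p)^{-e(I\cap J)}]$ with $I\sim\mu_E$, $J\sim\mu_O$ independent. Under $\mu_E$ the even vertices lie in $I$ and the odd vertices lie outside $I$ up to rare incursions, and symmetrically for $\mu_O$. In the opposite--phase case this forces $I\cap J$ to be typically empty, and its rare nonempty occurrences come from a single ``bad edge'' $\{v,w\}$ of $Q_d$ with $v\in\cO$, $w\in\cE$, which is an incursion into $I$ at $v$ and into $J$ at $w$ with $w\in I$ and $v\in J$. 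The perturbative estimate $\mu_E(\{v,w\}\subseteq I)=(1+o(1))\tfrac{1-p}{2}(1-\tfrac p2)^{d-1}$, the count $d\,2^{d-1}$ of edges, and the factor $\tfrac p{1-p}$ from $(1-p)^{-1}$ give, after exponentiating the cluster expansion,
\[
\frac{Z_{\mathrm{opp}}}{Z_EZ_O}=\exp\!\left[\tfrac{p(1-p)}{2(2-p)^2}\,d\,2^d\bigl(1-\tfrac p2\bigr)^{2d}+O(d^4)\bigl(1-\tfrac p2\bigr)^d\bigl(2-\tfrac{3p}2\bigr)^d\right],
\]
which is the second line of the formula. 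In the same--phase case there is an extra and dominant mechanism: a single odd vertex $v$ can be an incursion into both $I$ and $J$ at once, and then the states of its $d$ even neighbours (each possibly in $I$, in $J$, in both, or in neither) are coupled through $e(I\cap J)$. Resumming these $d$ neighbours produces, per such doubly--incurred $v$, the weight $(1-\tfrac{3p}4)^d$ in $Z_{\mathrm{same}}$ against the decoupled weight $(1-\tfrac p2)^{2d}$ in $Z_E^2$; summing the difference over the $2^{d-1}$ choices of $v$ and exponentiating (the remaining clusters being of smaller order) gives
\[
\frac{Z_{\mathrm{same}}}{Z_E^2}=\exp\!\left[\tfrac12\bigl(2-\tfrac{3p}2\bigr)^d-\tfrac12\,2^d\bigl(1-\tfrac p2\bigr)^{2d}+O(d^4)\bigl(1-\tfrac p2\bigr)^d\bigl(2-\tfrac{3p}2\bigr)^d\right],
\]
the first line of the formula. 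Combining the two displays with the reduction above proves the theorem.

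Two steps carry the real difficulty. The first is the reduction to the four pure phases: one must prove the pair analogue of the container bounds used for the first moment, estimating the weighted number of pairs $(I,J)$ for which $I$ or $J$ has large vertex boundary; this is where the hypothesis $p\ge C\log d/d^{1/3}$ is used. The second is to make the two cluster expansions rigorous and uniform in $p$ over the whole range: for $Z_{\mathrm{opp}}$ this is essentially the first--moment expansion for two nearly independent copies, but for $Z_{\mathrm{same}}$ one must incorporate the ``doubly--incurred'' polymers, whose weight $(1-\tfrac{3p}4)^d$ is of a larger order than the ordinary incursion weight $(1-\tfrac p2)^d$, and verify that the resulting polymer model still has a convergent expansion with error of the stated size. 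I expect the pair phase reduction to be the main obstacle; once the correct two--copy polymer model is in place, the cluster bookkeeping should follow the first--moment computation of \cref{thm:expectation} closely.
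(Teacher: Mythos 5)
Your proposal is correct and follows essentially the same route as the paper: your identity for $\E i(Q_{d,p})^2$ is \cref{prop:relation} with $k=2$, your decomposition into same-phase and opposite-phase pairs is the defect-side decomposition of \cref{thm:ClusterEx}, your reduction of the bad-pair estimate via $e(I)+e(J)-e(I\cap J)\ge\tfrac12(e(I)+e(J))$ is exactly the trick in \cref{lem:bad-configs}, and your ``doubly-incurred vertex'' and ``bad edge'' calculations reproduce precisely the quantities $A_{\text{same}}$ and $A_{\text{diff}}$ (scenarios III and IV) in the proof of \cref{thm:moments-general}. The only difference is organisational: the paper proves the general-$k$, general-$\lambda$ statement and then specialises to $\lambda=1$, $k=2$, whereas you carry out that special case directly.
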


As we have seen in \cref{cor:fluctuation-bound}, there is a change in behavior around $p=\frac23$, where for this value of $p$ there is no concentration, while for larger values there is. \cref{thm:variance} shows that the variance also undergoes a change of behavior around this point.
Indeed, the variance of $i(Q_{d,p})$ is of the same order as the square of its expectation for $p = \frac23 \pm \Theta(\frac1d)$, but is of larger order for $p \le \frac23 - \omega(\frac1d)$, while for $p \ge \frac23 + \omega(\frac1d)$, it is of smaller order, implying that $i(Q_{d,p})$ is concentrated around its expectation. 
This leads to the first part of \cref{cor:fluctuation-bound}, while the second part requires also the fourth moment, which is addressed below.

Our methods yield precise asymptotic formulas for any moment of $i(Q_{d,p})$, as well as for its central moments. To keep the exposition simple, we formulate the next result with lower precision than was given in \cref{thm:variance} for the second moment. As before, for increased neatness, we describe the result in terms of the ratio between the $k$-th moment and the $k$-th power of the expectation.

\begin{thm}\label{thm:moments}
	For any $k \ge 2$ and $p \ge \frac{Ck^2\log d}{d^{1/3}}$,
	\[ \frac{\E i(Q_{d,p})^k}{(\E i(Q_{d,p}))^k} =
	2^{-k} \sum_{m=0}^k \tbinom km \exp\left[ \tfrac12\big(\tbinom m2 + \tbinom{k-m}2\big) (2-\tfrac{3p}2)^d + O(d)2^d(1-\tfrac p2)^{2d} \right] .\]
	Furthermore, for any even $k \ge 4$ and $\frac23 + \omega(\frac1d) \le p \le 1 - \omega(\frac1d)$,
	\[ \frac{\E(i(Q_{d,p})-\E i(Q_{d,p}))^k}{(\E i(Q_{d,p}))^k} = \left(2^{-k}(k-1)!! \cdot (2-\tfrac{3p}2)^{kd/2} + 2^{-k}2^d(1-p+p2^{-k})^d\right)(1+o(1)) .\]
	This remains true for $p = \frac23 \pm O(\frac1d)$ if one replaces the term $1+o(1)$ by $\Theta(1)$, and it remains true for all $p \ge \frac23$ if one replaces it by $O(1)$. This also holds for odd $k \ge 3$ and $p \ge \frac23$ with the $O(1)$.
\end{thm}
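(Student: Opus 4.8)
The plan is to realise $\E i(Q_{d,p})^k$ as the partition function of $k$ coupled copies of the hard-core/Ising model on $Q_d$ and run it through the polymer and cluster-expansion apparatus already set up for the first and second moments. Writing $i(Q_{d,p})=\sum_{I}\prod_{e\in E(Q_d):\,e\subseteq I}\bigl(1-\1[e\in Q_{d,p}]\bigr)$ and taking expectations, $\E i(Q_{d,p})^k=\sum_{(I_1,\dots,I_k)}(1-p)^{|\{e\in E(Q_d):\,e\subseteq I_j\text{ for some }j\}|}$, a sum over $k$-tuples of subsets of $\{0,1\}^d$. Its $2^k$ ground states are the tuples in which each $I_j$ is one of the two sides $\mathcal E,\mathcal O$ of the bipartition; I would group them by $m:=\#\{j:I_j=\mathcal E\}$, of which there are $\tbinom km$. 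Relative to a ground state $\vec s\in\{\mathcal E,\mathcal O\}^k$, a polymer is a connected defect carrying the set $T\subseteq[k]$ of coordinates in which it deviates; integrating out the free choices on the neighbourhood of a single wrong-side vertex lying in $|T|=t$ coordinates gives it weight $\bigl(1-(1-2^{-t})p\bigr)^d=2^{-d}\bigl(2-(2-2^{1-t})p\bigr)^d$, and larger polymers are controlled by the Korshunov--Sapozhenko/Galvin-type container estimates, exactly as for $k=1$.

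For the first assertion I would run the cluster expansion about each ground state. The hypothesis $p\ge Ck^2\log d/d^{1/3}$ is what makes the Koteck\'y--Preiss-type convergence criterion and the medium-defect estimates valid here, the factor $k^2$ compensating for the $k^{O(\mathrm{size})}$ inflation of polymer counts caused by the coordinate labels. At the stated precision only single-vertex clusters contribute: a wrong-side vertex in one coordinate gives $\tfrac12(2-p)^d$ per coordinate, hence $\tfrac k2(2-p)^d$ in all, independently of $\vec s$ --- and since a wrong-side vertex for coordinate $j$ is a home-side vertex for any coordinate on the opposite side, coordinates on opposite sides do not interact at this order; a wrong-side vertex shared by two same-side coordinates gives $\tfrac12(2-\tfrac{3p}2)^d$ per such pair, hence $\tfrac12\bigl(\tbinom m2+\tbinom{k-m}2\bigr)(2-\tfrac{3p}2)^d$; all other clusters are swept into $O(d)2^d(1-\tfrac p2)^{2d}$. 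Exponentiating, summing over the $\tbinom km$ ground states of each type, and dividing by $(\E i(Q_{d,p}))^k=2^k\cdot 2^{k2^{d-1}}\exp[\tfrac k2(2-p)^d+\cdots]$ from \cref{thm:expectation} cancels $\tfrac k2(2-p)^d$ and yields the stated identity, the prefactor $2^{-k}$ being $2^{k2^{d-1}}/(2^k\cdot 2^{k2^{d-1}})$.

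For the central moments I would instead use the polymer representation of $i(Q_{d,p})$ itself: $i(Q_{d,p})=2^{2^{d-1}}(\Xi_{\mathcal E}+\Xi_{\mathcal O})$ up to a negligible overlap, where $\Xi_{\mathcal E}=\exp\bigl[\sum_{v\in\mathcal O}2^{-B_v}+T_{\mathcal E}\bigr]$ with $B_v$ the degree of $v$ in $Q_{d,p}$ (so $B_v\sim\Bin(d,p)$), $T_{\mathcal E}$ the higher-cluster correction, and $\Xi_{\mathcal O}$ the mirror image. Since $\E\Xi_{\mathcal E}=\E\Xi_{\mathcal O}$ this rearranges to $i(Q_{d,p})/\E i(Q_{d,p})=1+\tfrac12 X+\widetilde R$ with $X:=\sum_{v\in\{0,1\}^d}\bigl(2^{-B_v}-\E2^{-B_v}\bigr)$ and $\widetilde R$ collecting the $T$-corrections and the super-linear terms from exponentiating. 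The first task is to show $\widetilde R$ is negligible in $L^k$: this is where $\tfrac23+\omega(\tfrac1d)\le p\le 1-\omega(\tfrac1d)$ is used, as $p>\tfrac23$ forces $\Var X=(2-\tfrac{3p}2)^d(1+o(1))\to0$ and $p\le 1-\omega(\tfrac1d)$ forces $d2^d(1-\tfrac p2)^{2d}\ll(2-\tfrac{3p}2)^d$, which in particular makes the edge-wise correlations among the $B_v$ lower order. Then $\E(i(Q_{d,p})-\E i(Q_{d,p}))^k/(\E i(Q_{d,p}))^k=2^{-k}\E X^k(1+o(1))$, and I would evaluate $\E X^k$ through the moment--cumulant expansion $\E X^k=\sum_{\pi}\prod_{b\in\pi}\kappa_{|b|}(X)$ over set-partitions $\pi$ of $[k]$ with all blocks of size $\ge2$ (size-$1$ blocks drop out since $\E X=0$). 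With $X$ a sum of weakly dependent bounded variables satisfying $\kappa_2(X)=(2-\tfrac{3p}2)^d(1+o(1))$ and $\kappa_j(X)=2^d(1-p+p2^{-j})^d(1+o(1))$ for $j\ge3$ (the top binomial term of $\E(2^{-B_v}-\E2^{-B_v})^j$ being $\E2^{-jB_v}=(1-p+p2^{-j})^d$), the all-pairs partitions contribute $(k-1)!!\,\kappa_2(X)^{k/2}=(k-1)!!(2-\tfrac{3p}2)^{kd/2}(1+o(1))$ and the single-block partition contributes $\kappa_k(X)=2^d(1-p+p2^{-k})^d(1+o(1))$; multiplying by $2^{-k}$ gives the two displayed terms.

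It then remains to show every other partition of $[k]$ into blocks of size $\ge2$ --- necessarily containing a block of size $\ge3$ together with pairs --- contributes $o(\cdot)$ of the sum of these two, and to handle the boundary regimes; this is a finite optimisation comparing the products $\prod_{b\in\pi}2^d\bigl(2-(2-2^{1-|b|})p\bigr)^d$, whose extreme points are the all-pairs and single-block partitions, and here $k\ge4$ matters: for $k=2$ the unique admissible partition $\{1,2\}$ is simultaneously an all-pairs and a single-block partition, so writing both terms would double-count --- this is why the exact second moment is isolated as \cref{thm:variance}. For $p=\tfrac23\pm O(\tfrac1d)$ the two extremal families are only of the same order, not strictly comparable, giving the $\Theta(1)$ version; for $p\ge\tfrac23$ in general only the upper estimate persists, giving $O(1)$; and for odd $k\ge3$ the all-pairs family is empty, so only the single-block term survives, again with $O(1)$. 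The main obstacle is twofold: analytically, making the cluster expansion converge with constants uniform in $k$ and pinning the threshold $Ck^2\log d/d^{1/3}$, which rests on the container bounds for medium defect sets now decorated with $k$-colourings and is the $k$-dependent refinement of the work behind \cref{thm:expectation,thm:variance}; and combinatorially (for the second part), carrying out the partition optimisation together with the $L^k$-control of $\widetilde R$, i.e.\ showing that every intermediate defect cluster and every intermediate set-partition of $[k]$ is strictly lower order than the perfect-matching and single-full-block contributions, uniformly over the stated range of $p$.
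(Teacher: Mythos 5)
Your treatment of the first displayed formula is essentially the paper's own route: via \cref{prop:relation} the $k$-th moment is the partition function of the $k$-component system, the $2^k$ ground states are grouped by $m$, the cluster expansion about each ground state is made convergent by a Koteck\'y--Preiss condition whose threshold carries the $k^2$ factor, and only size-one and size-two clusters are computed exactly (in the paper the opposite-side pairs do interact, producing a term of order $d\,2^d(1-\tfrac p2)^{2d}$, but that is exactly what the stated error absorbs); this matches \cref{thm:moments-general} and its proof.

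For the central moments, however, you take a genuinely different route, and it has a real gap. The paper never works with the random variable $i(Q_{d,p})$ itself: it extracts the $k$-th central moment purely from the annealed partition functions $Z_k=\E Z^k$ by a binomial-expansion identity (\cref{lem:central-moment-cluster-formula}) that rewrites $\E(Z/\E Z-1)^k$ as a sum over sequences of clusters whose spans cover $[k]$, and then optimizes over span structures (\cref{thm:central-moments}). You instead posit a quenched representation $i(Q_{d,p})/\E i(Q_{d,p})=1+\tfrac12X+\widetilde R$ with $X=\sum_v(2^{-B_v}-\E 2^{-B_v})$ and assert that $\widetilde R$ is negligible in $L^k$. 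That assertion is the entire difficulty: it amounts to a convergent cluster/container expansion for the \emph{realized} graph $Q_{d,p}$, which is irregular and whose isoperimetry is not controlled in the way the hypercube's is --- precisely the obstacle the paper flags in \cref{sec:open} as the reason for working with the annealed $k$-system instead. Moreover the target quantities $(2-\tfrac{3p}2)^{kd/2}$ and $2^{-k}2^d(1-p+p2^{-k})^d$ are exponentially small, so one must show that the $k$-th moment of $\widetilde R$ --- including contributions from rare graphs on which any quenched expansion fails --- is below that scale; the reasons you give ($\Var X\to0$ and a comparison of $d2^d(1-\tfrac p2)^{2d}$ with $(2-\tfrac{3p}2)^d$) do not touch this, and the latter comparison is in fact false as stated near $p=1-\omega(\tfrac1d)$ without an extra factor $1-p$ (the paper's corresponding check is $(1-p)d\,\alpha_1^{2d}\ll\alpha_2^d$). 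A further caveat in your cumulant step: $\kappa_j(X)=2^d(1-p+p2^{-j})^d(1+o(1))$ fails as $p\to1$ (all cumulants of $2^{-B_v}$ of order $\ge2$ vanish at $p=1$, so there is cancellation against products of lower moments), so the uniformity over the stated range of $p$ would need the kind of case analysis the paper carries out in \cref{lem:alpha-bounds}. So the combinatorial skeleton (perfect matchings versus one full block) is the right picture and matches the paper's span analysis, but as written the second part rests on an unproved quenched expansion whose justification is not easier than the theorem itself.
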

In particular, this implies that, similarly to~\eqref{eq:expectation-simple}, for any $k \ge 2$ and $p \ge \frac{Ck^2\log d}{d^{1/3}}$,
\[ \E i(Q_{d,p})^k =
2^k \cdot 2^{k2^{d-1}} \exp\left[ \tfrac k2 (2-p)^d (1+o(1)) \right].
\]
\cref{thm:moments} shows that all higher moments undergo a change in behavior around $p=\frac23$ similar to that of the second moment. Specifically, for $k \ge 2$ and $p \ge \frac23 + \omega(\frac1d)$, the $k$-th moment of $i(Q_{d,p})$ is asymptotic to the $k$-th power of its expectation, whereas for $\frac{C\log d}{d^{1/3}} \le p \le \frac23 - \omega(\frac1d)$, it is of larger order.
When $p=\frac23 \pm O(\frac1d)$, all moments of $i(Q_{d,p})/\E i(Q_{d,p})$ are $1+\Theta(1)$.

Let us now discuss the central moments, which are of particular interest.
When $\frac{C\log d}{d^{1/3}} \le p \le \frac23 - \omega(\frac1d)$, it is not hard to see using the first part of the theorem that the $k$-th central moment is asymptotically the same as the $k$-th moment itself.
On the other hand, when $p \ge \frac23 - O(\frac1d)$, the second part of the theorem gives the order of magnitude of the normalized $k$-th central moment, showing that it is the larger of one of two terms. As it turns out, for $2 \le k \le 7$, the first term is always the larger of the two, whereas for any $k \ge 8$, there are two numbers $\frac23 < p_k^* < p_k^{**} < 1$ such that the second term is larger for $p \in (p_k^*,p_k^{**})$, and otherwise the first term is larger. As $k \to \infty$, these satisfy $p_k^* \to \frac23$ and $p_k^{**} \to 1$. Thus, for any constant $p \in (\frac23,1)$, the normalized $k$-th central moment is of order $2^d(1-p+p2^{-k})^d$ for all $k$ large enough.
On the other hand, when $p$ tends to 1, the normalized $k$-th central moment is asymptotic to $2^{-k}(k-1)!! (2-\frac{3p}2)^{kd/2}$ for all even $k \ge 2$. This suggests normal behavior as in \cref{thm:normal} and this will indeed follow from a slightly stronger version of \cref{thm:moments} (see \cref{thm:central-moments}).

\subsection{The hard-core model}\label{sec:hardcore}

A natural and well-studied generalization of independent sets is the hard-core model.
In this model, one is given a parameter $\lambda>0$ called the \textbf{fugacity}, and one samples a random independent set $I$ of a given finite graph $G$ with probability proportional to $\lambda^{|I|}$. The \textbf{partition function} of the model is the normalization constant given by
\[ Z(G,\lambda) := \sum_{I\text{ indep.\ set in }G} \lambda^{|I|} .\]
Note that $\lambda=1$ corresponds to counting independent sets, i.e., $Z(G,1)=i(G)$.

The hard-core model, which originates from statistical mechanics, serves as a simple model of gas or hard spheres.
It has been extensively studied by mathematical physicists, probabilists, combinatorialists and computer scientists (scheduling problems, communications). 

Galvin~\cite{galvin2011threshold} studied the hard-core model on the hypercube. Among his results which described the typical structure of a configuration in the hard-core model, was an extension of the basic result~\eqref{eq:KS} of Korshunov and Sapozhenko to the hard-core model, which states that for $\lambda \ge \sqrt2-1+\frac{(\sqrt2+\Omega(1))\log d}{d}$,
\[ Z(Q_d,\lambda) = (1+o(1)) \cdot 2 (1+\lambda)^{2^{d-1}} \exp\left[ \tfrac \lambda 2 \left(\tfrac2{1+\lambda}\right)^d \right] ,\]
and that for $\lambda \ge \frac{C\log d}{d^{1/3}}$,
\[ Z(Q_d,\lambda) = 2 (1+\lambda)^{2^{d-1}} \exp\left[ \tfrac \lambda 2 \left(\tfrac2{1+\lambda}\right)^d (1+o(1)) \right] .\]
Jenssen and Perkins~\cite{jenssen2020independent} gave more refined results, including a formula and algorithm which allows to find the asymptotics of $Z(Q_d,\lambda)$ for any constant $\lambda$. For example, for $\lambda \ge 2^{1/3}-1+\frac{2^{7/3}\log d+\omega(1)}{2d}$,
\[ Z(Q_d,\lambda) = (1+o(1)) \cdot 2 (1+\lambda)^{2^{d-1}} \exp\left[ \tfrac \lambda 2 \left(\tfrac2{1+\lambda}\right)^d \left(1+\tfrac{(2\lambda^2+\lambda^3)d(d-1)-2\lambda}{4(1+\lambda)^d}\right) \right] .\]

Our following result extends the above to random subgraphs of the hypercube.

\begin{thm}\label{thm:expectation-general0}
Suppose that $\lambda \le \lambda_0$ and $\lambda p \ge \frac{C\log d}{d^{1/3}}$. Then
\[ \E Z(Q_{d,p},\lambda) =
 	2(1+\lambda)^{2^{d-1}} \exp\left[ \tfrac\lambda 2 2^d \alpha_1^d + \left(a\tbinom d2 -\tfrac14 \right)\lambda^2 2^d \alpha_1^{2d} + O\left(d^4 \lambda^3 2^d \alpha_1^{3d}\right) \right], \]
where
\[ \alpha_1 := 1 - \frac{\lambda p}{1+\lambda} \qquad\text{and}\qquad a := \frac{(1+\lambda)^2(1+\lambda(1-p)^2)^2}{4(1+\lambda -\lambda p)^4} - \frac14  .\]
\end{thm}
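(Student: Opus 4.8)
\emph{Proof proposal.}
The plan is to run the polymer-model / cluster-expansion argument of Jenssen and Perkins, keeping the fugacity $\lambda$ free throughout — the case $\lambda=1$ being \cref{thm:expectation}. First I would turn the expectation into a deterministic weighted sum: a fixed set $I\subseteq V(Q_d)$ is independent in $Q_{d,p}$ exactly when none of the $e(I)$ edges of $Q_d$ spanned by $I$ survives, so
\[ \E Z(Q_{d,p},\lambda)=\sum_{I\subseteq V(Q_d)}\lambda^{|I|}(1-p)^{e(I)}=\sum_{A\subseteq\cO,\ B\subseteq\cE}\lambda^{|A|+|B|}(1-p)^{e(A,B)}, \]
where $\cE,\cO$ are the even and odd vertices of $Q_d$, $A=I\cap\cO$, $B=I\cap\cE$, and $e(A,B)$ is the number of $Q_d$-edges between $A$ and $B$ (all edges of $I$, since $Q_d$ is bipartite). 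Up to an explicit prefactor this is the partition function of the antiferromagnetic Ising model on $Q_d$ — the positive-temperature hard-core model of the abstract — which is the object the rest of the paper analyses.

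Next comes the two-phase decomposition. Performing the sum over $B$ with $A$ fixed gives $\sum_{B}\lambda^{|B|}(1-p)^{e(A,B)}=(1+\lambda)^{2^{d-1}}\prod_{b\in N(A)}\tfrac{1+\lambda(1-p)^{|N(b)\cap A|}}{1+\lambda}$, and symmetrically on the other side. The sum is dominated by an ``even phase'', in which the occupied odd vertices $A$ form a few small, well-separated defect clusters while $B$ is unconstrained in $\cE$, and by its mirror image the ``odd phase''. I would formalize the even phase by a polymer model whose polymers $\gamma$ are the $2$-linked subsets of $\cO$ of bounded size, with activity
\[ w(\gamma)=\lambda^{|\gamma|}\prod_{b\in N(\gamma)}\frac{1+\lambda(1-p)^{|N(b)\cap\gamma|}}{1+\lambda}, \]
so that (using that $N(\gamma_i)$ are disjoint and that $|N(b)\cap A|=|N(b)\cap\gamma_i|$ for $b\in N(\gamma_i)$ when the polymers are pairwise at distance $\ge 3$) the even-phase contribution is $(1+\lambda)^{2^{d-1}}\Xi$ with $\Xi$ the associated polymer partition function; by the $\cE\leftrightarrow\cO$ symmetry of $Q_d$ the odd phase contributes the same $(1+\lambda)^{2^{d-1}}\Xi$. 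A Sapozhenko-type argument — the graph-container method together with the vertex-isoperimetric inequality of $Q_d$, adapted to the ``soft'' setting in which a defect edge is penalized by $1-p$ rather than forbidden — then shows that the configurations not described by either phase, together with the overlap of the two phases, contribute a total which is exponentially small compared with the main term. This is where the hypothesis $\lambda p\ge\tfrac{C\log d}{d^{1/3}}$ is used; it plays the role of Galvin's fugacity threshold for the hard-core model, with $\lambda p$ in place of $\lambda$. Hence $\E Z(Q_{d,p},\lambda)=2(1+\lambda)^{2^{d-1}}\Xi$ up to a negligible additive error.

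It remains to compute $\log\Xi$. Verifying the Kotecký–Preiss criterion for the polymer model — again bounding the relevant sum $\sum_{\gamma\ni v}|w(\gamma)|e^{|\gamma|}$ with the same isoperimetric estimates, which needs the lower bound on $\lambda p$ once more — one obtains $\log\Xi$ as an absolutely convergent cluster expansion, to be truncated after two orders. The order-$\lambda$ term is $\sum_{v\in\cO}\lambda\alpha_1^{d}=\tfrac\lambda2\,2^d\alpha_1^d$ (a single-vertex polymer has activity $\lambda\alpha_1^d$, its $d$ neighbors each meeting it once, where $\alpha_1=\tfrac{1+\lambda(1-p)}{1+\lambda}=1-\tfrac{\lambda p}{1+\lambda}$), which is the leading exponent. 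At order $\lambda^2$ there are exactly two sources: the two-vertex polymers, which are the $2^{d-2}\tbinom d2$ pairs of odd vertices at hypercube-distance $2$ (each of activity $\lambda^2\bigl(\tfrac{1+\lambda(1-p)^2}{1+\lambda}\bigr)^2\alpha_1^{2d-4}$, the two common neighbors contributing the exceptional factor), and the leading pair term $-\tfrac12\sum_{\gamma_1\nsim\gamma_2}w(\gamma_1)w(\gamma_2)$ of the cluster expansion restricted to single-vertex polymers; a short computation adds these to exactly $\bigl(a\tbinom d2-\tfrac14\bigr)\lambda^2 2^d\alpha_1^{2d}$ with $a$ the rational function in the statement. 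The remaining cluster terms involve at least three odd vertices pairwise linked at distance $2$ and are $O(d^4\lambda^3 2^d\alpha_1^{3d})$; truncating at order $n$ instead gives error $O(d^{2n-2}\lambda^n 2^d\alpha_1^{nd})$.

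The main obstacle is the container/isoperimetry step of the second paragraph: uniformly over $\lambda\le\lambda_0$ and $\lambda p\ge\tfrac{C\log d}{d^{1/3}}$, and in particular near the threshold, one must rule out every configuration not captured by the two polymer models. For the hard-core model ($p=1$) this is precisely the core of Sapozhenko's and Galvin's analysis; in the present setting a defect is merely suppressed, so the container bounds have to be re-proved carrying the $(1-p)$-weights, and all estimates must be tracked through the two parameters $\lambda$ and $p$ rather than a single fugacity. By comparison the cluster-expansion computation and the verification of Kotecký–Preiss are routine — but it is there that the precise $a=a(\lambda,p)$ is determined, and pushing them to higher order is what yields the ``arbitrary accuracy'' mentioned in the introduction.
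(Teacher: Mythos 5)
Your proposal follows essentially the same route as the paper: rewriting $\E Z(Q_{d,p},\lambda)$ as the positive-temperature partition function (\cref{prop:relation}), setting up the even/odd polymer models, verifying the Koteck\'y--Preiss condition via container/isoperimetry bounds adapted to the soft $(1-p)$-penalty, and computing the cluster expansion exactly to second order with the same size-one and size-two cluster counts and weights (your activity is the paper's weight~\eqref{eq:polymer-weight-def} with the sum over $B$ performed). The hard step you flag --- controlling large polymers and configurations outside both phases at positive temperature --- is precisely what the paper carries out in \cref{sec:convergence} using Galvin-type approximations combined with the entropy lemma of Peled--Spinka, after which the truncation argument and the identification of $a(\lambda,p)$ proceed exactly as you describe.
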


In the theorem, and throughout the paper, $\lambda_0$ is an arbitrarily large constant, and $C,c$ denote constants which may depend on $\lambda_0$ but are otherwise universal.

The requirement that $\lambda$ is bounded can be relaxed, but can not be entirely dropped. For example, the conclusion of the theorem fails when $p \in (0,1)$ is constant and $\lambda \ge e^{C(p)d}$, since $\E Z(Q_{d,p})$ is trivially bounded above by $(1+\lambda)^{2^d}$, whereas the expression in the theorem is much larger in this case. While our proof would allow to obtain results for $\lambda$ that does not grow too fast, the more interesting regime is when $\lambda$ is constant or tends to 0, and so we have opted to keep things simpler by assuming that $\lambda$ is bounded.

\cref{thm:expectation-general0} generalizes \cref{thm:expectation}. All the other results discussed in the introduction similarly generalize to the hardcore model. We do not state these generalizations here and refer the reader to \cref{sec:moments} for details.

\subsection{A family of positive-temperature extensions of the hard-core model}\label{sec:posTemp}

As it turns out, the hard-core model on a random subgraph $G_p$ of a graph $G$ (keeping each edge of $G$ independently with probability $p$) is related to another statistical mechanics model on the base graph $G$. The latter model can be thought of as a positive-temperature hard-core model (which is nothing other than the antiferromagnetic Ising model with external magnetic field). In this model, in addition to the fugacity parameter $\lambda>0$, one is given a parameter $\beta \in [0,\infty]$ called the \textbf{inverse temperature}, and one samples a subset $I$ of vertices in $G$ with probability proportional to $\lambda^{|I|} e^{-\beta |E(I)|}$,
where $E(I) := \{ e \in E(G) : e \subset I \}$ is the set of edges of $G$ spanned by $I$.
The \textbf{partition function} of the model is
\begin{equation}\label{eq:postemp-partition-function}
Z(G,\lambda,\beta) := \sum_{I \subset V(G)} \lambda^{|I|} e^{-\beta |E(I)|} .
\end{equation}
Note that when $\beta=\infty$ this reduces to the usual hard-core model on $G$ and $Z(G,\lambda,\infty)=Z(G,\lambda)$.

We will see that when $p$ and $\beta$ satisfy the relation $p=1-e^{-\beta}$, we have the following useful relation between the partition function of the hard-core model on $G_p$ and that of the positive-temperature hard-core model on $G$:
\begin{equation}\label{eq:Z-relation}
\E Z({G_p,\lambda}) = Z({G,\lambda,\beta}) .
\end{equation}
This relation is key for our understanding and analysis of the expected number of independent sets in $Q_{d,p}$.

In fact, there is also a certain representation for the moments of $Z({G_p,\lambda})$ in terms of the positive-temperature hard-core model on $G$. Taking the same relation between $p$ and $\beta$, letting $I_1,\dots,I_k$ be independent and identically distribution random variables chosen from the positive-temperature hard-core model on $G$, we have
\begin{equation}
\frac{\E Z({G_p,\lambda})^k}{(\E Z({G_p,\lambda}))^k} = \E e^{\beta(|E(I_1)|+\cdots+|E(I_k)| - |E(I_1)\cup\cdots\cup E(I_k)|)} .
\end{equation}
Actually, we will not use this relation, but found it interesting to mention as it also gives another interpretation for our results on the moments.

We will instead use a different relation, closer in spirit to that in~\eqref{eq:Z-relation}. In this relation, the $k$-th moment of $Z({G_p,\lambda})$ is related to a model of $k$ interacting sets $I_1,\dots,I_k$ of vertices of $G$. Specifically, one samples $I_1,\dots,I_k \subset V(G)$ with probability proportional to $\lambda^{|I_1|+\cdot+|I_k|} e^{-\beta |E(I_1) \cup \cdots \cup E(I_k)|}$.
The \textbf{partition function} of the model is
\[ Z_k(G,\lambda,\beta) := \sum_{I_1,\dots,I_k \subset V(G)} \lambda^{|I_1|+\cdot+|I_k|} e^{-\beta |E(I_1) \cup \cdots \cup E(I_k)|} .\]
Note that $Z_1(G,\lambda,\beta)=Z(G,\lambda,\beta)$ and that $Z_k(G,\lambda,\infty)=Z(G,\lambda)^k$.

The following is an extension of the relation~\eqref{eq:Z-relation} to arbitrary $k \ge 1$.

\begin{prop}\label{prop:relation}
Let $G$ be a finite graph, let $\lambda>0$ and $\beta \in [0,\infty]$ and set $p=1-e^{-\beta}$. Then
\[ Z_k({G,\lambda,\beta}) = \E Z({G_p,\lambda})^k .\]
\end{prop}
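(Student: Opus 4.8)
The plan is to prove the identity by directly expanding $\E Z(G_p,\lambda)^k$ as a finite sum over $k$-tuples of vertex subsets and evaluating the percolation expectation term by term. Since $G$ is finite, every sum that appears is finite, so linearity of expectation (Fubini) applies without any convergence concerns and we may freely interchange $\E$ with the sums.

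First I would fix a realization of $G_p$ and write
\[ Z(G_p,\lambda)^k = \sum_{I_1,\dots,I_k \subset V(G)} \lambda^{|I_1|+\cdots+|I_k|} \prod_{j=1}^k \1\big[I_j \text{ is independent in } G_p\big], \]
using that $I_j$ is an independent set of $G_p$ precisely when none of the edges it spans in $G$, namely $E(I_j)$, survives the percolation. The key observation is that the product of indicators collapses to a single indicator governed by the \emph{union} of the edge sets:
\[ \prod_{j=1}^k \1\big[I_j \text{ independent in } G_p\big] = \1\big[\text{no edge of } E(I_1)\cup\cdots\cup E(I_k) \text{ survives in } G_p\big]. \]

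Next I would take expectations. As each edge of $G$ is retained independently with probability $p$, the probability that every edge of $E(I_1)\cup\cdots\cup E(I_k)$ is deleted equals $(1-p)^{|E(I_1)\cup\cdots\cup E(I_k)|}$, and substituting $p = 1-e^{-\beta}$ turns this into $e^{-\beta\,|E(I_1)\cup\cdots\cup E(I_k)|}$ (with the conventions $e^{-\infty\cdot 0}=1$ and $e^{-\infty\cdot m}=0$ for $m\geq1$ taking care of the boundary case $\beta=\infty$, $p=1$, where $G_p=G$). Plugging this back in,
\[ \E Z(G_p,\lambda)^k = \sum_{I_1,\dots,I_k \subset V(G)} \lambda^{|I_1|+\cdots+|I_k|}\, e^{-\beta\,|E(I_1)\cup\cdots\cup E(I_k)|} = Z_k(G,\lambda,\beta), \]
which is exactly the claimed identity; the case $k=1$ recovers the relation~\eqref{eq:Z-relation}.

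There is essentially no deep obstacle here: the argument is a one-line percolation computation once the expansion is in place. The only points requiring a little care are the (routine) justification for exchanging expectation with finite sums, the correct reading of the formula at the extreme values $\beta\in\{0,\infty\}$ (equivalently $p\in\{0,1\}$), and the elementary but crucial equivalence ``$I_j$ independent in $G_p$'' $\iff$ ``all edges of $E(I_j)$ are deleted'', which is what makes the joint event depend only on the union $E(I_1)\cup\cdots\cup E(I_k)$ rather than on some finer structure.
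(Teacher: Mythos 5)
Your proposal is correct and is essentially the paper's own argument run in the opposite direction: the paper starts from $Z_k(G,\lambda,\beta)$ and unfolds $e^{-\beta\1_{e\in E(I_1)\cup\cdots\cup E(I_k)}}$ into a sum over edge subsets $\omega$ with weights $p^{|\omega|}(1-p)^{|E\setminus\omega|}$, while you start from $\E Z(G_p,\lambda)^k$ and compute the expectation of the indicator directly as $(1-p)^{|E(I_1)\cup\cdots\cup E(I_k)|}=e^{-\beta|E(I_1)\cup\cdots\cup E(I_k)|}$; both hinge on the same Fubini interchange and on the joint independence event depending only on the union of spanned edge sets. Your explicit treatment of the boundary case $\beta=\infty$ is a minor tidiness bonus over the paper's use of $q=p/(1-p)$.
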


\begin{proof}
	Denote $G=(V,E)$ and $q=e^\beta -1= p/(1-p)$. Then
	\begin{align*}
	Z_k({G,\lambda,\beta}) &=\sum_{I_1,\dots,I_k\subseteq V}\lambda^{|I_1|+\dots+|I_k|}\prod_{e\in E}e^{-\beta\textbf{1}_{e\in E(I_1)\cup\dots\cup E(I_k)}}\\
	&=\sum_{I_1,\dots,I_k\subseteq V}\lambda^{|I_1|+\dots+|I_k|}e^{-\beta|E|}\prod_{e\in E} (1+q\textbf{1}_{e\notin E(I_1)\cup\dots\cup E(I_k)})\\ 
	&=\sum_{I_1,\dots,I_k\subseteq V}\lambda^{|I_1|+\dots+|I_k|}e^{-\beta|E|}\sum_{\omega\subseteq E}q^{|\omega|}\prod_{e\in \omega} \textbf{1}_{e\notin E(I_1)\cup\dots\cup E(I_k)}\\
	&=\sum_{\omega\subseteq E}e^{-\beta|E|}q^{|\omega|}\sum_{I_1,\dots,I_k\subseteq V}\lambda^{|I_1|+\dots+|I_k|} \textbf{1}_{\{I_1,\dots,I_k\ are\ independent\ in\ \omega\}}\\
	&=\sum_{\omega\subseteq E}e^{-\beta|E|}q^{|\omega|}Z(\omega,\lambda)^k\\
	&=\sum_{ \omega\subseteq E}Z(\omega,\lambda)^k p^{|\omega|}(1-p)^{|E\setminus \omega|}\\
	&=\E Z(G_p,\lambda)^k. \qedhere
	\end{align*}
\end{proof}

In light of this relation between the hard-core model on $Q_{d,p}$ and the positive-temperature models on $Q_d$, the proofs of the main results boil down to analyzing the latter models. This analysis establishes an understanding of the structure of a typical configuration. Let us first describe this structure when $k=1$. A typical configuration will mostly be contained in one of the sides, $\cE$ or $\cO$, of the hypercube. Configurations which are entirely contained in one side may be thought of as ``ground states'', and then a typical configuration can be seen as a small deviation from such a ground state. For $k \ge 2$, there are $2^k$ classes of ground states, each characterized by a vector $\cD \in \{\cE,\cO\}^k$, where the corresponding ground state configurations are those having $I_1 \cap \cD_1 = \cdots = I_k \cap \cD_k = \emptyset$. Thus, we think of $\cD$ as describing the ``defect/deviation sides''. We will establish a convergent cluster expansion for this model, which makes rigorous the fact that typical configurations are small deviations from such ground state configurations.

\subsection{Proof outline}
In this section we give an outline of the proofs of our main results.
The proofs combine a number of ideas and techniques, including the cluster expansion for polymer models, approximations of contours, comparison with the model on the complete bipartite graph $K_{d,d}$, the aforementioned relation between the hard-core model on $Q_{d,p}$ and the family of positive-temperature models on $Q_d$, and the method of moments.

If we were content with weaker versions of the results, we would not require all of the above ingredients. In \cref{sec:warmup}, we warm up by proving that for $p \ge \frac23 + \omega(\frac1d)$, the number of independent sets in $Q_{d,p}$ is $(1+o(1)) \cdot 2e^{\frac12 (2-p)^d} \cdot 2^{2^{d-1}}$, both in expectation and with high probability (see \cref{thm:simple}). Let us first discuss the proof outline for this weaker result, which does not rely on the cluster expansion, and involves many of the ideas that go into the main theorems.

\smallskip\noindent
\textbf{Outline for \cref{thm:simple}:}
To prove the theorem, it suffices to lower bound $i(Q_{d,p})$ with high probability, and to upper bound its expectation. For the lower bound, it is instructive to first see how one obtains a tight lower bound on $i(Q_d)$. By considering independent sets which are entirely contained in one bipartition class of the hypercube (we call these ground states), one easily sees that
\[ i(Q_d) \ge 2 \cdot 2^{2^{d-1}} - 1 .\]
This bound already gives the correct order of magnitude. To get the correct leading constant, it suffices to additionally take into account independent sets which are ``almost'' entirely contained in one side of the hypercube (small deviations from a ground state), in the sense that only a bounded number of vertices belong to the other side. Indeed, since any set of size $k$ in the hypercube has at most $kd$ neighbors, the number of independent sets which are contained in (say) the even side, except for precisely $k$ vertices in the odd side, is at least $2^{n - kd} \binom nk$, where $n:=2^{d-1}$. For fixed $k$, this is asymptotically the same as $2^n \frac1{k!} 2^{-k}$. Summing over $0 \le k \le K$, with $K$ slowly tending to infinity, and reversing the roles of even and odd to obtain an additional factor of 2 (noting that the double counting is negligible), yields that
\[ i(Q_d) \ge (1-o(1)) \cdot 2\sqrt{e} \cdot 2^{2^{d-1}} .\]

The lower bound on $i(Q_{d,p})$ is obtained by a similar ``direct'' counting argument, using the second moment method in order to control fluctuations and produce a bound which holds with high probability. While we only required bounded $k$ for the bound on $i(Q_d)$, the bound on $i(Q_{d,p})$ will use $k$ up to roughly $(2-p)^d$ (which is bounded precisely when $p=1-O(\frac1d)$; compare with~\eqref{eq:expecatation-c/d}). The details of this lower bound are given in \cref{sec:warmup-lower-bound}.

Let us give a heuristic for why $(2-p)^d$ is a relevant order of magnitude. 
Consider independent sets which are mostly contained in the even side of the hypercube. If we consider a randomly chosen independent set, then we may approximate the state of even vertices as independent fair coin flips. In order for it to be possible for a given odd vertex to belong to the independent set, each of the $d$ adjacent vertices must either be vacant or the edge connecting to it should not appear in $Q_{d,p}$. By the independence assumption, this has probability $(1-p+p/2)^d$. Thus, there are roughly $2^{d-1}(1-p+p/2)^d = \frac12(2-p)^d$ odd vertices which have no occupied neighbors.


Let us now discuss the upper bound on the expectation of $i(Q_{d,p})$.
As we have seen in \cref{sec:posTemp}, this expectation can be interpreted as the partition function of a positive-temperature model.
To upper bound the partition function of the latter model, we employ and extend techniques of Galvin~\cite{galvin2011threshold} and Peled--Spinka~\cite{peled2020long} with the goal of showing that most configurations do not deviate much from a ground state. The basic objects we work with, called \emph{polymers}, are 2-linked sets of vertices contained in one side of the hypercube, and whose closures do not contain more than $\frac34$ of the vertices on that side (the precise constant $\frac34$ is not important; any constant strictly between $\frac12$ and 1 would work). Here, 2-linked means that it is connected in the enhanced graph where edges are added between distance-two vertices, and the closure of a set is the largest set with the same neighborhood. Polymers represent local deviations from a ground state. Each polymer $A$ has an associated weight
\[ \omega(A) :=  \sum_{B \subset N(A)} 2^{-|N(A)|} e^{-\beta|E(A,B)|} ,\]
defined so that the weight of a configuration (under some minor restrictions) can be written as the product of weights of the polymers it decomposes into. At zero temperature ($\beta=\infty$), this weight is simply $\omega(A)=2^{-|N(A)|}$, which comes from the fact that removing $A$ from the independent set, frees up the vertices in $N(A)$ to be added (or not) to the independent set. At positive temperature, however, the picture is more involved as it is possible for the vertices in $N(A)$ to be occupied even when the vertices in $A$ are occupied, and thus the weight $\omega(A)$ is given by a sum of the contributions from the possible states of vertices in $N(A)$.

Not all configurations can be seen as polymers configurations. Roughly speaking, configurations which contain both a significant number of even and odd vertices cannot be identified with a polymer configuration.
We refer to these as non-polymer configurations. More precisely, non-polymer configurations are those whose closures have both even and odd 2-linked components of size larger than $\frac34 \cdot 2^{d-1}$. In particular, the closure of a non-polymer configurations contains more than $\frac34 \cdot 2^{d-1}$ even and odd vertices. In fact, this will be the only property we use in order to bound the total weight of non-polymer configurations. We note that at zero temperature, non-polymer configurations are simply not possible (for this it is enough that the closure contains more than $\frac12 \cdot 2^{d-1}$ even and odd vertices), so that there is nothing to show. At positive, but very low, temperature, each non-polymer configuration has very small weight and a simple union bound suffices. However, at lower temperature (even already for constant $\beta$), bounding the total weight of non-polymer configurations is non-trivial. For this we use a technique from~\cite{peled2020long}, based on entropy methods~\cite{kahn2001entropy,galvin2004weighted,galvin2012bounding}, which relies on a certain comparison with the model on the complete bipartite graph $K_{d,d}$.
We state the required bound (\cref{lem:bad-configs0}) in the warm-up section and leave the proof to the later \cref{sec:bad-configs}.

In order to show that deviations are unlikely in polymer configurations, we will bound the total weight of all polymers.
Small polymers have small weight relative to their size (since $N(A)$ is much larger than $A$), and so it is not hard to rule out the existence of a small deviation at a given vertex in a typical configuration. However, while the weight of a large polymer is small in absolute terms (this by itself is already a challenge at positive temperature and requires the aforementioned technique from~\cite{peled2020long}; note that the contribution to $\omega(A)$ is small for large $B$, but there are many such $B$ to sum over), it is not so small relative to the number of such polymers (since $N(A)$ is not much larger than $A$), so that it is not obvious how to rule out large deviations.

The main technical step toward obtaining an upper bound on the partition function is then to bound the total weight of large polymers. For this we use an approximation scheme (container method) for polymers. Such approximations were initially used by Korshunov and Sapozhenko~\cite{Korshunov1983Th}, and subsequently in numerous works including~\cite{galvin2011threshold,peled2020long}. The positive-temperature nature of the model makes the use of these approximation more involved (e.g., in comparison to~\cite{Korshunov1983Th,galvin2011threshold}), and again require the use of the aforementioned technique from~\cite{peled2020long}. As the bound on the weight of large polymers is quite technical and long, we have only stated the required bound (\cref{lem:Lemma15ForTheorem1.1}) in the warm-up section and left the proof to the later \cref{sec:ProofOfGab}, where the bound is shown in the generality needed for the other results of the paper as well. The other details of the upper bound are given in \cref{sec:warmup-upper-bound}.


\smallskip\noindent
\textbf{Outline for \cref{thm:expectation,thm:expectation-general0}:}
While one could in theory use the same approach as for \cref{thm:simple} to obtain precise lower and upper bounds on $\E i(Q_{d,p})$, such a ``hands-on'' approach would likely be cumbersome in practice. Instead, we employ the well-developed machinery of the cluster expansion, which will allow for nice bookkeeping and provide formulas for various quantities of interest in terms of polymers (in a similar spirit as inclusion-exclusion).
Background on the cluster expansion can be found in~\cite{brydges1984short,scott2005repulsive}. A nice example of how the cluster expansion can be used to obtain precise asymptotic and further probabilistic information is given by the work of Jenssen and Perkins~\cite{jenssen2020independent} who extend results of Galvin~\cite{galvin2011threshold} on the hard-core model on $Q_d$.

The new object here is a \emph{cluster}, which is a sequence of polymers (repetition is allowed) with certain connectivity properties among them. The weight of a cluster is the product of the weights of its polymers, times another factor (which may be positive or negative), called the Ursell function, which depends on the connectivity structure between the polymers.
 The cluster expansion is a formal expression for the logarithm of the partition function, which expresses it as the sum of weights of all clusters. Although the system is finite, there are infinitely many clusters, making this a formal sum which could potentially be absolutely divergent. There are several conditions in the literature which guarantee the absolute convergence of the cluster expansion. A particularly useful one, which we shall use, is due to Koteck\'y and Preiss~\cite{kotecky1986cluster}.
 
Two steps remain in order to obtain the theorem.
The first step is to verify the Koteck\'y--Preiss condition for the absolute convergence of the cluster expansion. The two main inputs needed for this are ones which were already needed and discussed for \cref{thm:simple} -- bounding the total weight of non-polymer configurations and bounding the total weight of large polymers. The second step is to compute the cluster expansion series (to the desired accuracy). For the results as stated in \cref{thm:expectation} and \cref{thm:expectation-general0}, we compute precisely the contribution to the series from clusters of size one and two, and bound the absolute contribution from larger clusters. Of course, one could compute more terms precisely and thereby obtain more precise results.

\smallskip\noindent
\textbf{Outline for \cref{cor:fluctuation-bound,thm:normal}:}
Both theorems will follow from a good understanding of the (central) moments of $i(Q_{d,p})$, which is given in \cref{thm:expectation,thm:variance,thm:moments} and also in \cref{thm:moments-general,thm:central-moments}. For the first part of \cref{cor:fluctuation-bound}, we use an upper bound on the variance, while for the second part, we use a lower bound on the variance and an upper bound on the fourth central moment.
For \cref{thm:normal}, we show that the standardized moments converge to those of a standard normal random variable, and the convergence in distribution will follow.

\smallskip\noindent
\textbf{Outline for \cref{thm:variance,thm:moments}:}
The starting point for understanding the moments of $i(Q_{d,p})$ is the fact that the $k$-th moment $\E i(Q_{d,p})^k$ can be interpreted as the partition function $Z_k$ of a positive-temperature $k$-component model (see \cref{sec:posTemp}). The same techniques used for \cref{thm:expectation} can be applied to obtain a convergent cluster expansion for this $k$-component model (with a suitable definition of a polymer). Computing this cluster expansion to some desired accuracy leads to the formulas for the moments.

To study the central moments, we use a binomial expansion in order to write the $k$-th central moment $\E (i(Q_{d,p}) - \E i(Q_{d,p}))^k$ in terms of the (non-central) moments. Plugging in the cluster expansion series and suitably manipulating the series, we obtain an expression for the $k$-th central moment as a sum over sequences of clusters, where the only allowed sequences are those which ``span'' all $k$ components of the system (in a certain precise sense).
Computing this series leads to the formula for the $k$-th central moment.

%
%

\subsection{Notation}\label{sec:notation}

Given a graph $G=(V,E)$, we write $u \sim v$ when $u$ and $v$ are adjacent vertices. We write $N(v)$ for the neighbors of $v \in V$, and we write $N(U) := \bigcup_{u \in U} N(u)$ for the neighborhood of $U \subset V$.
For a subset $U \subset V$, we define the \textbf{closure} of $U$ to be largest set $[U]$ with the same neighborhood as $U$, i.e., $[U] := \{ v \in V : N(v) \subset N(U) \}$.

The $d$-dimensional hypercube $Q_d$ is the Hamming graph on $\{0,1\}^d$, i.e., the graph with vertex set $\{0,1\}^d$ and edge set $\{ \{u,v\} : \sum_{i=1}^d |u_i-v_i|=1 \}$. We call a vertex of $Q_d$ even or odd according to the sum of its coordinates. We denote by $\cE$ and $\cO$ the set of even and odd vertices of $Q_d$, respectively.

For real numbers $a$ and $b$, we write $a \wedge b := \min\{a,b\}$ and $a \vee b := \max\{a,b\}$.

We write $f \ll g$ for $f = o(g)$. All asymptotics are as $d \to \infty$, unless otherwise stated, and $d$ is assumed to be large enough when needed.

We use $\lambda_0$ to denote an arbitrarily large constant. 
We write $C,c$ for positive constants, which may depend on $\lambda_0$ but are otherwise universal, and which may change from line to line (with large constants only increasing, and small constants decreasing). We use the notation $a\approx b$ in the sense that $\frac{a}{b}$ is bounded away from zero and infinity by universal constants.

\subsection{Preliminaries}
We will make use of the following isoperimetric inequalities from~\cite{Korshunov1983Th,galvin2011threshold,jenssen2020homomorphisms}.

\begin{lemma}\label{lem:isoperimetry}
	Let $d \ge 1$ and let $\cE \cup \cO$ be the partition of $V(Q_d)$ into the even and odd vertices.
	Suppose $S\subseteq \cE$ (or $S\subseteq \cO$). Then
\[ |N(S)| \ge \begin{cases}
 d|S|-2|S|^2 &\text{if }|S|\leq d/10 \\
 \frac1{10}d|S| &\text{if }|S|\leq d^4 \\
 \big(1+\Omega\big(1/\sqrt d\big)\big)|S| &\text{if }|S|\leq (1-\Omega(1)) 2^{d-1}
\end{cases} .\]
\end{lemma}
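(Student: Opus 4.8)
The plan is to treat the three regimes separately, as they each require a different tool, and in each case to exploit that $S$ lies entirely in one side of the bipartition (say $S\subseteq\cE$), so that $N(S)\subseteq\cO$ and the induced bipartite graph is $d$-regular on the $S$ side. First I would record the trivial but crucial double-counting identity: the number of edges between $S$ and $N(S)$ is exactly $d|S|$, so any upper bound on the number of edges incident to a vertex of $N(S)$ that can land in $S$ — equivalently, a lower bound forcing edges to "spread out" — yields a lower bound on $|N(S)|$.

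For the first regime, $|S|\le d/10$: here I would argue directly via inclusion–exclusion on neighborhoods. Each vertex of $S$ contributes $d$ neighbors, and for two vertices $u,v\in S$ at Hamming distance $2$ (the only way they can share neighbors, since both are even), $|N(u)\cap N(v)|=2$. Hence $|N(S)|\ge d|S|-2\binom{|S|}{2}\cdot\frac{?}{}$ — more carefully, $|N(S)|\ge \sum_{u\in S}|N(u)| - \sum_{\{u,v\}\subseteq S}|N(u)\cap N(v)| \ge d|S| - 2\binom{|S|}{2}\ge d|S|-|S|^2$; a slightly more careful accounting (or simply citing the stated references) gives the claimed $d|S|-2|S|^2$. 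The smallness $|S|\le d/10$ guarantees this bound is still positive and of order $d|S|$.

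For the second regime, $d/10\le |S|\le d^4$, the naive inclusion–exclusion is too lossy, and this is where I expect the main obstacle to lie. The right tool is a more global argument: one can pass to a random restriction / projection onto a subcube, or use the Kruskal–Katona-type / compression machinery that underlies the Korshunov–Sapozhenko and Galvin bounds. Concretely, I would invoke the known result that for $S$ in one side of $Q_d$, $|N(S)|/|S|$ is minimized (for $S$ of this size) by a Hamming ball or subcube-like configuration, for which the expansion ratio is $\Omega(d/\log|S|)=\Omega(d/(4\log d))$, comfortably at least $\frac1{10}d$ for $d$ large. Since the paper explicitly attributes the lemma to \cite{Korshunov1983Th,galvin2011threshold,jenssen2020homomorphisms}, I would cite the relevant statement there rather than reprove the vertex-isoperimetry of the hypercube from scratch; the content of this step is really a repackaging of those results into the clean piecewise form stated.

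For the third regime, $|S|\le (1-\Omega(1))2^{d-1}$, the goal is only a $(1+\Omega(1/\sqrt d))$ expansion, which is the classical edge-isoperimetric / eigenvalue bound on the bipartite "half-cube" graph, or equivalently a consequence of Harper's theorem applied to one side of $Q_d$: the boundary of a set occupying a bounded-away-from-$1$ fraction of a side has size at least $\Omega(1/\sqrt d)$ times the set, since the extremal sets are Hamming balls whose boundary-to-volume ratio near the equator is $\Theta(1/\sqrt d)$. Again I would cite this from the listed references. To assemble the final statement I would simply note the three cases are exhaustive for the stated ranges (with overlaps, where the stronger bound applies), and that $d\ge 1$ is needed only to make the statement non-vacuous. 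The one genuine subtlety worth flagging is matching constants at the regime boundaries — e.g. checking $d|S|-2|S|^2\ge \frac1{10}d|S|$ is not required since the regimes are "whichever applies", but one should confirm each individual bound holds throughout its own stated range, which is routine.
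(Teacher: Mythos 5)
The paper does not actually prove this lemma: it is quoted verbatim from the cited references (Korshunov--Sapozhenko, Galvin, Jenssen--Perkins--et al.), so deferring the second and third regimes to those sources is exactly what the paper itself does, and your first-regime argument is a correct self-contained addition. Indeed, for $S\subseteq\cE$ two distinct even vertices share at most two common neighbours (exactly two at Hamming distance $2$, none otherwise), so Bonferroni gives $|N(S)|\ge d|S|-2\binom{|S|}2 = d|S|-|S|^2+|S|$, which is already stronger than the stated $d|S|-2|S|^2$; no ``more careful accounting'' is needed there. Your regime-3 sketch (one-sided Harper-type bound, extremal sets are ball-like with boundary ratio $1+\Theta(1/\sqrt d)$ near the equator) is the right picture, with the caveat that Harper's theorem as usually stated concerns boundaries in the full cube, and the one-sided version for $S$ contained in a bipartition class is precisely what the cited lemmas supply.

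The genuine problem is your justification of the middle regime. You invoke an expansion ratio of $\Omega(d/\log|S|)$ and claim this is ``comfortably at least $\tfrac1{10}d$'' for $|S|\le d^4$; but $\Omega(d/\log|S|)$ only gives $\Omega(d/\log d)$ in this range, which is asymptotically \emph{smaller} than $d/10$, so the bound you quote (as you state it) does not imply the conclusion. The correct known statement — and the reason the lemma is true — is that the worst sets of size at most $d^4$ on one side are Hamming-ball-like of constant radius: an even ball of radius $2r$ has size $\approx\binom d{2r}$ and neighbourhood $\approx\binom d{2r+1}$, so the expansion ratio degrades like $d/(2r+1)$, i.e.\ like $d\log d/\log|S|$, not $d/\log|S|$. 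For $|S|\le d^4$ this forces $2r\le 4$ and hence expansion at least roughly $d/5$, which is where the constant $\tfrac1{10}$ comes from in the cited lemma. If you intend to cite rather than reprove, cite the statement in that (correct) form; as written, your regime-2 step would fail.
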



We also use the following graph-theoretic lemma (see, e.g., \cite[Lemma 13]{jenssen2020independent}).

\begin{lemma}\label{lem:Lemma13}
	The number of 2-linked subsets $S\subseteq V(Q_d)$ of size at most $t$ which contain given vertex $v$ is at most $(ed^2)^{t-1}$.
\end{lemma}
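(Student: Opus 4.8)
The plan is to pass from $Q_d$ to the auxiliary ``$2$-linked graph'' $H$ on vertex set $V(Q_d)$, in which two vertices are joined whenever their Hamming distance is $1$ or $2$; by definition a set is $2$-linked in $Q_d$ exactly when it is connected in $H$, so \cref{lem:Lemma13} asks to count connected subsets of $H$. A vertex of $H$ has $d$ neighbours at distance $1$ and $\binom d2$ at distance $2$, hence $\Delta(H)=d+\binom d2\le d^2$ for every $d\ge 1$, and in fact $\Delta(H)=\tfrac{d^2+d}2\le\tfrac23 d^2$ once $d\ge 3$. So it suffices to establish the general fact that in any graph $G$ with maximum degree $\Delta$, the number of connected vertex subsets of size exactly $s$ containing a fixed vertex is at most $(e\Delta)^{s-1}$, and then to sum this bound over $1\le s\le t$.

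For the general fact I would fix an arbitrary total order on $V(G)$, assign to a connected set $S\ni v$ its canonical BFS spanning tree $T$ rooted at $v$ (ties broken by the order), and order the children of every node of $T$ by the fixed order. A depth-first traversal of $T$ then encodes $S$: at each node $u$ one records, among the non-parent neighbours of $u$ in $G$ (listed in the fixed order), the ranks of the children of $u$; since children are visited in increasing rank, these ranks form an increasing tuple, i.e.\ a $c_u$-subset of a set of size at most $\Delta$, where $c_u$ denotes the number of children of $u$. Given $v$, this encoding is injective, so the number of sets is at most
\[ \sum_{\tau}\ \prod_{u\in\tau}\binom{\Delta}{c_u(\tau)}\ \le\ \Delta^{s-1}\sum_{\tau}\ \prod_{u\in\tau}\frac1{c_u(\tau)!}, \]
the sums running over plane rooted trees $\tau$ on $s$ nodes. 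The weighted tree sum $\sum_\tau\prod_u\tfrac1{c_u!}$ has generating function $\Phi$ satisfying $\Phi(x)=xe^{\Phi(x)}$, so by Lagrange inversion it equals $\tfrac{s^{s-2}}{(s-1)!}$, which is at most $\tfrac{e^{s}}{s}$ by Stirling. Hence the count is at most $\tfrac es(e\Delta)^{s-1}\le(e\Delta)^{s-1}$ for $s\ge 3$, and $s\in\{1,2\}$ are immediate.

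Applying this with $G=H$ (so $\Delta\le d^2$) bounds the number of $2$-linked sets of size exactly $s$ containing $v$ by $(ed^2)^{s-1}$, and it remains to sum over $1\le s\le t$. Since $e\Delta(H)\ge e>2$ this is a geometric series whose constant is absorbed by the slack between $\Delta(H)\approx\tfrac12 d^2$ and $d^2$ (using $\Delta(H)\le\tfrac23 d^2$ for $d\ge 3$), yielding $\sum_{s=1}^t(e\Delta(H))^{s-1}\le(ed^2)^{t-1}$; the handful of smallest pairs $(d,t)$ are checked directly. The step I expect to require the most care is getting the constant exactly $e$ rather than the $4$ that a naive Catalan-number bound on depth-first walks would produce -- this is precisely what the Cayley-type identity $\sum_\tau\prod_u 1/c_u!=s^{s-2}/(s-1)!$ buys -- together with the bookkeeping that keeps enough room for the summation over $s\le t$ to still fit under $(ed^2)^{t-1}$.
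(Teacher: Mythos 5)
Your argument is correct and follows essentially the same route as the source the paper cites for this lemma (\cite[Lemma 13]{jenssen2020independent}): pass to the auxiliary graph on $V(Q_d)$ joining vertices at Hamming distance $1$ or $2$, whose maximum degree $d+\binom d2$ is at most $d^2$, and invoke the classical bound $(e\Delta)^{s-1}$ on the number of connected $s$-sets through a fixed vertex in a graph of maximum degree $\Delta$. The only additions are that you prove that classical bound from scratch (via the plane-tree generating function $\Phi=xe^{\Phi}$, which is fine) and that you handle the summation over sizes $s\le t$ explicitly, using the slack $\Delta\le\tfrac23 d^2$ for $d\ge3$ together with a check of small cases -- both steps are correct.
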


\subsection{Organization}

In \cref{sec:warmup}, we prove a simplified version of \cref{thm:expectation,cor:fluctuation-bound}.
In \cref{sec:clusterEx}, we introduce the cluster expansion, define a polymer model, and establish results relating the partition function of the polymer model to the partition function $Z(Q_d,\lambda,\beta)$ of the positive-temperature hard-core model.
In \cref{sec:convergence}, we prove the convergence of the cluster expansion by verifying the Koteck\'y--Preiss condition (\cref{thm:KP}), and further provide bounds on the tail of the cluster expansion series. For this, we define and use approximations to bound the total weight of large polymers.
In \cref{sec:moments}, we establish stronger versions of our main results for the hard-core model on $Q_{d,p}$. In particular, all the results of \cref{sec:intro} will follow from \cref{thm:expectation-general,thm:moments-general,thm:central-moments}. We conclude with a discussion and open questions in \cref{sec:open}.

\section{Warm-up}\label{sec:warmup}

In this section, we give a ``hands-on'' proof of basic versions of some of our results for $p \ge \frac23$. While the general proof does not follow the same route (in particular, we do not use the cluster expansion here), we still hope this helps convey some basic ideas in a simple setting. While the lower bound is not too difficult and we provide below all details of the proof, the upper bound relies on a special case (\cref{lem:Lemma15ForTheorem1.1}) of a powerful technical lemma (about the total weight of larger polymers via approximations) and on a special case (\cref{lem:bad-configs0}) of an additional technical lemma (about the total weight of non-polymer configurations via entropy methods) whose proofs are only given later in the general setting (see \cref{sec:convergence}). We prove the following:

\begin{thm}\label{thm:simple}
Suppose that $p \ge \frac23 + \omega(\frac1d)$. Then
\[ i(Q_{d,p}) = (1+o(1)) \cdot 2e^{\frac12 (2-p)^d} \cdot 2^{2^{d-1}} \]
with high probability and in expectation.
\end{thm}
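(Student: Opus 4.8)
The plan is to prove the two bounds separately: a lower bound on $i(Q_{d,p})$ that holds with high probability, and an upper bound on $\E i(Q_{d,p})$. Combined with Markov's inequality (for the upper bound in probability) and the fact that the lower bound already matches the claimed asymptotics, this yields both the in-expectation and the with-high-probability statements. Throughout write $n := 2^{d-1}$ and note that the relevant scale is $\mu := \tfrac12(2-p)^d = n(1-\tfrac p2)^d$, which is $\omega(1)$ precisely when $p \le 1-\omega(\tfrac1d)$ but is $O(1)$ (indeed bounded) when $p = 1-O(\tfrac1d)$; the argument should be written so as to cover both regimes.

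For the \textbf{lower bound}, I would count independent sets that lie entirely in the even side $\cE$ except for a small set $T \subseteq \cO$ of ``defect'' vertices, with $|T| = k$ ranging over $0 \le k \le K$ for some $K = \omega(1)$ growing slowly (e.g.\ $K = \log d$, or $K$ a slowly growing function times $\mu$ when $\mu \to \infty$). Given such a $T$, the set $\cE \setminus N_{Q_{d,p}}(T)$ together with $T$ forms an independent set of $Q_{d,p}$, and different $(T, \text{subset of }\cE\setminus N(T))$ pairs with $|T|=k$ contribute roughly $\binom{|\mathcal O_{\mathrm{good}}|}{k} 2^{\,n - |N_{Q_{d,p}}(T)|}$ configurations, where $\mathcal O_{\mathrm{good}}$ is the set of odd vertices with no $Q_{d,p}$-neighbor that is forced occupied. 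The heuristic in the proof outline says $\E|N_{Q_{d,p}}(v)|$ behaves like $pd/2$ for a uniformly random occupied-even background and the number of odd vertices with all neighbors vacant-or-deleted is $\approx \mu$; to make this rigorous with high probability I would apply the second moment method to the random variable counting pairs $(T, J)$ (equivalently, a weighted count of independent sets with a bounded number of odd vertices), using that the random graph $Q_{d,p}$ and the random choice of even-background are jointly well-behaved, and that the typical overlap between the neighborhoods $N_{Q_{d,p}}(T)$ for distinct small $T$ is negligible by the isoperimetric bound $|N(S)| \ge \tfrac1{10}d|S|$ from \cref{lem:isoperimetry}. Summing the contributions over $0\le k\le K$ and then repeating with the roles of $\cE$ and $\cO$ swapped (the overlap of the two families being negligible, as sets counted twice have $\Omega(n)$ vertices on both sides) gives the factor $2$ and the factor $e^{\mu}$, i.e.\ $i(Q_{d,p}) \ge (1-o(1)) \cdot 2 e^{\mu} \cdot 2^{n}$ whp.

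For the \textbf{upper bound}, I would use the representation $\E i(Q_{d,p}) = Z(Q_d, 1, \beta)$ from \cref{prop:relation} with $p = 1-e^{-\beta}$ and bound the positive-temperature partition function. Split configurations $I \subseteq V(Q_d)$ into \emph{non-polymer} configurations (whose closure has both an even and an odd $2$-linked component larger than $\tfrac34 n$) and \emph{polymer} configurations. The total weight of non-polymer configurations is controlled by \cref{lem:bad-configs0}, which is exactly the entropy/comparison-with-$K_{d,d}$ input referenced in the outline, and is negligible for $p \ge \tfrac23+\omega(\tfrac1d)$. Each polymer configuration decomposes (after choosing a side $\cE$ or $\cO$ it mostly lies in) into a configuration of disjoint polymers $A$, contributing $\prod_A \omega(A)$ relative to the ground state, so $Z(Q_d,1,\beta) \le (1+o(1)) \cdot 2 \cdot 2^n \cdot \big(\text{weight of polymer gas}\big)$. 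It then suffices to show the polymer gas partition function is $e^{\mu(1+o(1))}$: small polymers (size $O(1)$) are enumerated directly and contribute a total weight $\sum_{v \in \cO}\omega(\{v\}) + o(\mu) = \mu(1+o(1))$ using $\omega(\{v\}) = \sum_{B\subseteq N(v)} 2^{-d} e^{-\beta|E(\{v\},B)|} = (1-\tfrac p2)^d$ and \cref{lem:isoperimetry}; the contribution of large polymers is negligible by \cref{lem:Lemma15ForTheorem1.1}. Exponentiating the single-polymer total (the cross terms being lower order, as in the classical Korshunov--Sapozhenko count) gives $\E i(Q_{d,p}) \le (1+o(1)) \cdot 2 e^{\mu} \cdot 2^{n}$.

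The \textbf{main obstacle} is the upper bound, and within it the control of large polymers at positive (even constant) temperature: unlike the zero-temperature case, a large polymer $A$ has weight $\omega(A) = \sum_{B \subseteq N(A)} 2^{-|N(A)|} e^{-\beta|E(A,B)|}$ that is \emph{not} small relative to the number of polymers of its size, because $|N(A)|$ need not exceed $|A|$ by much and there are exponentially many choices of $B$. This is precisely why one needs the container/approximation method for polymers together with the entropy-based comparison to $K_{d,d}$ from \cite{peled2020long}, packaged here as \cref{lem:Lemma15ForTheorem1.1} and \cref{lem:bad-configs0}; granting those two lemmas, the remaining work is the (routine but careful) bookkeeping sketched above. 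A secondary point requiring care is making the second-moment computation in the lower bound uniform over $p$ in the whole range $\tfrac23+\omega(\tfrac1d) \le p \le 1$, since when $\mu = O(1)$ one is summing $O(1)$ terms rather than genuinely exploiting concentration of a growing sum.
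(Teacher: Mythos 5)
Your proposal takes essentially the same route as the paper: the with-high-probability lower bound by directly counting independent sets with at most $K$ ``defect'' vertices on one side together with a second-moment/Chebyshev argument (with $K$ chosen to exceed $\tfrac12(2-p)^d$, handling the $\mu=O(1)$ regime you rightly flag), and the expectation upper bound via $\E i(Q_{d,p})=Z(Q_d,1,\beta)$ with the polymer/non-polymer split, invoking \cref{lem:bad-configs0} and \cref{lem:Lemma15ForTheorem1.1} and the size-one polymer contribution $\tfrac12(2-p)^d$, exactly as in \cref{sec:warmup}. The one slip is your parenthetical on the double count: sets counted in both families have at most $K$ vertices on \emph{each} side (not $\Omega(n)$ on both), and the overlap is negligible simply because there are at most $2^{O(Kd)}\ll 2^{2^{d-1}}$ such small sets, which is how the paper bounds $i_{\le K}$.
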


\cref{thm:simple} will follow from a lower bound on $i(Q_{d,p})$ that holds with high probability, and a matching upper bound on its expectation.
Namely, for the lower bound, we need to show that, with high probability,
	\begin{equation}\label{eq:lower-bound}
	i(Q_{d,p}) \ge (1+o(1)) \cdot 2 e^{\frac12 (2-p)^d} \cdot 2^{2^{d-1}} .
	\end{equation}
For the upper bound, we need to show that
\begin{equation}\label{eq:upper-bound}
\E i(Q_{d,p}) \le (1+o(1)) \cdot 2 e^{\frac12 (2-p)^d} \cdot 2^{2^{d-1}} .
\end{equation}
These two bounds together yield \cref{thm:simple}.

The proof of \eqref{eq:lower-bound} is by a rather direct computation.
The proof of \eqref{eq:upper-bound} requires more work and will use the relation with the positive-temperature hard-core model given by \cref{prop:relation}. We mention that the upper bound will in fact work for $p \ge 2-\sqrt{2} + \omega(\frac{\log d}d)$, and a matching lower bound on the expectation also follows for such $p$ from the proof of~\eqref{eq:lower-bound} (and in fact also for much smaller $p$, but the bound is far from the truth in that case).

\subsection{The lower bound}\label{sec:warmup-lower-bound}

Let $i_{m,\ell}$ denote the number of independent sets in $Q_{d,p}$ which contain exactly $m$ vertices of $\cE$ and $\ell$ vertices of $\cO$. Then
\[ i(Q_{d,p}) = \sum_{m,\ell} i_{m,\ell} .\]
Define $i_{m,*} := \sum_\ell i_{m,\ell}$ and $i_{*,\ell} := \sum_m i_{m,\ell}$.
Also define $i_{\le k} := \sum_{m,\ell \le k} i_{m,\ell}$. Then
\begin{equation}\label{eq:lower-bound-basic}
i(Q_{d,p}) \ge \sum_{k=0}^K (i_{k,*}+i_{*,k}) - i_{\le K} \qquad\text{for any }K \ge 0 .
\end{equation}

Our plan is to use~\eqref{eq:lower-bound-basic} with $K := (2-p)^d$.
For this we first aim to give a lower bound on $i_{k,*}$ which holds with high probability for any particular $k \le K$.
Using that $i_{k,*}$ and $i_{*,k}$ have the same distribution, and that $i_{\le K}$ is trivially always at most $2^{2Kd}$, this will already yield a lower bound close to~\eqref{eq:lower-bound} (with the $1+o(1)$ in the exponent). To obtain the desired bound~\eqref{eq:lower-bound}, we will give a lower bound on the sum of $i_{k,*}$ over $k \le K$ in a similar manner.

We proceed to bound $i_{k,*}$ from below.
Observe that
\[ i_{k,*} = \sum_{A \subset \cE: |A|=k} 2^{2^{d-1}- |\tilde N(A)|} \ge 2^{2^{d-1}} \sum_{A \subset \cE: |A|=k} 2^{- \sum_{v \in A} d_v} ,\]
where $\tilde N(A)$ is the neighborhood of $A$ in $Q_{d,p}$, and $d_v$ is the degree of $v$ in $Q_{d,p}$.
Define
\[ S_k := \sum_{A \subset \cE: |A|=k} X_A \qquad\text{where}\qquad X_A := 2^{-\sum_{v \in A} d_v} .\]
We proceed to lower bound $S_k$.
Note that $S_k$ is a sum of identically distributed random variables. These random variables are not all independent, but most pairs are, and we can expect that $S_k$ is concentrated around its mean. To show this, we first compute the mean of $S_k$ and then bound its variance. Using that $\E x^{\Bin(m,p)} = (1-p+px)^m$, we see that
\[ \E S_k = \binom nk \E 2^{-\Bin(dk,p)} = \binom nk (1-\tfrac p2)^{kd} ,\]
where $n := 2^{d-1}$.
For the variance of $S_k$, we have that
\[ \Var(S_k) = \sum_{A,A'} \Cov(X_A,X_{A'}) = \sum_{i=1}^k \binom nk \binom ki \binom {n-k}{k-i} \Cov(X_{A_0},X_{A_i}) ,\]
where the first sum is over sets $A,A' \subset \cE$ of size $k$, and where $A_0,\dots,A_k$ are any subsets of $\cE$ of size $k$ such that $|A_0 \cap A_i|=i$. We have that
\[ \Cov(X_{A_0},X_{A_i}) \le \E X_{A_0}X_{A_i}  = \E 4^{-\Bin(id,p)} \E 2^{-\Bin(2(k-i)d,p)} = (1-\tfrac{3p}4)^{id} (1-\tfrac p2)^{2(k-i)d} .\]
Thus, using that $\binom ki \le k^i$ and $\binom {n-k}{k-i} \le \binom n{k-i} \le \binom nk \big(\frac k{n-k}\big)^i$,
\[ \Var(S_k) \le \binom nk^2 (1-\tfrac p2)^{2kd} \sum_{i=1}^k \left(\frac{k^2}{n-k}\right)^i \left(\frac{1-\tfrac{3p}4}{(1-\tfrac p2)^2}\right)^{id} .\]
Note that since $p \ge \frac23 + \omega(\frac1d)$, we have for $k \le K$ that
\[ \frac{k^2}{n-k} \left(\frac{1-\tfrac{3p}4}{(1-\tfrac p2)^2}\right)^d =  o(1) .\]
Thus, $\Var(S_k) \ll (\E S_k)^2$ so that, by Chebychev's inequality, $S_k \ge (1-o(1)) \E S_k$ with high probability.
In particular, for any $k \le K$, with high probability,
\[ i_{k,*} \ge (1-o(1)) \cdot 2^{2^{d-1}} \cdot \binom nk (1-\tfrac p2)^{kd} .\]
From here, simply using that $i(Q_{d,p}) \ge i_{K,*} + i_{*,K} - 2^{2Kd}$, it would already be possible to deduce that, with high probability,
\[ i(Q_{d,p}) \ge 2 \cdot 2^{2^{d-1}} \cdot e^{\frac12 (2-p)^d (1-o(1))} .\]

To get the desired lower bound~\eqref{eq:lower-bound}, we aim to show
that $S := \sum_{k=0}^K S_k$ is concentrated around its mean. Observe first that
\[ \E S = \sum_{k=0}^K \binom nk (1-\tfrac p2)^{kd} = (1-o(1)) (1 +(1-\tfrac p2)^d)^n = (1-o(1)) e^{\frac12(2-p)^d} ,\]
where in the second equality we used that $[0,K]$ contains a symmetric interval of size $\omega(\sqrt{nq(1-q)})$ around $nq$ (which tends to infinity), where $q:=(1-\tfrac p2)^d /(1+(1-\tfrac p2)^d)$, and where in the last equality we used that $e^{x-x^2} \le 1+x \le e^x$ and that $n(1-\frac p2)^{2d}=o(1)$ since $p \ge 2-\sqrt2 + \omega(\frac1d)$.
Let us now bound the variance of $S$. In a similar manner as before, we obtain that
\[ \Var(S) = \sum_{A,A'} \Cov(X_A,X_{A'}) \\
  \le \sum_{k=0}^K\sum_{k'=0}^K \sum_{i=1}^{k \wedge k'} \binom nk \binom ki \binom {n-k'}{k'-i} (1-\tfrac{3p}4)^{id} (1-\tfrac p2)^{(k+k'-2i)d} ,\]
where the first sum runs over sets $A,A' \subset \cE$ of size at most $K$, so that
\[ \Var(S) \le \sum_{k=0}^K \sum_{k'=0}^K \binom nk \binom n{k'} (1-\tfrac p2)^{(k+k')d} \sum_{i=1}^{k \wedge k'} \left(\frac{kk'}{n-k'}\right)^i \left(\frac{1-\tfrac{3p}4}{(1-\tfrac p2)^2}\right)^{id} \ll (\E S)^2 .\]
Thus, \eqref{eq:lower-bound-basic} yields that, with high probability,
\[ i(Q_{d,p}) \ge (1-o(1)) 2^{2^{d-1}} \cdot 2\E S - 2^{2Kd} = (1-o(1)) \cdot 2 \cdot 2^{2^{d-1}} e^{\frac12 (2-p)^d} .\]
This establishes~\eqref{eq:lower-bound}.

\subsection{The upper bound}\label{sec:warmup-upper-bound}

Recall the positive-temperature hard-core model from \cref{sec:posTemp} and recall from~\eqref{eq:Z-relation} that $\E i(Q_{d,p})=Z$, where $Z := Z(Q_d,1,\beta)$ was defined in~\eqref{eq:postemp-partition-function} and $\beta:=-\log(1-p)$. Our goal is thus to upper bound $Z$.

A polymer is a 2-linked subset of $\cE$ whose closure (defined in \cref{sec:notation}) has size at most $\frac34 \cdot 2^{d-1}$ (later in \cref{sec:clusterEx} we define more general polymers). For $A \subset \cE$, define
\[ \omega(A) := \sum_{B \subset N(A)} 2^{-|N(A)|}e^{-\beta|E(A,B)|} .\]
We begin by showing that
\begin{equation}\label{eq:warmup-Z-bound}
Z \le 2 \cdot 2^{2^{d-1}} \cdot \exp\left( \sum_{\gamma\text{ polymer}} \omega(\gamma) \right) + \sum_{I \in \cI} \omega(I) ,
\end{equation}
where $\cI$ is the collection of all configurations $I \subset V(Q_d)$ such that $|[I \cap \cE]|,|[I \cap \cO]| >\frac34 2^{d-1}$.
To see this, recall from~\eqref{eq:postemp-partition-function} that $Z$ is a sum over all configurations $I$, and write $Z=Z'+Z''$, where $Z''$ sums over configurations $I \in \cI$ and $Z'$ sums over the remaining configurations. By even-odd symmetry,
\[ Z' \le 2 \cdot 2^{2^{d-1}} \sum_{A \subset \cE, B \subset N(A), |[A]| \le \frac34 2^{d-1}} 2^{-|N(A)|}e^{-\beta|E(A,B)|} = 2 \cdot 2^{2^{d-1}} \sum_{A \subset \cE, |[A]| \le \frac34 2^{d-1}} \omega(A) .\]
By decomposing $A$ into its 2-linked components $A_1,\dots,A_m$ (which are polymers), noting that $\omega(A)=\omega(A_1)\cdots\omega(A_m)$, and taking into account the $m!$ possible ordering of $A_1,\dots,A_m$, we get
\begin{align*}
 \sum_{A \subset \cE, |[A]| \le \frac34 2^{d-1}} \omega(A)
  \le \sum_{m=0}^\infty \frac1{m!} \left(\sum_{\gamma\text{ polymer}} \omega(\gamma)\right)^m
  = \exp\left(\sum_{\gamma\text{ polymer}} \omega(\gamma)\right).
\end{align*}
This proves~\eqref{eq:warmup-Z-bound}.

It suffices to show that
\[ \sum_{\gamma\text{ polymer}} \omega(\gamma) = \frac12 (2-p)^d + o(1) \qquad\text{and}\qquad \sum_{I \in \cI} \omega(I) = o(Z) .\]
A simple computation shows that polymers of size 1 contribute $2^{d-1} 2^{-d} (1+e^{-\beta})^d = \frac12 (2-p)^d$.
The following two lemmas show that the contribution from larger polymers is negligible and that the second sum above is negligible, thereby completing the proof of~\eqref{eq:upper-bound}.

\begin{lemma}\label{lem:Lemma15ForTheorem1.1}
	 For $p \ge 2-\sqrt2 + \omega(\frac{\log d}d)$,
	 \[  \sum_{\gamma\text{ polymer},~|\gamma| \ge 2} \omega(\gamma) = o(1). \]
\end{lemma}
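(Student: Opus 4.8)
The plan is to bound $\omega(\gamma)$ pointwise for a polymer $\gamma$ of size $t \geq 2$ and then sum over all such polymers using the counting bound of \cref{lem:Lemma13}. First, I would obtain a usable estimate for $\omega(\gamma)$ itself. Writing $n_\gamma := |N(\gamma)|$, we have
\[ \omega(\gamma) = 2^{-n_\gamma} \sum_{B \subset N(\gamma)} e^{-\beta |E(\gamma,B)|} = 2^{-n_\gamma} \prod_{v \in N(\gamma)} \left(1 + e^{-\beta \deg_\gamma(v)}\right), \]
where $\deg_\gamma(v)$ is the number of neighbors of $v$ inside $\gamma$; here I used that the choice of $B$ factorizes over $v \in N(\gamma)$ and that an edge $\{u,v\}$ with $u \in \gamma$, $v \in B$ contributes exactly when $v$ is chosen. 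Since $e^{-\beta} = 1-p$, each factor is $1 + (1-p)^{\deg_\gamma(v)} \leq 2-p$ (as $\deg_\gamma(v)\ge 1$ for $v\in N(\gamma)$), so $\omega(\gamma) \leq 2^{-n_\gamma}(2-p)^{n_\gamma} = (1-\tfrac p2)^{n_\gamma}$. This already captures the leading behaviour: the analogous bound $2^{-n_\gamma}$ at zero temperature is the classical one, and the positive-temperature correction is harmless here because $p$ is bounded away from $0$.

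Next I would feed in the isoperimetric input. By \cref{lem:isoperimetry}, for a $2$-linked $\gamma \subset \cE$ of size $t$ we have $|N(\gamma)| \geq dt - 2t^2$ when $t \leq d/10$, $|N(\gamma)| \geq \tfrac1{10}dt$ when $t \leq d^4$, and $|N(\gamma)| \geq (1+\Omega(1/\sqrt d))t$ in the remaining range (up to $\tfrac34 2^{d-1}$, which is what the closure condition in the definition of a polymer guarantees, so this third case is available). Combining with \cref{lem:Lemma13}, the number of polymers of size exactly $t$ containing a fixed vertex is at most $(ed^2)^{t-1}$, and there are $2^{d-1}$ choices for a distinguished vertex, so
\[ \sum_{\gamma \text{ polymer},\ |\gamma| = t} \omega(\gamma) \leq 2^{d-1} (ed^2)^{t-1} (1-\tfrac p2)^{|N(\gamma)|_{\min}(t)} . \]
I would then split the sum over $t \geq 2$ into the three isoperimetric regimes. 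For small $t$ (say $2 \leq t \leq d/10$), the bound $|N(\gamma)| \geq dt-2t^2 \geq dt/2$ together with $p \geq 2-\sqrt2+\omega(\tfrac{\log d}{d})$ — which makes $(1-\tfrac p2)^d \leq (\tfrac1{\sqrt2})^{d(1+o(1))} d^{-\omega(1)}$, in particular $2 e d^2 (1-\tfrac p2)^{d/2} = o(1)$ comfortably, since $(1-\tfrac p2)^{1/2}$ is bounded above by roughly $2^{-1/4}$ — gives a geometric sum over $t$ dominated by its $t=2$ term, which is $2^{d-1}(ed^2)(1-\tfrac p2)^{|N(\gamma)|}$ with $|N(\gamma)| \geq 2d-8$; so one needs $2^{d-1} d^2 (1-\tfrac p2)^{2d} = o(1)$, which is exactly where the threshold $p \geq 2-\sqrt2 + \omega(\tfrac{\log d}{d})$ is used (it makes $(1-\tfrac p2)^2 < \tfrac12$ with enough room to beat the $2^d d^2$). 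For the middle regime $d/10 \leq t \leq d^4$, use $|N(\gamma)| \geq dt/10$ so the summand is at most $2^{d-1}(ed^2 (1-\tfrac p2)^{d/10})^{t-1}$, and since $ed^2(1-\tfrac p2)^{d/10} \to 0$ this is a convergent geometric tail whose total is $o(1)$ (the $2^{d-1}$ prefactor is killed by the $(1-\tfrac p2)^{d^3/10}$-type smallness of even the first term). For the large regime, $|N(\gamma)| \geq (1+c/\sqrt d)t$ with $t \geq d^4$, so the summand is at most $2^{d-1}(ed^2)^t (1-\tfrac p2)^{(1+c/\sqrt d)t}$; here $ed^2 (1-\tfrac p2)^{1+c/\sqrt d}$ is not small, but $(ed^2)(1-\tfrac p2)^{c/\sqrt d}$ multiplied by the base $(1-\tfrac p2)$ — hmm, this is the delicate point.

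The main obstacle, as flagged in the proof outline, is precisely this last regime: when $t$ is large relative to $d$, the isoperimetric gain $|N(\gamma)| - |\gamma|$ is only an $\Omega(t/\sqrt d)$ surplus, while the entropy of polymers is $(ed^2)^{t}$, so the naive bound $(ed^2)^t (1-\tfrac p2)^{(1+c/\sqrt d)t}$ does \emph{not} obviously decay. The honest resolution requires the container/approximation method for polymers (Korshunov--Sapozhenko-style), combined with the entropy/comparison-with-$K_{d,d}$ technique from \cite{peled2020long} to handle the positive-temperature weights; one approximates each large polymer by a small ``fingerprint'' so that the effective number of polymers with a given $|N(\gamma)|$ is subexponential in $|N(\gamma)|$ rather than $(ed^2)^{|\gamma|}$, at which point the factor $(1-\tfrac p2)^{|N(\gamma)|}$ wins. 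Since this argument is long and is needed in the full generality of the $k$-component model anyway, I would only state the pointwise weight bound $\omega(\gamma) \leq (1-\tfrac p2)^{|N(\gamma)|}$ and the elementary small-$t$ computation here, and defer the large-polymer container estimate to \cref{sec:ProofOfGab}, invoking it as a black box exactly as the excerpt anticipates.
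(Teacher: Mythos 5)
Your proposal is correct and follows essentially the same route as the paper: your pointwise bound $\omega(\gamma)\le(1-\tfrac p2)^{|N(\gamma)|}$ is the $k=1$, $\lambda=1$ case of \cref{lem:weight-of-polymer}, your three-regime split via \cref{lem:isoperimetry} and \cref{lem:Lemma13} is exactly the computation in the proof of \cref{lem:Lemma15pre} (from which the paper's short proof of this lemma is deduced), and the paper likewise defers the $\|\gamma\|>d^4$ regime to the container/entropy argument of \cref{lem:large-polymers} proved in \cref{sec:ProofOfGab}. The only quibble is cosmetic: in the middle regime the first term's exponent is of order $d^2$ (namely $dt/10$ at $t=d/10$), not $d^3$, which does not affect the conclusion.
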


\begin{lemma}\label{lem:bad-configs0}
	 For $p \ge \frac{C\log d}{d^{1/3}}$,
	 \[ \sum_{I \in \cI} \omega(I) = o(Z). \]
\end{lemma}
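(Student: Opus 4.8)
Recall from the warm-up that $\omega(I)=e^{-\beta|E(I)|}=(1-p)^{|E(I)|}$ for a configuration $I\subseteq V(Q_d)$ (with $\beta=-\log(1-p)$), that $Z=\sum_I(1-p)^{|E(I)|}$, and that $Z\ge 2\cdot 2^{2^{d-1}}-1$ since configurations contained entirely in $\cE$ or entirely in $\cO$ span no edges. Writing $Z'':=\sum_{I\in\cI}\omega(I)$, it therefore suffices to prove the quantitative statement $Z''=o(2^{2^{d-1}})$. The only feature of $\cI$ I would use is that every $I\in\cI$ has $|[I\cap\cE]|,|[I\cap\cO]|>\tfrac34\cdot 2^{d-1}$, and the first thing to do is to turn this into a concrete constraint on $S:=I\cap\cE$ and $T:=I\cap\cO$. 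Using the identity $\cE\setminus[S]=N(\cO\setminus N(S))$ (immediate from the definition of the closure), the bound $|[S]|>\tfrac34\cdot 2^{d-1}$ gives $|N(\cO\setminus N(S))|<\tfrac14\cdot 2^{d-1}$, whence the third case of \cref{lem:isoperimetry} (applied, if need be, to a subset of $\cO\setminus N(S)$ of size $\tfrac12\cdot 2^{d-1}$) forces $|\cO\setminus N(S)|<\tfrac14\cdot 2^{d-1}$, i.e.\ $|N(S)|>\tfrac34\cdot 2^{d-1}$; symmetrically $|N(T)|>\tfrac34\cdot 2^{d-1}$. The same circle of ideas shows moreover that every $I\in\cI$ has $|E(I)|=\Omega(2^d/d)$: if $|E(I)|=e$ then $|T\cap N(S)|\le e$ and $T\setminus N(S)\subseteq\cO\setminus N(S)$, so $|N(T)|\le de+|N(\cO\setminus N(S))|<de+\tfrac14\cdot 2^{d-1}$, which combined with $|N(T)|>\tfrac34\cdot 2^{d-1}$ forces $e>\tfrac{2^{d-2}}{d}$. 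In particular, if $1-p\le 2^{-cd}$ with $c$ a large constant, the crude bound $Z''\le |\cI|\cdot\max_{I\in\cI}(1-p)^{|E(I)|}\le 2^{2^d}(1-p)^{\Omega(2^d/d)}$ is $o(2^{2^{d-1}})$, which settles the lemma in the very-low-temperature regime.

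For all other $p$ the union bound is hopeless — there are $2^{\Theta(2^d)}$ admissible pairs $(S,T)$ while $(1-p)^{|E(I)|}$ is only as small as $(1-p)^{O(2^d/d)}$ — and I would instead run an entropy argument in the spirit of~\cite{peled2020long}, resting on entropy methods~\cite{kahn2001entropy,galvin2004weighted,galvin2012bounding} and a comparison with the model on the complete bipartite graph $K_{d,d}$. Sample $I=S\cup T$ from the probability measure on $\cI$ proportional to $(1-p)^{e(S,T)}$, so that $\log Z''=H(S,T)-\beta\,\E\,e(S,T)$, where $e(S,T)=|E(I)|=\sum_{v\in\cO}\1_{v\in T}\,|N(v)\cap S|$; the goal is to bound this by $2^{d-1}\log 2-\omega(1)$. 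I would bound $H(S,T)$ by Shearer's inequality with the cover of $\cE$ (resp.\ of $\cO$) by the vertex neighbourhoods $\{N(v)\}_{v\in\cO}$ (resp.\ $\{N(u)\}_{u\in\cE}$), each of which covers the opposite side exactly $d$-fold; combined with the chain rule and the Markov property of the Gibbs measure, this reduces $H(S,T)-\beta\,\E\,e(S,T)$ to a sum of $2^{d-1}$ local terms, each a functional of the law of the ``link'' $\big(S\cap N(v),\1_{v\in T}\big)$ of a single vertex $v$ — which is precisely the data of a homomorphism from the star $K_{1,d}$, and for which the relevant optimisation is governed by the model on $K_{d,d}$.

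An unconstrained optimisation of the local term yields only a per-vertex bound strictly exceeding $\log 2$ (of the shape $\log(3+(1-p)^d)$), which is worse than $\log Z$ and hence useless. The saving must come from the constraints $|N(S)|,|N(T)|>\tfrac34\cdot 2^{d-1}$, which force, for a constant fraction of the vertices, the corresponding link to be non-trivial (for the two Shearer directions: $S\cap N(v)\ne\emptyset$ for a $\tfrac34$-fraction of $v\in\cO$, and $T\cap N(u)\ne\emptyset$ for a $\tfrac34$-fraction of $u\in\cE$) — exactly the links whose local contribution drops below $\log 2$. So the comparison is really with a \emph{constrained} $K_{d,d}$ model in which the all-empty link is excluded a constant fraction of the time; this constrained partition function is of order $d(2-p)^d$ rather than $2^{d+1}$, and carrying the computation through gives $\log Z''\le 2^{d-1}\log 2-\omega(1)$, hence $Z''=o(2^{2^{d-1}})=o(Z)$.

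The main obstacle is to make this estimate survive all the way down to $p=\tfrac{C\log d}{d^{1/3}}$. At such small $p$ the factor $(1-p)^{e(S,T)}$ contributes essentially no saving on its own, so the entire deficit must be wrung out of the interaction between the Shearer bound and the isoperimetric constraint; in particular one genuinely needs the improvement $|N(S)|\ge\big(1+\Omega(1/\sqrt d)\big)|[S]|$ supplied by \cref{lem:isoperimetry}, not merely $|N(S)|>\tfrac34\cdot 2^{d-1}$, and one must handle the range in which a closure lies very close to the whole of $\cE$ or $\cO$, where the isoperimetric estimates have to be applied with some care. Getting the constants to line up, and doing so in the generality required for the $k$-component models of \cref{sec:moments}, is carried out in \cref{sec:bad-configs}.
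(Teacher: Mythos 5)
Your preliminary reductions are fine: the identity $\cE\setminus[S]=N(\cO\setminus N(S))$, the deduction that $|N(S)|,|N(T)|>\tfrac34\cdot 2^{d-1}$ for $I\in\cI$, and the consequence $|E(I)|\ge 2^{d-2}/d$ are all correct (the paper derives essentially the same facts via $|E([I])|\ge \tfrac d2 2^{d-1}$). But, as you note, that energy bound only kills the union bound when $\beta=\Omega(d)$, i.e.\ when $1-p$ is exponentially small, so everything hinges on your entropy step — and that step contains a genuine quantitative error. Excluding only the all-empty link does not make the local partition function small: with $\lambda=1$ one has
\[ Z\bigl(\{0,1\}^d\setminus\{\bar 0\}\bigr)=\sum_{j\ge 1}\tbinom dj\,(1+e^{-\beta j})^d\;\ge\;2^d-1, \]
because links of large size retain their full counting weight regardless of the interaction. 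So the constrained local partition function is $\Theta(2^d)$, not $\Theta(d(2-p)^d)$, and the per-vertex contribution of a vertex with a non-empty link does \emph{not} drop below $\log 2$; forcing non-empty links for a $\tfrac34$-fraction of vertices therefore produces no deficit at all, while the remaining $\tfrac14$-fraction (whose links may be empty) still produces a surplus of order $2^d/d$ in the exponent. Carried through, your Shearer bound comes out \emph{larger} than $Z$, not $o(Z)$. The figure $d(2-p)^d$ would be correct only if the link were additionally constrained to have small size (or the central vertex forced occupied), which is not implied by $|N(S)|,|N(T)|>\tfrac34\cdot 2^{d-1}$.

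The missing idea — and the heart of the paper's proof of \cref{lem:bad-configs-k=1} — is a dichotomy on link \emph{size}, not just non-emptiness. Set $s\approx \tfrac d2\tfrac{\lambda}{1+\lambda}$ and $m=2^dd^{-2/3}$. For configurations in which at least $m$ vertices have a ``light'' link ($1\le |N(v)\cap I|\le s$), the three-class entropy lemma (\cref{lem:weighted-shearer3}) combined with \cref{lem:Z-Psi-bound} gives a per-light-vertex saving $e^{-c\lambda p}$: light links are rare (a Chernoff bound on $\sum_{|\psi|<s}\lambda^{|\psi|}$) and suppress the centre vertex, whereas empty and heavy links are merely benchmark-neutral. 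For the remaining configurations of $\cI$, having few light vertices on each side forces $|E(I)|\ge ms=\Theta(2^dd^{1/3})$ — far stronger than your $2^{d-2}/d$ — and then the crude union bound works since $\beta ms\gtrsim p\,d^{1/3}2^d\gg 2^d$; this is exactly where the hypothesis $p\ge \tfrac{C\log d}{d^{1/3}}$ is used. (Incidentally, the $(1+\Omega(1/\sqrt d))$ isoperimetric improvement you say is ``genuinely needed'' plays no role in this lemma; it is used for the polymer estimates elsewhere.) Without the size dichotomy and the strengthened energy bound, your sketch does not close for any $p$ below the very-low-temperature regime you already handled.
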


The lemmas are proved in \cref{sec:convergence} (see \Cref{lem:Lemma15pre} for a stronger version of the first lemma and \cref{lem:bad-configs-k=1} for a stronger version of the second lemma).

\section{Cluster expansion}\label{sec:clusterEx}

Recall the model of $k$ interacting sets $I_1,\dots,I_k$ described in \cref{sec:posTemp}. We will henceforth refer to this as the \textbf{$k$-system}.
The goal of this section is to write the partition function $Z_k=Z_k(Q_d,\lambda,\beta)$ of the $k$-system using an expansion into so-called clusters. 
We will define a new model, called \textbf{the polymer model}, based on the $k$-system, which inherits the parameters $k,d,\lambda,\beta$ from the relevant $k$-system, and is used in order to give a good estimate for the partition function $Z_k$.
We first give the required definitions, with explanations following the theorem.

A \textbf{polymer} is a tuple $\gamma=(A_1,\dots,A_k)$ of sets such that
 \begin{itemize}
  \item Each $A_i$ is contained in either $\cE$ or $\cO$.
  \item Each $[A_i]$ has size at most $\frac34 \cdot 2^{d-1}$ (recall the definition of $[\cdot]$ from \cref{sec:notation}).
  \item The graph $H_\gamma$ is connected, where $H_\gamma$ is the graph whose vertices are all pairs $(i,u)$ with $i \in [k]$ and $u \in A_i$ and with two vertices $(i,u)$ and $(j,v)$ adjacent whenever $i=j$ and $\dist(u,v)=2$, or $i \neq j$ and $\dist(u,v) \in \{0,1\}$.
 \end{itemize}
The \textbf{weight} of the polymer $\gamma$ is
\begin{equation}\label{eq:polymer-weight-def}
\omega(\gamma):= \sum_{B_1 \subset N(A_1), \dots, B_k \subset N(A_k)} \frac{\lambda^{|A_1|+\cdots+|A_k|+|B_1|+\cdots+|B_k|}}{(1+\lambda)^{|N(A_1)|+\cdots+|N(A_k)|}} e^{-\beta|E(A_1,B_1) \cup \cdots \cup E(A_k,B_k)|}.
\end{equation}

Fix $\cD \in \{\cE,\cO\}^k$.
A \textbf{$\cD$-polymer} is a polymer $\gamma=(A_1,\dots,A_k)$ such that $A_i \subset \cD_i$ for all $i$.
Two $\cD$-polymers $\gamma$ and $\gamma'$ are \textbf{incompatible} if their coordinate-wise union $\gamma \cup \gamma' = (A_1 \cup A'_1,\dots,A_k \cup A'_k)$ satisfies that $H_{\gamma \cup \gamma'}$ is connected; otherwise they are \textbf{compatible} (in which case, the connected components of $H_{\gamma \cup \gamma'}$ are precisely $H_\gamma$ and $H_{\gamma'}$). We write $\gamma \sim \gamma'$ for compatible polymers and $\gamma \not\sim \gamma'$ for incompatible polymers.
A \textbf{$\cD$-cluster} is an ordered tuple $\Gamma=(\gamma_1,\dots,\gamma_n)$ of $\cD$-polymers such that the \textbf{incompatibility graph} $H_\Gamma$ is connected. Here $H_\Gamma$ is the graph with vertex set $\{1,\dots,n\}$ and with $i$ and $j$ adjacent when $\gamma_i \not\sim \gamma_j$. The \textbf{weight} of the cluster $\Gamma$ is
 \[ \omega(\Gamma) := \phi(H_\Gamma)\omega(\gamma_1)\cdots \omega(\gamma_n) ,\]
where $\phi(H)$ is the Ursell function of a graph $H$, defined by
\[ \phi(H) := \frac1{|V(H)|!} \sum_{\substack{S \subset E(H)\\\text{spanning, connected}}} (-1)^{|S|} .\]
We denote the set of all $\cD$-clusters by $\cC_\cD$.
Note that $\cC_\cD$ is infinite since the same polymer can be repeated any number of times in a cluster.

The following condition will recur in many of our results:
\begin{equation}\label{eq:lambda-cond-k}
\lambda \le \lambda_0 \qquad\text{and}\qquad \lambda(1-e^{-\beta}) \ge \frac{Ck^2\log d}{d^{1/3}} .
\end{equation}
We remind the reader that the particular constant $\lambda_0$ is not important, that $C$ is a universal constant (except that it may depend on $\lambda_0$) and that $d$ is always assumed to be sufficiently large.

\begin{thm}\label{thm:ClusterEx}
	Fix $k \ge 1$ and suppose that~\eqref{eq:lambda-cond-k} holds. Then
	\[ Z_k = (1+\lambda)^{k 2^{d-1}} \sum_{\cD\in \{\cE,\cO\}^k}  \exp\left( \sum_{\Gamma \in \cC_{\cD}} \omega(\Gamma)\right) \cdot (1+O(\exp(-2^d/d^4))) ,\]
	where the cluster expansion series (the inner sum) is absolutely convergent.
\end{thm}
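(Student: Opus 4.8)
The plan is to follow the standard polymer-model paradigm: (i) express $Z_k$ exactly as a sum over $\cD \in \{\cE,\cO\}^k$ of polymer partition functions plus a negligible "non-polymer" error; (ii) invoke the Koteck\'y--Preiss criterion to get absolute convergence of each cluster expansion series and the identity $\log \Xi_\cD = \sum_{\Gamma \in \cC_\cD} \omega(\Gamma)$; (iii) control the error term.

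\medskip

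\emph{Step 1: The exact polymer representation.} Fix $\cD \in \{\cE,\cO\}^k$. A configuration $(I_1,\dots,I_k)$ with $I_j \subset V(Q_d)$ is called \emph{$\cD$-good} if, writing $A_j := I_j \cap \cD_j$, each $2$-linked component structure of the tuple $(A_1,\dots,A_k)$ decomposes into polymers (equivalently, the closures $[A_j]$ are not "too large" in the relevant sense). Splitting off the sides $\cD_j^c$: given the $A_j$'s, the vertices of $\cD_j^c \setminus N(A_j)$ contribute a free factor $(1+\lambda)$ each (they can be in or out of $I_j$ freely, with no new edges), while vertices in $N(A_j) \cap \cD_j^c$ contribute the $B_j$-sum in~\eqref{eq:polymer-weight-def}. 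Since $|\cD_j^c| = 2^{d-1}$, this yields a factor $(1+\lambda)^{k2^{d-1}}$ times a product over $2$-linked components, each component giving exactly $\omega(\gamma)$. Summing over decompositions into an unordered collection of mutually compatible polymers and using the usual $m!$ bookkeeping gives
\[ \sum_{(I_1,\dots,I_k)\ \cD\text{-good}} \lambda^{\sum|I_j|}e^{-\beta|\bigcup E(I_j)|} = (1+\lambda)^{k2^{d-1}} \, \Xi_\cD, \qquad \Xi_\cD := \sum_{m \ge 0} \frac{1}{m!}\sum_{\substack{\gamma_1,\dots,\gamma_m \\ \text{pairwise compatible}}} \prod_{i=1}^m \omega(\gamma_i). \]
The only configurations not counted in any $\cD$-good sum are the non-polymer ones (those whose even- and odd-side closures are both large in all components); write their total weight as $E_{\mathrm{bad}}$. (There is a small subtlety: a configuration may be $\cD$-good for several $\cD$, so one must fix a canonical assignment — e.g. assign $I_j$ to the side whose closure of $I_j$-restriction is larger, breaking ties arbitrarily — and verify the polymer sum over that canonical $\cD$ matches. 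This is routine symmetrization.) Thus $Z_k = (1+\lambda)^{k2^{d-1}} \sum_\cD \Xi_\cD + E_{\mathrm{bad}}$.

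\medskip

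\emph{Step 2: Convergence via Koteck\'y--Preiss.} To apply~\cite{kotecky1986cluster}, choose $a(\gamma) := |\gamma| := \sum_j |A_j|$ (or $a(\gamma):=c|\gamma|$ with a small constant) and $d(\gamma) := |\gamma|$, and verify that for every polymer $\gamma_0$,
\[ \sum_{\gamma \not\sim \gamma_0} \omega(\gamma)\, e^{a(\gamma)+d(\gamma)} \le a(\gamma_0). \]
By translation-type bounds, the number of polymers of a given "shape" incompatible with $\gamma_0$ is controlled by $|N[\gamma_0]| \approx |\gamma_0| \cdot \mathrm{poly}(d)$ times the count of $2$-linked sets through a fixed vertex, which is $(ed^2)^{|\gamma|-1}$ by \cref{lem:Lemma13} (applied in the appropriate $k$-layered enhanced graph, costing an extra factor like $(Ck)^{|\gamma|}$). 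So it suffices to show $\sum_{\gamma \ni v} \omega(\gamma)\, (Cd^2 e^2 k)^{|\gamma|} \le c$ for each vertex $v$. Split this into small polymers ($|\gamma|$ below, say, $d^4$) and large polymers. For small polymers, the isoperimetric bound of \cref{lem:isoperimetry} gives $|N(A_j)| \ge \tfrac{1}{10} d|A_j|$ (and $\ge d|A_j| - 2|A_j|^2$ when very small), while the $B_j$-sum in $\omega(\gamma)$ is bounded by noting each vertex of $N(A_j)$ contributes at most $\tfrac{1}{1+\lambda}(1 + \lambda e^{-\beta}) \le 1 - \tfrac{\lambda(1-e^{-\beta})}{1+\lambda}$ per the standard computation, so $\omega(\gamma) \le \prod_j \big(\tfrac{1+\lambda e^{-\beta}}{1+\lambda}\big)^{|N(A_j)|} \le \big(1-\tfrac{\lambda p}{1+\lambda}\big)^{\frac{d}{10}|\gamma|}$ with $p = 1-e^{-\beta}$. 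Under~\eqref{eq:lambda-cond-k}, $\lambda p \ge \tfrac{Ck^2 \log d}{d^{1/3}}$, so $(1-\tfrac{\lambda p}{1+\lambda})^{d/10} \le e^{-ck^2\log d \cdot d^{2/3}/10} \ll (Cd^2 e^2 k)^{-1}$, making the small-polymer sum geometric with ratio $o(1)$; its tail (over $|\gamma| \ge 2$) is $o(1)$ per vertex, hence $o(2^d)$ total. For large polymers, one cannot afford the crude union bound and must instead invoke the container/approximation machinery — this is exactly the content of \cref{lem:Lemma15ForTheorem1.1} (and its general-$k$ strengthening \cref{lem:Lemma15pre} referenced in the text), which bounds the total weight of polymers of size $\ge 2$, and in particular shows the large-polymer contribution to the KP sum is negligible. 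Granting these two inputs, the KP condition holds, so each $\Xi_\cD$ is nonzero with $\log \Xi_\cD = \sum_{\Gamma \in \cC_\cD}\omega(\Gamma)$ absolutely convergent, giving $Z_k = (1+\lambda)^{k2^{d-1}}\sum_\cD \exp(\sum_{\Gamma\in\cC_\cD}\omega(\Gamma)) + E_{\mathrm{bad}}$.

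\medskip

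\emph{Step 3: The error term.} It remains to absorb $E_{\mathrm{bad}}$ into the multiplicative factor $1 + O(\exp(-2^d/d^4))$. Since the main term is at least $(1+\lambda)^{k2^{d-1}} \cdot 2^{-k} \cdot$ (something bounded below — indeed each $\Xi_\cD \ge e^{-o(2^d)}$ from the convergent expansion, and there are $2^k$ terms), it suffices to show $E_{\mathrm{bad}} \le (1+\lambda)^{k2^{d-1}} \exp(-2^d/d^4)$, i.e.\ that the total weight of non-polymer configurations is a factor $\exp(-\Omega(2^d/d^4))$ smaller than the trivial lower bound on $Z_k$. This is precisely \cref{lem:bad-configs0} (general-$k$ version \cref{lem:bad-configs-k=1}), which uses the entropy/$K_{d,d}$-comparison technique from~\cite{peled2020long}: a non-polymer configuration has $> \tfrac34 2^{d-1}$ vertices in the closure on both sides in some component, forcing a large "interface" and an exponential entropy deficit. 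With that lemma in hand the bookkeeping is immediate.

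\medskip

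\emph{Main obstacle.} The genuinely hard step is bounding the total weight of \emph{large} polymers (Step 2, large-polymer part) — equivalently \cref{lem:Lemma15ForTheorem1.1}/\cref{lem:Lemma15pre}. For large polymers $N(A)$ is only a $(1+\Omega(d^{-1/2}))$-factor larger than $A$ (by the last case of \cref{lem:isoperimetry}), so the naive bound "weight per polymer $\times$ number of polymers" does not close; one needs a Sapozhenko-style container/approximation scheme, which at positive temperature must additionally track the $B_j$-sums and therefore requires the comparison-with-$K_{d,d}$ entropy argument of~\cite{peled2020long}. The secondary obstacle is the non-polymer bound \cref{lem:bad-configs0}, again relying on entropy methods. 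Everything else — the algebraic polymer representation, the $m!$ bookkeeping, the KP verification for small polymers, and the final error estimate — is routine given those two lemmas, whose proofs the paper defers to \cref{sec:convergence}.
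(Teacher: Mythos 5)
Your Steps 2 and 3 follow the paper's route (Koteck\'y--Preiss convergence resting on the deferred container and entropy lemmas, i.e.\ \cref{lem:cluster-convergence,lem:bad-configs}), but Step 1 contains a genuine gap: the identity $Z_k=(1+\lambda)^{k2^{d-1}}\sum_\cD\Xi_\cD+E_{\mathrm{bad}}$ is false as stated, because the right-hand side overcounts. Any configuration in which some $I_i$ lies entirely in one side of the hypercube (in particular every ground state) is $\cD$-good for both choices of $\cD_i$, so $\sum_\cD(1+\lambda)^{k2^{d-1}}\Xi_\cD$ strictly exceeds the total weight of the polymer-representable configurations, and this excess is not obviously negligible: it must be shown to be $O(\exp(-2^d/d^4))$ times the main term, which is exactly the error scale the theorem asserts, so it cannot be ignored. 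Your parenthetical fix (a canonical minority-side assignment) does not rescue the identity, because once each configuration is counted only under its canonical $\cD$, the restricted sum is no longer $(1+\lambda)^{k2^{d-1}}\Xi_\cD$; one is back to bounding the discrepancy $\sum_\cD\cZ(\{I\in\hat\cI_\cD:\cM(I)\neq\cD\})$ against the main term. This is not routine symmetrization: in the paper it is the content of \cref{lem:Lemma16,lem:Lemma17,lem:Lemma14}, which require (i) the exponentially tilted tail bound of \cref{lem:cluster-convergence} (the factor $e^{\|\Gamma\|d^{-3/2}}$ is carried along precisely for this) to show that under $\nu_\cD$ the polymer part has size at most $2^d/d^2$ except with probability $O(\exp(-2^d/d^4))$, and (ii) a Chernoff bound for the Bernoulli$(\tfrac{\lambda}{1+\lambda})$ filling of the majority side, so that the minority side vector coincides with the defect vector up to that error. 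The resulting sandwich $\hat Z_k-\sum_\cD\cZ(\hat\cI^*_\cD)\le Z_k\le\hat Z_k+\cZ(\cI^0)$ is what produces the stated factor $1+O(\exp(-2^d/d^4))$; your proposal supplies the second error term ($E_{\mathrm{bad}}$) but not the first.

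A smaller inaccuracy: your per-coordinate weight bound $\omega(\gamma)\le\prod_j\bigl(\tfrac{1+\lambda e^{-\beta}}{1+\lambda}\bigr)^{|N(A_j)|}$ is false for $k\ge2$, since the Boltzmann factor involves the union $E(A_1,B_1)\cup\cdots\cup E(A_k,B_k)$ and does not factorize over coordinates; for $A_1=\cdots=A_k=\{v\}$ the true weight is $\lambda^k\alpha_k^d\ge\lambda^k\alpha_1^{kd}$. The correct bound is \cref{lem:weight-of-polymer} with $\tilde\alpha_k=\alpha_k^{1/k}$, weaker by roughly a factor $k$ in the exponent; your Koteck\'y--Preiss check still closes because condition~\eqref{eq:lambda-cond-k} carries the $k^2$ needed to absorb this loss, so this is fixable rather than structural.
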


%
%

In applications of the theorem, it is useful to have explicit bounds on the absolute tail of the cluster expansion series; such bounds are provided in \cref{sec:convergence}. In fact, such a bound is already needed for the proof of \cref{thm:ClusterEx}.
We state the required bound here, but defer its proof to \cref{sec:convergence}.
The \textbf{size} of a polymer $\gamma=(A_1,\dots,A_k)$ is $\|\gamma\| := |A_1|+\cdots+|A_k|$, and the \textbf{size} of the cluster $\Gamma=(\gamma_1,\dots,\gamma_n)$ is $\|\Gamma\| := \|\gamma_1\|+\cdots+\|\gamma_n\|$.


\begin{lemma}\label{lem:cluster-convergence}
Fix $k \ge 1$ and $\cD \in \{\cE,\cO\}^k$ and suppose that~\eqref{eq:lambda-cond-k} holds. Then
\[ \sum_{\Gamma\in \cC_\cD}|\omega(\Gamma)| e^{\|\Gamma\| d^{-3/2}} = O(1) \cdot 2^d \left(\frac{1+\lambda e^{-\beta}}{1+\lambda}\right)^d .\]
\end{lemma}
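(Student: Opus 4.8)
The goal is to bound the weighted sum of absolute cluster weights. The standard route is to invoke the Koteck\'y--Preiss machinery: once we verify that the polymers satisfy a suitable Koteck\'y--Preiss condition with a decay function, the abstract cluster expansion theorem automatically gives control of exactly this kind of weighted sum. Concretely, I would look for a function $g(\gamma) \ge 0$ on polymers and a parameter such that
\[ \sum_{\gamma' \not\sim \gamma} |\omega(\gamma')| e^{g(\gamma') + \|\gamma'\| d^{-3/2}} \le g(\gamma) \qquad\text{for every polymer }\gamma, \]
where the natural choice is $g(\gamma) = c \|\gamma\|$ for a small constant $c$ (absorbing the $d^{-3/2}$ slack). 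The Koteck\'y--Preiss theorem then yields, for any fixed polymer $\gamma_0$ (e.g.\ a size-one polymer at a fixed vertex),
\[ \sum_{\Gamma \ni \gamma_0} |\omega(\Gamma)| e^{\|\Gamma\| d^{-3/2}} \le e^{g(\gamma_0)} \omega(\gamma_0) \cdot (\text{const}), \]
and summing over the choices of $\gamma_0$ (or rather, over the "first" polymer in each cluster, using that each polymer contains some vertex) produces a bound of the form $\sum_\gamma |\omega(\gamma)| e^{g(\gamma)+\|\gamma\|d^{-3/2}}$, which I then need to show is $O(1)\cdot 2^d\big(\tfrac{1+\lambda e^{-\beta}}{1+\lambda}\big)^d$.

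\textbf{Key steps.} First, I would establish a pointwise bound on the single-polymer weight: for a polymer $\gamma$ with $\|\gamma\| = s$ and the vertex-set of $H_\gamma$ contained near a given location, one has
\[ |\omega(\gamma)| \le \Big(\tfrac{\lambda}{1+\lambda}\Big)^{s}\, (\text{something involving } |N(A_1)|,\dots) , \]
and then use the isoperimetric inequality (\cref{lem:isoperimetry}) to convert $|N(A_i)|$ into a factor that is exponentially large in $s$ times $d$ in the small-polymer regime, and grows at least like $(1+\Omega(1/\sqrt d))$ per vertex in the large-polymer regime. The point is that the denominator $(1+\lambda)^{|N(A_i)|}$ beats the numerator because $|N(A_i)| \gg |A_i|$. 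Second, I would separate the sum over clusters by their size $\|\Gamma\|$; the contribution from clusters consisting of a single size-one polymer is exactly $2^d\big(\tfrac{\lambda e^{-\beta}\cdot(\text{stuff})}{\dots}\big)$-type and gives the main term $2^d\big(\tfrac{1+\lambda e^{-\beta}}{1+\lambda}\big)^d$ up to constants (here one uses that a size-one polymer $(\{v\})$ at an even vertex $v$ has weight $\tfrac{\lambda}{(1+\lambda)^d}(1+\lambda e^{-\beta})^d$, and there are $2^{d-1}$ such $v$ on each side, giving $2\cdot 2^{d-1}$ in total). Third, for $\|\Gamma\| \ge 2$, I would show the total is $o$ of the main term, crucially using the large-polymer weight bound — this is where the technical input of the paper, namely the bound on the total weight of large polymers (\cref{lem:Lemma15ForTheorem1.1} in the warm-up, or its general version stated later in \cref{sec:convergence}), enters. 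The counting of 2-linked sets via \cref{lem:Lemma13} (at most $(ed^2)^{t-1}$ such sets of size $\le t$ through a fixed vertex) controls the entropy of small polymers and shows their total weighted contribution is a convergent geometric-type series dominated by the size-one term.

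\textbf{Main obstacle.} The hardest part is verifying the Koteck\'y--Preiss condition uniformly over all polymers, and in particular controlling the large polymers — those whose closure is a constant fraction of a side of the cube. For such polymers the neighborhood expansion $|N(A)| \ge (1+\Omega(1/\sqrt d))|A|$ is weak (only a $1/\sqrt d$ gain per vertex), so naively the weight is only smaller than $(1+\lambda)^{-\Omega(|A|/\sqrt d)}$ per vertex, while the number of 2-linked sets of size $s$ through a vertex is $(ed^2)^{s}$ — these two quantities are not obviously in the right balance. Overcoming this requires the container/approximation method for polymers together with the entropy-method comparison to $K_{d,d}$ (as flagged in the proof outline), which is exactly the content of the deferred lemmas. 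So in this proof I would reduce \cref{lem:cluster-convergence} to: (i) the abstract Koteck\'y--Preiss lemma, (ii) the size-one computation, and (iii) the large-polymer weight bound proved later in \cref{sec:convergence}, and carefully track constants to land on $O(1)\cdot 2^d\big(\tfrac{1+\lambda e^{-\beta}}{1+\lambda}\big)^d$ rather than a larger bound. A secondary subtlety is the $k$-dependence: the factor $k^2$ in condition~\eqref{eq:lambda-cond-k} must be enough to absorb the extra combinatorial factors coming from the $k$ interacting components (e.g.\ the vertices of $H_\gamma$ live in $[k]\times V(Q_d)$ and incompatibility allows distance $\le 1$ across components), and one needs to check the bounds degrade at most polynomially in $k$.
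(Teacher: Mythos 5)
Your overall architecture is the paper's: the paper also proves this lemma by verifying the Koteck\'y--Preiss condition (\cref{thm:KP} via \cref{lem:Lemma15pre,lem:Lemma15}, with the large-polymer input \cref{lem:large-polymers} proved by containers and the $K_{d,d}$ entropy comparison), and then upgrades the resulting coarse bound by enumerating small clusters directly, the size-one clusters giving the main term $2^{d}\lambda\alpha_1^d$ (this is the second part of \cref{lem:cluster-tail-improved} with $\ell=1$, from which \cref{lem:cluster-convergence} is read off). However, there are two concrete gaps in your plan. First, the choice $g(\gamma)=c\|\gamma\|$ with a constant $c$ cannot be verified: for polymers of size larger than $d^4$ the only available estimate (\cref{lem:large-polymers}, coming from \cref{lem:Gab}) affords an exponential slack of rate $d^{-3/2}$ per unit of size, not a constant rate -- the container bound decays only like $\exp(-c(b-a)\alpha^2/(k^2\log d))$ with $b-a$ possibly as small as $\Theta(\|\gamma\|/\sqrt d)$, so multiplying by $e^{c\|\gamma\|}$ destroys it. This is why the paper's decay function $\tilde g$ in~\eqref{eq:f-g-def} is size-dependent and drops to rate $d^{-3/2}$ beyond size $d^4$.

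Second, and more seriously, the per-polymer weight bound your sketch would produce (isoperimetry plus the obvious estimate, i.e.\ essentially \cref{lem:weight-of-polymer}: $\omega(\gamma)\le\lambda^{\|\gamma\|}\tilde\alpha_k^{\|N(\gamma)\|}$ with $\tilde\alpha_k=\alpha_k^{1/k}$) is not strong enough to show that clusters of size at least $2$ contribute $o\big(2^d\alpha_1^d\big)$ once $k\ge 2$. A single cluster consisting of one polymer supported on one vertex $v$ but occupied in two components has $\|N(\gamma)\|=2d$, so your bound gives $\lambda^2\tilde\alpha_k^{2d}$; for instance at $\lambda=1$, $p=\tfrac12$, $k=5$ (which satisfies~\eqref{eq:lambda-cond-k} for large $d$) one has $\tilde\alpha_5^{2}\approx 0.77>\alpha_1=0.75$, so this bound exceeds the target $\alpha_1^d$ by an exponentially large factor, and there are $\Theta(k^2 2^d)$ such clusters. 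The true weight of such a cluster is $\lambda^2\alpha_2^d$ with $\alpha_2<\alpha_1^2$, and extracting this requires the rearrangement/super-multiplicativity argument ($\delta_i\delta_j\le\delta_{i-1}\delta_{j+1}$, \cref{cl:super-multiplicative}) applied per vertex, which is exactly the content of \cref{lem:cluster-tail-improved}; your ``$k$-dependence'' remark only concerns polynomial combinatorial factors and misses this exponential-rate issue. Relatedly, even a correct Koteck\'y--Preiss verification only yields a bound of the form $2^d d^{O(1)}\tilde\alpha_k^{d-O(1)}$ (as in \cref{lem:Lemma15}), which for $k\ge2$ is weaker than the stated $O(1)\cdot 2^d\big(\tfrac{1+\lambda e^{-\beta}}{1+\lambda}\big)^d$; so the refinement step is not optional bookkeeping but the main missing ingredient.
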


We will also need the following lemma for the proof of \cref{thm:ClusterEx}.
\begin{lemma}\label{lem:bad-configs}
Fix $k \ge 1$ and suppose that~\eqref{eq:lambda-cond-k} holds. Then
\[ \sum_{\substack{I_1,\dots,I_k \subset V(Q_d):\\|[I_i \cap \cE]|,|[I_i \cap \cO]| > \frac34 \cdot 2^{d-1} \text{ for some }i \in [k]}} \lambda^{|I_1|+\cdot+|I_k|} e^{-\beta |E(I_1) \cup \cdots \cup E(I_k)|} \le Z_k \cdot O(\exp(-2^d/d)) .\]
\end{lemma}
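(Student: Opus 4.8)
The plan is to reduce the $k$-system bound to the $k=1$ case already established in \cref{lem:bad-configs0} (equivalently, its stronger form \cref{lem:bad-configs-k=1}), via a union bound over the coordinate $i \in [k]$ for which the ``non-polymer'' condition $|[I_i\cap\cE]|,|[I_i\cap\cO]|>\tfrac34\cdot 2^{d-1}$ holds, together with a factorization of the weight over the remaining coordinates. Fix such an $i$. Summing $\lambda^{|I_j|}e^{-\beta|E(I_j)|}$ freely over each $I_j$ with $j\neq i$ gives exactly $Z(Q_d,\lambda,\beta)^{k-1}$, since $|E(I_1)\cup\cdots\cup E(I_k)|\le |E(I_i)|+\sum_{j\ne i}|E(I_j)|$ and $e^{-\beta t}$ is monotone, so dropping the union down to the single term $E(I_i)$ only increases the weight and decouples the coordinates. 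The coordinate $i$ itself contributes the $k=1$ non-polymer sum, which by \cref{lem:bad-configs0} is $o(Z)=o(Z(Q_d,\lambda,\beta))$, and in fact the proof there gives the quantitative bound $Z(Q_d,\lambda,\beta)\cdot O(\exp(-2^d/d))$. Multiplying out and using $k$ choices for $i$:
\[
\sum_{\substack{I_1,\dots,I_k:\ |[I_i\cap\cE]|,|[I_i\cap\cO]|>\frac34\cdot 2^{d-1}\\ \text{for some }i}} \lambda^{|I_1|+\cdots+|I_k|}e^{-\beta|E(I_1)\cup\cdots\cup E(I_k)|}
\le k\cdot Z(Q_d,\lambda,\beta)^{k}\cdot O(\exp(-2^d/d)).
\]

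It then remains to relate $Z(Q_d,\lambda,\beta)^k$ to $Z_k$. The trivial inequality $|E(I_1)\cup\cdots\cup E(I_k)|\le |E(I_1)|+\cdots+|E(I_k)|$ shows that $Z(Q_d,\lambda,\beta)^k = \prod_i \sum_{I_i}\lambda^{|I_i|}e^{-\beta|E(I_i)|} \le Z_k$, so the bound above is at most $k\cdot Z_k\cdot O(\exp(-2^d/d))$. Finally, absorbing the factor $k$ into the exponential error (using that $\lambda(1-e^{-\beta})\ge Ck^2\log d/d^{1/3}$ forces $k=o(2^{d}/d)$, indeed $k$ is tiny compared to $2^d/d$ for $d$ large), we get $k\cdot O(\exp(-2^d/d)) = O(\exp(-2^d/d))$ after a harmless adjustment of the constant in the exponent — or more cleanly, one simply notes $k\exp(-2^d/d)\le \exp(-2^d/(2d))$ for $d$ large, which is still of the claimed form $O(\exp(-2^d/d))$ up to renaming the constant hidden in the $d$ in the denominator.

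The only genuinely substantive input is \cref{lem:bad-configs0} (proved later in \cref{sec:convergence} via the entropy/$K_{d,d}$-comparison method of \cite{peled2020long}), so the main ``obstacle'' here is purely bookkeeping: making sure the decoupling step is valid, i.e. that restricting the union of edge sets to a single coordinate only inflates the weight, and that the resulting product is genuinely bounded by $Z_k$ rather than merely by $Z(Q_d,\lambda,\beta)^k$. Both follow from monotonicity of $t\mapsto e^{-\beta t}$ and the set-theoretic inequality $|\bigcup_j E(I_j)|\le \sum_j |E(I_j)|$; no further estimates are needed. I would also remark that the same argument, applied with the quantitative version \cref{lem:bad-configs-k=1} in place of \cref{lem:bad-configs0}, yields whatever sharper error term is needed downstream, with $k^2$-type dependence easily absorbed under~\eqref{eq:lambda-cond-k}.
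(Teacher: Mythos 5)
There is a genuine gap in your decoupling step. You bound $e^{-\beta|E(I_1)\cup\cdots\cup E(I_k)|}$ by $e^{-\beta|E(I_i)|}$ (correct, since the union contains $E(I_i)$), but then you still sum $\lambda^{|I_j|}e^{-\beta|E(I_j)|}$ over the coordinates $j\neq i$ and claim this gives $Z(Q_d,\lambda,\beta)^{k-1}$. To have those edge penalties available you would need $e^{-\beta|E(I_1)\cup\cdots\cup E(I_k)|}\le e^{-\beta|E(I_i)|}\prod_{j\neq i}e^{-\beta|E(I_j)|}$, i.e.\ $|E(I_1)\cup\cdots\cup E(I_k)|\ge \sum_j|E(I_j)|$ --- which is the \emph{reverse} of the set-theoretic inequality you invoke and is false whenever the edge sets overlap. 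The valid version of your step forfeits the edge penalties of the other coordinates entirely, and the free sum over each $I_j$, $j\neq i$, is then $(1+\lambda)^{2^d}$, not $Z(Q_d,\lambda,\beta)$. This is fatal: $(1+\lambda)^{(k-1)2^d}$ times the $k=1$ non-polymer bound exceeds $Z_k$ by a factor of order $(1+\lambda)^{(k-1)2^{d-1}}$ (recall $Z_k$ is of order $2^k(1+\lambda)^{k2^{d-1}}$ up to small corrections), which cannot be absorbed into $O(\exp(-2^d/d))$.

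The paper's route is to distribute the penalty rather than drop it: $|E(I_1)\cup\cdots\cup E(I_k)|\ge\frac1k\big(|E(I_1)|+\cdots+|E(I_k)|\big)$ decouples the coordinates at inverse temperature $\beta/k$, giving $k\,Z_1(\tfrac\beta k)^{k-1}$ times the $k=1$ non-polymer sum at inverse temperature $\beta/k$; \cref{lem:bad-configs-k=1} is then applied at $\beta/k$, which is one place the $k^2$ in~\eqref{eq:lambda-cond-k} is actually needed. The remaining step --- which your proposal dismisses as pure bookkeeping --- is genuinely substantive: one must compare $Z_1(\tfrac\beta k)^{k}$ back to $Z_k(\beta)$, and this is done via \cref{lem:weighted-shearer0} and \cref{lem:Z-Psi-bound}, which give $Z_1(\tfrac\beta k)\le(1+\lambda)^{2^{d-1}}e^{2^{d-1}/d^4}\le Z_1(\beta)\,e^{2^{d-1}/d^4}$, followed by $Z_1(\beta)^k\le Z_k(\beta)$ (this last inequality, which you also use, is correct). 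So beyond \cref{lem:bad-configs0} the proof does require the entropy/$K_{d,d}$-comparison input a second time; your argument as written does not close without it.
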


\bigbreak

Let us now motivate the definitions given above.
Recall that in the $k$-system, configurations are tuples $I=(I_1,\dots,I_k)$ of subsets of $Q_d$ and that one samples such a configuration with probability proportional to $\lambda^{|I_1|+\cdots+|I_k|} e^{-\beta |E(I_1) \cup \cdots \cup E(I_k)|}$. The corresponding probability measure $\mu_k$ is given by
\[ \mu_k(I) := \frac{\lambda^{|I_1|+\cdots+|I_k|} e^{-\beta |E(I_1) \cup \cdots \cup E(I_k)|}}{Z_k} .\]
Let $\cD \in \{\cE,\cO\}^k$ and let $g=(A_1,\dots,A_k,B_1,\dots,B_k)$ be a tuple of sets $A_i \subset \cD_i$ and $B_i \subset N(A_i)$. Define
\begin{equation}\label{eq:dec-polymer-weight-def}
\omega(g):= \frac{\lambda^{|A_1|+\cdots+|A_k|+|B_1|+\cdots+|B_k|}}{(1+\lambda)^{|N(A_1)|+\cdots+|N(A_k)|}} e^{-\beta|E(A_1,B_1) \cup \cdots \cup E(A_k,B_k)|}.
\end{equation}
 Let $E_\cD(g)$ be the event that $I_i \cap \cD_i = A_i$ and $I_i \cap N(A_i) = B_i$ for all $i$.
A straightforward computation reveals that
\begin{equation}\label{eq:weight-ratio}
 \mu_k(E_\cD(\emptyset)) = \frac{(1+\lambda)^{k2^{d-1}}}{Z_k} \qquad\text{and}\qquad \frac{\mu_k(E_\cD(g))}{\mu_k(E_\cD(\emptyset))} = \omega(g),
\end{equation}
where $\emptyset$ is identified here with $(\emptyset,\dots,\emptyset)$.
It is precisely now that the definition of a polymer comes into play. Suppose for a moment that each $A_i$ has size at most $\frac34 \cdot 2^{d-1}$. Then there is a unique set of $\cD$-polymers $\{\gamma_j\}$ such that the connected components of $H_{(A_1,\dots,A_k)}$ are precisely $\{H_{\gamma_j}\}$. Note that the polymers $\{\gamma_j\}$ are necessarily pairwise compatible. We extend each polymer $\gamma_j$ to a ``decorated'' polymer $\hat\gamma_j$ in the following way: if $\gamma_j=(A^{j}_1,\dots,A^{j}_k)$ then $\hat\gamma_j:=(A^{j}_1,\dots,A^{j}_k,B^{j}_1,\dots,B^{j}_k)$, where $B^{j}_i := B_i \cap N(A^{j}_i)$. Then $\{A^j_i\}_j$ and  $\{B^j_i\}_j $ partition $A_i$  and $B_i$, respectively, and $\omega(g)$ factorizes over these decorated polymers:
\[ \omega(g) = \prod_j \omega(\hat\gamma_j) .\]
Note that our earlier assumption that $A_i$ has size at most $\frac34 \cdot 2^{d-1}$ was not strictly necessary; it was only used to ensure that each of the components $\gamma_j$ themselves satisfy the analogous requirement.

This leads us to the following definitions.
A \textbf{decorated polymer} is a tuple $\hat\gamma = (A_1,\dots,A_k,B_1,\dots,B_k)$ such that $\gamma=(A_1,\dots,A_k)$ is a polymer and $B_i \subset N(A_i)$ for all $i$. The \textbf{size} of such a decorated polymer is $\|\hat\gamma\| := \|\gamma\|$ and its weight $\omega(\hat\gamma)$ is defined by the same formula as in~\eqref{eq:dec-polymer-weight-def}.
Thus, the weight $\omega(\gamma)$ of a polymer $\gamma$ is the sum of the weights $\omega(\hat\gamma)$ of decorated polymers $\hat\gamma$ which extend it. We say that two decorated polymers are compatible if their underlying polymers are compatible.
Let $\Omega_{\cD}$ denote the family of all sets of pairwise compatible decorated $\cD$-polymers. We sometimes refer to the elements of $\Omega_\cD$ as \textbf{polymer configurations}.
The \textbf{size} of a polymer configuration $\Theta$ is $\|\Theta\| := \sum_{\hat\gamma \in \Theta} \|\hat\gamma\|$.
The \textbf{the polymer model} (associated with $\cD$) is the probability measure $\nu_{\cD}$ on $\Omega_{\cD}$ define by
\[ \nu_{\cD}(\Theta) := \frac{\prod_{\hat\gamma \in \Theta} \omega(\hat\gamma)}{\Xi_{\cD}} , \qquad \Theta \in \Omega_\cD , \]
where the partition function $\Xi_{\cD}$ is given by
\[ \Xi_{\cD} := \sum_{\Theta \in \Omega_{\cD}}  \prod_{\hat\gamma \in \Theta} \omega(\hat\gamma) .\]
The cluster expansion for the logarithm of the partition function of the polymer model associated to $\cD$ is the formal power series in the weights of the clusters:
\begin{equation}\label{eq:cluster-expansion}
\log \Xi_\cD = \sum_{\Gamma \in \cC_\cD} \omega(\Gamma) .
\end{equation}
The cluster expansion is a powerful and classical tool which applies to general abstract polymer models (for background see, e.g., \cite{jenssen2020independent} and references therein). For our particular polymer model, \cref{lem:cluster-convergence} will ensure that the above cluster expansion series is absolutely convergent for the corresponding parameter range.


It is instructive to note that if we were to drop the size requirement from the definition of a polymer, then $\nu_\cD$ could be precisely identified with a certain marginal of $\mu_k$, namely, the distribution of $(I_i\cap \cD_i,N(I_i\cap\cD_i))_i$ where $(I_1,\dots,I_k)$ is sampled from $\mu_k$.
%
%
This size requirement is, however, crucial and makes the two measures quite different (though $\mu_k$ is related to a mixture of the $\nu_{\cD}$).

\subsection{Remarks}\label{sec:remarks}

Let us give some remarks regarding the above definitions and results.
Regarding the requirement that $H_\gamma$ is connected in the definition of a polymer, we note that this implies that $A_1 \cup \cdots \cup A_k$ is a 2-linked set (for $k=1$, it is exactly equivalent). Many of our arguments regarding polymers (e.g., for their weighted counting) will only rely on this weaker property. In fact, both \cref{thm:ClusterEx} and \cref{lem:cluster-convergence} would remain true if we were to replace the requirement that $H_\gamma$ is connected in the definition of a polymer with the requirement that $A_1 \cup \cdots \cup A_k$ is 2-linked. On the other hand, the stronger requirement will make precise computations easier to handle as it gives rise to less polymers.
We mention that it would have also been a natural choice to define the decorated polymers to be polymers to begin with (which would change the notion of a cluster accordingly), but we have found our choice more convenient to work with.

For the requirement that $[A_i]$ has size at most $\frac34 \cdot 2^{d-1}$, the precise constant $\frac34$ is not important; any constant greater than $\frac12$ and less than 1 would suffice for our purposes. Previous works on the hard-core model used the constant $\frac12$~\cite{galvin2011threshold,jenssen2020independent}, which naturally arises from the fact that a subset of $Q_d$ which contains more than half of the even vertices and half of the odd vertices cannot be an independent set. Since configurations in the positive temperature model are arbitrary subsets of $Q_d$ and not just independent sets, it is simpler to work with a constant $c \in (\frac12,1)$, which guarantees that any subset of $Q_d$ which contains a $c$-fraction of the even vertices and of the odd vertices is far from being an independent set in the sense that it spans many edges. We note that similar considerations are also relevant in the homomorphism models studied in~\cite{jenssen2020homomorphisms}, where a suitable constant greater than $1/2$ is also used.

Let us also discuss the role of $\cD \in \{\cE,\cO\}^k$.
This vector indicates for each of the $k$ sets $I_1,\dots,I_k$, which side of the hypercube is the ``defect side'', with the other side being the dominant side where most of the configuration resides. Configurations in which $I_1 \cap \cD_1 = \cdots = I_k \cap \cD_k = \emptyset$ are ground states which  correspond to the polymer model associated with $\cD$, and the cluster expansion describes configurations as (typically small) deviations from such ground states. Each choice of $\cD$ actually gives a different polymer model (having its own cluster expansion), with two different choices $\cD$ and $\cD'$ leading to isomorphic models if $m(\cD)=m(\cD')$ or $m(\cD)=k-m(\cD')$, where $m(\cD):=|\{ i \in [k] : \cD_i=\cE\}|$. In particular, there are only $\lfloor k/2 \rfloor+1$ truly different polymer models. For example, when $k=1$, the two choices of $\cD$ lead to the ``even'' and ``odd'' polymer model, which are clearly symmetric. When $k=2$, there are two symmetric polymer models having the defects on the same side and two symmetric ones having them on different sides, but the former two are not equivalent to the latter two.

\subsection{Some computational examples}\label{sec:computational-examples}
The reader may find it helpful to see some examples and computations involving polymers and their weights. We give several such examples here. These will not be needed in this section, but will be used later in \cref{sec:moments}.

\smallskip
\noindent
\emph{Scenario I:}
Consider the case $k=1$ and let $\gamma=(A)$ be a polymer.
The smallest polymer is obtained when $A=\{v\}$ for some vertex $v$.
Let us compute the weight of this polymer. There are $2^d$ decorated polymers $\hat\gamma=(A,B)$ extending $\gamma$, one for each subset $B \subset N(v)$. Any such decorated polymer has $|E(A,B)|=|B|$.
Thus,
\[ \omega(\gamma) = \sum_{B \subset A} \frac{\lambda^{|A|+|B|}}{(1+\lambda)^{|N(A)|}} e^{-\beta|E(A,B)|} = \sum_{B \subset N(v)} \frac{\lambda^{1+|B|}}{(1+\lambda)^d} e^{-\beta|B|} = \lambda \left(\frac{1+\lambda e^{-\beta}}{1+\lambda}\right)^d .\]

\smallskip
\noindent
\emph{Scenario II:}
The next simplest polymer (still with $k=1$) is obtained when $A=\{u,v\}$, where $u$ and $v$ are vertices at distance two from each other. Note that $N(A)$ has size $2d-2$, with two vertices there being common neighbors of $u$ and $v$, and the remaining $2d-4$ vertices adjacent to only one of $u$ or $v$. Thus, if a vertex of the former type belongs to $B$, then it contributes 2 edges to $E(A,B)$, while vertices of the latter type in $B$ contribute only one edge.
Thus, $|E(A,B)| = |B_1| +2|B_2|$ where $B_1 = B \setminus (N(u) \cap N(v))$ and $B_2 = B \cap N(u) \cap N(v)$.
Thus,
\begin{align*}
\omega(\gamma)
 &= \sum_{B \subset N(\{u,v\})} \frac{\lambda^{2+|B|}}{(1+\lambda)^{2d-2}} e^{-\beta|E(A,B)|} \\&= \frac{\lambda^2}{(1+\lambda)^{2d-2}} \sum_{B_1 \subset N(\{u,v\}) \setminus N(u) \cap N(v)} \lambda^{|B_1|} e^{-\beta|B_1|} \sum_{B_2 \subset N(u) \cap N(v)} \lambda^{|B_2|} e^{-2\beta|B_2|} \\
 &= \frac{\lambda^2}{(1+\lambda)^{2d-2}} (1+\lambda e^{-\beta})^{2d-4} (1+\lambda e^{-2\beta})^2
 = \lambda^2 \left(\frac{1+\lambda e^{-\beta}}{1+\lambda}\right)^{2d-2} \left(\frac{1+\lambda e^{-2\beta}}{1+\lambda e^{-\beta}}\right)^2 .
\end{align*}
We note that for polymers of size 3, where $A=\{u,v,w\}$, there are two different types: one obtained when any two of $u,v,w$ are at distance two from each other; the other obtained when two of these pairs are at distance two and the third pair is at distance four.
We do not compute the weights of these polymers here.

\smallskip
\noindent
\emph{Scenario III:}
Let us now consider general $k$. We demonstrate a computation in the particular case when the polymer $\gamma=(A_1,\dots,A_k)$ has the smallest possible support (defined as $A_1 \cup \cdots \cup A_k$), but the largest possible size under this restriction. This occurs when $\cD \in \{\cE,\cO\}^k$ consists of all $\cE$ or all $\cO$, and $A_1=\cdots=A_k=\{v\}$ for some vertex $v$. A decorated polymer extending $\gamma$ is determined by a choice of subsets $B_1,\dots,B_k \subset N(v)$. Given such a choice, we have $|E(A_1,B_1) \cup \cdots \cup E(A_k,B_k)| = |B_1 \cup \cdots \cup B_k|$. Thus,
\begin{align*}
\sum_{B_1,\dots,B_k \subset N(v)} \lambda^{|B_1|+\cdots+|B_k|} e^{-\beta|B_1 \cup \cdots \cup B_k|} &= \left(\sum_{b_1,\dots,b_k \in \{0,1\}} \lambda^{b_1+\cdots+b_k} e^{-\beta \1_{\{b_1+\cdots+b_k \ge 1\}}}\right)^d .
\end{align*}
The sum on the right-hand side equals $1 + ((1+\lambda)^k-1)e^{-\beta}$, and hence,
\[ \omega(\gamma) = \lambda^k \left(\frac{1 + ((1+\lambda)^k-1)e^{-\beta}}{(1+\lambda)^k}\right)^d .\]

\smallskip
\noindent
\emph{Scenario IV:}
We consider one last example. Suppose that $k=2$ and that $\cD$ is either $(\cE,\cO)$ or $(\cO,\cE)$ (corresponding to a polymer model where the defects of $I_1$ and $I_2$ lie on different sides of the hypercube). Consider a polymer $\gamma=(A_1,A_2)$ of size 2 whose support also has size 2. That is, $A_1=\{u\}$ and $A_2=\{v\}$, where $u$ and $v$ are adjacent vertices (one is even and one is odd). A decorated polymer extending $\gamma$ is determined by two subsets $B_1 \subset N(u)$ and $B_2 \subset N(v)$. For such a choice, $E(A_1,B_1)$ and $E(A_2,B_2)$ are disjoint except for the edge $\{u,v\}$ in the case that $v \in B_1$ and $u \in B_2$. Thus,
\[ \omega(\gamma) = \lambda^2 \frac{(1+\lambda e^{-\beta})^{2d-2}}{(1+\lambda)^{2d}} \sum_{b_u,b_v \in \{0,1\}} \lambda^{b_u+b_v} e^{-\beta \1_{\{b_u+b_v \ge 1\}}} = \lambda^2 \left( \frac{1+\lambda e^{-\beta}}{1+\lambda} \right)^{2d-2} \cdot \frac{1+2\lambda e^{-\beta} + \lambda^2 e^{-\beta}}{(1+\lambda)^2} .\]

\subsection{Proof of \cref{thm:ClusterEx}}
The rest of this section is devoted to the proof of \cref{thm:ClusterEx}. The proof relies on \cref{lem:cluster-convergence} and a sequence of additional lemmas which we proceed to state and prove. Our approach here follows closely that of~\cite[Section~3.2]{jenssen2020independent}. We assume throughout the section that $\lambda$ is bounded and that $\lambda(1-e^{-\beta}) \gg \frac{\log d}{d^{1/3}}$.

\begin{lemma}\label{lem:Lemma16}
	Let $\Theta$ be a random configuration sampled according to $\nu_\cD$. Then with probability at least $1-O(\exp(-2^d/d^4))$, we have $\|\Theta\| \leq 2^d/d^2$.
\end{lemma}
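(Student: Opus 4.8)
The plan is to bound $\nu_\cD(\|\Theta\| > 2^d/d^2)$ by a first-moment (union-bound) argument, using the absolute convergence of the cluster expansion provided by \cref{lem:cluster-convergence} to control the relevant sums. First I would observe that $\|\Theta\| \ge t$ implies that $\Theta$ contains some decorated polymer whose support meets a fixed even or odd vertex — more usefully, that the event $\{\|\Theta\| \ge t\}$ can be bounded by summing, over polymers $\gamma$, the probability that $\gamma \in \Theta$, weighted by $\|\gamma\|$. Concretely, by Markov's inequality applied to $e^{\|\Theta\| d^{-3/2}}$,
\[ \nu_\cD(\|\Theta\| > 2^d/d^2) \le e^{-2^d d^{-7/2}} \, \E_{\nu_\cD} e^{\|\Theta\| d^{-3/2}} . \]
So it suffices to show $\E_{\nu_\cD} e^{\|\Theta\| d^{-3/2}} = e^{O(2^d (1-\tfrac p2)^d)}$ (or any bound of smaller order than $e^{2^d d^{-7/2}}$), since $2^d(\tfrac{1+\lambda e^{-\beta}}{1+\lambda})^d = 2^d(1-\tfrac{\lambda p}{1+\lambda})^d$ is superpolynomially smaller than $2^d d^{-7/2}$ under~\eqref{eq:lambda-cond-k} (here using $\lambda(1-e^{-\beta}) \gg \log d/d^{1/3}$, which forces $(1-\tfrac{\lambda p}{1+\lambda})^d$ to decay like $e^{-\omega(d^{2/3}\log d)}$, beating $2^d/d^{7/2}$).

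The main step is therefore to estimate $\E_{\nu_\cD} e^{\|\Theta\| d^{-3/2}} = \Xi_\cD^{-1} \sum_{\Theta \in \Omega_\cD} e^{\|\Theta\| d^{-3/2}} \prod_{\hat\gamma \in \Theta} \omega(\hat\gamma)$. Writing $e^{\|\Theta\| d^{-3/2}} = \prod_{\hat\gamma \in \Theta} e^{\|\hat\gamma\| d^{-3/2}}$, this is the ratio of two polymer-model partition functions: the numerator is $\Xi_\cD$ computed with the tilted weights $\omega'(\hat\gamma) := \omega(\hat\gamma) e^{\|\hat\gamma\| d^{-3/2}}$, and the denominator is $\Xi_\cD$ with the original weights. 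Taking logarithms and applying the cluster expansion~\eqref{eq:cluster-expansion} to both (valid by \cref{lem:cluster-convergence}, and for the tilted weights by the same lemma since the tilt is exactly the exponential factor appearing there), we get
\[ \log \E_{\nu_\cD} e^{\|\Theta\| d^{-3/2}} = \sum_{\Gamma \in \cC_\cD} \big(\omega'(\Gamma) - \omega(\Gamma)\big) , \]
where $\omega'(\Gamma) = \phi(H_\Gamma) \prod_i \omega'(\gamma_i) = \omega(\Gamma) e^{\|\Gamma\| d^{-3/2}}$. Hence the right-hand side is at most $\sum_{\Gamma \in \cC_\cD} |\omega(\Gamma)| \big(e^{\|\Gamma\| d^{-3/2}} - 1\big) \le \sum_{\Gamma \in \cC_\cD} |\omega(\Gamma)| e^{\|\Gamma\| d^{-3/2}}$, which by \cref{lem:cluster-convergence} is $O(1) \cdot 2^d \big(\tfrac{1+\lambda e^{-\beta}}{1+\lambda}\big)^d$, exactly the bound we wanted.

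Combining, $\nu_\cD(\|\Theta\| > 2^d/d^2) \le \exp\big(-2^d d^{-7/2} + O(2^d)(\tfrac{1+\lambda e^{-\beta}}{1+\lambda})^d\big) = O(\exp(-2^d/d^4))$, since the first term dominates (its exponent is more negative than $-2^d/d^4$ for large $d$, and the second term is negligible beside it). The main obstacle is the bookkeeping in the step that identifies $\E_{\nu_\cD} e^{\|\Theta\| d^{-3/2}}$ with a ratio of partition functions and applies the cluster expansion to the tilted model — one must check that the tilted weights $\omega'$ still satisfy whatever hypothesis \cref{lem:cluster-convergence} needs (which is immediate, since that lemma's statement already carries the factor $e^{\|\Gamma\| d^{-3/2}}$, i.e. it was designed with precisely this tilt in mind), and that the Kotecký–Preiss verification underlying it is not disturbed; everything else is a short Markov-inequality argument.
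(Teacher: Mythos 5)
Your argument is essentially the paper's own proof: the paper likewise introduces the tilted polymer model with weights $\omega(\hat\gamma)e^{\|\hat\gamma\|d^{-3/2}}$, identifies $\E e^{\|\Theta\|d^{-3/2}}$ with the ratio $\tilde\Xi_\cD/\Xi_\cD$, bounds $\log\tilde\Xi_\cD$ via \cref{lem:cluster-convergence} (whose $e^{\|\Gamma\|d^{-3/2}}$ factor, coming from the choice $g=f+\tilde g$ in the Koteck\'y--Preiss verification, is designed exactly for this tilt), and concludes by Markov's inequality with $s=2^d/d^2$. Your bookkeeping and numerics are correct, so this is a valid proof along the same lines.
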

\begin{proof}
Consider a new polymer model on $\Omega_\cD$ whose weights are
\[ \tilde\omega(\gamma) := \omega(\gamma) \cdot e^{\|\gamma\| d^{-3/2}} .\]
Let $\tilde\Xi_{k,m} = \sum_{\Theta \in \Omega_{\cD}}  \prod_{\hat\gamma \in \Theta} \tilde\omega(\hat\gamma)$ be its partition function and observe that
\[ \E\left[e^{\|\Theta\| d^{-3/2}}\right] = \frac{\tilde\Xi_{k,m}}{\Xi_{k,m}} .\]
Applying \cref{lem:cluster-convergence}, and then using that $\lambda(1-e^{-\beta}) \gg (\log d)/d$, we get that
\[ \log\tilde\Xi_{k,m} \le O(1) 2^d \left(\frac{1+\lambda e^{-\beta}}{1+\lambda}\right)^{d} \le O( 2^d d^{-10}) .\]
Hence,
\[ \log \E\left[e^{\|\Theta\| d^{-3/2}}\right] = \log \tilde\Xi_{k,m} - \log \Xi_{k,m} \le \log \tilde\Xi_{k,m} \le O(2^d d^{-10}) .\]
Thus, by Markov's inequality,
\[ \Pr(\|\Theta\| \ge s) \le e^{-sd^{-3/2}} \E\left[e^{\|\Theta\|d^{-3/2}}\right] \le \exp\left[O(2^d d^{-10})-sd^{-3/2}\right] .\]
Plugging in $s=2^d d^{-2}$, we obtain the lemma.
\end{proof}

We define a measure $\hat\mu_k$ on triplets $(I,\cD,\Theta)$ of configurations $I=(I_1,\dots,I_k)$, vectors $\cD \in \{\cE,\cO\}^k$, called the \textbf{defect side vector}, and polymer configurations $\Theta$ as follows:
\begin{enumerate}
 \item Choose the defect side vector $\cD \in \{\cE,\cO\}^k$ with probability proportional to $\Xi_{k,\cD}$.
 \item Sample a decorated polymer configuration $\Theta \in \Omega_{k,\cD}$ from $\nu_{k,\cD}$.
 \item For each $i \in [k]$:
 \begin{enumerate}
 \item Assign all vertices of $D_i := \bigcup_{(A_1,\dots,A_k,B_1,\dots,B_k) \in \Theta} (A_i \cup B_i)$ to be occupied in $I_i$.
 \item For each vertex $v \notin \cD_i \cup N(D_i)$, include $v$ in $I_i$ with probability $\frac{\lambda}{1+\lambda}$.
 \end{enumerate}
\end{enumerate}
We note that $\Theta$ can be recovered from $(I,\cD)$, so that we may regard $\hat\mu_k$ as a measure on pairs $(I,\cD)$. This measure can be explicitly written: for any feasible $(I,\cD)$, i.e., which can be constructed via the above procedure,
\[ \hat\mu(I,\cD) = \frac{\lambda^{|I_1|+\cdots+|I_k|} e^{-\beta |E(I_1) \cup \cdots \cup E(I_k)|}}{(1+\lambda)^{k2^{d-1}}\sum_{m=0}^k\binom km \Xi_{k,m}} .\]
Denote
\begin{equation}\label{eq:Z-hat}
\hat Z_k := (1+\lambda)^{k2^{d-1}}\sum_{m=0}^k\binom km \Xi_{k,m} .
\end{equation}
Following the remark after the definition of $\nu_\cD$, we note that if we were to drop the size requirement from the definition of a polymer, then any pair $(I,\cD) \in (\{0,1\}^k)^{Q^d} \times \{\cE,\cO\}^k$ would be feasible for $\hat\mu_k$, which means that $\hat\mu_k$ would simply be the product of $\mu_k$ and a uniform vector in $\{\cE,\cO\}^k$. As mentioned before, this size requirement is essential, and $I$ and $\cD$ are not independent under $\hat\mu_k$. The relation between $\hat\mu_k$ and $\mu_k$ is made precise below (see \cref{cor:total-variation}).

The \textbf{minority side vector} of a configuration $I=(I_1,\dots,I_k)$ is $\cM(I) := (\cM(I_1),\dots,\cM(I_k)) \in \{\cE,\cO\}^k$, where the minority side $\cM(A)$ of a subset $A$ of $Q_d$ is $\cE$ or $\cO$ according to the smaller of $|A \cap \cE|$ and $|A \cap \cO|$ (breaking ties arbitrarily).

\begin{lemma}\label{lem:Lemma17}
Let $(I,\cD,\Theta)$ be sampled from $\hat\mu_k$. Then the minority side vector coincides with the defect side vector with high probability. More precisely,
\[ \Pr(\cM(I) \neq \cD) \le O(\exp(-2^d/d^4)) .\]
\end{lemma}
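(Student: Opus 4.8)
\textbf{Proof plan for \cref{lem:Lemma17}.}
The plan is to show that the bad event $\{\cM(I) \neq \cD\}$ forces the polymer configuration $\Theta$ sampled in step (2) to be atypically large, and then to invoke \cref{lem:Lemma16} together with the non-polymer bound in \cref{lem:bad-configs}. Fix $i \in [k]$ and condition on $\cD$ and $\Theta$. By construction, $I_i \cap \cD_i = D_i \cap \cD_i$ is entirely determined by $\Theta$, while $I_i \cap \overline{\cD_i}$ consists of the vertices of $D_i$ on the non-defect side together with an independent $\mathrm{Bernoulli}(\tfrac{\lambda}{1+\lambda})$ sprinkling over the roughly $2^{d-1}-|N(D_i) \cap \overline{\cD_i}|$ free vertices there. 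The first step is the deterministic observation that if $\|\Theta\| \le 2^d/d^2$, then $|D_i \cap \cD_i| \le |D_i| \le \|\Theta\| + |N(\text{support})| \le O(d)\|\Theta\| \le o(2^{d-1})$, using $|N(S)| \le d|S|$; so the defect side $\cD_i$ contains only $o(2^{d-1})$ vertices of $I_i$, whereas the non-defect side $\overline{\cD_i}$ receives the full Bernoulli sprinkling and hence contains $(1-o(1)) \cdot \tfrac{\lambda}{1+\lambda} \cdot 2^{d-1} \gg o(2^{d-1})$ vertices of $I_i$ with probability $1 - \exp(-\Omega(2^d))$ by a Chernoff bound. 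On this event $\cM(I_i) = \cD_i$.

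The second step assembles these pieces. Let $B$ be the event that $\|\Theta\| > 2^d/d^2$. On $B^c$, the previous paragraph and a union bound over the $k$ coordinates (recall $k$ is fixed, and in any case $k = O(d^{1/6})$ under~\eqref{eq:lambda-cond-k}) give $\Pr(\cM(I) \neq \cD \mid B^c) \le k \exp(-\Omega(2^d)) = \exp(-\Omega(2^d))$. It therefore remains to bound $\Pr(B)$. Here \cref{lem:Lemma16} is not quite directly applicable because it is stated for $\nu_\cD$ with a \emph{fixed} $\cD$, whereas under $\hat\mu_k$ the vector $\cD$ is itself random with distribution proportional to $\Xi_{k,\cD}$; but since $\Pr(\|\Theta\| > 2^d/d^2 \mid \cD) \le O(\exp(-2^d/d^4))$ for every $\cD$ by \cref{lem:Lemma16}, averaging over $\cD$ yields $\Pr(B) \le O(\exp(-2^d/d^4))$. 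Combining,
\[ \Pr(\cM(I) \neq \cD) \le \Pr(B) + \Pr(\cM(I) \neq \cD \mid B^c) \le O(\exp(-2^d/d^4)) + \exp(-\Omega(2^d)) = O(\exp(-2^d/d^4)), \]
as claimed.

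The main obstacle I anticipate is the bookkeeping in the first step: one must be careful that the sprinkling in step (3b) is over vertices outside $\cD_i \cup N(D_i)$, so the count of free vertices on $\overline{\cD_i}$ is $2^{d-1} - |N(D_i) \cap \overline{\cD_i}|$, and one needs $|N(D_i)| = o(2^{d-1})$, which is exactly where the bound $\|\Theta\| \le 2^d/d^2$ (rather than merely $o(2^d)$) is used, since $|N(D_i)| \le d|D_i| \le O(d^2)\|\Theta\| \le O(2^d)$ would not suffice — one genuinely needs $\|\Theta\| \ll 2^d/d$ so that $|N(D_i)| \ll 2^{d-1}$. A secondary point is to make sure the Chernoff bound is applied to a $\mathrm{Bin}(m,\tfrac{\lambda}{1+\lambda})$ variable with $m = (1-o(1))2^{d-1}$, giving deviation probability $\exp(-\Omega(2^d))$, which comfortably beats $\exp(-2^d/d^4)$; since $\lambda$ is bounded below only away from $0$ in the regime of interest (indeed $\lambda p \gg d^{-1/3}\log d$ forces $\lambda$ not too small), one should note $\tfrac{\lambda}{1+\lambda}$ may itself tend to $0$, in which case the relevant lower tail is $\mathrm{Bin}(m,\tfrac{\lambda}{1+\lambda})$ staying above, say, $\tfrac{\lambda}{2(1+\lambda)}m$, whose failure probability is $\exp(-\Omega(\tfrac{\lambda}{1+\lambda} 2^d)) = \exp(-\Omega(2^d/\mathrm{poly}(d)))$ — still enough. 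Everything else is routine.
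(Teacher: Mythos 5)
Your overall route is the same as the paper's: reduce to the event $\|\Theta\|\le 2^d/d^2$ via \cref{lem:Lemma16} (averaging over the random $\cD$ is fine and is what the paper implicitly does), then for each coordinate $i$ note that $I_i\cap\cD_i$ is determined by $\Theta$ and small, while $I_i\setminus\cD_i$ dominates a Binomial from the sprinkling in step (3b), and finish with Chernoff plus a union bound over $i$. That is exactly the paper's argument, and your treatment of the small-$\lambda$ Chernoff tail (getting $\exp(-\Omega(\lambda 2^d))\ll \exp(-2^d/d^4)$ rather than $\exp(-\Omega(2^d))$) is correct.

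There is, however, one concrete gap, and it sits at the step you yourself flag as the main obstacle: you need the number of free vertices on $\bar\cD_i$, namely $2^{d-1}-|N(D_i)\setminus\cD_i|$, to be $(1-o(1))2^{d-1}$ (or at least $\Omega(2^{d-1})$), but the chain you actually write, $|N(D_i)|\le d|D_i|\le O(d^2)\|\Theta\|$, only yields $O(2^d)$ when $\|\Theta\|\le 2^d/d^2$, and your proposed remedy ``one genuinely needs $\|\Theta\|\ll 2^d/d$'' does not repair it either, since $O(d^2)\cdot o(2^d/d)=o(d2^d)$ is still far from $o(2^{d-1})$. The fix is the bipartite observation the paper uses: only the $\cD_i$-part of $D_i$ (the $A$-sets, of total size at most $\|\Theta\|$, not the $B$-sets) has neighbours in $\bar\cD_i$, so $|N(D_i)\setminus\cD_i|\le d\,|D_i\cap\cD_i|\le d\|\Theta\|\le 2^d/d$, giving at least $2^{d-2}$ free vertices. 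Relatedly, your comparison ``$(1-o(1))\tfrac{\lambda}{1+\lambda}2^{d-1}\gg o(2^{d-1})$'' is not a valid inference when $\lambda\to 0$, since both sides may be $o(2^{d-1})$; the correct comparison is quantitative: the sprinkled count is at least $\tfrac{\lambda}{2(1+\lambda)}2^{d-2}\gtrsim 2^d\log d/d^{1/3}$ with the stated Chernoff probability, while $|I_i\cap\cD_i|=|D_i\cap\cD_i|\le\|\Theta\|\le 2^d/d^2$ (you do not even need your weaker $O(d)\|\Theta\|$ bound, since there is no sprinkling on $\cD_i$), and these compare favourably under~\eqref{eq:lambda-cond-k}. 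With these two one-line repairs your proof coincides with the paper's.
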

\begin{proof}
By \cref{lem:Lemma16},
\[ \Pr(\cM(I) \neq \cD) \le \Pr\left(\cM(I) \neq \cD \mid \|\Theta\| \le 2^d/d^2\right) + O(\exp(-2^d/d^4)) .\]
By a union bound, it suffices to fix $i \in [k]$ and bound the (conditional) probability that $\cM(I_i) \neq \cD_i$. Since $|I_i \cap \cM(I_i)| \le |I_i \cap \cD_i| \le \|\Theta\|$, it suffices to show that
\[ \Pr\left(|I_i \setminus \cD_i| \le 2^d d^{-2} \mid \|\Theta\| \le 2^d d^{-2}\right) \le O(\exp(-2^d/d^4)) .\]
We henceforth condition on $\Theta$ and work on the event that $\|\Theta\| \le 2^d/d^2$.
Note that $|I_i \setminus \cD_i|$ is the sum of $|D_i \setminus \cD_i|$ (with $D_i$ as above) and an independent Binomial random variable with $L_i:=2^{d-1}-|N(D_i) \setminus \cD_i|$ trials of success probability $\frac{\lambda}{1+\lambda}$. In particular, $|I_i \setminus \cD_i|$ stochastically dominates $\Bin(L_i,\frac{\lambda}{1+\lambda})$. Since $\frac{\lambda}{1+\lambda} \ge \frac1d$, $L_i \ge 2^{d-1} - d|D_i \cap \cD_i|$ and $|D_i \cap \cD_i| \le \|\Theta\|$, we have
\[ \Pr\left(|I_i \setminus \cD_i| \le 2^d d^{-2} \mid \|\Theta\| \le 2^d d^{-2}\right) \le \Pr\left(\Bin\big(2^{d-2}, \tfrac1d\big) \le 2^d d^{-2}\right) \le \exp(-c2^d/d),\]
where the second inequality follows from a standard Chernoff bound.
\end{proof}

\begin{lemma}\label{lem:Lemma14}
	\[ \left|\log Z_k-\log \hat Z_k\right|=O(\exp(-2^d/d^4)) .\]
\end{lemma}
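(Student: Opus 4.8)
The plan is to show that $Z_k$ and $\hat Z_k$ are within a multiplicative factor $1+O(\exp(-2^d/d^4))$, so that taking logarithms gives the stated bound (noting both quantities are of order at least $(1+\lambda)^{k2^{d-1}}$, so the additive and multiplicative error bounds agree up to the relevant precision). The key point is that $\hat Z_k$ is, by construction, the normalizing constant for the measure $\hat\mu_k$ on feasible pairs $(I,\cD)$, and its numerator $\lambda^{|I_1|+\cdots+|I_k|}e^{-\beta|E(I_1)\cup\cdots\cup E(I_k)|}$ is exactly the same Boltzmann weight that appears in $Z_k$ — but summed over a restricted set of configurations (only the feasible ones, and each $I$ possibly counted for several $\cD$). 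So I would write $\hat Z_k = \sum_{(I,\cD)\text{ feasible}} \lambda^{|I_1|+\cdots+|I_k|}e^{-\beta|E(I_1)\cup\cdots\cup E(I_k)|}$ and compare this sum term-by-term with $Z_k = \sum_{I} \lambda^{|I_1|+\cdots+|I_k|}e^{-\beta|E(I_1)\cup\cdots\cup E(I_k)|}$.

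First I would pin down exactly which pairs $(I,\cD)$ are feasible for $\hat\mu_k$: given $\cD$, the pair is feasible iff for each $i$ the closure $[I_i\cap\cD_i]$ has size at most $\frac34\cdot2^{d-1}$ (this is precisely the constraint that makes the $2$-linked components of $I_i\cap\cD_i$ into legitimate polymers; the "outside" vertices are then free). This gives two sources of discrepancy between $Z_k$ and $\hat Z_k$. On one hand, $\hat Z_k$ may overcount: a single $I$ with $\cM(I)=\cD$ feasible might also be feasible with some other $\cD'$. But for such double-counting to occur we would need, for some coordinate $i$, both $[I_i\cap\cE]$ and $[I_i\cap\cO]$ to have size at most $\frac34\cdot2^{d-1}$ while $\cD_i$ and $\cD'_i$ differ — actually the real issue is configurations $I$ that are feasible for $\cD\ne\cM(I)$; by \cref{lem:Lemma17} (applied via the measure $\hat\mu_k$) the $\hat\mu_k$-mass of $\{\cM(I)\ne\cD\}$ is $O(\exp(-2^d/d^4))$, which bounds the overcounted mass relative to $\hat Z_k$. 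On the other hand, $Z_k$ may contain configurations $I$ that are not feasible for \emph{any} $\cD$, namely those where for some $i$ both $[I_i\cap\cE]$ and $[I_i\cap\cO]$ exceed $\frac34\cdot2^{d-1}$; the total Boltzmann weight of these is bounded by \cref{lem:bad-configs}, which gives exactly $Z_k\cdot O(\exp(-2^d/d))$.

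Concretely, I would argue: taking $\cD=\cM(I)$ for each $I$ that is feasible with its minority-side vector, we get an injection showing $\hat Z_k \ge \sum_{I:\,(I,\cM(I))\text{ feasible}}\lambda^{|I_1|+\cdots}e^{-\beta|E(\cdots)|} \ge Z_k - (\text{weight of configs not feasible for any }\cD) \ge Z_k(1-O(\exp(-2^d/d)))$, using \cref{lem:bad-configs} for the last step (any $I$ feasible for no $\cD$ must have, for some $i$, both closures large). For the reverse inequality, $\hat Z_k = \sum_{\cD}\sum_{I:\,(I,\cD)\text{ feasible}}\lambda^{|I_1|+\cdots}e^{-\beta|E(\cdots)|}$; splitting each inner sum according to whether $\cM(I)=\cD$ or not, the $\cM(I)=\cD$ parts sum to at most $Z_k$ (each $I$ counted once), while the $\cM(I)\ne\cD$ parts together have total weight $\hat Z_k\cdot\Pr_{\hat\mu_k}(\cM(I)\ne\cD)\le \hat Z_k\cdot O(\exp(-2^d/d^4))$ by \cref{lem:Lemma17}. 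Rearranging gives $\hat Z_k(1-O(\exp(-2^d/d^4)))\le Z_k$, i.e. $\hat Z_k\le Z_k(1+O(\exp(-2^d/d^4)))$. Combining the two inequalities yields $|\log Z_k-\log\hat Z_k|=O(\exp(-2^d/d^4))$.

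The main obstacle I anticipate is the bookkeeping in the reverse direction — correctly decomposing the double sum defining $\hat Z_k$ and verifying that the $\cM(I)=\cD$ contributions are counted without overlap (this uses that $\cM(I)$ is a well-defined function of $I$, with ties broken consistently) while the remaining mass is genuinely controlled by the \emph{probability} statement of \cref{lem:Lemma17} rather than needing a separate weighted estimate. One should also double-check that the error is dominated by the $d^4$-exponent term (from \cref{lem:Lemma17}) rather than the $d$-exponent term (from \cref{lem:bad-configs}); since $\exp(-2^d/d)\le\exp(-2^d/d^4)$ for large $d$, this is fine, and the stated $O(\exp(-2^d/d^4))$ is the correct final bound.
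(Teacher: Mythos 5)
Your proposal is correct and follows essentially the same route as the paper's proof: compare $Z_k$ and $\hat Z_k$ term by term, bound the configurations infeasible for every $\cD$ (equivalently, those with both closures large in some coordinate) via \cref{lem:bad-configs}, and bound the overcounting from feasible pairs with $\cD\neq\cM(I)$ via \cref{lem:Lemma17}, exactly as in the paper's sandwich $\hat Z_k - \sum_\cD \cZ(\hat\cI^*_\cD) \le Z_k \le \hat Z_k + \cZ(\cI^0)$. The only slight imprecision is your claim that feasibility of $(I,\cD)$ is \emph{equivalent} to $|[I_i\cap\cD_i]|\le\frac34\cdot 2^{d-1}$ for all $i$ (this condition is sufficient but not necessary, since feasibility only constrains each 2-linked component), but you use only the correct direction, so the argument is unaffected.
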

\begin{proof}
Let $\cI := (2^{V(Q_d)})^k$ be the set of all configurations $I=(I_1,\dots,I_k)$. For $\cI' \subset \cI$, define
\[ \cZ(\cI') := \sum_{I \in \cI'} \lambda^{|I_1|+\cdots+|I_k|} e^{-\beta |E(I_1) \cup \cdots \cup E(I_k)|} .\]
Observe that
\[ Z_k = \cZ(\cI) \qquad\text{and}\qquad \hat Z_k = \sum_{\cD \subset \{\cE,\cO\}^k} \cZ(\hat\cI_\cD) ,\]
where $\hat\cI_\cD$ is the set of $I \in \cI$ such that $(I,\cD)$ is feasible under $\hat\mu_k$. 
The set $\hat\cI_\cD$ can be described explicitly, but we only require the observation that
\[ \hat\cI_\cD \supset \cI_\cD := \big\{ I \in \cI : |[\cD_1 \cap I_1]|, \dots, |[\cD_k \cap I_k]| \le \tfrac34 \cdot 2^{d-1} \big\} .\]

Let $\cI^0$ be the set of $I \in \cI$ which belong to no $\cI_\cD$. Then each $I \in \cI$ contributes to $Z_k$ exactly once, and it contributes to $\hat Z_k$ at least once unless $I \in \cI^0$. Some $I$ contribute more than once (anywhere up to $2^k$ times) to $\hat Z_k$, but any $I$ can only contribute once with $\cD=\cM(I)$. Thus, denoting
\[ \hat\cI^*_\cD := \{ I \in \hat\cI_\cD : \cM(I) \neq \cD \} ,\]
we see that
\[ \hat Z_k - \sum_\cD \cZ(\hat\cI^*_\cD) \le Z_k \le \hat Z_k + \cZ(\cI^0) .\]
Thus,
\[ \log(1-\hat\mu_k(\cM(I) \neq \cD)) \le \log Z_k - \log \hat Z_k \le -\log(1-\mu_k(\cI^0)) .\]
It therefore suffices to show that $\hat\mu_k(\cM(I) \neq \cD)$ and $\mu_k(\cI^0)$ are each at most $O(\exp(-2^d/d^4))$.
The former case is precisely \cref{lem:Lemma17} and the latter case follows from \cref{lem:bad-configs}.
\end{proof}

We note the following simple consequence of \cref{lem:Lemma17,lem:Lemma14}.
\begin{cor}\label{cor:total-variation}
Let $\bar\mu_k$ be the distribution of $(I,\cM(I))$ under $\mu_k$. Then
\[ \|\hat\mu_k - \bar\mu_k\|_{\text{TV}} = O(\exp(-2^d/d^4)) .\]
\end{cor}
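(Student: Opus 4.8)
The plan is to deduce \cref{cor:total-variation} directly from \cref{lem:Lemma17,lem:Lemma14} by a short computation with total variation distance, without revisiting the combinatorial structure. First I would recall that for two probability measures $\nu_1,\nu_2$ on a common finite space, $\|\nu_1-\nu_2\|_{\text{TV}} = \sup_{\mathcal{A}} (\nu_1(\mathcal{A})-\nu_2(\mathcal{A}))$, and that it suffices to exhibit a coupling, or alternatively to bound the measures of a well-chosen ``bad'' event and use the triangle inequality on the conditioned measures. The natural bad event here is $\{\cM(I)\neq\cD\}$ under $\hat\mu_k$, which \cref{lem:Lemma17} controls.

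Concretely, the key observation is that $\hat\mu_k$ and $\bar\mu_k$ agree after conditioning on the event $\mathcal{G} := \{(I,\cD) : \cM(I)=\cD\}$, up to the reweighting coming from the two (nearly equal) normalizing constants $\hat Z_k$ and $Z_k$. Indeed, on $\mathcal{G}$ a feasible pair $(I,\cD)$ has $\hat\mu_k(I,\cD) = \lambda^{|I_1|+\cdots+|I_k|}e^{-\beta|E(I_1)\cup\cdots\cup E(I_k)|}/\hat Z_k$, while $\bar\mu_k$ assigns to $(I,\cM(I))$ the mass $\lambda^{|I_1|+\cdots+|I_k|}e^{-\beta|E(I_1)\cup\cdots\cup E(I_k)|}/Z_k$; on $\mathcal{G}$ these pairs are the same, so the two measures restricted to $\mathcal{G}$ differ only by the constant factor $Z_k/\hat Z_k = 1 + O(\exp(-2^d/d^4))$ by \cref{lem:Lemma14}. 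The steps would then be: (i) bound $\hat\mu_k(\mathcal{G}^c) = \hat\mu_k(\cM(I)\neq\cD) = O(\exp(-2^d/d^4))$ by \cref{lem:Lemma17}; (ii) bound $\bar\mu_k(\mathcal{G}^c)$ — but note $\bar\mu_k$ is by construction supported on $\{(I,\cM(I))\}$, hence entirely on $\mathcal{G}$, so $\bar\mu_k(\mathcal{G}^c)=0$; (iii) write, for any event $\mathcal{A}$, $|\hat\mu_k(\mathcal{A})-\bar\mu_k(\mathcal{A})| \le |\hat\mu_k(\mathcal{A}\cap\mathcal{G})-\bar\mu_k(\mathcal{A}\cap\mathcal{G})| + \hat\mu_k(\mathcal{G}^c)$, and bound the first term using that on $\mathcal{G}$ the two measures are proportional with ratio $1+O(\exp(-2^d/d^4))$, giving $|\hat\mu_k(\mathcal{A}\cap\mathcal{G})-\bar\mu_k(\mathcal{A}\cap\mathcal{G})| \le |1-Z_k/\hat Z_k| \cdot \bar\mu_k(\mathcal{G}) = O(\exp(-2^d/d^4))$; (iv) take the supremum over $\mathcal{A}$.

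There is essentially no serious obstacle here — the corollary is a bookkeeping consequence of the two lemmas it cites. The only mild subtlety is being careful about the direction of domination and the fact that $\bar\mu_k$ sits on a ``diagonal'' subset (pairs of the form $(I,\cM(I))$), so that one should phrase the comparison on the event $\mathcal{G}$ where $\hat\mu_k$ also concentrates; once that is noted, both the ``mass off $\mathcal{G}$'' and the ``ratio of normalizations on $\mathcal{G}$'' are each $O(\exp(-2^d/d^4))$, and the triangle inequality closes the argument. I would keep the write-up to a few lines, citing \cref{lem:Lemma17} for the first error term and \cref{lem:Lemma14} for the second.
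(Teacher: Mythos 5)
Your overall strategy is the same as the paper's (combine \cref{lem:Lemma17} and \cref{lem:Lemma14}), but there is a genuine gap in step (iii): the claim that $\hat\mu_k$ and $\bar\mu_k$ are proportional on $\mathcal{G}=\{\cM(I)=\cD\}$ is false. The explicit formula $\hat\mu_k(I,\cD)=\lambda^{|I_1|+\cdots+|I_k|}e^{-\beta|E(I_1)\cup\cdots\cup E(I_k)|}/\hat Z_k$ holds only for \emph{feasible} pairs, i.e.\ those producible by the three-step sampling procedure, which forces every (joint) $2$-linked component of each $I_i\cap\cD_i$ to be a legitimate polymer, in particular to have closure of size at most $\tfrac34\cdot 2^{d-1}$. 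Diagonal pairs $(I,\cM(I))$ can violate this: for $k=1$ and $\beta<\infty$, take $I_1\cap\cE$ to be the even vertices of a codimension-one subcube (size $2^{d-2}$, $2$-linked, with $N(I_1\cap\cE)=\cO$ and hence closure all of $\cE$) and $I_1\cap\cO$ of size larger than $2^{d-2}$; then $\cM(I_1)=\cE$ but the unique component fails the closure bound, so $\hat\mu_k$ assigns the pair zero mass while $\bar\mu_k$ assigns it positive mass. Consequently the bound $|\hat\mu_k(\mathcal{A}\cap\mathcal{G})-\bar\mu_k(\mathcal{A}\cap\mathcal{G})|\le|1-Z_k/\hat Z_k|\cdot\bar\mu_k(\mathcal{G})$ is not justified as stated: taking $\mathcal{A}$ to be the set of infeasible diagonal pairs, the left-hand side is their $\bar\mu_k$-mass, which your cited lemmas do not directly control. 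The paper sidesteps exactly this issue by bounding the total variation one-sidedly, summing $(\hat\mu_k-\bar\mu_k)^+$ over pairs with $\hat\mu_k>\bar\mu_k$; there the infeasible diagonal pairs drop out (they have $\hat\mu_k=0\le\bar\mu_k$), and what remains is either off-diagonal (handled by \cref{lem:Lemma17}) or feasible diagonal, where proportionality does hold (handled by \cref{lem:Lemma14}).

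The gap is fixable within your symmetric approach, but it needs an extra line: on the set $\mathcal{G}_{\mathrm f}$ of \emph{feasible} diagonal pairs proportionality is valid, and since $\hat\mu_k$ charges only feasible pairs, $\bar\mu_k(\mathcal{G}_{\mathrm f})=\tfrac{\hat Z_k}{Z_k}\,\hat\mu_k(\mathcal{G}_{\mathrm f})=\tfrac{\hat Z_k}{Z_k}\bigl(1-\hat\mu_k(\cM(I)\neq\cD)\bigr)\ge 1-O(\exp(-2^d/d^4))$ by \cref{lem:Lemma17,lem:Lemma14}; hence the infeasible diagonal pairs carry $\bar\mu_k$-mass $O(\exp(-2^d/d^4))$ as well, and your triangle inequality then closes with three error terms of the same order. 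As written, however, the proportionality-on-$\mathcal{G}$ step implicitly assumes every pair $(I,\cM(I))$ is feasible, which is not true, so the argument is incomplete without this additional bound.
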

\begin{proof}
By considering pairs $(I,D)$ with $\hat\mu_k(I,D)>\bar\mu_k(I,D)$, we see that
\[ \|\hat\mu_k - \bar\mu_k\|_{\text{TV}} \le \hat\mu_k(\cM(I) \neq \cD) + \left|1-\tfrac{\hat Z_k}{Z_k}\right| .\]
The corollary now follows from \cref{lem:Lemma17,lem:Lemma14}.
\end{proof}

\begin{proof}[Proof of \cref{thm:ClusterEx}]
By \cref{lem:Lemma14} and~\eqref{eq:Z-hat}, we have that
\[ Z_k = (1+\lambda)^{k2^{d-1}} \sum_\cD \Xi_\cD \cdot e^{O(\exp(-2^d/d^4))} .\]
Thus, it suffices to show that for each $\cD$, $\log \Xi_\cD = \sum_{\Gamma \in \cC_\cD} \omega(\Gamma)$,
where the sum is absolutely convergent. Indeed, this is the cluster expansion series~\eqref{eq:cluster-expansion} of the polymer model associated to $\cD$, and its absolute convergence follows from \cref{lem:cluster-convergence}.
\end{proof}

\section{Convergence of the cluster expansion}
\label{sec:convergence}

Recall the polymer model defined in \cref{sec:clusterEx} and that it has various parameters: the dimension $d$, the fugacity $\lambda$, the inverse temperature $\beta$, the number of sets $k$, and the defect side vector $\cD \in \{\cE,\cO\}^k$. In this section, we give bounds on the absolute tail of the cluster expansion~\eqref{eq:cluster-expansion} of this polymer model. In particular, we will prove \cref{lem:cluster-convergence}.

Throughout this section, we fix $k \ge 1$ and $\cD \in \{\cE,\cO\}^k$, and polymer refers to $\cD$-polymer.

%
   

For $\ell \ge 1$, denote
\[ \alpha_\ell := \frac{1 + ((1+\lambda)^\ell-1)e^{-\beta}}{(1+\lambda)^\ell} \qquad\text{and}\qquad \tilde\alpha_\ell := (\alpha_\ell)^{1/\ell} .\]
Define
\begin{equation}\label{eq:f-g-def}
\tilde g(n) := \begin{cases}
	(dn-3n^2) \log (1/\tilde\alpha_k) - 7n\log d &\text{if }1 \le n \le \frac d{10} \\
	\frac 1{20} dn \log (1/\tilde\alpha_k) &\text{if } \frac d{10} < n \le d^4 \\
	\frac{n}{d^{3/2}} &\text{if }n>d^4 
	\end{cases} .
\end{equation}

%
%
\begin{lemma}\label{lem:Lemma15}
Assume~\eqref{eq:lambda-cond-k}. Then
\[ \sum_{\Gamma\in \cC_\cD : \|\Gamma\| \ge n}|\omega(\Gamma)| e^{\|\Gamma\|d^{-3/2}} \leq d^{-3/2}2^d e^{-\tilde g(n)}  \qquad\text{for any }n \ge 1 .\]
\end{lemma}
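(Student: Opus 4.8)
\textbf{Proof proposal for \cref{lem:Lemma15}.}

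The plan is to reduce the bound on the full cluster sum to a bound on a single polymer sum, via the Koteck\'y--Preiss convergence criterion, and then to establish that polymer sum using the container/approximation machinery for large polymers together with the entropy-type bounds already announced. First I would fix an auxiliary weight function $a(\gamma) := \|\gamma\| d^{-3/2} + \log 2$ (or a similar choice balancing the $e^{\|\Gamma\| d^{-3/2}}$ factor against the combinatorial growth of connected polymer configurations) and aim to verify the Koteck\'y--Preiss condition in the form
\[ \sum_{\gamma' \not\sim \gamma} |\omega(\gamma')| e^{a(\gamma')} \le a(\gamma) \qquad\text{for every polymer }\gamma. \]
The standard consequence of this condition (see the references to \cite{kotecky1986cluster} and the exposition in \cite{jenssen2020independent}) is that the cluster expansion converges absolutely and, moreover, that for any polymer-indexed ``pinning'' one has a tail bound of the shape $\sum_{\Gamma \ni \gamma} |\omega(\Gamma)| e^{\|\Gamma\| d^{-3/2}} \le |\omega(\gamma)| e^{a(\gamma)}$. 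Summing this over all polymers $\gamma$ with $\|\gamma\| \ge n$, and keeping track of how many times a cluster of size $m \ge n$ is counted (at most $m$ times, say, if one pins the first polymer, or one can pin by a vertex and use \cref{lem:Lemma13}), reduces the claimed inequality to a bound of the form $\sum_{\gamma : \|\gamma\| \ge n} |\omega(\gamma)| e^{O(\|\gamma\| d^{-3/2})} \le d^{-3/2} 2^d e^{-\tilde g(n)}$, up to harmless polynomial-in-$d$ factors that get absorbed.

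The crux is therefore the weighted count of polymers of a given size, split into three regimes matching the three cases in the definition~\eqref{eq:f-g-def} of $\tilde g$. For small polymers, $1 \le n \le d/10$, I would use the first isoperimetric bound in \cref{lem:isoperimetry}, namely $|N(A)| \ge d|A| - 2|A|^2$ for each coordinate, to show that the weight $\omega(\gamma)$ of a polymer of size $n$ is at most roughly $(1+\lambda)^{-(dn - 3n^2)}$ times a product of $(1+\lambda e^{-\beta})$-type factors — i.e.\ comparable to $\tilde\alpha_k^{\,dn - 3n^2}$ up to lower-order corrections — while \cref{lem:Lemma13} bounds the number of 2-linked sets containing a fixed vertex by $(ed^2)^{n-1}$; choosing the vertex in $2^d$ ways and comparing $(ed^2)^n$ against the $d^{7n}$ slack built into $\tilde g$ gives the result. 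For the intermediate regime $d/10 < n \le d^4$, I would instead use the second isoperimetric bound $|N(A)| \ge \frac1{10} d|A|$, yielding weight at most $\tilde\alpha_k^{\,dn/10 + o(dn)}$, again against the $(ed^2)^n$ count; the $\frac1{20}$ versus $\frac1{10}$ in the exponent of $\tilde g$ leaves room for the entropy factor. For large polymers $n > d^4$, the naive count of 2-linked sets is far too lossy, and here one must invoke the approximation/container scheme for polymers (the ``powerful technical lemma'' referenced as \cref{lem:Lemma15ForTheorem1.1} and its general version, together with the comparison to $K_{d,d}$ and the entropy method of \cite{peled2020long,kahn2001entropy,galvin2004weighted}): a polymer is approximated by a pair $(F,S)$ with $F$ of size $O(\|\gamma\|/d)$ and $S$ of size $(1+o(1))\|\gamma\|$, the number of polymers with a given approximation is controlled by the entropy bound, and the weight gain from $|N(A)|$ being close to $d|A|/\text{something}$ is balanced against this count to give decay $e^{-n/d^{3/2}}$.

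The main obstacle, and the place where positive temperature genuinely complicates matters beyond \cite{galvin2011threshold,Korshunov1983Th}, is controlling the polymer weight $\omega(\gamma) = \sum_{B_i \subset N(A_i)} \cdots$ itself in the large-polymer regime: unlike the zero-temperature case where $\omega(A) = 2^{-|N(A)|}$ depends only on $|N(A)|$, here the sum over the $B_i$'s can be large (each occupied neighbor only costs a factor $e^{-\beta}$, not $0$), so one cannot simply say ``large neighborhood implies tiny weight.'' The resolution — and the reason the hypotheses include $\lambda(1-e^{-\beta}) \gg (\log d)/d^{1/3}$ — is the $K_{d,d}$-comparison plus entropy argument of \cite{peled2020long}, which effectively shows that even at positive temperature the relevant ``cost per neighbor'' is still large enough (on the log scale, of order $\log(1/\tilde\alpha_k) \gtrsim \lambda(1-e^{-\beta})/d^{0}$ per unit, which under the hypothesis beats the $\log d$ entropy per vertex) to make the total weight of large polymers summably small. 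I would structure the proof so that \cref{lem:Lemma15} follows by plugging the general large-polymer weight bound (proved in \cref{sec:ProofOfGab}) into the three-regime argument above, with the $K_{d,d}$ non-polymer estimate (\cref{lem:bad-configs}) only entering indirectly through the validity of the closure-size restriction $|[A_i]| \le \tfrac34 2^{d-1}$ in the third isoperimetric case.
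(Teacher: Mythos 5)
Your polymer-level analysis (the three regimes matching the definition~\eqref{eq:f-g-def} of $\tilde g$, via \cref{lem:isoperimetry}, \cref{lem:Lemma13}, the bound $\omega(\gamma)\le\lambda^{\|\gamma\|}\tilde\alpha_k^{\|N(\gamma)\|}$, and the container/entropy machinery for $\|\gamma\|>d^4$) is exactly the right content and matches what the paper does in proving \cref{lem:Lemma15pre} and \cref{lem:large-polymers}. However, the reduction from the cluster-level statement to that polymer-level bound has a genuine gap. You propose to cover the tail $\{\Gamma\in\cC_\cD:\|\Gamma\|\ge n\}$ by pinning each such cluster at a polymer $\gamma\in\Gamma$ with $\|\gamma\|\ge n$ and summing the pinned Koteck\'y--Preiss bound over those $\gamma$. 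But a cluster of size $\ge n$ need not contain any polymer of size $\ge n$: a cluster consisting of $n$ mutually incompatible polymers of size $1$ has $\|\Gamma\|=n$ and every constituent polymer has size $1$. Such clusters are simply not seen by your sum, so the reduction to $\sum_{\gamma:\|\gamma\|\ge n}|\omega(\gamma)|e^{O(\|\gamma\|d^{-3/2})}\le d^{-3/2}2^de^{-\tilde g(n)}$ does not follow. Nor can your auxiliary weight $a(\gamma)=\|\gamma\|d^{-3/2}+\log 2$ rescue this: a per-polymer gain of $\log 2$ only produces a factor exponential in the \emph{number} of polymers, whereas the required decay $e^{-\tilde g(n)}$ is of order $\tilde\alpha_k^{\,dn(1-o(1))}$ in the relevant range, i.e.\ of size roughly $dn\log(1/\tilde\alpha_k)\gg n\log 2$ in the exponent, even for clusters built entirely from size-one polymers.

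The missing idea is to build $\tilde g$ into the Koteck\'y--Preiss weight function itself. The paper verifies~\eqref{eq:KP-cond} with $f(n)=nd^{-3/2}$ and $g(n)=f(n)+\tilde g(n)$ (this is where your three-regime polymer estimate is used, now with the extra factor $e^{f(\gamma)+g(\gamma)}$ inside the sum, via \cref{lem:Lemma15pre}), so that the conclusion~\eqref{eq:KP-conc} carries the factor $e^{g(\Gamma)}=e^{\|\Gamma\|d^{-3/2}+\tilde g(\Gamma)}$ with $\tilde g(\Gamma)=\sum_{\gamma\in\Gamma}\tilde g(\|\gamma\|)$. Pinning by a size-one polymer at (or adjacent to) each of the $2^d$ vertices and summing gives $\sum_{\Gamma\in\cC_\cD}|\omega(\Gamma)|e^{\|\Gamma\|d^{-3/2}}e^{\tilde g(\Gamma)}\le 2^dd^{-3/2}$, and then the subadditivity of $\tilde g$ (equivalently, $\tilde g(n)/n$ non-increasing) together with monotonicity yields $\tilde g(n)\le\tilde g(\|\Gamma\|)\le\tilde g(\Gamma)$ for $\|\Gamma\|\ge n$, which is precisely how the decay $e^{-\tilde g(n)}$ is assembled from the per-polymer contributions no matter how the cluster's size is distributed among small polymers. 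Without this device (or an equivalent one), your argument proves absolute convergence and a polymer tail bound, but not the stated cluster tail bound. A minor additional point: \cref{lem:bad-configs} plays no role in this lemma; the closure-size restriction enters only through the isoperimetric bound used in \cref{lem:large-polymers}.
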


\cref{lem:Lemma15} establishes the absolute convergence of the cluster expansion~\eqref{eq:cluster-expansion} and provides bounds on its tail. Let us also mention that it yields a slightly weaker version of \cref{lem:cluster-convergence}, which would already suffice for the applications in \cref{sec:clusterEx}. The precise bound stated in \cref{lem:cluster-convergence}, as well as further estimates on the absolute tail of the cluster expansion, will be shown in \cref{sec:improved-bounds}.

The proof of \cref{lem:Lemma15} relies on checking the Koteck\'y--Preiss condition~\cite{kotecky1986cluster} for convergence of the cluster expansion. This condition can be used for abstract polymer models (where a set of abstract polymers are given, together with weights and a compatibility relation, and some functions $g$ and $f$ on the set of polymers), but we formulate it here specialized to our situation. We refer the reader to~\cite{jenssen2020independent} for a short exposition in the abstract setting.

Fix two functions $f,g \colon \N \to [0,\infty)$.
For a polymer $\gamma$, define $f(\gamma):=f(\|\gamma\|)$ and $g(\gamma):=g(\|\gamma\|)$, and also define $g(\Gamma):=\sum_{\gamma\in\Gamma}g(\gamma)$ for a cluster $\Gamma$.
Recall from \cref{sec:clusterEx} that $\gamma \not\sim \gamma'$ means that $\gamma$ and $\gamma'$ are incompatible polymers. For a polymer $\gamma$ and a cluster $\Gamma$, we write $\gamma \not\sim \Gamma$ whenever $\gamma \not\sim \gamma'$ for some $\gamma' \in \Gamma$.
\begin{thm}[Koteck\'y--Preiss~\cite{kotecky1986cluster}]\label{thm:KP}
Suppose that 
\begin{equation}\label{eq:KP-cond}
\sum_{\gamma'\not\sim \gamma}\omega(\gamma')e^{f(\gamma')+g(\gamma')}\leq f(\gamma) \qquad\text{for any polymer }\gamma .
\end{equation}
Then the cluster expansion~\eqref{eq:cluster-expansion} is absolutely convergent, and furthermore,
\begin{equation}\label{eq:KP-conc}
\sum_{\Gamma\not\sim \gamma}|\omega(\Gamma)|e^{g(\Gamma)}\leq f(\gamma) \qquad\text{for any polymer }\gamma.
\end{equation}
\end{thm}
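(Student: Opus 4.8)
The plan is to follow the classical proof of the Koteck\'y--Preiss criterion; nothing specific to our polymer model enters, and in the paper I would ultimately quote the argument essentially verbatim (see~\cite{kotecky1986cluster}, and~\cite{jenssen2020independent} for an exposition in this abstract language). Here is the route. Fix a polymer $\gamma$ and, for $N\ge 0$, set
\[ \Phi_N(\gamma):=\sum_{\substack{\Gamma\not\sim\gamma\\ \Gamma\text{ has at most }N\text{ polymers}}}|\omega(\Gamma)|\,e^{g(\Gamma)}, \]
a finite sum of nonnegative terms, nondecreasing in $N$, with $\Phi_0(\gamma)=0$. The goal is to prove that $\Phi_N(\gamma)\le f(\gamma)$ for every $N\ge 0$ and every polymer $\gamma$. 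Granting this, letting $N\to\infty$ yields~\eqref{eq:KP-conc} by monotone convergence; and the same tree bookkeeping used below gives $\sum_{\Gamma}|\omega(\Gamma)|e^{g(\Gamma)}\le\sum_{\gamma}|\omega(\gamma)|e^{g(\gamma)+f(\gamma)}$, which is finite here since $Q_d$ admits only finitely many distinct polymers and each $f(\gamma)<\infty$, so~\eqref{eq:cluster-expansion} converges absolutely.

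The main step is a recursive estimate for $\Phi_N$. First I would bound the Ursell function via the Penrose tree-graph inequality: for a connected graph $H$ on $n$ vertices, $n!\,|\phi(H)|$ is at most the number of spanning trees of $H$. Using this to dominate $|\omega(\Gamma)|=|\phi(H_\Gamma)|\prod_i|\omega(\gamma_i)|$ by a sum over spanning trees of the incompatibility graph $H_\Gamma$, and then encoding a cluster together with a choice of spanning tree as a rooted labelled tree whose nodes are polymers and along each of whose edges the two polymers are incompatible, the factor $n!$ absorbs precisely the overcounting coming from relabelling the $n$ nodes. Peeling the root off such a rooted tree and summing over the collection of subtrees dangling from it --- each again a rooted tree of polymers whose root is incompatible with its parent --- and invoking the standard generating-function identity for rooted trees (with the $1/|V(H_\Gamma)|!$ in the definition of $\omega(\Gamma)$ supplying the symmetry factors when polymers repeat), one obtains, for every polymer $\gamma$, an inequality of the shape
\[ \Phi_N(\gamma)\ \le\ \sum_{\gamma'\not\sim\gamma}|\omega(\gamma')|\,e^{g(\gamma')}\exp\big(\Phi_{N-1}(\gamma')\big). \]

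The conclusion then follows by a short induction on $N$. The base case $N=0$ is immediate since $\Phi_0\equiv 0$ and $f\ge 0$. For the inductive step, assume $\Phi_{N-1}(\gamma')\le f(\gamma')$ for all polymers $\gamma'$; then the displayed recursion, monotonicity of $\exp$, and the hypothesis~\eqref{eq:KP-cond} give
\[ \Phi_N(\gamma)\ \le\ \sum_{\gamma'\not\sim\gamma}|\omega(\gamma')|\,e^{g(\gamma')+f(\gamma')}\ \le\ f(\gamma), \]
closing the induction.

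The step I expect to be the main obstacle --- indeed the only place real work is needed --- is the combinatorial reorganization behind the recursion: establishing (or quoting) the Penrose tree-graph inequality, and, more delicately, keeping track of the symmetry factors when a polymer occurs several times in a cluster, so that the $1/|V(H_\Gamma)|!$ factor in $\omega(\Gamma)$ is exactly matched by the automorphisms of the associated trees. Once this combinatorial lemma is in hand, the recursion and the two-line induction above are routine, and since none of it depends on the particular weights or compatibility relation of our model, I would simply invoke the abstract version as proved in~\cite{jenssen2020independent}.
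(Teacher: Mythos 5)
This theorem is not proved in the paper at all: it is quoted as a known result of Koteck\'y--Preiss, with~\cite{jenssen2020independent} cited for an exposition in the abstract polymer setting, so your plan to simply invoke the abstract statement is exactly what the paper does. Your sketch of the standard proof (Penrose tree-graph bound on the Ursell function, rooted-tree reorganization, induction on $N$ closed by the condition~\eqref{eq:KP-cond}) is correct in outline, with the one imprecision that the recursion should be run on the rooted-tree (pinned) sums, which dominate the cluster sums $\Phi_N$ rather than equal them, so stating the recursion directly for $\Phi_N$ is not literally justified --- but since you defer to the literature for the combinatorial lemma, this does not affect your proposal.
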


When applying \cref{thm:KP}, we must specify the functions $f$ and $g$.
In order to prove convergence of the cluster expansion and obtain bounds on the absolute tail of the cluster expansion, it would suffice to apply the theorem with $f(n)=nd^{-3/2}$ and $g=\tilde g$. However, in order to obtain the additional $e^{\|\Gamma\|d^{-3/2}}$ factor in \cref{lem:Lemma15} (which was needed for the proof of \cref{thm:ClusterEx} via \cref{lem:cluster-tail}), we will actually apply the theorem with
\[ f(n) := nd^{-3/2} \qquad\text{and}\qquad g(n) := f(n)+\tilde g(n) .\]
The main input needed to verify that the Koteck\'y--Preiss condition~\eqref{eq:KP-cond} holds with this choice (and for suitable choices of the model parameters) is given in the following lemma whose proof is given in \cref{sec:KP,sec:ProofOfGab}.

For a polymer $\gamma=(A_1,\dots,A_k)$, we define its \textbf{support} to be
\[ S(\gamma) :=A_1 \cup \cdots \cup A_k .\]

%
%
\begin{lemma}\label{lem:Lemma15pre}
	Assume~\eqref{eq:lambda-cond-k}.
	Then for any vertex $v$,
		\begin{align}\label{eq:Lemma15:KPconsition}
	\sum_{\gamma:v \in S(\gamma)}\omega(\gamma)e^{f(\gamma)+g(\gamma)}\leq d^{-7/2}.
	\end{align}
\end{lemma}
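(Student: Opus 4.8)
\textbf{Proof plan for \cref{lem:Lemma15pre}.}
The plan is to bound the sum over all polymers $\gamma$ whose support contains a fixed vertex $v$, organizing the sum by the size $n=\|\gamma\|$ (equivalently, stratifying over $|S(\gamma)| =: s$, noting $s \le n \le ks$). For each size, I would split the count of polymers into a count of their \emph{supports} times the number of ways to distribute those support vertices among the $k$ coordinates, and multiply by a uniform upper bound on the weight $\omega(\gamma)$ of any polymer with a given support $S$. The central-point computations in \cref{sec:computational-examples} (Scenarios I and III) suggest the right normalization: a single-vertex support in the all-$\cE$ (or all-$\cO$) configuration with $A_1=\dots=A_k=\{v\}$ has weight $\lambda^k \alpha_k^d$, and this should be the dominant contribution, so the target bound $d^{-7/2}$ should come out as roughly $2^{d-1}\lambda^k\alpha_k^d \cdot (\text{small})$, which is $o(d^{-7/2})$ precisely because \eqref{eq:lambda-cond-k} forces $\lambda(1-e^{-\beta})\gg k^2(\log d)/d^{1/3}$, hence $\alpha_k \le 1 - ck\lambda(1-e^{-\beta})/(1+\lambda_0) \le 1 - c(\log d)/d^{1/3}$ up to constants, making $2^d\alpha_k^d$ decay faster than any power of $d$.

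The key steps, in order: (i) prove a uniform weight bound $\omega(\gamma) \le (\text{const})^{\|\gamma\|}\,\alpha_{\le k}^{|N(S(\gamma))|}$ or more precisely relate $\omega(\gamma)$ to $\tilde\alpha_k^{|N(S(\gamma))|}$ using the definition \eqref{eq:polymer-weight-def} and the isoperimetric lower bounds on $|N(S)|$ from \cref{lem:isoperimetry} — here one sums over the decorations $B_i \subset N(A_i)$ exactly as in Scenario III, observing that each vertex of $N(S(\gamma))$ contributes a factor at most $\alpha_\ell$ for the relevant multiplicity $\ell$, and $\alpha_\ell \le \tilde\alpha_k^\ell$-type inequalities let one convert to a clean power of $\tilde\alpha_k$; (ii) count 2-linked supports through $v$ of size $s$ using \cref{lem:Lemma13}, giving at most $(ed^2)^{s-1}$, and count coordinate-assignments by a crude $k^s$ (or $(2^k)^s$); (iii) feed in the three-regime isoperimetric bound to match the three cases of $\tilde g$ in \eqref{eq:f-g-def}, and check that in each regime $\omega(\gamma)e^{f(\gamma)+g(\gamma)} = \omega(\gamma) e^{2\|\gamma\|d^{-3/2}+\tilde g(\gamma)}$ is summable with total $\le d^{-7/2}$; (iv) handle the small-$s$ case ($s\le d/10$) most carefully, since there the isoperimetric gain $dn - 3n^2$ competes with the entropy $(ed^2)^{s-1}k^s$ and with $\tilde g$ absorbing $-7n\log d$, so one needs $\log(1/\tilde\alpha_k) \approx k\lambda(1-e^{-\beta})/d \gg k^2(\log d)/d^{4/3}$ to win; the $s=1$ term is computed exactly and shown to already be $o(d^{-7/2})$.

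The main obstacle I expect is step (i): getting a weight bound on $\omega(\gamma)$ that is simultaneously (a) uniform over all polymers with a given support and (b) tight enough — in terms of the \emph{right} base $\tilde\alpha_k$ and the \emph{right} exponent (essentially $|N(S(\gamma))|$, not merely $|S(\gamma)|$) — to survive the entropy cost $(ed^2)^{\|\gamma\|}$ in the thin regime. At positive temperature the decorations $B_i$ interact through the union $E(A_1,B_1)\cup\cdots\cup E(A_k,B_k)$, so the clean product factorization of Scenario III over single vertices of $N(v)$ no longer holds exactly for larger supports; one must bound $e^{-\beta|\bigcup E(A_i,B_i)|} \le \prod_{e} e^{-\beta \mathbf{1}_{e \in \bigcup E(A_i,B_i)}}$ and then bound the resulting per-edge sum, carefully tracking which vertices of $N(S(\gamma))$ are common neighbors of several $A_i$'s. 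This is exactly the technical heart deferred to \cref{sec:ProofOfGab}, and I would prove it there via the container/approximation method for polymers together with the comparison-to-$K_{d,d}$ entropy technique of Peled--Spinka, which is needed precisely because for large polymers in the low-temperature regime the naive union bound over decorations is too lossy. Once \cref{lem:Lemma15pre} is in hand, \cref{lem:Lemma15} follows by summing the Koteck\'y--Preiss concentration bound \eqref{eq:KP-conc} over a vertex in the support, and \cref{lem:cluster-convergence} follows from the sharper version of these estimates in \cref{sec:improved-bounds}.
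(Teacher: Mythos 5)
Your outline follows the paper's route: split the sum over $\|\gamma\|$ into the three regimes of $\tilde g$, treat small and medium polymers by combining a per-polymer weight bound with the coordinatewise isoperimetry of \cref{lem:isoperimetry} and the $(ed^2)^{n-1}$ count of 2-linked supports from \cref{lem:Lemma13} (times a $2^{kn}$-type factor for coordinate assignments), and defer the regime $\|\gamma\|>d^4$ to the approximation/entropy machinery of \cref{sec:ProofOfGab} (this is exactly \cref{lem:large-polymers}). However, your step (i) as stated would fail, and this is the one substantive issue. The weight bound must carry the exponent $\|N(\gamma)\|=|N(A_1)|+\cdots+|N(A_k)|$, not $|N(S(\gamma))|$: the correct statement (the paper's \cref{lem:weight-of-polymer}) is $\omega(\gamma)\le\lambda^{\|\gamma\|}\tilde\alpha_k^{\|N(\gamma)\|}$, which is what your per-vertex multiplicity argument actually produces, since $\sum_v m_v(\gamma)=\|N(\gamma)\|$. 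With base $\alpha_k$ and exponent $|N(S(\gamma))|$ the bound is simply false (take $k\ge 2$ and $\gamma=(\{v\},\emptyset,\dots,\emptyset)$, whose weight $\lambda\alpha_1^d$ exceeds $\lambda\alpha_k^d$), and with base $\tilde\alpha_k$ it is true but too weak: since $g(\gamma)=\tilde g(\|\gamma\|)$ grows like $d\|\gamma\|\log(1/\tilde\alpha_k)$, a decay of only $\tilde\alpha_k^{\,d|S(\gamma)|}$ cannot absorb $e^{g(\gamma)}$ once $\|\gamma\|>|S(\gamma)|$; already the Scenario~III polymer ($A_1=\cdots=A_k=\{v\}$, so $\|\gamma\|=k$, $|S(\gamma)|=1$) would give $\omega(\gamma)e^{g(\gamma)}$ of order $\tilde\alpha_k^{-(k-1)d}$ under that bound. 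With the exponent $\|N(\gamma)\|$ and isoperimetry applied coordinate by coordinate ($\|N(\gamma)\|\ge d\|\gamma\|-2\|\gamma\|^2$ for $\|\gamma\|\le d/10$, and $\ge d\|\gamma\|/10$ up to $d^4$), your three-regime computation closes as intended.

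A smaller point of attribution: the single-polymer weight bound is not what requires containers or the $K_{d,d}$ entropy comparison. It is elementary for all polymers, via $|E(A_i,B_i)|\ge|B_i|$ and the super-multiplicativity of $\delta_m=1+((1+\lambda)^m-1)e^{-\beta}$. What genuinely needs \cref{sec:ProofOfGab} is the bound on the \emph{total} weight of polymers with $\|\gamma\|>d^4$, because in that regime the isoperimetric gain is only a factor $1+c/\sqrt d$ and the $(ed^2)^{\|\gamma\|}$ counting entropy can no longer be beaten polymer by polymer. Your plan does route that regime to the approximation scheme, so this is a misplacement of where the difficulty lies rather than a flaw in the overall structure.
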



\begin{proof}[Proof of \cref{lem:Lemma15}]
	Let us check that the Koteck\'y--Preiss condition~\eqref{eq:KP-cond} holds.
	Indeed, since $\gamma\not\sim \gamma'$ implies that the supports of $\gamma$ and $\gamma'$ are at distance at most 2, or equivalently, that $S(\gamma')\cap B_2(v)\neq \emptyset$ for some $v\in S(\gamma)$ (where 
	$B_2(v)$ is the ball of radius 2 around $v$), and since $|B_2(v)|=1+d+\binom d2\leq d^2$, using  \cref{lem:Lemma15pre} we have that
	\[\sum_{\gamma'\not\sim \gamma}\omega(\gamma')e^{f(\gamma')+g(\gamma')}\leq |S(\gamma)|\cdot |B_2(v)|\cdot d^{-7/2}\leq \|\gamma\|d^{-3/2}=f(\gamma).
	\]
	
	By \cref{thm:KP}, we have the inequality~\eqref{eq:KP-conc}.
	Let us show how this yields the inequality of the lemma.
	Define the support of a cluster $\Gamma$ to be $S(\Gamma) := \bigcup_{\gamma \in \Gamma} S(\gamma)$.
	For any vertex $v$, we can choose a polymer $\gamma$ such that $\|\gamma\|=1$ and $v\in S(\gamma)\cup N(S(\gamma))$, to which we apply \eqref{eq:KP-conc} to obtain that
		$$	\sum_{\Gamma\in \cC_\cD:  v\in S(\Gamma)}|\omega(\Gamma)|e^{g(\Gamma)}\leq \sum_{\Gamma\in \cC_\cD:  \Gamma\not\sim \gamma}|\omega(\Gamma)|e^{g(\Gamma)}\leq f(\gamma) = f(1) =
	 d^{-3/2}.$$
		Summing over all $v$, and writing $\tilde g(\Gamma)=\sum_{\gamma\in\Gamma}\tilde g(\|\gamma\|)$, we get that
\[ \sum_{\Gamma\in \cC_\cD}|\omega(\Gamma)|e^{\|\Gamma\|d^{-3/2}} e^{\tilde g(\Gamma)} = \sum_{\Gamma\in \cC_\cD}|\omega(\Gamma)|e^{g(\Gamma)} \leq 2^dd^{-3/2} .\]
	Since $\tilde g$ is sub-additive as a function on $\N $ (this follows from the fact that $\tilde g(n)/n$ is non-increasing in $n$, which is straightforward to verify using \Cref{cor:alphaIncreasing} and the assumption on $\lambda$ and $\beta$), we have $\tilde g(\|\Gamma\|)\leq \tilde g(\Gamma)$. Since $\tilde g$ is also non-decreasing, we obtain the lemma.
\end{proof}

\begin{proof}[Proof of \cref{lem:Lemma15ForTheorem1.1}]
Since $f$ and $g$ are non-decreasing, \cref{lem:Lemma15pre} yields that
\[ \sum_{\|\gamma\|\geq 2} \omega(\gamma) e^{f(2)+g(2)} \leq \sum_\gamma\omega(\gamma) e^{f(\gamma)+g(\gamma)} \leq  2^d d^{-7/2}. \]
Using that $f(2)+g(2) \ge \tilde g(2)$, plugging in the value of $\tilde g(2)$ (with $k=1$ and $\lambda=1$) and using the assumption that $p \ge 2-\sqrt2+\omega(\frac{\log d}d)$, we get that
\[ \sum_{\|\gamma\|\geq 2} \omega(\gamma) \le 2^d d^{-7/2} e^{-\tilde g(2)} = 2^d d^{-7/2} \left(\frac{1+e^{-\beta}}2\right)^{2d-12} d^{14} = o(1) . \qedhere \]
\end{proof}

\subsection{Verifying the Koteck\'y--Preiss condition}\label{sec:KP}

In this section, we prove \cref{lem:Lemma15pre}, which as we saw, easily yields the Koteck\'y--Preiss condition~\eqref{eq:KP-cond}.
A main technical step is to bound the contribution from large polymers.
We state this as a lemma and prove it separately in \cref{sec:ProofOfGab}.

\begin{lemma}\label{lem:large-polymers}
Assume~\eqref{eq:lambda-cond-k}. Then
\[ \sum_{\gamma : \|\gamma\| > d^4} \omega(\gamma) e^{3\|\gamma\| d^{-3/2}} \le O\big(e^{-d^2}\big) .\]
\end{lemma}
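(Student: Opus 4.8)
The plan is to bound the weighted count of large polymers using the container/approximation method, adapting the classical Korshunov--Sapozhenko scheme to the positive-temperature setting as in~\cite{peled2020long}. Recall that a polymer $\gamma=(A_1,\dots,A_k)$ of size $\|\gamma\|>d^4$ has support $S=S(\gamma)$ which is a $2$-linked subset of one side of $Q_d$ of size at least $d^4/k$, and whose closure (in each coordinate) has size at most $\tfrac34\cdot 2^{d-1}$. The first step is to reduce the bound on $\omega(\gamma)$ to a purely combinatorial quantity: expanding the sum over the $B_i$'s in~\eqref{eq:polymer-weight-def} and using that each edge between $A_i$ and $N(A_i)$ contributes a factor involving $e^{-\beta}$, one obtains an estimate of the form $\omega(\gamma)\le \big(\tfrac{1+\lambda e^{-\beta}}{1+\lambda}\big)^{\,c|N(S)|}\cdot(\text{small factors})$ for some absolute $c>0$; here the key point is that $|N(A_i)|$ is large relative to $|A_i|$, so the denominator $(1+\lambda)^{|N(A_i)|}$ dominates. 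One then invokes \cref{lem:isoperimetry}: for $|S|\le d^4$ we have $|N(S)|\ge \tfrac1{10}d|S|$, while in the intermediate range $|N(S)|\ge (1+\Omega(1/\sqrt d))|S|$, giving the needed surplus of neighbors over support.

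The second and main step is the approximation argument. For each polymer one defines a pair of "containers" $(F,\Psi)$ — a pair of sets approximating the support and its neighborhood, chosen from a small family — such that the number of polymers mapping to a given container is controlled, and such that the weight of each polymer is bounded in terms of the container data. This is exactly where the positive temperature causes trouble, since unlike the zero-temperature case one cannot simply forbid configurations but must instead account for the entropy of the $B_i$'s sitting inside $N(A_i)$; the technique from~\cite{peled2020long}, based on the entropy/comparison bounds underlying \cref{lem:bad-configs}, is used to control this. Summing the weight over all polymers with a fixed container and then over all containers, one gets a bound roughly of the shape
\[
\sum_{\gamma:\|\gamma\|>d^4}\omega(\gamma)e^{3\|\gamma\|d^{-3/2}} \le \sum_{t>d^4} 2^d\cdot(\text{number of containers of "size" }t)\cdot \big(\tilde\alpha_k\big)^{\,\Omega(dt)}\cdot e^{3td^{-3/2}}.
\]
Using \cref{lem:Lemma13} to bound the number of $2$-linked sets (hence the number of containers) by $(ed^2)^{t}$, and using that $\tilde\alpha_k^{\,\Omega(d)}=\big(\tfrac{1+\lambda e^{-\beta}}{1+\lambda}\big)^{\Omega(d)}$ is at most $d^{-\omega(1)}$ by the hypothesis $\lambda(1-e^{-\beta})\ge Ck^2\log d/d^{1/3}$, each term is at most $e^{-\Omega(t)}$ for $t>d^4$, and the geometric sum over $t>d^4$ is $O(e^{-d^4})=O(e^{-d^2})$. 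The factor $e^{3td^{-3/2}}$ is harmless since $3td^{-3/2}\ll \log d\cdot t$.

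The main obstacle is the approximation step itself — constructing the container map with the right quantitative trade-off between the number of containers and the weight bound per polymer, in the presence of positive temperature. This requires the full strength of the comparison-with-$K_{d,d}$ / entropy machinery, and is precisely why the proof is deferred to \cref{sec:ProofOfGab}, where it is carried out in the generality needed for all $k$. Everything downstream of that — the isoperimetric input, the counting of $2$-linked sets, and the final geometric summation — is routine given the hypotheses on $\lambda$ and $\beta$.
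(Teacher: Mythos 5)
Your high-level plan (containers plus the entropy/comparison machinery of~\cite{peled2020long}, deferred to the heavy lemmas of \cref{sec:ProofOfGab}) is the right one, but the bookkeeping with which you propose to conclude does not work, and the failure is exactly the issue this lemma exists to address. Your displayed bound charges a weight $\tilde\alpha_k^{\Omega(dt)}$ per container and pays an entropy $(ed^2)^t$ for the containers via \cref{lem:Lemma13}. The exponent $\Omega(dt)$ comes from the isoperimetric inequality $|N(S)|\ge\frac1{10}d|S|$, which holds only for $|S|\le d^4$ --- precisely \emph{not} the regime $\|\gamma\|>d^4$ of this lemma. For large polymers the only available surplus is $\|N(\gamma)\|\ge(1+\Omega(1/\sqrt d))\|\gamma\|$, so the true decay per polymer is only of order $\tilde\alpha_k^{(1+c/\sqrt d)t}$ (for $\lambda p\asymp \log d/d^{1/3}$ this is merely $e^{-\Theta(t\log d/(kd^{1/3}))}$), which is overwhelmed by the $(ed^2)^t\approx e^{2t\log d}$ count of $2$-linked supports. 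A union bound over polymers, or over containers counted as $2$-linked sets, therefore cannot close the argument; this is exactly the obstacle the paper's proof outline flags for large polymers.

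The correct accounting, which is how the paper actually derives the lemma, tracks the pair $a=\|[\gamma]\|$, $b=\|N(\gamma)\|$ and measures everything in terms of the \emph{surplus} $b-a\ge \frac{c}{\sqrt d}\,a$: the approximation family has size only $2^d\exp(Ck(b-a)\log d/d^{2/3})$ (\cref{lem:Lemma5.1+5.2}, not \cref{lem:Lemma13}), and the total weight of polymers fitting a fixed approximation is at most $\exp(-c(b-a)\alpha^2/(k^2\log d))$ (\cref{lem:bdOnSum}, via the $K_{d,d}$-comparison lemmas), so that \cref{lem:Gab} gives $\sum_{\gamma\in\cG_\cD(a,b)}\omega(\gamma)\le 2^d\exp(-c(b-a)\alpha^2/(k^2\log d))$. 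Hypothesis~\eqref{eq:lambda-cond-k} is precisely what makes $\alpha^2/(k^2\log d)$ beat the container entropy $Ck\log d/d^{2/3}$, and since $b-a\ge ca/\sqrt d$, summing over $a>d^4$ and admissible $b$ absorbs both the prefactor $2^{2d}$ and the tilt $e^{3ad^{-3/2}}$ (because $\alpha^2/(k^2\sqrt d\log d)\gg d^{-3/2}$), giving a bound of order $e^{-cd^{17/6}\log d}\le O(e^{-d^2})$. The missing idea in your write-up is that both the number of containers and the per-container weight must be quantified by the neighborhood surplus $b-a$, not by $d\|\gamma\|$ or by the raw count of $2$-linked sets; with the quantities as you state them, the final sum is not small.
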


A second main step toward proving \cref{lem:Lemma15pre} is to bound the weight of small polymers. The following provides a bound on the weight of an arbitrary polymer, but is effective primarily for small polymers.

For a polymer $\gamma=(A_1,\dots,A_k)$, we define
\[ N(\gamma) :=(N(A_1),\dots,N(A_k)) \qquad\text{and}\qquad \|N(\gamma)\| := |N(A_1)|+\cdots+|N(A_k)| .\]

\begin{lemma}\label{lem:weight-of-polymer}
For any polymer $\gamma$, we have
\[ \omega(\gamma) \le \lambda^{\|\gamma\|} \tilde\alpha_k^{\|N(\gamma)\|} .\]
\end{lemma}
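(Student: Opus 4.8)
The plan is to bound the sum defining $\omega(\gamma)$ in~\eqref{eq:polymer-weight-def} by simply dropping the nonnegative factor $e^{-\beta|E(A_1,B_1)\cup\cdots\cup E(A_k,B_k)|} \le 1$, which is far too lossy in general but turns out to be exactly strong enough once one reintroduces the edge-counting through a clever regrouping. More precisely, first I would rewrite $\omega(\gamma)$ by expanding the union in the exponent: for each vertex $u$ that lies in $N(A_i)$ for various $i$, the edges of $\bigcup_i E(A_i,B_i)$ incident to $u$ are in bijection with the set of indices $i$ for which $u \in B_i$ and $u \in N(A_i)$, except that distinct $A_i$ lying on the same side may share neighbors, so one must be slightly careful. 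The cleanest route: group the sum over $(B_1,\dots,B_k)$ according to, for each relevant vertex $u$, the subset $J_u := \{ i : u \in B_i\} \subset \{ i : u \in N(A_i)\}$. For a fixed vertex $u$ appearing in $\ell_u := |\{i : u \in N(A_i)\}|$ of the neighborhoods, the local contribution is $\sum_{J \subset [\ell_u]} \lambda^{|J|} e^{-\beta \cdot \mathbf{1}[J \ne \emptyset, \text{the edges are distinct}]}$, which one bounds above by $1 + ((1+\lambda)^{\ell_u}-1)e^{-\beta} = (1+\lambda)^{\ell_u}\alpha_{\ell_u}$ when all the edges from $u$ to the different $A_i$ are genuinely distinct (which happens when the $A_i$ lie on different sides), and by the even simpler $(1+\lambda)^{\ell_u}\alpha_1$ — no wait, one should just use $(1+\lambda)^{\ell_u}\tilde\alpha_k^{\ell_u}$ uniformly.

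So the key step is the pointwise inequality $\alpha_\ell^{1/\ell}$ is non-increasing in $\ell$ (this is \Cref{cor:alphaIncreasing}, or the monotonicity asserted just before it), hence $\alpha_{\ell_u} \le \tilde\alpha_k^{\ell_u}$ whenever $\ell_u \le k$ — and indeed $\ell_u \le k$ always, since $u$ can lie in at most one $N(A_i)$ per index $i \in [k]$. Factoring the sum over $(B_1,\dots,B_k)$ as a product over vertices $u \in \bigcup_i N(A_i)$ of these local sums, and noting $\sum_u \ell_u = \|N(\gamma)\| = |N(A_1)|+\cdots+|N(A_k)|$, I get
\[
\omega(\gamma) \le \frac{\lambda^{\|\gamma\|}}{(1+\lambda)^{\|N(\gamma)\|}} \prod_{u} (1+\lambda)^{\ell_u}\tilde\alpha_k^{\ell_u} = \lambda^{\|\gamma\|}\, \tilde\alpha_k^{\|N(\gamma)\|},
\]
as claimed, with the $(1+\lambda)^{\|N(\gamma)\|}$ factors cancelling exactly.

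The main obstacle is handling the edge-multiplicity issue cleanly: when two sets $A_i, A_j$ lie on the \emph{same} side and share a common neighbor $u$ with $u \in B_i$ and $u \in B_j$, the corresponding edges $\{u, a\}$ and $\{u, a'\}$ (with $a \in A_i$, $a' \in A_j$ adjacent to $u$) may or may not coincide, and if they coincide they are counted once in the union. This only helps the upper bound (fewer edges means larger $e^{-\beta|\cdot|}$, but we are discarding that factor's benefit anyway in the worst case), so the bound $1 + ((1+\lambda)^{\ell_u}-1)e^{-\beta}$ as an upper bound for the local sum remains valid — one just checks that in every scenario the local sum over $J \subset \{i : u\in N(A_i)\}$ of $\lambda^{|J|}e^{-\beta(\#\text{distinct edges from }u)}$ is maximized (over edge-identification patterns) when all edges are distinct, giving precisely $(1+\lambda)^{\ell_u}\alpha_{\ell_u}$. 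Once this local bound is in hand, the monotonicity of $\ell \mapsto \tilde\alpha_\ell$ and the telescoping of the $(1+\lambda)$-powers finish the proof in one line.
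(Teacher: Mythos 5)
Your overall architecture is the same as the paper's: expand the sum over $(B_1,\dots,B_k)$ vertex by vertex, bound the local factor at a vertex $u$ lying in $\ell_u$ of the neighborhoods by $\delta_{\ell_u}:=1+((1+\lambda)^{\ell_u}-1)e^{-\beta}=(1+\lambda)^{\ell_u}\alpha_{\ell_u}$, and then use $\alpha_{\ell_u}\le\tilde\alpha_k^{\ell_u}$ together with $\sum_u\ell_u=\|N(\gamma)\|$. (Your per-vertex use of monotonicity is a harmless variant of the paper's redistribution via $\delta_m\delta_n\le\delta_{m-t}\delta_{n+t}$; note, however, that the fact you need is that $\alpha_x^{1/x}$ is \emph{increasing}, as in \cref{cor:alphaIncreasing}, not ``non-increasing'' as you wrote --- the inequality you then apply is the correct one.) The genuine gap is the step where you pass from the true exponent $|E(A_1,B_1)\cup\cdots\cup E(A_k,B_k)|$ to the vertex-factorized product of local sums, i.e.\ the claim that the exponent is at least $\#\{u:J_u\neq\emptyset\}=|B_1\cup\cdots\cup B_k|$. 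Your discussion of edge multiplicity gets the direction of the effect backwards: coincidences among the edges make the union \emph{smaller}, hence $e^{-\beta|\cdot|}$ \emph{larger}, so they work against the factorized upper bound rather than ``only helping'' it. Moreover, for a fixed $u$ all the relevant $A_i$ (those with $u\in N(A_i)$) lie on the side opposite to $u$, so the same-side coincidences you worry about are not the dangerous ones; the dangerous case is a single edge being the unique contribution of \emph{both} of its endpoints, which occurs exactly when $\cD_i\neq\cD_j$ and the supports of coordinates $i$ and $j$ are adjacent --- a configuration explicitly allowed by the definition of $H_\gamma$.

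Concretely, take $k=2$, $\cD=(\cE,\cO)$, $A_1=\{u\}$, $A_2=\{v\}$ with $u\sim v$ (Scenario IV of \cref{sec:computational-examples}). For $B_1=\{v\}$, $B_2=\{u\}$ the edge union is the single edge $\{u,v\}$ while two vertices carry nonempty $J$-sets, and summing over all $(B_1,B_2)$ the normalized weight $\omega(\gamma)(1+\lambda)^{\|N(\gamma)\|}\lambda^{-\|\gamma\|}$ equals $\delta_1^{2d-2}\delta_2$, which strictly exceeds your factorized bound $\prod_u\delta_{\ell_u}=\delta_1^{2d}$ for all $0<\beta<\infty$, since $\delta_2-\delta_1^2=\lambda^2e^{-\beta}(1-e^{-\beta})>0$. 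So the factorization inequality you assert is false for mixed $\cD$ (as it happens, the paper's own one-line justification of the analogous passage to $\bar\omega$ glosses over the same point); the lemma itself survives because the target $\tilde\alpha_k^{\|N(\gamma)\|}$ has slack --- in the example $\alpha_1^{2d-2}\alpha_2\le\alpha_2^d$ precisely because $\alpha_1^2\le\alpha_2$ --- but a complete proof must genuinely account for edges shared between $E(A_i,B_i)$ and $E(A_j,B_j)$ with $\cD_i\neq\cD_j$, e.g.\ by treating the two parities separately and merging the interacting membership decisions at adjacent cross-parity pairs into a single $\delta$-type factor, rather than dismissing the issue. For $k=1$, or when all coordinates of $\cD$ are equal, your argument is correct as written.
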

\begin{proof}
Let $\gamma=(A_1,\dots,A_k)$ be a polymer.
The inequality of the lemma is equivalent to
\[ \sum_{B_1 \subset N(A_1),\dots,B_k \subset N(A_k)} \lambda^{|B_1|+\cdots+|B_k|} e^{-\beta|E(A_1,B_1) \cup \cdots \cup E(A_k,B_k)|} \le \delta_k^{\frac1k \|N(\gamma)\|} ,\]
where
\[ \delta_\ell := \alpha_\ell (1+\lambda)^\ell = 1 + ((1+\lambda)^\ell-1)e^{-\beta} .\]
Since every element in $B_i$ contributes an incident edge to $E(A_i,B_i)$, the sum is at most
\[ \bar\omega(\gamma) := \sum_{B_1 \subset N(A_1),\dots,B_k \subset N(A_k)} \lambda^{|B_1|+\cdots+|B_k|} e^{-\beta|B_1 \cup \cdots \cup B_k|} .\]
At this point, we could simply use that $|B_1 \cup \cdots \cup B_k| \ge \frac1k (|B_1| + \cdots + |B_k|)$ and Newton's binomial to conclude that
\[ \bar\omega(\gamma) \le \sum_{B_1 \subset N(A_1),\dots,B_k \subset N(A_k)} (\lambda e^{-\beta/k})^{|B_1|+\cdots+|B_k|}=(1+\lambda e^{-\beta/k})^{\|N(\gamma)\|}. \]
However, as $1+\lambda e^{-\beta/k}>\delta_k^{1/k}$, this would yield a slightly worse bound than desired. Instead, we proceed to bound $\bar\omega(\gamma)$ as follows. We may rewrite $\bar\omega(\gamma)$ as
\[ \bar\omega(\gamma) = \prod_v \sum_{\substack{b_1,\dots,b_k \in \{0,1\}\\ b_i=0\text{ unless }v \in N(A_i)}} \lambda^{b_1+\cdots+b_k} e^{-\beta \1_{\{b_1+\cdots+b_k \ge 1\}}} = \prod_v \delta_{m_v(\gamma)} ,\]
where
\[ m_v(\gamma) := |\{ i : v \in N(A_i) \}| .\]
A straightforward computation shows that the sequence $(\delta_m)_{m=0}^\infty$ is super-multiplicative, meaning that $\delta_m \delta_n \le \delta_{n+m}$ for any $n,m \ge 0$. In fact, it has the stronger property that $\delta_m \delta_n \le \delta_{m-\ell}\delta_{n+\ell}$ for any $n \ge m \ge \ell \ge 0$ (see \cref{cl:super-multiplicative}). In particular, $\delta_m^{1/m} \le \delta_k^{1/k}$ for $0 \le m \le k$. Since $m_v(\gamma)$ is at most $k$ and since $\sum_v m_v(\gamma) = \|N(\gamma)\|$, applying the former inequality repeatedly and then the latter inequality once yields that
\[ 
\bar\omega(\gamma) = \prod_v \delta_{m_v(\gamma)} \le \delta_k^{\lfloor \frac1k \|N(\gamma)\| \rfloor} \delta_{\|N(\gamma)\| - k\lfloor \frac1k \|N(\gamma)\| \rfloor} \le \delta_k^{\frac1k \|N(\gamma)\|}. \qedhere
\]
\end{proof}

\begin{proof}[Proof of \cref{lem:Lemma15pre}]

	\smallskip
	
	We now turn toward the sum in~\eqref{eq:Lemma15:KPconsition}.
	We split the sum into three parts, bounding each by $\frac 1{3d^{7/2}}$.
	We start with polymers $\gamma$ such that $\|\gamma\|\leq d/10$.
	By \Cref{lem:isoperimetry}, we have $|N(A_i)|\geq d|A_i|-2|A_i|^2$ for each $i$. Thus,
	\[ \|N(\gamma)\| \geq d(|A_1|+\cdots+|A_k|)-2(|A_1|^2+\dots+|A_k|^2)\geq d\|\gamma\|-2\|\gamma\|^2 .\]
	Suppose that $\gamma$ has size $\|\gamma\|=n$.
	Since $\gamma$ contains a given vertex $v$, the number of ways to choose its support is at most $(ed^2)^{n-1}$ by \Cref{lem:Lemma13}. Thus, the number of ways to choose the polymer $\gamma$ itself is at most $(ed^2)^{n-1}2^{kn}\leq d^{3n}$, for $d>e^k$. Thus, using \cref{lem:weight-of-polymer},
	\begin{align*}
	\sum_{\gamma: v \in S(\gamma),\ \|\gamma\|\leq d/10}\omega(\gamma)e^{f(\gamma)+g(\gamma)}&\leq \sum_{n=1}^{d/10}d^{3n}\lambda^n(\tilde\alpha_k)^{dn-2n^2}e^{nd^{-3/2}+g(n)} \le \sum_{n=1}^\infty d^{-4n} \lambda^n e^{2nd^{-3/2}}
	\leq \frac 1{3d^{7/2}},
	\end{align*}
	where the second inequality follows from the definition of $g$ and the fact that $\tilde\alpha_k \le 1$, and the last inequality uses that $\lambda\leq d$.
	\smallskip
	
	Next, we consider polymers $\gamma$ having $d/10<\|\gamma\|\leq d^4$. In this case, \Cref{lem:isoperimetry} yields that $|N(A_i)|\geq d|A_i|/10$ for all $i$, so that $\|N(\gamma)\|\geq d\|\gamma\|/10$. Thus, as before,
	\begin{align*}
	\sum_{\gamma: v \in S(\gamma),\ d/10< \|\gamma\|\leq d^4}\omega(\gamma)e^{f(\gamma)+g(\gamma)}&\leq \sum_{n=d/10}^{d^4}d^{3n}\lambda^n(\tilde\alpha_k)^{dn/10}e^{nd^{-3/2}+g(n)} \\&\leq \sum_{n=d/10}^{d^4} \left(d^3 \lambda (\tilde\alpha_k)^{d/20}e^{d^{-3/2}}\right)^n \le \frac 1{3d^{7/2}},
	\end{align*}
	where the second inequality uses the definition of $g$, and the last inequality is obtained by bounding the sum by $d^4$ times the maximum term. Note that the maximum term is obtained for $n=d/10$ and is $d^{-\omega(1)}$, since $\lambda$ is bounded and $(\tilde\alpha_k)^d = d^{-\omega(1)}$. Recalling that $k$ is fixed, the latter follows from the observations that $\tilde\alpha_k = \alpha_k^{1/k}$, $\alpha_k<\alpha_1$, and $\alpha_1^d = d^{-\omega(1)}$ since $1-\alpha_1=\frac{\lambda(1-e^{-\beta})}{1+\lambda}$, $\lambda(1-e^{-\beta})\gg \log d/d$ and $\lambda$ is bounded.
	
	\smallskip

	Finally, we consider polymers $\gamma$ having $\|\gamma\|>d^4$. 
	\cref{lem:large-polymers} gives that 
	\[ \sum_{\gamma: v \in S(\gamma),\ \|\gamma\|> d^4}\omega(\gamma)e^{f(\gamma)+g(\gamma)} = \sum_{\gamma: v \in S(\gamma),\ \|\gamma\|> d^4} \omega(\gamma) e^{3\|\gamma\| d^{-3/2}}  \le \frac 1{3d^{7/2}} .\]
	
	Putting the three cases together yields the lemma.
\end{proof}

\subsection{Bounding the weight of large polymers via approximations}
\label{sec:ProofOfGab}

In this section, we bound the total weight of large polymers, and in particular prove \cref{lem:large-polymers}. We also prove \cref{lem:bad-configs}.

Recall the definition of the closure $[A]$ of a set $A \subset V(Q_d)$ from \cref{sec:notation}.
For a $\cD$-polymer $\gamma=(A_1,\dots,A_k)$, we denote $[\gamma]:=([A_1],\dots,[A_k])$.
Define
\[ \cG_\cD(a,b) := \Big\{ \gamma\text{ is a $\cD$-polymer} : \|[\gamma]\|=a,~ \|N(\gamma)\|=b \Big\} .\]
We will bound the total weight of polymers in $\cG_\cD(a,b)$ for any $a,b>0$ with $a \ge d^4$. Note that \cref{lem:isoperimetry} implies that $\cG_\cD(a,b)$ is empty unless $b \ge (1+\frac{c}{\sqrt d})a$.
Thus, throughout this section, we fix $a \ge d^4$ and $b \ge (1+\frac{c}{\sqrt d})a$. We also denote $\alpha:=\lambda(1-e^{-\beta})$.

\begin{lemma}\label{lem:Gab}
Assume~\eqref{eq:lambda-cond-k}. Then
\[ \sum_{\gamma \in \cG_\cD(a,b)} \omega(\gamma) \le 2^d \exp\left(-\frac{c(b-a) \alpha^2}{k^2\log d}\right) .\]
\end{lemma}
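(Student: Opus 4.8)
The plan is to bound $\sum_{\gamma\in\cG_\cD(a,b)}\omega(\gamma)$ by combining a \emph{container/approximation} argument for polymers with the weight bound from \cref{lem:weight-of-polymer}. First I would fix, for each polymer $\gamma=(A_1,\dots,A_k)$, an approximation: a tuple $(F_1,\dots,F_k,\Psi)$ where $F_i$ is an ``approximating fingerprint'' for $A_i$ (a relatively small subset of $\cD_i$ that, together with the side information, determines $[A_i]$ up to a controlled number of choices) and $\Psi$ encodes $N(\gamma)$. The classical Korshunov--Sapozhenko/Sapozhenko approximation scheme (as used in \cite{galvin2011threshold,peled2020long,jenssen2020independent}) gives that the number of polymers with a given approximation is at most $\exp(O((b-a)/\sqrt d \cdot \text{something}))$, but crucially the \emph{number of approximations} is small: roughly $2^{O(b/d \cdot \log d)}$ or so, and—this is the key point—one can set up the scheme so the number of approximations is at most $2^d \cdot \exp(O((b-a)\log d / d))$ since the approximation lives on a set of size $O(b/d + (b-a))$. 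The $k$-fold structure multiplies the fingerprint sizes by $k$, which is where the $k^2$ will enter.

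Second, I would organize the sum by approximation:
\[
\sum_{\gamma\in\cG_\cD(a,b)}\omega(\gamma) \le \sum_{\text{approx } \mathbf F} \;\sum_{\gamma \mapsto \mathbf F} \omega(\gamma) \le \sum_{\mathbf F} \lambda^{O(\|\gamma\|)}\tilde\alpha_k^{b}\cdot(\text{\# polymers with approx }\mathbf F).
\]
Using \cref{lem:weight-of-polymer}, each polymer contributes at most $\lambda^{\|\gamma\|}\tilde\alpha_k^{\|N(\gamma)\|}=\lambda^{\|\gamma\|}\tilde\alpha_k^{b}$. Since $\|[\gamma]\|=a$ and $N$ of the closure equals $N$ of the polymer, one has $\|\gamma\|\le a$, and the isoperimetry (\cref{lem:isoperimetry}) forces $b\ge(1+c/\sqrt d)a$, so $\tilde\alpha_k^b \le \tilde\alpha_k^{a}\cdot\tilde\alpha_k^{b-a}$. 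Now $1-\tilde\alpha_k \approx 1-\alpha_k^{1/k} \approx \frac{1}{k}(1-\alpha_k)\approx \frac{\alpha}{k(1+\lambda)}\gtrsim \alpha/k$ (using $\lambda\le\lambda_0$ and $\alpha=\lambda(1-e^{-\beta})$), so $\tilde\alpha_k^{b-a}\le \exp(-c(b-a)\alpha/k)$. The factor $\lambda^{a}\tilde\alpha_k^{a}$ together with the number of polymers sharing a fixed approximation (which I'd bound by $\exp(O(a\log d/d))$ or absorb into a $2^d$ via the container bound) has to be shown to be at most, say, $2^d$; this is exactly the place where the $\frac{\alpha^2}{k^2\log d}$ exponent is engineered: one splits $b-a$, using a $\Theta(b-a)\cdot \frac{\alpha}{k\log d}$-sized portion to kill the entropy of approximations (which is $\exp(O((b-a)\log d/d))$ once the fingerprint has size $O((b-a)/d)$—wait, more carefully $O((b-a)\cdot\frac{\log d}{d}\cdot d)$... the bookkeeping is that the fingerprint has size $O(b/d)$ and each element carries $O(\log d)$ bits, but the ``surplus'' $b-a$ is what controls the fingerprint size on top of the trivial $2^d$), and the remaining $\Theta(b-a)\cdot\frac{\alpha}{k}$-portion gives the stated $\exp(-c(b-a)\alpha^2/(k^2\log d))$ after one more division by $\log d$ coming from $\alpha = \lambda(1-e^{-\beta})\ge Ck^2\log d/d^{1/3}$ being potentially small.

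Third, I would reduce the two-sided, $k$-component bookkeeping to the one-sided single-set case: since the support $S(\gamma)=A_1\cup\cdots\cup A_k$ is $2$-linked and each $A_i$ lies in one side, I'd build the approximation side-by-side and component-by-component, using \cref{lem:Lemma13} to count $2$-linked sets where needed; the $k$-fold union only inflates the relevant set sizes by a factor $k$, hence the $k^2$ in the final exponent (one $k$ from the fingerprint size, one $k$ from $1-\tilde\alpha_k\approx\alpha/k$). The entropy-type comparison with $K_{d,d}$ from \cite{peled2020long}, needed because at low temperature one cannot afford a naive union bound over $B$'s, is already packaged in \cref{lem:weight-of-polymer}, so I wouldn't need it again here.

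The main obstacle I expect is setting up the approximation scheme correctly in the positive-temperature, multi-component setting so that (a) the number of approximations is genuinely at most $2^d\exp(O((b-a)\log d/d))$ — this requires the fingerprint to be supported on a set whose size is controlled by the isoperimetric ``surplus'' $b-a$ rather than by $a$ itself, which is the subtle part of the Korshunov--Sapozhenko method — and (b) the per-approximation polymer count is subexponential in the right parameter. Balancing the split of $b-a$ between these two tasks, and tracking the dependence on $k$ and on the possibly-small $\alpha$, is the delicate calculation; everything else is routine once \cref{lem:weight-of-polymer} and \cref{lem:isoperimetry} are in hand.
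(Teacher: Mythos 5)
The gap is in the per-approximation step, and it is exactly the difficulty the paper's proof is designed to overcome. You bound $\sum_{\gamma\approx\mathbf{F}}\omega(\gamma)$ by (number of polymers compatible with $\mathbf{F}$) times the pointwise bound $\omega(\gamma)\le\lambda^{\|\gamma\|}\tilde\alpha_k^{b}$ of \cref{lem:weight-of-polymer}, asserting both that this count is $\exp(O(a\log d/d))$ and that the $K_{d,d}$ entropy comparison of \cite{peled2020long} is ``already packaged'' in \cref{lem:weight-of-polymer}. Neither is true. \cref{lem:weight-of-polymer} is an elementary super-multiplicativity estimate with no entropy input, and an approximation $(F_i,H_i)_i$ only pins a polymer down to $[A_i]\subset H_i$ and $F_i\subset N(A_i)$: the ($\lambda^{\|\gamma\|}$-weighted) number of compatible polymers is controlled only by $(1+\lambda)^{|H_1|+\cdots+|H_k|}=e^{\Theta(a)}$, and nothing in the approximation scheme gives your claimed $e^{O(a\log d/d)}$. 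Since the decay supplied by \cref{lem:weight-of-polymer} is only $\tilde\alpha_k^{b}=e^{-\Theta(\alpha b/k)}$, and under \eqref{eq:lambda-cond-k} the quantity $\alpha$ may be as small as $Ck^2\log d/d^{1/3}$ while $a\asymp b$ can be of order $2^d$ with $b-a$ as small as $\Theta(b/\sqrt d)$ (\cref{lem:isoperimetry}), the product (count)$\times$(max weight) is $e^{\Theta(a)}$, vastly exceeding the target $2^d\exp(-c(b-a)\alpha^2/k^2\log d)$; for $\lambda>1$ and small $\alpha$ even a single polymer's bound $\lambda^{\|\gamma\|}\tilde\alpha_k^{b}$ can be $e^{\Theta(a)}$. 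This is precisely the obstacle flagged in the proof outline: at positive temperature the weight of a large polymer is not small relative to the number of polymers sharing an approximation, so no ``count times max-weight'' argument can close.

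What the paper does instead, and what your proposal is missing, is to bound the aggregated weighted sum over all $(A_i,B_i)_i$ compatible with a fixed approximation directly by the entropy method: this is \cref{lem:bdOnSum}, proved by reducing to $k=1$ via $|E(A_1,B_1)\cup\cdots\cup E(A_k,B_k)|\ge\frac1k\sum_i|E(A_i,B_i)|$ (i.e., replacing $\beta$ by $\beta/k$ --- one source of the $k^2$) and then invoking \cref{lem:bdOnSumForLargeF,lem:bdOnSumForSmallF}, which rest on the Shearer-type inequalities \cref{lem:weighted-shearer0,lem:weighted-shearer} together with the $K_{d,d}$ estimate \cref{lem:Z-Psi-bound}; for instance, for a fixed closure $A'$ one gets \eqref{eq:bdOnGivenA}, a bound of $(1+\lambda)^b e^{b/d^4-c\alpha(b-a)}$ on the full weighted sum over all $A$ with $[A]=A'$ and all $B$, where the gain $e^{-c\alpha(b-a)}$ comes from $\sum_{v\in N(A')}\ell_{\Psi_v}=d(b-a)$ --- a cancellation invisible to any product of a per-polymer bound with a counting bound. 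Your outer structure (enumerate approximations as in \cref{lem:Lemma5.1+5.2}, whose family has size $2^d\exp(Ck(b-a)\log d/d^{2/3})$, and balance this entropy against the per-approximation decay using $\alpha\ge Ck^2\log d/d^{1/3}$) does match the paper's final step, but without an entropy-type argument in the middle the proof does not go through.
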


Recall that one requirement in the definition of a polymer $\gamma=(A_1,\dots,A_k)$ is that the graph $H_\gamma$ is connected. As we have mentioned, this implies that $A_1 \cup \cdots \cup A_k$ is 2-linked. The proof of \cref{lem:Gab} will not use the stronger assumption, but rather only its latter implication. In particular, the statement of \cref{lem:Gab} remains true if one replaces $\cG_\cD(a,b)$ with the larger collection of all $\gamma=(A_1,\dots,A_k)$ such that each $A_i$ is contained in either $\cE$ or $\cO$, $A_1 \cup \cdots \cup A_k$ is 2-linked, $\|[\gamma]\|=a$ and $\|N(\gamma)\|=b$.

Let us see how \cref{lem:Gab} yields \cref{lem:large-polymers}.
\begin{proof}[Proof of \cref{lem:large-polymers}]
	We need to bound the sum of $\omega(\gamma) e^{3\|\gamma\|d^{-3/2}}$ over polymers $\gamma$ of size $\|\gamma\|>d^4$. In fact, we will prove the stronger statement that this bound holds when summing over all polymers $\gamma$ having $\|[\gamma]\|>d^4$.
	For any polymer $\gamma=(A_1,\dots,A_k)$, we have that $|N(A_i)|\geq (1+\frac c{\sqrt d})|A_i|$ for all $i$ by \cref{lem:isoperimetry} (this is the only place where we need the size restriction appearing in the definition of a polymer). In particular, $\|N(\gamma)\|\geq (1+\frac c{\sqrt d})\|\gamma\|$.
Since $[\gamma]$ is also a polymer, we also have that $\|N(\gamma)\|=\|N([\gamma])\| \geq (1+\frac c{\sqrt d})\|[\gamma]\|$.
	 Thus,
\begin{align*}
	\sum_{\gamma : \|[\gamma]\|>d^4} \omega(\gamma)e^{3\|\gamma\|d^{-3/2}}
	 &\leq \sum_{a> d^4, b\ge (1+\frac c{\sqrt d})a}e^{3ad^{-3/2}}\sum_{\gamma\in \cG_\cD(a,b)}\omega(\gamma) \\
	 &\le 2^{2d} \sum_{a> d^4}  e^{\left(3d^{-3/2} - \frac{c\alpha^2}{k^2\sqrt d \log d}\right)a} \le e^{-cd^{17/6} \log d} ,
\end{align*}
	where the second inequality follows from \Cref{lem:Gab} and the last inequality uses~\eqref{eq:lambda-cond-k}.
\end{proof}

The proof of \cref{lem:Gab} is based on the following notion of an approximation of a polymer. We write $\bar\cE := \cO$ and $\bar\cO := \cE$.
An \textbf{approximation} is a tuple $(F_1,\dots,F_k,H_1,\dots,H_k)$ of sets  $F_i \subset \bar\cD_i$, $H_i \subset \cD_i$ such that each $H_i \cup (\bar\cD_i \setminus F_i)$ induces a subgraph of maximum degree at most $d^{2/3}$.
We write $(F_i,H_i)_i$ as shorthand for $(F_1,\dots,F_k,H_1,\dots,H_k)$.
We say that $(F_i,H_i)_i$ \textbf{approximates a polymer} $\gamma=(A_1,\dots,A_k)$, denoted $\gamma\approx (F_i,H_i)_i$, if for all $i$,
\begin{equation}\label{eq:approx-contain-def}
F_i \subset N(A_i) \qquad\text{and}\qquad H_i \supset [A_i] .
\end{equation}
We note that whether or not a given $(F_i,H_i)_i$ approximates $\gamma$ depends on $\gamma$ only through $N(\gamma)$.


%
%
\begin{lemma}\label{lem:Lemma5.1+5.2}
	There exists a family $\mathcal A $ of approximations with 
	$$|\mathcal A|\leq 2^d\exp\left(\frac {Ck(b-a)\log d}{d^{2/3}}\right),$$
such that every polymer in $\mathcal G_\cD(a,b)$ is approximated by an element in $\mathcal A$.
\end{lemma}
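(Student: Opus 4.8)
The plan is to build the family $\mathcal A$ by a greedy/iterative ``container-style'' argument, processing one coordinate $i \in [k]$ at a time and, within each coordinate, using a two-round exposure of vertices reminiscent of Sapozhenko's and Galvin's approximation schemes. Fix a polymer $\gamma=(A_1,\dots,A_k) \in \mathcal G_\cD(a,b)$. For each $i$, we want to produce a pair $(F_i,H_i)$ with $F_i \subset N(A_i) \subset \bar\cD_i$, $H_i \supset [A_i] \subset \cD_i$, such that $H_i \cup (\bar\cD_i \setminus F_i)$ induces a subgraph of maximum degree at most $d^{2/3}$, and such that the number of pairs $(F_i,H_i)$ arising this way, summed appropriately over the ``deficiency'' $|N(A_i)| - |[A_i]|$, is controlled. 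Since $\bar\cD_i \setminus F_i \supset \bar\cD_i \setminus N(A_i)$ consists of vertices all of whose $\cD_i$-neighbors lie outside $[A_i]$ (hence outside $A_i$) and $H_i \supset [A_i]$, the degree condition is morally saying that the ``boundary'' between $H_i$ and $\bar\cD_i\setminus F_i$ is thin; this is exactly the kind of statement the approximation scheme is designed to guarantee by discarding high-degree vertices into $F_i$ and $H_i$.

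The key steps, in order: (i) First I would recall the standard single-side approximation result — essentially Sapozhenko's lemma as used by Galvin \cite{galvin2011threshold} and in \cite{peled2020long} — which says that for a $2$-linked set $A \subset \cE$ with $|[A]| = a'$ and $|N(A)| = b'$, there is a family of ``fingerprints'' of size at most $2^{d-1}\exp(C(b'-a')\log d / d^{2/3})$, or an analogous bound, such that $A$ is approximated in the sense of \eqref{eq:approx-contain-def} together with the max-degree-$d^{2/3}$ property, by an element of that family; the factor $2^{d-1}$ (rather than $2^d$) comes from a vertex-count/entropy argument on the $n=2^{d-1}$ vertices of one side. (ii) Apply this to each coordinate separately. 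Since $A_1 \cup \cdots \cup A_k$ is $2$-linked but the individual $A_i$ need not be, I would first observe that replacing $A_i$ by the $2$-linked set $A_i \cup (\text{a spanning set of links})$ only enlarges $N(A_i)$, or alternatively run the approximation on the $2$-linked support $S(\gamma)$ projected to each side — the cleanest route is to note that $\|[\gamma]\| = a$ and $\|N(\gamma)\| = b$ decompose as $\sum_i a_i$ and $\sum_i b_i$ with $b_i \ge (1+c/\sqrt d)a_i$, and that the per-coordinate families have sizes multiplying to at most $\prod_i 2^{d-1}\exp(C(b_i-a_i)\log d/d^{2/3})$. (iii) Multiply: $\prod_{i=1}^k 2^{d-1} = 2^{k(d-1)}$ and $\prod_i \exp(C(b_i-a_i)\log d/d^{2/3}) = \exp(C(b-a)\log d/d^{2/3})$, giving $|\mathcal A| \le 2^{k(d-1)}\exp(C(b-a)\log d/d^{2/3})$. (iv) Finally absorb the $2^{k(d-1)}$ factor: since $b - a \ge \frac{c}{\sqrt d} a \ge \frac{c}{\sqrt d} d^4 = c d^{7/2}$, the exponent $C(b-a)\log d/d^{2/3} \gg d^{2/3}\log d \gg kd$ (as $k$ is fixed), so $2^{k(d-1)} \le \exp(Ck(b-a)\log d/d^{2/3})$ after adjusting $C$; combining with one extra factor $2^d$ to be safe gives the stated bound $|\mathcal A| \le 2^d \exp(Ck(b-a)\log d/d^{2/3})$.

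The main obstacle I expect is step (ii): the approximation scheme is naturally a statement about a single $2$-linked set on one side of the cube, and here the connectivity hypothesis lives on the union $S(\gamma) = A_1 \cup \cdots \cup A_k$ across all $k$ coordinates and across both sides. One must be careful that applying the scheme coordinatewise is legitimate — i.e., that the approximation of $A_i$ depends on $\gamma$ only through $N(A_i)$ (this is noted in the text just before the lemma), and that we are allowed to run the single-set scheme on each $A_i$ even though $A_i$ alone may be disconnected in the $2$-enhanced graph. The resolution is that the approximation scheme of \cite{peled2020long,galvin2011threshold} does not actually require $2$-linkedness for the \emph{counting} bound — it only requires the isoperimetric input $|N(A_i)| \ge (1+c/\sqrt d)|A_i|$, which we have from \cref{lem:isoperimetry} via the size restriction $|[A_i]| \le \tfrac34 \cdot 2^{d-1}$ built into the definition of a polymer — so the coordinatewise application goes through, and the connectivity of $H_\gamma$ is used only elsewhere (to keep the number of \emph{polymers} with a given support small, not here). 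The remaining bookkeeping — verifying the max-degree condition survives the coordinatewise construction and that the union over the finitely many regimes of $(a_i,b_i)$ does not cost more than the claimed multiplicative factor — is routine given the per-coordinate bound.
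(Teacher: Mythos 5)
Your reduction to a coordinatewise application of the single-set approximation lemma has a genuine gap, and it sits exactly at the point you flagged and then dismissed. The Sapozhenko--Galvin approximation scheme does not only use the isoperimetric input: the $2$-linkedness of the set being approximated is essential to the \emph{counting} step. The enumeration works by showing that the certificate sets produced by the construction are $O(1)$-linked (in \cite{galvin2011threshold}, $F'$ is $4$-linked and $T$ is $8$-linked), so that one pays a factor $2^d$ once for a starting vertex and then only $d^{O(1)}$ per certificate element, via a counting lemma in the spirit of \cref{lem:Lemma13}; without linkedness, certificates of size $t$ can sit anywhere and their number is of order $\binom{2^{d-1}}{t}=2^{\Theta(td)}$ rather than $2^d\,d^{O(t)}$. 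Concretely, take $A_i$ to be a union of $t$ pairwise far-apart vertices of $\cD_i$ (so $a_i=t$, $b_i=td$). The degree condition forces every vertex of $H_i$ to have at least $d-d^{2/3}$ neighbours in $F_i\subset N(A_i)$, hence $|H_i|=O(td)$, so a single pair $(F_i,H_i)$ approximates at most $e^{O(t\log d)}$ such sets; since there are $2^{td(1-o(1))}$ of them, any approximating family must have size at least $2^{td(1-o(1))}$, vastly exceeding $2^{d}\exp(Ctd^{1/3}\log d)$. So the per-coordinate bound $2^{d-1}\exp(C(b_i-a_i)\log d/d^{2/3})$ invoked in your step (ii) is false for a non-$2$-linked $A_i$, and in the present lemma only the union $A_1\cup\cdots\cup A_k$ is guaranteed to be $2$-linked (for $k\ge2$ an individual $A_i$ can be badly scattered, with the other coordinates providing the links).

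The paper's proof repairs this precisely where your argument breaks: the constructions of Lemmas 5.1--5.2 of \cite{galvin2011threshold} are indeed carried out separately in each coordinate, but the \emph{enumeration} is performed on the union over $i\in[k]$ of the resulting certificate sets, which is $O(1)$-linked because $A_1\cup\cdots\cup A_k$ is $2$-linked. One therefore pays the $2^d$ factor only once, and the remaining cost is the choice, for each element of the linked union, of the coordinates it serves; this is why the relevant terms are raised to the power $k$ and is the source of the factor $k$ in the exponent $Ck(b-a)\log d/d^{2/3}$. Your step (iv), absorbing stray $e^{O(kd)}$ factors using $b-a\ge cd^{7/2}$, is fine as far as it goes, but it cannot rescue step (ii): the loss caused by disconnected $A_i$ is of order $2^{\Theta(ad)}$, not $2^{O(kd)}$.
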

\begin{proof}
The proof is basically that of Lemma 5.1 and Lemma 5.2 in \cite{galvin2011threshold}. Indeed, the case $k=1$ follows directly from these two lemmas (see the paragraph following Lemma 5.3 there, and note that the maximum degree condition is written in the proof of Lemma 5.2). The case $k>1$ requires only minor modifications, which we now explain. The constructions in the proof of Lemma~5.1 are carried out separately for each coordinate $i \in [k]$, yielding the sets $(F',T_0,T'_0,T_1,T,L,\Omega)$ for each coordinate. The algorithmic procedure at the end of Lemma~5.1 and in Lemma~5.2 is also done separately for each coordinate. The only part of the argument which is not done separately for each coordinate is related to the enumeration in Lemma~5.1: (1) The argument that $F'$ is 4-linked and hence that $T$ is 8-linked works as written for the unions over all coordinates $i$ of the respective sets. (2) Given the union of the $T$s, we must choose the subsets $(T_0,T_1,\Omega)$ for each coordinate, and hence the terms in (5.9) other than $|Y|$ are raised to the power $k$.
\end{proof}

Recall the definition of $\alpha$ from \cref{lem:Gab}.

%
%
\begin{lemma}\label{lem:bdOnSum}
Assume~\eqref{eq:lambda-cond-k}.
Then for any approximation $(F_i,H_i)_i$,
\[ \sum_{\gamma\in \cG_\cD(a,b):\gamma\approx (F_i,H_i)_i} \omega(\gamma) \le \exp\left( -\frac{c(b-a)\alpha^2}{k^2\log d}\right) .\]
\end{lemma}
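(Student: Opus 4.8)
## Proof proposal for Lemma (bounding $\sum_{\gamma \approx (F_i,H_i)_i} \omega(\gamma)$)

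\textbf{Overall strategy.} The plan is to bound the sum over polymers $\gamma \in \cG_\cD(a,b)$ with a \emph{fixed} approximation $(F_i,H_i)_i$ by showing that each such polymer is severely constrained, and then to combine the entropy (number of such $\gamma$) with the smallness of the weight $\omega(\gamma)$. The key structural fact supplied by the approximation is that for any $\gamma=(A_1,\dots,A_k)$ with $\gamma \approx (F_i,H_i)_i$, we have $A_i \subset H_i$ (since $A_i \subset [A_i] \subset H_i$) and $N(A_i) \supset F_i$, so $N(A_i)$ lies between $F_i$ and $N(H_i)$, and moreover the ``uncertain'' region $N(H_i) \setminus F_i$ is where all the freedom sits. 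The heart of the matter is that $|N(H_i)\setminus F_i|$, summed over $i$, is controlled: because $\|N(\gamma)\|=b$ and $\|[\gamma]\|=a$, and because $F_i \subset N(A_i)$ forces $|F_i|$ to be not much smaller than $b$-ish while $N(H_i)$ cannot be much larger than $a$-ish, the total slack is $O(b-a)$ up to logarithmic/degree factors. I would first make this precise: establish that $\sum_i |N(H_i)\setminus F_i| \le C(b-a) + (\text{lower order})$, using the maximum-degree hypothesis on $H_i \cup (\bar\cD_i \setminus F_i)$ together with \cref{lem:isoperimetry} and a double-counting of edges between $H_i$ and its neighborhood.

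\textbf{Weighted count of polymers with a fixed approximation.} Given the approximation, I would bound $\sum_{\gamma \approx (F_i,H_i)_i} \omega(\gamma)$ directly from the definition~\eqref{eq:polymer-weight-def}. Write each $\omega(\gamma)$ as a sum over $B_i \subset N(A_i)$; since $N(A_i) \subset N(H_i)$ and $F_i \subset N(A_i)$, and since every vertex of $B_i$ contributes an edge to $E(A_i,B_i)$, the exponential factor $e^{-\beta|\bigcup_i E(A_i,B_i)|}$ costs at least $e^{-\beta}$ per vertex of each $B_i$ (more carefully, using the super-multiplicativity of $(\delta_m)$ exactly as in the proof of \cref{lem:weight-of-polymer}). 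The point is that the contribution factorizes over vertices $v$: vertices $v \in F_i$ (which must lie in $N(A_i)$) contribute a bounded factor, while for the uncertain vertices $v \in N(H_i)\setminus F_i$ one gets a factor involving $\alpha = \lambda(1-e^{-\beta})$ that is bounded away from $\delta_k^{1/k}$ — this is precisely the ``gain'' that was deliberately \emph{not} exploited in \cref{lem:weight-of-polymer}. Combining with the fact that the number of choices of the support $A_i \subset H_i$ with $A_1\cup\cdots\cup A_k$ 2-linked is at most exponential in $\sum_i|N(H_i)\setminus F_i|$ (here one uses \cref{lem:isoperimetry} and \cref{lem:Lemma13}-type counting, plus the key observation that two polymers with the same approximation have closures inside the $H_i$ and neighborhoods containing $F_i$, so they can only differ on the slack region), one gets a bound of the form $\exp(-(c\alpha^2/(k^2\log d))\cdot \sum_i|N(H_i)\setminus F_i|)$ times the contribution of the ``minimal'' polymer, which by a similar argument is itself $\le 1$. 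Feeding in $\sum_i|N(H_i)\setminus F_i| \gtrsim (b-a)$ gives the stated bound $\exp(-c(b-a)\alpha^2/(k^2\log d))$.

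\textbf{Expected main obstacle.} The delicate point — and the reason the bound carries an $\alpha^2$ and a $(\log d)^{-1}$ rather than just $\alpha$ — is squeezing out the \emph{square} of $\alpha$ and paying only a logarithmic penalty. A naive argument yields only $\exp(-c\alpha(b-a))$, which is too weak when $\alpha$ is as small as $d^{-1/3}\log d$ relative to the entropy cost $\exp(C(b-a)\log d/d^{2/3})$ incurred by \cref{lem:Lemma5.1+5.2}; one needs the two $\alpha$-factors so that $\alpha^2/\log d$ beats $\log d/d^{2/3}$ in the regime~\eqref{eq:lambda-cond-k}. Extracting the second factor of $\alpha$ requires the finer ``two-sided'' estimate: not only does each uncertain vertex cost an $\alpha$-factor in the weight, but also the number of polymers compatible with the approximation on the slack region is not $2^{\sum|N(H_i)\setminus F_i|}$ but rather is itself suppressed by the geometry (a vertex being in $N(A_i)$ forces some adjacent vertex into $A_i$, and the isoperimetry then limits how many such $A_i$-vertices there can be). This is exactly the entropy/container balancing argument of Sapozhenko–Galvin, and adapting it to the positive-temperature weight $\omega(\gamma)$ (where the $B_i$-sum must be carried along) is where essentially all the work lies; I would structure it by first conditioning on $S(\gamma) \cap (N(H_i)\setminus F_i)$ for each $i$, bounding the number of such choices via isoperimetry, and then bounding the residual weight-sum over the remaining degrees of freedom.
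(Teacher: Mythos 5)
Your proposal has a genuine gap, and it sits exactly at the step you call the ``heart of the matter''. The claimed structural fact $\sum_i |N(H_i)\setminus F_i| \le C(b-a)$ is false: nothing in the definition of an approximation forces $|F_i|$ to be within $O(b-a)$ of $|N(H_i)|$, and $N(H_i)$ is not ``$a$-ish'' -- it lives on the opposite side from $H_i$ and can have size up to $d|H_i|$, i.e.\ of order $db$ (the paper itself only uses $|N(H)|\le 2db$, see \eqref{eq:N(H)-small}). What \emph{is} of size $O(b-a)$ is $b-|F|=|N(A)\setminus F|$ (via \eqref{eq:approx-size-def}), but that small set sits inside the much larger slack region $N(H)\setminus F$, so locating it costs entropy of order $(b-|F|)\log d$, not $O(b-a)$. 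This is precisely why the paper splits into two regimes according to $b_i-|F_i|$: when $F$ is large, one gets the full gain $e^{-c\alpha(b-a)}$ from the weight and pays the container entropy $\binom{2db}{b-|F|}$ (\cref{lem:bdOnSumForLargeF}); when $F$ is small, one only gets $e^{-c\alpha(b-|F|)}$, which via the case threshold yields $e^{-c\alpha^2(b-a)/(k\log d)}$ (\cref{lem:bdOnSumForSmallF}). So the exponent $\alpha^2/\log d$ is the worse of these two cases, not the result of extracting a ``second $\alpha$-factor'' from geometric suppression of the number of polymers; note also that your motivating comparison is off -- under \eqref{eq:lambda-cond-k} a uniform bound $e^{-c\alpha(b-a)}$ \emph{would} beat the family entropy of \cref{lem:Lemma5.1+5.2}; the problem is that it is not achievable uniformly in $(F,H)$.

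The second missing ingredient is how to actually sum the positive-temperature weights over all supports consistent with a fixed approximation. The family of such $A_i$ is exponential in $a$ (many sets share a closure), so counting supports ``exponentially in the slack'' cannot work, and the weight does not factorize over vertices once you sum over $A$: the super-multiplicativity of $(\delta_m)$ from \cref{lem:weight-of-polymer} bounds the weight of a \emph{single} polymer but gives no handle on the sum. The paper's route is to first decouple the $k$ coordinates using $|E(A_1,B_1)\cup\cdots\cup E(A_k,B_k)|\ge \frac1k\sum_i|E(A_i,B_i)|$, i.e.\ pass to one-component weights at inverse temperature $\beta/k$, and then, for $k=1$, to absorb the sum over supports with the entropy (weighted Shearer) inequalities of \cref{lem:weighted-shearer0,lem:weighted-shearer} comparing to $K_{d,d}$, combined with the $Z(\Psi)$ bound of \cref{lem:Z-Psi-bound} in which the gain per vertex $v\in N(A)$ is proportional to $\ell_{\Psi_v}$, the number of its neighbours outside the closure (summing to $d(b-a)$); the maximum-degree condition $d^{2/3}$ in the definition of an approximation is what makes $\ell_v\ge d-d^{2/3}$ for vertices outside $F$ in the small-$F$ regime. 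Your outline never engages with this mechanism, and without it (or a substitute) the proposed argument does not go through.
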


It is not hard to deduce \cref{lem:Gab} from \cref{lem:Lemma5.1+5.2} and \cref{lem:bdOnSum}.
\begin{proof}[Proof of \cref{lem:Gab}]
\cref{lem:Lemma5.1+5.2} and \cref{lem:bdOnSum} yield that
\[ \sum_{\gamma \in \cG_\cD(a,b)} \omega(\gamma) \le 2^d \exp\left(-\frac{c(b-a) \alpha^2}{k^2\log d} + \frac {Ck(b-a)\log d}{d^{2/3}} \right) .\]
To obtain the lemma (with different constants), it suffices to check that $\frac{(b-a)\alpha^2}{k^2\log d}$ is greater than $\frac {Ck(b-a)\log d}{d^{2/3}}$. This follows from~\eqref{eq:lambda-cond-k}.
\end{proof}

It remains to prove \cref{lem:bdOnSum}.
The proof will boil down to the case of $k=1$. Recall that in this case, a polymer is just a 2-linked subset $A$ of $\cE$ or $\cO$ whose closure has size at most $\frac34 2^{d-1}$. An approximation in this case is simply a pair $(F,H)$ of subsets of $Q_d$ satisfying the required properties.
The proof is split into two lemmas, each effective for a different size of $F$. We write $\cG_1(a,b)$ as shorthand for $\cG_{\cE}(a,b)$, which may also be identified with $\cG_{\cO}(a,b)$.

\begin{lemma}\label{lem:bdOnSumForLargeF}
Suppose that $\lambda \le \lambda_0$ and $\lambda(1-e^{-\beta})^2 \ge \frac{C\log d}d$. For any approximation $(F,H)$,
\[ \sum_{\gamma\in \cG_1(a,b):\gamma\approx (F,H)} \omega(\gamma) \le \binom{2db} {b-|F|} \exp\left(\frac b{d^4}- c\alpha (b-a)\right) .\]
\end{lemma}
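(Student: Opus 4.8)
The plan is to reduce to the zero-temperature counting of Galvin~\cite{galvin2011threshold}, but carrying along the positive-temperature weight. Fix an approximation $(F,H)$ with $F\subset\bar\cD_1$, $H\subset\cD_1$, and recall that a polymer $\gamma\approx(F,H)$ in $\cG_1(a,b)$ is a $2$-linked set $A$ with $F\subset N(A)$, $[A]\subset H$, $\|[\gamma]\|=|[A]|=a$ and $\|N(\gamma)\|=|N(A)|=b$. First I would bound $\omega(\gamma)$: by the reasoning behind \cref{lem:weight-of-polymer} (with $k=1$, $\tilde\alpha_1=\alpha_1 = 1-\frac{\alpha}{1+\lambda}$), each vertex of $N(A)$ contributes a factor $1+\lambda e^{-\beta}$ to the $B$-sum, so
\[ \omega(\gamma) \le \lambda^{|A|}\left(\frac{1+\lambda e^{-\beta}}{1+\lambda}\right)^{|N(A)|} = \lambda^{|A|}\,\alpha_1^{\,b}. \]
Since $|A|\le|[A]| = a\le b$ and $\lambda\le\lambda_0$, this is at most $\lambda_0^{b}\alpha_1^{b}\le \exp(Cb)$ — far too lossy on its own, so the gain must come from counting.

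The key point is that $A$ is \emph{determined} once we know its neighborhood $N(A)$: indeed $[A]=\{v : N(v)\subset N(A)\}$ is a function of $N(A)$, and $A\subset[A]$, but more is true — since a polymer is the largest set with its neighborhood, it suffices to enumerate the possible sets $N(A)$, i.e.\ sets $W$ with $F\subset W\subset\bar\cD_1$, $|W|=b$, such that $[W]:=\{v:N(v)\subset W\}$ has size $a$. The number of such $W$ is at most $\binom{|N(F)|}{b-|F|}$ times nothing else relevant once we observe $W\setminus F\subset N([W])\subset N(H)$... more carefully: every vertex of $W$ lies in $N(A)\subset N([A])$, and $[A]$ is $2$-linked of size $a$ with $N([A])\supset F$; so $W\setminus F$ is a set of at most $b-|F|$ vertices, each within distance $1$ of the $2$-linked set $[A]$ whose neighborhood is $W$ itself, hence each lies in $N(\,\cdot\,)$ of a vertex whose neighborhood is inside $W$. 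The clean way Galvin handles this is: the vertices of $W$ lie in $N(F^{\mathrm{cl}})$ where $F^{\mathrm{cl}}$ has size $O(b/d)$ (this uses $|N(A)|\ge(1+c/\sqrt d)|A|$, i.e.\ $b\ge(1+c/\sqrt d)a$, to control $|[A]|$), so $|N(F)|\le 2d\cdot b$ crudely, giving at most $\binom{2db}{b-|F|}$ choices. I would follow this route, citing Galvin for the combinatorial enumeration and only re-deriving the weight bound.

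Putting these together: the sum over $\gamma\approx(F,H)$ in $\cG_1(a,b)$ is at most (number of admissible $W$) times $\max_\gamma\omega(\gamma)$, i.e.
\[ \sum_{\gamma\in\cG_1(a,b):\gamma\approx(F,H)}\omega(\gamma)\ \le\ \binom{2db}{b-|F|}\,\lambda_0^{\,b}\,\alpha_1^{\,b}. \]
It remains to check $\lambda_0^{b}\alpha_1^{b}\le \exp\!\big(\tfrac{b}{d^4}-c\alpha(b-a)\big)$. Here I would \emph{not} expect $\alpha_1^b$ alone to beat $\lambda_0^b$ — this is exactly why the $\binom{2db}{b-|F|}$ factor must absorb part of the entropy and why the target bound has a $(b-a)$ in the exponent rather than $b$. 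So the real argument must extract the decay from the \emph{discrepancy} $b-a=|N(A)|-|[A]|$: Galvin's enumeration in fact gives roughly $\binom{\text{poly}(d)\cdot(b-a)}{b-|F|}$-type bounds once one peels off the ``bulk'' of $N(A)$ that is forced by $F$, and one combines this with the observation that each of the $\approx(b-a)$ ``free'' neighbor-vertices in $N(A)$ that are \emph{not} forced carries a genuine weight saving of a factor $\alpha_1<1$ relative to being occupied — more precisely, comparing $\omega(\gamma)$ to the ground-state weight of the region $H$, each vertex of $N(A)\setminus(\text{forced set})$ contributes $\alpha_1$ instead of $1$, yielding an extra $\alpha_1^{\,c(b-a)}\le e^{-c\alpha(b-a)}$ using $1-\alpha_1=\frac{\alpha}{1+\lambda}\ge c\alpha$ and $\log(1/\alpha_1)\ge c\alpha$. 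The crude $\binom{2db}{b-|F|}$ is then retained only as the stated clean bound for the complementary regime handled in the companion lemma (for \emph{large} $|F|$, this binomial is small), and the $\frac{b}{d^4}$ slack soaks up lower-order terms and the maximum-degree $d^{2/3}$ conditions.

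\medskip
The main obstacle I anticipate is the bookkeeping that separates the ``forced'' part of $N(A)$ (determined by $F$ and by the approximation structure, contributing no entropy but also no decay) from the ``free'' part (of size comparable to $b-a$, contributing both the binomial entropy $\binom{2db}{b-|F|}$ and the multiplicative decay $e^{-c\alpha(b-a)}$), and verifying that the positive-temperature weight $\omega(\gamma)$ genuinely factors as (ground-state weight)$\times\alpha_1^{\#\text{free}}$ up to the $e^{b/d^4}$ error — this is where the maximum-degree $\le d^{2/3}$ condition on $H\cup(\bar\cD_1\setminus F)$ built into the definition of an approximation is used, exactly as in \cite{peled2020long}. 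The constraint $\lambda(1-e^{-\beta})^2\ge C\log d/d$ enters precisely to guarantee $\alpha\cdot(b-a)\ge\alpha\cdot\tfrac{c}{\sqrt d}a\ge\alpha\cdot\tfrac{c}{\sqrt d}d^4$ dominates the $\log d$ and $d^{2/3}$ losses, and also to make $\alpha_1^{d}$ polynomially small so that small deviations are controlled; I would state these numeric checks at the end.
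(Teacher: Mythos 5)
There is a genuine gap at the heart of your argument. Your counting scheme is ``(number of admissible neighborhoods $W=N(A)$) $\times$ $\max_\gamma\omega(\gamma)$'', justified by the claim that $A$ is determined by $N(A)$ ``since a polymer is the largest set with its neighborhood''. That is false: a polymer is the set $A$ itself, not its closure, and many distinct polymers $A$ share the same closure and hence the same neighborhood (any $2$-linked $A\subset[A']$ with $N(A)=N(A')$), with multiplicity that can be exponential in $a$. So even if your per-polymer bound were strong enough, the enumeration would undercount. Moreover, as you yourself note, the per-polymer bound $\omega(\gamma)\le\lambda^{|A|}\alpha_1^{\,b}$ does not yield the stated exponent $e^{b/d^4-c\alpha(b-a)}$ (for $\lambda_0>1$ and small $\alpha$ it is not even $O(1)^{b-a}$), and your proposed fix --- that each ``free'' vertex of $N(A)$ contributes a factor $\alpha_1$ relative to a ground-state weight of $H$, extracted via Galvin-style peeling --- is never substantiated; it is a heuristic, not an argument, and it is not how the decay actually arises. (A small further point: the maximum-degree-$d^{2/3}$ condition in the definition of an approximation is not what is needed here; it is used in the companion small-$|F|$ lemma, \cref{lem:bdOnSumForSmallF}. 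Here one only needs \eqref{eq:approx-contain-def} and the bound $|H|\le|F|+3(b-a)d^{-1/3}$.)

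The paper's proof fixes exactly the two points above by aggregating before bounding: for a \emph{fixed closure} $A'$ with $|A'|=a$, $|N(A')|=b$, it bounds the total weight of \emph{all} pairs $(A,B)$ with $[A]=A'$, $B\subset N(A)$, identified with a family $\cF\subset\{0,1\}^{A'\cup N(A')}$, via the entropy inequality \cref{lem:weighted-shearer0} combined with \cref{lem:Z-Psi-bound}. The $e^{-c\alpha(b-a)}$ decay comes from the fact that for each $v\in N(A')$ the local patterns $\Psi_v$ avoid $\bar 0$ (since $[A]=A'$ forces $v$ to have an occupied neighbor) and have $\ell_{\Psi_v}\ge|N(v)\setminus A'|$ frozen coordinates, together with the identity $\sum_{v\in N(A')}|N(v)\setminus A'|=d(b-a)$; this gives the per-closure bound $(1+\lambda)^b e^{b/d^4-c\alpha(b-a)}$, i.e.\ $e^{b/d^4-c\alpha(b-a)}$ after dividing by the $(1+\lambda)^{|N(A)|}$ in the definition of $\omega$. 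Only then does one take a union bound over closures, using $F\subset N(A)\subset N(H)$, $|N(H)|\le d|H|\le 2db$, and the fact that $N(A)$ determines $[A]$, to get the factor $\binom{2db}{b-|F|}$ --- this last step is essentially the part of your proposal that is sound. To repair your write-up you would need to replace the ``max weight times count'' step by a bound on the aggregated weight per closure carrying the $e^{-c\alpha(b-a)}$ factor, which is precisely the entropy-method input you left as a heuristic.
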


\begin{lemma}\label{lem:bdOnSumForSmallF}
Suppose that $\lambda \le \lambda_0$ and $\lambda(1-e^{-\beta})^2 \ge \frac{C\log d}d$. For any approximation $(F,H)$,
\[ \sum_{\gamma\in \cG_1(a,b):\gamma\approx (F,H)} \omega(\gamma) \le \exp\left(-c\alpha (b-|F| - 3(b-a)d^{-1/3}) + b/d^4 + Cbd \alpha e^{-c\alpha d} \right) .\]
\end{lemma}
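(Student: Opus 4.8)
This is the complement, in the case $k=1$, of \cref{lem:bdOnSumForLargeF}: there the number of polymers sharing a fixed approximation $(F,H)$ is controlled by the crude factor $\binom{2db}{b-|F|}$, which is wasteful precisely when $|F|$ is far below $b$, and in that regime one must instead extract a correspondingly larger exponential decay from the weights. So the plan is to fix the approximation $(F,H)$ — here $F\subset\cO$, $H\subset\cE$, with $H\cup(\cO\setminus F)$ inducing a subgraph of maximum degree at most $d^{2/3}$, and for $k=1$ a polymer $\gamma$ is just a $2$-linked set $A\subset\cE$ (or $\cO$) — and to bound $\sum_{\gamma\in\cG_1(a,b):\gamma\approx(F,H)}\omega(\gamma)$ by $e^{-c\alpha(b-|F|)}$ up to the stated lower-order corrections, where $\alpha:=\lambda(1-e^{-\beta})$.

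First I would record the geometric structure of a polymer $\gamma=A$ approximated by $(F,H)$. Since $A\subset[A]\subset H$ and every vertex of $H$ has at most $d^{2/3}$ neighbours in $H\cup(\cO\setminus F)$, each $v\in A$ has at least $d-d^{2/3}$ of its $d$ neighbours in $F$; summing $|N(v)\cap F|$ over $v\in A$ and comparing with $\sum_{w\in F}|N(w)\cap A|\le d|F|$ yields $|A|\le(1+2d^{-1/3})|F|$, so $A$ is small and ``hugs'' $F$. Moreover $N(A)=F\sqcup R$ with $R:=N(A)\setminus F$ and $|R|=b-|F|$, and the closure $[A]=\{v:N(v)\subset N(A)\}$ differs from the fixed set $H_0:=\{v:N(v)\subset F\}$ only within $N(R)$, a set of size at most $d(b-|F|)$; hence $A\subset H_0\cup N(R)$ and $a=|[A]|$ differs from $|H_0|$ by at most $d(b-|F|)$. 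Finally \cref{lem:isoperimetry} gives $b\ge(1+\Omega(1/\sqrt d))a$, so the slack $b-a$ — to which the correction $3(b-a)d^{-1/3}$ is charged — is positive.

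The heart of the proof is to bound the weighted sum over $A$ directly, without enumerating polymers: a crude enumeration is hopeless here, because the hypothesis permits $\alpha$ as small as $\Theta((\log d)/d)$, so the per-vertex weight factor $\alpha_1=1-\tfrac{\alpha}{1+\lambda}$ is extremely close to $1$. By \eqref{eq:weight-ratio} the sum equals $\mu_1(I\cap\cE\in\mathcal A)/\mu_1(I\cap\cE=\emptyset)$, where $\mathcal A\subset\cG_1(a,b)$ is the set of polymers approximated by $(F,H)$, and I would bound this ratio by the entropy / $K_{d,d}$-comparison technique of \cite{peled2020long} (the same tool used for \cref{lem:bad-configs}). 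Informally: the approximation ``predicts'' the set $F$ of occupied-adjacent odd vertices, and each of the $b-|F|$ further occupied-adjacent vertices — those of $R$ — contributes a factor at most $\alpha_1\le e^{-c\alpha}$ in a product-measure bound on the bipartite graph between the two sides; at positive temperature the sum over the decorations $B$ in the definition of $\omega$ must be carried along throughout, which is exactly why the Peled--Spinka argument, rather than the simpler zero-temperature counting of \cite{galvin2011threshold}, is required. This produces the main factor $e^{-c\alpha(b-|F|)}$; the correction $-c\alpha\cdot 3(b-a)d^{-1/3}$ absorbs the $\Omega(1/\sqrt d)$ isoperimetric loss together with the $d^{2/3}$ degree threshold built into the approximation, $b/d^4$ is an entropy-rounding error, and $Cbd\alpha e^{-c\alpha d}$ bounds the contribution of the genuinely free vertices near the boundary of $F$, using $\alpha d\ge c\log d$ (which follows from $\lambda(1-e^{-\beta})^2\ge C(\log d)/d$) to keep this term lower order.

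I expect the main obstacle to be precisely this last step: pushing the entropy comparison through at positive temperature with the bookkeeping needed to land the corrections in the stated form — in particular controlling the sum over decorations and verifying that the model on $K_{d,d}$ indeed yields the clean per-vertex factor $\alpha_1$. The geometric preliminaries and the final assembly are routine once that estimate is available; it is the entropy bound itself — the only tool strong enough to beat the feeble weight decay when $\alpha\approx(\log d)/d$ — that does the real work, as a positive-temperature analogue of the arguments in \cite{galvin2011threshold,peled2020long}.
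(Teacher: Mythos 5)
You have identified the right tool (the entropy/$K_{d,d}$ comparison of~\cite{peled2020long}, i.e.\ \cref{lem:weighted-shearer} combined with \cref{lem:Z-Psi-bound}) and the right overall shape, so your plan does coincide with the paper's route. But the proposal stops at a plan: the step you yourself flag as ``the main obstacle'' is the entire content of the lemma, and the heuristic you offer in its place --- that each of the $b-|F|$ vertices of $R=N(A)\setminus F$ contributes a fixed factor $\alpha_1\le e^{-c\alpha}$ in a product-measure bound --- is not the mechanism and cannot by itself produce the stated error terms, because $N(A)$ (hence $R$) varies over the sum, so there is no fixed set of vertices to which such factors could be attached. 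The actual argument encodes each pair $(A,B)$ with $A\approx(F,H)$, $B\subset N(A)$, as an element of $\{0,1\}^{T}$ with $T=H\cup N(H)$, applies \cref{lem:weighted-shearer} over the odd vertices $v\in N(H)$, and then feeds in \cref{lem:Z-Psi-bound} to get the per-vertex factor $Z(\Psi_v)^{p_v/d}\le\big((1+\lambda)e^{1/d^4}e^{-c\alpha\ell_v/d}\big)^{p_v}$. The gain $e^{-c\alpha(b-|F|-3(b-a)d^{-1/3})}$ then comes from the accounting $\sum_v p_v=\E|N(A)|=b$ and $\sum_v p_v\ell_v=\E\sum_{v\in N(A)}\ell_v\ge bd-|F|d-3(b-a)d^{2/3}$, where the last inequality uses $\ell_v\ge|N(v)\setminus H|$ together with the edge bound $|E(N(A)\setminus F,H)|\le 3(b-a)d^{2/3}$ from~\eqref{eq:approx-size-def}; in particular the $3(b-a)d^{-1/3}$ correction is a consequence of the approximation's $d^{2/3}$ edge bound, not of the isoperimetric loss as you suggest.

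The second genuinely missing piece is the control of the entropy factors $(1/p_v)^{p_v/d}(1/(1-p_v))^{(1-p_v)/d}$ that \cref{lem:weighted-shearer} produces: these must be absorbed by a portion of the gain $e^{-c\alpha\ell_v p_v/d}$, and this is exactly where the term $Cbd\alpha e^{-c\alpha d}$ in the statement comes from. The key observation (absent from your sketch) is that any $v$ with $p_v<1$ must lie outside $F$, hence by the maximum-degree-$d^{2/3}$ condition in the definition of an approximation has $\ell_v\ge d-d^{2/3}\ge 2d/3$; one then checks, by a short case analysis in $p_v$ (comparing $H(p_v)/d$ with $c\alpha p_v$ and using $\alpha d\ge C\log d$), that each such factor costs at most $e^{C\alpha e^{-c\alpha d}}$, and $|N(H)|\le 2db$ converts this into the stated error. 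Your geometric preliminaries ($|A|\le(1+2d^{-1/3})|F|$, the comparison of $[A]$ with $\{v:N(v)\subset F\}$) are correct but are not what the proof needs; what is needed, and what is missing, is precisely the positive-temperature entropy bookkeeping above.
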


Before proving these two lemmas, let us show how they yield \cref{lem:bdOnSum}.

\begin{proof}[Proof of \cref{lem:bdOnSum}]
Consider a polymer $\gamma=(A_1,\dots,A_k)$ and a decorated polymer $\hat\gamma=(A_1,\dots,A_k,B_1,\dots,B_k)$ extending it.
Recall the definitions of their weights $\omega(\gamma)$ and $\omega(\hat\gamma)$ from~\eqref{eq:polymer-weight-def} and~\eqref{eq:dec-polymer-weight-def}. In this proof, we will need to keep track of the inverse temperature parameter $\beta$, and we write it explicitly in the notation of the weights $\omega_\beta(\gamma)$ and $\omega_\beta(\hat\gamma)$.
Using that $|E(A_1,B_1) \cup \cdots \cup E(A_k,B_k)| \ge \frac1k (|E(A_1,B_1)| + \cdots + |E(A_k,B_k)|)$, we see that
\[ \omega_\beta(\hat\gamma) \le \prod_{i=1}^k \frac{\lambda^{|A_i|+|B_i|}}{(1+\lambda)^{|N(A_i)|}} e^{-\frac \beta k |E(A_i,B_i)|} .\]
After applying this bound, the weight of a $\cD$-polymer factorizes over the $k$ components:
\[ \omega_\beta(\gamma) \le \sum_{B_1 \subset N(A_1),\dots,B_k \subset N(A_k)} \prod_{i=1}^k \frac{\lambda^{|A_i|+|B_i|}}{(1+\lambda)^{|N(A_i)|}} e^{-\frac \beta k |E(A_i,B_i)|} = \prod_{i=1}^k \omega_{\beta/k}((A_i)) .\]
Note that the term on the right-hand side refers to weights of polymers in the 1-system (that is, $(\cE)$-polymers or $(\cO)$-polymers).
Hence,
\[ \sum_{\gamma\in \cG_\cD(a,b),\gamma\approx (F_i,H_i)_i} \omega_\beta(\gamma) \le \sum_{\substack{a_1+\cdots+a_k=a\\b_1+\cdots+b_k=b}} \prod_{i=1}^k \sum_{\substack{\gamma\in \cG_{\cD_i}(a_i,b_i):\\\gamma\approx (F_i,H_i)}} \omega_{\beta/k}(\gamma) .\]
We claim that each term in the product satisfies
\[ \sum_{\gamma\in \cG_1(a_i,b_i),\gamma\approx (F_i,H_i)} \omega_{\beta/k}(\gamma) \le e^{-c(b_i-a_i) \alpha^2/k^2\log d + b_i/d^3} .\]
Indeed, if $b_i-|F_i| \le \frac{c\alpha(b_i-a_i)}{k\log d}+3(b_i-a_i)d^{-1/3}$ (for a small enough constant $c$), we apply \cref{lem:bdOnSumForLargeF} (note that $\alpha$ decreases by at most a factor $2k$ when $\beta$ decreases by a factor $k$) to deduce that the left-hand side is bounded by
\[ \binom{2db_i}{\frac{c\alpha(b_i-a_i)}{k\log d}+\frac{3(b_i-a_i)}{d^{1/3}}} \exp\left(\frac{b_i}{d^4} - \frac{c\alpha}k(b_i-a_i) \right) ,\]
which is seen to at most the claimed value by using the bound $\binom nm \le (en/m)^m$ and that $\alpha \ge \frac{Ck^2\log d}{d^{1/3}}$ and $b_i/(b_i-a_i) \le C\sqrt{d}$ by \cref{lem:isoperimetry} (otherwise $\cG_{\cD_i}(a_i,b_i)$ is empty). Otherwise, we apply \cref{lem:bdOnSumForSmallF} to deduce that the left-hand side is bounded by
\[ \exp\left(-\frac{c\alpha^2(b_i-a_i)}{k^2\log d} + \frac{b_i}{d^4} + Cb_i \tfrac{\alpha d}k e^{-c\alpha d/k}\right) ,\]
which is at most the claimed value since $1/d^4 + \frac{C\alpha d}k e^{-c\alpha d/k} \le 1/d^3$ using that $\frac{\alpha d}k \ge C\log d$.
Thus,
\[ \sum_{\gamma\in \cG_{k,m}(a,b),\gamma\approx (F_i,H_i)_i} \omega_\beta(\gamma) \le  \binom{a-k+1}{k-1} \binom{b-k+1}{k-1} e^{-c(b-a)\alpha^2 / k^2\log d + b/d^3} .\]
Each multinomial is at most $b^k$. Thus,
\[ \sum_{\gamma\in \cG_{k,m}(a,b),\gamma\approx (F_i,H_i)_i} \omega_\beta(\gamma) \le  e^{-c(b-a)\alpha^2 / k^2 \log d + b/d^3 + 2k \log b} , \]
and the lemma follows after noting that $b/d^3+2k\log b$ is negligible compared with $(b-a)\alpha^2 / k^2 \log d$ since $b/(b-a) \le C\sqrt{d}$, $b\ge a >d^4$ and the assumption on $\alpha$.
\end{proof}

\medbreak

The rest of this section is devoted to the proofs of \cref{lem:bdOnSumForLargeF} and \cref{lem:bdOnSumForSmallF}.
We first require some preparation in the form of a preliminary tool from~\cite{peled2020long} and an additional computation.
The tool, which we now present, is a method for bounding the weight of certain collections of configurations in the positive-temperature hard-core model on $Q_d$.

For a family $\cF$ of configurations $I \subset V(Q_d)$, define
\[ \tilde\omega(\cF):=\sum_{I\in \cF}\lambda^{|I|}e^{-\beta|E(I)|} .\]
For $\Psi \subset \{0,1\}^d$, define
\[ Z(\Psi) := \sum_{\psi \in \Psi} \lambda^{|\psi|} \left(1+ \lambda e^{-\beta |\psi|}\right)^d ,\]
where we identify an element $\psi \in \{0,1\}^d$ with a subset of $V(Q_d)$ (so that $|\psi|$ is the same as $|\psi^{-1}(1)|$).
Given $U \subset V \subset V(Q_d)$ and $f \in \{0,1\}^V$, we write $f_U$ for the restriction of $f$ to $U$, and $|f_U|$ for $|\{ u \in U : f(u)=1\}|$.
Note that $Z(\{0,1\}^d)$ is exactly the partition function $Z_1(K_{d,d},\lambda,\beta)$ of the positive-temperature hard-core model on the complete bipartite graph $K_{d,d}$.

Our analysis relies on a entropy tool from~\cite{peled2020long}, given in \cite[Lemma 7.3]{peled2020long}. This is general tool which applies to nearest-neighbor discrete spin systems on regular bipartite graphs (it was formulated for $\Z^d$, but the statement and proof holds more generality). Rather than stating the general lemma (which would require additional definitions), we formulate three special cases for the positive-temperature hard-core model, which we shall require in our proofs. We begin with the simplest of these:

\begin{lemma}[{\cite[Lemma 7.3]{peled2020long}}]\label{lem:weighted-shearer0}
	Let $T \subset Q_d$ be odd and let $\cF \subset \{0,1\}^T$.
	Then
	\[ \tilde\omega(\cF) \le \prod_{v \in T^{\text{odd}}} Z(\Psi_v)^{\frac1d} ,\]
	where $\Psi_v := \{ f_{N(v)} : f\in \cF\}$.
\end{lemma}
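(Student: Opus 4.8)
The plan is to deduce this from the general entropy lemma of Peled--Spinka, \cite[Lemma 7.3]{peled2020long}, by specializing it to the positive-temperature hard-core model on $Q_d$. Recall that in that model, a configuration is a subset $I \subset V(Q_d)$ with weight $\lambda^{|I|}e^{-\beta|E(I)|}$. We are given an odd set $T$ (that is, $T$ is a subset of the vertices which is balanced so that for each odd vertex $v \in T^{\text{odd}}$, all of $N(v)$ lies in $T$; more precisely $T$ should be taken to be a union of ``stars'' centered at odd vertices, so $T = T^{\text{odd}} \cup \bigcup_{v \in T^{\text{odd}}} N(v)$), and a family $\cF \subset \{0,1\}^T$ of partial configurations. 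The quantity $\tilde\omega(\cF) = \sum_{I \in \cF} \lambda^{|I|} e^{-\beta|E(I)|}$ sums the model weights over $\cF$, where the edge set $E(I)$ is understood as $\{e \in E(Q_d) : e \subset I\}$, all of whose relevant edges are internal to $T$ by the oddness assumption.

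First I would set up the Shearer-type / entropy decomposition underlying \cite[Lemma 7.3]{peled2020long}: the odd vertices of $T$ form an independent set (in $Q_d$), and the stars $\{v\} \cup N(v)$ for $v \in T^{\text{odd}}$ cover $T$ with each even vertex covered exactly $d$ times (since $Q_d$ is $d$-regular and each even vertex in $T$ has all $d$ of its neighbors being odd vertices of $T$ by the way $T$ is chosen). This uniform fractional cover by stars is exactly the structure the general lemma exploits: one writes $\tilde\omega(\cF)$ as an expectation over a configuration drawn proportionally to its weight, bounds the entropy by the sum of conditional entropies over the stars, and uses the uniform $d$-fold cover to convert this into a product of per-star partition functions each raised to the power $\frac1d$. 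Concretely, for a single star centered at $v$, the local partition function over states of $\{v\} \cup N(v)$ compatible with $\Psi_v := \{f_{N(v)} : f \in \cF\}$ being the allowed neighbourhood patterns is $\sum_{\psi \in \Psi_v} \lambda^{|\psi|} (1 + \lambda e^{-\beta|\psi|})$ — the factor $\lambda^{|\psi|}$ accounts for the $|\psi|$ occupied neighbours, and the factor $1 + \lambda e^{-\beta|\psi|}$ accounts for the two choices at $v$ (vacant, contributing $1$; or occupied, contributing $\lambda$ and paying $e^{-\beta}$ for each of the $|\psi|$ edges to occupied neighbours). However, since each even vertex is shared among $d$ stars, the neighbourhood of $v$ in the ``blown up'' bipartite structure has $d$ copies of each of its $d$ neighbours; after the standard renormalization this local factor becomes $Z(\Psi_v) = \sum_{\psi \in \Psi_v} \lambda^{|\psi|}(1 + \lambda e^{-\beta|\psi|})^d$, matching the definition in the text.

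Thus the main steps are: (i) verify that $T$ odd means the stars around $T^{\text{odd}}$ cover $T$ with uniform multiplicity $d$ on the even part and multiplicity $1$ on the odd part, so the hypotheses of the general entropy lemma apply with this cover; (ii) identify the per-star ``single-site'' partition function that appears in the general lemma with $Z(\Psi_v)^{1/d}$, using the combinatorial description of the hard-core weight restricted to a star as above; (iii) invoke \cite[Lemma 7.3]{peled2020long} to conclude $\tilde\omega(\cF) \le \prod_{v \in T^{\text{odd}}} Z(\Psi_v)^{1/d}$. I expect step (ii) — correctly matching the normalization so that the exponent $d$ appears inside $Z(\Psi_v)$ rather than outside — to be the only real subtlety; it is essentially bookkeeping about how the entropy-compression argument treats the $d$-fold overlap of even vertices, and follows the template in \cite{peled2020long} for nearest-neighbour spin systems on regular bipartite graphs. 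Everything else is a direct specialization of an already-established general result.
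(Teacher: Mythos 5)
Your overall route -- obtaining the statement by specializing the general entropy lemma \cite[Lemma 7.3]{peled2020long} -- is exactly what the paper does (its entire derivation is the sentence following the lemma: take $S=T\cup N(T)$, $\mathbb S_u=\{0\}$, and all $X_v$ constant). The genuine gap is in your reading of the hypothesis that $T$ is odd, which is precisely what makes the bound true. You interpret ``odd'' as meaning $T$ is a union of stars centred at odd vertices, i.e.\ $N(v)\subset T$ for every $v\in T^{\text{odd}}$, and you then assert that ``each even vertex in $T$ has all $d$ of its neighbours being odd vertices of $T$ by the way $T$ is chosen.'' That assertion is false under your own definition: for a single star $T=\{v_0\}\cup N(v_0)$, each even vertex of $T$ has only one neighbour in $T^{\text{odd}}$. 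With it goes the uniform $d$-fold cover on which your Shearer/H\"older step (and the exponent $\frac1d$) rests, and in fact the inequality itself fails for such $T$: taking $\cF=\{0,1\}^T$, $\lambda=1$, $\beta=\infty$, the left-hand side counts independent sets in a star, $2^d+1$, while the right-hand side is $Z(\{0,1\}^d)^{1/d}=(2^{d+1}-1)^{1/d}\le 2+o(1)$.

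The meaning of ``odd'' needed here (and consistent with \cite{peled2020long}) is the opposite one: every \emph{even} vertex of $T$ has its entire neighbourhood inside $T$, equivalently the inner vertex boundary of $T$ consists of odd vertices; odd vertices of $T$ may well have neighbours outside $T$ (which is why $f_{N(v)}$ is read by extending $f\in\{0,1\}^T$ by zero off $T$). This is what holds for the sets the paper actually feeds into the lemma, e.g.\ $T=A'\cup N(A')$ with $A'\subset\cE$ in the proof of \cref{lem:bdOnSumForLargeF}, where your hypothesis $N(v)\subset T$ for $v\in N(A')$ typically fails, so a lemma proved under your hypothesis would not even apply where it is used. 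Under the correct reading, each even vertex of $T$ is covered exactly $d$ times by the stars of the odd vertices of $T$, the odd vertices are handled by the conditional (per-star) term, and the rest of your outline -- identifying the per-star factor with $Z(\Psi_v)^{1/d}$, i.e.\ the $K_{d,d}$-type partition function restricted to the allowed neighbourhood patterns, and invoking the general lemma -- matches the paper's one-line specialization. So the fix is to correct the definition of odd and re-derive the covering property from it; the remaining steps are fine.
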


The above special case is obtained from \cite[Lemma 7.3]{peled2020long} by taking $S=T \cup N(T)$, $\mathbb S_u = \{0\}$ for all $u$, identifying $\cF$ as a subset of $\{0,1\}^T$ in the obvious way, and taking all $X_v$ to be trivial (constant) random variables.
Two additional special cases are obtained by either taking all $X_v$ to be $\1_{|f_{N(v)}|=0\}}$ or to be $(\1_{\{|f_{N(v)}|=0\}},\1_{\{|f_{N(v)}| \le s\}})$.

\begin{lemma}[{\cite[Lemma 7.3]{peled2020long}}]\label{lem:weighted-shearer}
	Let $T \subset Q_d$ be odd and let $\cF \subset \{0,1\}^T$ be a collection of sets containing no isolated odd vertices.
	Then
	\[ \tilde\omega(\cF) \le \prod_{v \in T^{\text{odd}}} Z(\Psi_v)^{\frac{p_v}d} \left(\tfrac 1{p_v}\right)^{\frac{p_v}d} \left(\tfrac1{1-p_v}\right)^{\frac{1-p_v}d} ,\]
	where $\Psi_v := \{ f_{N(v)} : f\in \cF,~ |f_{N(v)}|>0\}$ and $p_v := \Pr(|f_{N(v)}|>0)$ when $f$ is a random element of $\cF$ chosen according to weight $\tilde\omega$.
\end{lemma}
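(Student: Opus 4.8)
\emph{Proof proposal.} The plan is to derive this as a special case of the entropy inequality \cite[Lemma 7.3]{peled2020long}, in exactly the way Lemma~\ref{lem:weighted-shearer0} was, but now feeding that abstract lemma a nontrivial choice of auxiliary observables. Concretely, I would apply \cite[Lemma 7.3]{peled2020long} with the same choices used to derive Lemma~\ref{lem:weighted-shearer0} --- namely $S = T \cup N(T)$, the single-site weights of the positive-temperature hard-core model on $Q_d$ (each vertex vacant with weight $1$ or occupied with weight $\lambda$, each present edge carrying a factor $e^{-\beta}$), and $\cF$ identified with the corresponding family of configurations on $S$ --- except that now, for each odd vertex $v \in T^{\text{odd}}$, the auxiliary random variable is taken to be $X_v := \1_{\{|f_{N(v)}| = 0\}}$ rather than a constant; this is a function of $f_{N(v)}$, as the lemma requires. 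The lemma then bounds $\tilde\omega(\cF)$ by a product over $v \in T^{\text{odd}}$ of local factors, each raised to the power $\tfrac1d$, where the local factor at $v$ is, up to the entropy of $X_v$, the hard-core partition function on $K_{d,d}$ with the states on the $N(v)$-side restricted to those consistent with the value of $X_v$.

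The second step is to identify these local factors with the quantities appearing in the statement. On the event $\{X_v = 0\}$, i.e. $|f_{N(v)}| > 0$, the admissible states of $N(v)$ are precisely the elements of $\Psi_v$, and the associated $K_{d,d}$-partition function is exactly $Z(\Psi_v)$: the factor $\lambda^{|\psi|}$ accounts for the occupation within $N(v)$ and the factor $(1+\lambda e^{-\beta|\psi|})^d$ for the partner side of $K_{d,d}$, using that $Q_d$ is $d$-regular. On the complementary event $\{X_v = 1\}$, i.e. $|f_{N(v)}| = 0$, the hypothesis that $\cF$ contains no isolated odd vertices forces $f(v) = 0$, so this branch is degenerate: the only consistent local configuration is the empty one, of weight $1$, and the corresponding local partition function equals $1$. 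Recombining the two branches with the binary-entropy term $-p_v\log p_v - (1-p_v)\log(1-p_v)$ supplied by \cite[Lemma 7.3]{peled2020long} (with the usual conventions in the boundary cases $p_v \in \{0,1\}$), the local factor at $v$ becomes $Z(\Psi_v)^{p_v} p_v^{-p_v}(1-p_v)^{-(1-p_v)}$; taking the product over $v$ and the $\tfrac1d$ power yields the stated bound.

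I expect the only genuine subtlety to lie in this identification step: translating the conclusion of \cite[Lemma 7.3]{peled2020long}, stated for general nearest-neighbour spin systems with auxiliary observables on regular bipartite graphs, into the concrete hard-core language, and in particular checking carefully that the $\{X_v = 1\}$ branch contributes the trivial factor $1$ in place of the $(1+\lambda)^d$ that appears in Lemma~\ref{lem:weighted-shearer0}. That replacement is exactly what the no-isolated-odd-vertices hypothesis buys, and it is the reason this refined bound is effective at densities where the cruder Lemma~\ref{lem:weighted-shearer0} is not.
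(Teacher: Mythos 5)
Your proposal matches the paper's own derivation: the paper obtains this lemma exactly as you do, as the special case of \cite[Lemma 7.3]{peled2020long} with the same choices ($S=T\cup N(T)$, $\mathbb S_u=\{0\}$) used for \cref{lem:weighted-shearer0}, except that $X_v=\1_{\{|f_{N(v)}|=0\}}$ replaces the constant auxiliary variables. Your accounting of the two branches---in particular that the no-isolated-odd-vertices hypothesis is what reduces the empty-neighbourhood branch to the trivial factor $1$, rather than the $(1+\lambda)$-type factor that appears in \cref{lem:weighted-shearer3} where no such hypothesis is made---is precisely the bookkeeping the paper leaves implicit.
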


\begin{lemma}[{\cite[Lemma 7.3]{peled2020long}}]\label{lem:weighted-shearer3}
	Let $T \subset Q_d$ be odd, let $\cF \subset \{0,1\}^T$ and let $s>0$.
	Then
	\[ \tilde\omega(\cF) \le \prod_{v \in T^{\text{odd}}} Z(\Psi_v)^{\frac{p_v}d} Z(\Psi'_v)^{\frac{p'_v}d} (1+\lambda)^{1-p_v-p'_v} \left(\tfrac 1{p_v}\right)^{\frac{p_v}d} \left(\tfrac 1{p'_v}\right)^{\frac{p'_v}d} \left(\tfrac1{1-p_v-p'_v}\right)^{\frac{1-p_v-p'_v}d} ,\]
	where $\Psi_v := \{ f_{N(v)} : f\in \cF,~ 1 \le |f_{N(v)}| \le s \}$, $\Psi'_v := \{ f_{N(v)} : f\in \cF,~ |f_{N(v)}| > s \}$, $p_v := \Pr(1 \le |f_{N(v)}| \le s)$ and $p'_v := \Pr(|f_{N(v)}|> s)$, when $f$ is a random element of $\cF$ chosen according to weight $\tilde\omega$.
\end{lemma}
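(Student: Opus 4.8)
The plan is to obtain the inequality as a direct specialization of the general weighted-entropy lemma of Peled--Spinka, \cite[Lemma 7.3]{peled2020long}, following exactly the template already used to deduce \cref{lem:weighted-shearer0,lem:weighted-shearer}, but with a finer choice of the auxiliary statistic. First I would set up the application of \cite[Lemma 7.3]{peled2020long} to the positive-temperature hard-core model on $Q_d$: the host graph is $Q_d$ with its even/odd bipartition (a bipartite $d$-regular graph), each vertex carries a spin in $\{0,1\}$ recording membership in the configuration, the local weights are the ones producing $\lambda^{|I|}e^{-\beta|E(I)|}$ (a factor $\lambda$ for each occupied vertex and $e^{-\beta}$ for each edge with both endpoints occupied), the region is taken to be $S:=T\cup N(T)$ with the spins outside $T$ left free and those on $T$ prescribed by $\cF$, and — the one new ingredient — for each odd vertex $v$ one takes the two-bit statistic
\[ X_v := \big(\1_{\{|f_{N(v)}|=0\}},\ \1_{\{|f_{N(v)}|\le s\}}\big), \]
which depends on $f$ only through $f_{N(v)}$ and is therefore admissible.

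Next I would read off the output of \cite[Lemma 7.3]{peled2020long} with this input: it bounds $\tilde\omega(\cF)$ by a product over the odd vertices $v\in T^{\text{odd}}$ of a per-vertex factor of the shape $\prod_{x}\big(\mathrm{PF}_v(x)/\Pr(X_v=x)\big)^{\Pr(X_v=x)/d}$, where $x$ runs over the values of $X_v$, $\mathrm{PF}_v(x)$ is the local (``star'') partition function at $v$ restricted to those states of $N(v)$ that are consistent with $X_v=x$, and the exponent $1/d$ records that every even vertex lies in the neighbourhood of exactly $d$ odd vertices. The statistic $X_v$ takes three values, according to whether $|f_{N(v)}|=0$, $1\le|f_{N(v)}|\le s$, or $|f_{N(v)}|>s$, and under the weighted measure these occur with probabilities $1-p_v-p'_v$, $p_v$, and $p'_v$. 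I would then identify the three local partition functions: on $\{|f_{N(v)}|=0\}$ the whole of $N(v)$ is vacant while $v$ itself is unconstrained, so the star contributes $(1+\lambda)^d$; on $\{1\le|f_{N(v)}|\le s\}$ it contributes $Z(\Psi_v)$; and on $\{|f_{N(v)}|>s\}$ it contributes $Z(\Psi'_v)$, the relevant quantity being precisely the $K_{d,d}$-type sum $Z(\cdot)$ consistent with the identification $Z(\{0,1\}^d)=Z_1(K_{d,d},\lambda,\beta)$ recorded above. Substituting these contributions and multiplying over $v$ produces exactly the stated inequality, the $(1+\lambda)$ factor appearing with exponent $1-p_v-p'_v$ (and no $1/d$) simply because $Z(\{\emptyset\})^{1/d}=1+\lambda$.

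I do not expect a genuine obstacle here: all of the analytic substance is in \cite[Lemma 7.3]{peled2020long}, and what remains is careful bookkeeping. The three points that need attention are: (i) confirming that the per-vertex local partition function supplied by \cite[Lemma 7.3]{peled2020long} is the $K_{d,d}$-type quantity $Z(\cdot)$; (ii) noting that, unlike in \cref{lem:weighted-shearer}, where the ``no isolated odd vertex'' hypothesis forces the empty-neighbourhood star to contribute only $1$, here that star contributes the full factor $(1+\lambda)^d$, which is the origin of the $(1+\lambda)^{1-p_v-p'_v}$ term; and (iii) matching the three entropy-of-the-partition factors $(1/p_v)^{p_v/d}$, $(1/p'_v)^{p'_v/d}$, $(1/(1-p_v-p'_v))^{(1-p_v-p'_v)/d}$ coming from the three-way split of $\cF$ at each $v$. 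As a sanity check, the same computation recovers \cref{lem:weighted-shearer0} when $X_v$ is taken trivial and \cref{lem:weighted-shearer} when $X_v=\1_{\{|f_{N(v)}|=0\}}$.
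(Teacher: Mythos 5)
Your proposal is correct and follows essentially the same route as the paper: the paper also obtains this lemma as a direct specialization of \cite[Lemma 7.3]{peled2020long}, taking $S=T\cup N(T)$, identifying $\cF$ with a subset of $\{0,1\}^T$, and choosing for each odd vertex the two-bit statistic $X_v=(\1_{\{|f_{N(v)}|=0\}},\1_{\{|f_{N(v)}|\le s\}})$, exactly as you do. Your bookkeeping of the three local partition functions $(1+\lambda)^d$, $Z(\Psi_v)$, $Z(\Psi'_v)$ and the corresponding entropy factors matches the stated bound, so no gap remains.
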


To make practical use of the above lemmas, we need to combine them with suitable bounds on $Z(\Psi)$. The required bound is given in the following lemma.
Define
\[ \ell_\Psi := \#\{ i \in [d] : \psi_i=0 \text{ for all }\psi \in \Psi \} .\]

\begin{lemma}\label{lem:Z-Psi-bound}
Suppose that $\lambda \le \lambda_0$ and $\lambda (1-e^{-\beta})^2 \ge \frac{C\log d}d$.
Then for $\Psi \subset \{0,1\}^d \setminus \{\bar0\}$,
\[ Z(\Psi)\leq (1+\lambda)^d e^{1/d^3- \frac12 \alpha \ell_\Psi} .\]
\end{lemma}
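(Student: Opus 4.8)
The plan is to bound $Z(\Psi) = \sum_{\psi \in \Psi} \lambda^{|\psi|}(1+\lambda e^{-\beta|\psi|})^d$ by splitting the sum according to the value of $|\psi|$, and exploiting the fact that no $\psi \in \Psi$ is the all-zero vector (so $|\psi| \ge 1$) together with the fact that $\ell_\Psi$ coordinates are identically zero across all of $\Psi$. First I would factor out the trivial contribution of the $\ell_\Psi$ always-zero coordinates: writing $d' := d - \ell_\Psi$, every $\psi \in \Psi$ is supported on a fixed set of $d'$ coordinates, so $Z(\Psi) \le \sum_{m=1}^{d'} \binom{d'}{m} \lambda^m (1+\lambda e^{-\beta m})^d$. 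The key observation is that since $m \ge 1$, we have $e^{-\beta m} \le e^{-\beta}$, hence $1 + \lambda e^{-\beta m} \le 1 + \lambda e^{-\beta} = (1+\lambda)(1 - \frac{\lambda(1-e^{-\beta})}{1+\lambda}) = (1+\lambda)(1 - \frac{\alpha}{1+\lambda})$ where $\alpha = \lambda(1-e^{-\beta})$; more crudely, $(1+\lambda e^{-\beta m})^d \le (1+\lambda)^d e^{-\alpha d/(1+\lambda) \cdot \mathbf 1_{\{m\ge 1\}}}$ is too lossy, so instead I would keep the dependence on $m$ only where it helps and use $1+\lambda e^{-\beta m} \le (1+\lambda)e^{-c\alpha}$ uniformly for $m \ge 1$.

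Concretely, I would write $(1+\lambda e^{-\beta m})^d = (1+\lambda)^d \big(1 - \tfrac{\lambda(1-e^{-\beta m})}{1+\lambda}\big)^d \le (1+\lambda)^d \exp\!\big(-\tfrac{d \lambda(1-e^{-\beta m})}{1+\lambda}\big)$, and then use $1 - e^{-\beta m} \ge 1 - e^{-\beta} = \alpha/\lambda$ for $m \ge 1$ together with a sharper bound like $1 - e^{-\beta m} \ge \tfrac12 \min(1, \beta m)$ when a factor growing in $m$ is needed to kill the binomial coefficients. This gives
\[
Z(\Psi) \le (1+\lambda)^d \sum_{m=1}^{d'} \binom{d'}{m} \lambda^m \exp\!\Big(-\tfrac{d\lambda}{1+\lambda}(1-e^{-\beta m})\Big).
\]
For the $m=1$ term this already produces the factor $e^{-\Theta(\alpha)}$ I want; the point is to control $\ell_\Psi$, so I would isolate the exponential decay $\exp(-\tfrac{d}{1+\lambda}\alpha)$ coming from the requirement $m\ge 1$ and convert $d$ into $\ell_\Psi$ plus an error. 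Actually the cleaner route: since each of the $\ell_\Psi$ common-zero coordinates contributes a factor $(1+\lambda e^{-\beta m})$ that is at most $(1+\lambda e^{-\beta}) = (1+\lambda) - \alpha \le (1+\lambda)e^{-\alpha/(1+\lambda)}$, we get $(1+\lambda e^{-\beta m})^{\ell_\Psi} \le (1+\lambda)^{\ell_\Psi} e^{-\alpha \ell_\Psi /(1+\lambda)} \le (1+\lambda)^{\ell_\Psi} e^{-\frac12 \alpha \ell_\Psi}$ using $\lambda \le \lambda_0$ so $\tfrac1{1+\lambda}\ge\tfrac1{1+\lambda_0} \ge \tfrac12$ after possibly adjusting constants — wait, this requires $\lambda \le 1$; for general $\lambda \le \lambda_0$ one gets $e^{-c\alpha\ell_\Psi}$ and the constant $\tfrac12$ in the statement must come from a more careful argument. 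The remaining $d - \ell_\Psi$ coordinates contribute a full factor $(1+\lambda e^{-\beta m})^{d-\ell_\Psi} \le (1+\lambda)^{d-\ell_\Psi}$, and then $\sum_m \binom{d'}{m}\lambda^m e^{-\beta m \cdot \text{(something)}}$ over $m \ge 1$ must be shown to be $\le e^{1/d^3}$. This last bound is where the hypothesis $\lambda(1-e^{-\beta})^2 \ge \frac{C\log d}{d}$ enters: it forces $\beta$ (equivalently $\alpha$) to be large enough that $d'\lambda e^{-\beta \cdot d/(1+\lambda)}$-type tails are summable to something exponentially small in a power of $d$.

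The main obstacle I anticipate is getting the clean constant $\tfrac12$ in the exponent $-\tfrac12 \alpha \ell_\Psi$ and simultaneously the tiny error $e^{1/d^3}$, rather than merely $e^{-c\alpha\ell_\Psi + Ce^{-c\alpha d}}$ or similar. This will require splitting the sum over $m$ into a "small $m$" range (say $1 \le m \le d^{2/3}$ or so) where one uses $1 - e^{-\beta m} \ge (1-\tfrac{o(1)})(1-e^{-\beta})$ and absorbs the binomial coefficient into a negligible multiplicative error using $\binom{d'}{m}\lambda^m e^{-c\alpha d m/(1+\lambda)} \le (d\lambda e^{-c\alpha d/(1+\lambda)})^m$, which is summable and $\le 1 + o(1/d^3)$ by the hypothesis on $\alpha d$; and a "large $m$" range where $1 - e^{-\beta m}$ is essentially $1$ so the whole term is at most $2^{d'} \lambda^{d'} e^{-c\alpha d} \le e^{-c\alpha d + O(d)}$, negligible. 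One has to balance the split point so that both pieces are $\le \tfrac12 e^{1/d^3}$, say, and check the bookkeeping of the $(1+\lambda)^{\ell_\Psi}$ versus $e^{-\frac12\alpha\ell_\Psi}$ split; I'd want to verify the exact constant by comparing against the $|\psi|=1$, $\ell_\Psi = d-1$ extreme case, which is presumably tight and dictates the $\tfrac12$.
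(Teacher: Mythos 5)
Your overall direction resembles the paper's (sum over $m=|\psi|\ge 1$, use the constraint $|\psi|\ge1$ together with the restricted support to extract the $\ell_\Psi$-gain, and split the sum over $m$ into ranges), but there is a genuine gap in the middle range of $m$. Your geometric-series device $\binom{d'}{m}\lambda^m e^{-c\alpha dm/(1+\lambda)}\le(d\lambda e^{-c\alpha d/(1+\lambda)})^m$ needs decay \emph{linear} in $m$, i.e.\ $1-e^{-\beta m}\ge c\,m(1-e^{-\beta})$, which holds only for $m=O(1/\beta)$; beyond that the decay saturates at $e^{-\Theta(\lambda d/(1+\lambda))}$ while $\binom{d'}{m}\lambda^m$ keeps growing, and your large-$m$ estimate (count at most $2^{d'}\lambda^{d'}$ times the saturated decay) cannot absorb the window $\Theta(1/\beta)\lesssim m\lesssim\Theta(\lambda d/(1+\lambda))$: near the top of that window the count is comparable to $(1+\lambda)^{d'}$ up to $e^{-\Theta(\lambda d')}$ factors, so any uniform ``max count times max decay'' pairing is off by an exponential factor. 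This window is nonempty precisely in the regime the hypothesis is designed for, e.g.\ $1-e^{-\beta}\asymp\sqrt{\log d/(\lambda d)}$. The paper closes it with a step your sketch has no counterpart of: with thresholds $s\asymp\frac{(d-\ell)\lambda}{2(1+\lambda)}$ and $s'\asymp\beta^{-1}\log(2+\lambda)$, it bounds the decay on $s'<|\psi|<s$ by its value at $s'$ and controls the mass of such $\psi$ via the Chernoff bound $\sum_{|\psi|<s}\lambda^{|\psi|}\le(1+\lambda)^{d-\ell}\Pr\big(\Bin(d-\ell,\tfrac{\lambda}{1+\lambda})<s\big)\le(1+\lambda)^{d-\ell}e^{-s/2}$.

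The bookkeeping of the $\ell_\Psi$-gain also does not close as written. If you bound the remaining $d-\ell_\Psi$ factors trivially by $(1+\lambda)^{d-\ell_\Psi}$, no decay is left and the leftover sum is $(1+\lambda)^{d'}-1$, not $\le e^{1/d^3}$; if you instead keep the honest factor, the needed inequality $\sum_{m\ge1}\binom{d'}{m}\lambda^m(1+\lambda e^{-\beta m})^{d'}\le(1+\lambda)^{d'}e^{1/d^3}$ already fails for $d'=1$, $\lambda=2$, $\beta$ small. The cure is to retain part of the extracted decay as slack: the paper works with $\ell:=\min\{\ell_\Psi,d/2\}$, so that in the tiny-$|\psi|$ range the full factor $(1+\lambda e^{-\beta})^d=(1+\lambda)^d e^{-\bar\alpha d}$ supplies both the $\ell$-gain and an extra $e^{-\bar\alpha d/2}$ to swallow $\lambda^{|\psi|}$ and the $(d(1+\lambda))^{s'}$ count; this truncation is exactly where the factor $\tfrac12$ comes from. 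Relatedly, your unease about the constant is justified: the natural rate is $\bar\alpha=\log\frac{1+\lambda}{1+\lambda e^{-\beta}}$, which drops below $\tfrac12\alpha$ once $\lambda$ is large, and the paper in fact proves the $\bar\alpha$-version (\cref{lem:Z-Psi-bound-gen}); for $\lambda>1$ one should only aim for $e^{-c\alpha\ell_\Psi}$, which is all the later applications use, so no ``more careful argument'' will restore $\tfrac12\alpha$ uniformly in $\lambda\le\lambda_0$.
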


We will prove a stronger version of \cref{lem:Z-Psi-bound} which does not require $\lambda$ to be bounded. Define
\[ \bar\alpha := -\log\left(1-\frac{\alpha}{1+\lambda}\right) = \log\left(\frac{1+\lambda}{1+\lambda e^{-\beta}}\right) .\]
Note that when $\lambda$ is bounded, $c\bar\alpha \le \alpha \le C\bar\alpha$. Thus, the following immediately implies \cref{lem:Z-Psi-bound}.

\begin{lemma}\label{lem:Z-Psi-bound-gen}
Suppose that
\[ \frac{\lambda}{1+\lambda} \ge \frac{C\log d}d + \frac{C\log(\lambda d^4)}{\beta d} \qquad\text{and}\qquad \bar\alpha \ge \frac{C\log d}d + \frac {C\log(d(1+\lambda))\log (2+\lambda)}{\beta d} .\]
Then for $\Psi \subset \{0,1\}^d \setminus \{\bar0\}$, we have $Z(\Psi)\leq (1+\lambda)^d e^{1/d^3- \frac12 \bar\alpha \ell_\Psi}$.
\end{lemma}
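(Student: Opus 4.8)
The plan is to estimate $Z(\Psi) = \sum_{\psi \in \Psi} \lambda^{|\psi|} (1+\lambda e^{-\beta|\psi|})^d$ by splitting the sum according to the value of $|\psi|$. For $\psi$ with $|\psi|$ large, the factor $(1+\lambda e^{-\beta|\psi|})^d$ is close to $1$, so each such term is at most roughly $\lambda^{|\psi|} e^{\lambda d e^{-\beta|\psi|}}$; after summing over all $\psi$ with a given weight $j$ (there are $\binom dj$ of them, all forced to avoid the $\ell_\Psi$ coordinates that are identically zero on $\Psi$, so really $\binom{d-\ell_\Psi}{j}$) one gets a contribution that we want to compare to $(1+\lambda)^{d-\ell_\Psi} \le (1+\lambda)^d e^{-\bar\alpha \ell_\Psi \cdot \frac{\log(1+\lambda)}{\alpha}\cdot(\dots)}$; here I would use the identity $\bar\alpha = \log\frac{1+\lambda}{1+\lambda e^{-\beta}}$ to rewrite $(1+\lambda)^{d-\ell_\Psi}$ in terms of $\bar\alpha$. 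The threshold between ``small'' and ``large'' $|\psi|$ should be taken so that $\lambda d e^{-\beta|\psi|}$ is at most, say, $1/d^4$; by the second displayed hypothesis (which controls $\bar\alpha$, hence effectively $\beta$ from below relative to $\log(\lambda d)$) this threshold is $O(\log(\lambda d)/\beta)$, which is small compared to $d$, so the ``small $|\psi|$'' range contributes few binomial coefficients.

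Concretely, I would argue as follows. First reduce to $\Psi$ supported on the $d-\ell_\Psi$ non-trivial coordinates, so $Z(\Psi) \le \sum_{j=1}^{d-\ell_\Psi} \binom{d-\ell_\Psi}{j} \lambda^j (1+\lambda e^{-\beta j})^d$. Write $m := d - \ell_\Psi$. For $j \ge j_0 := \lceil \frac1\beta \log(\lambda d^5)\rceil$ we have $\lambda e^{-\beta j} \le d^{-5}$, hence $(1+\lambda e^{-\beta j})^d \le e^{\lambda d e^{-\beta j}} \le e^{d^{-4}}$, so the tail sum over $j \ge j_0$ is at most $e^{d^{-4}} \sum_{j} \binom mj \lambda^j \le e^{d^{-4}}(1+\lambda)^m$. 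For $1 \le j < j_0$ we bound $(1+\lambda e^{-\beta j})^d \le (1+\lambda)^d$ and $\binom mj \lambda^j \le (1+\lambda)^m$, so this part contributes at most $j_0 (1+\lambda)^{m+d} / (1+\lambda)^{?}$ — more carefully, $\binom mj\lambda^j \le (1+\lambda)^m$ and the extra factor is $(1+\lambda)^d$, giving $j_0 (1+\lambda)^{m}(1+\lambda)^d$, which is dangerous; so instead for these small $j$ I keep $(1+\lambda e^{-\beta j})^d$ and note $\lambda^j (1+\lambda e^{-\beta j})^d \le (1+\lambda e^{-\beta})^d \cdot \lambda^{j-1}\cdot(\text{something} \le 1)$ using that $x \mapsto \lambda^x(1+\lambda e^{-\beta x})^d$ type bounds — the cleanest route is to observe $\lambda^j(1+\lambda e^{-\beta j})^d \le \max(\lambda,1)^{j_0}(1+\lambda e^{-\beta})^d$ when $j \le j_0$ and $j\ge 1$, since decreasing $j$ only increases $(1+\lambda e^{-\beta j})^d$ up to $j=1$. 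Then the small-$j$ sum is at most $j_0 \binom{m}{\le j_0}\max(\lambda,1)^{j_0}(1+\lambda e^{-\beta})^d \le m^{j_0}\max(\lambda,1)^{j_0}(1+\lambda e^{-\beta})^d$, and I use the hypothesis $\bar\alpha \ge \frac{C\log(d(1+\lambda))\log(2+\lambda)}{\beta d}$ to absorb $m^{j_0}\max(\lambda,1)^{j_0} = e^{O(j_0 \log(d(1+\lambda)))} = e^{O(\frac{\log(\lambda d)\log(d(1+\lambda))}{\beta})}$ against $(1+\lambda)^{\bar\alpha\text{-savings}}$: precisely, $(1+\lambda e^{-\beta})^d = (1+\lambda)^d e^{-\bar\alpha d}$ and we need $e^{-\bar\alpha d} \cdot e^{O(\log(\lambda d)\log(d(1+\lambda))/\beta)} \le e^{-\frac12\bar\alpha m + 1/d^3 - \bar\alpha\ell_\Psi/2}$ up to a factor $(1+\lambda)^{-\ell_\Psi}$ bookkeeping; this is where the two hypotheses are exactly calibrated.

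Then I combine: $Z(\Psi) \le (1+\lambda)^m\big(e^{d^{-4}} + m^{j_0}\max(\lambda,1)^{j_0} e^{-\bar\alpha d}/(1+\lambda)^{m-d}\big)$, and using $m \le d$ so $(1+\lambda)^{m-d}\ge 1$ doesn't help; rather I keep $(1+\lambda)^m = (1+\lambda)^d(1+\lambda)^{-\ell_\Psi}$ and note $(1+\lambda)^{-\ell_\Psi} = e^{-\bar\alpha \ell_\Psi}(1+\lambda e^{-\beta})^{-\ell_\Psi}$... — actually the identity I want is simply $(1+\lambda)^{-1} \le e^{-\bar\alpha}$ is false in general, so I instead use $(1+\lambda)^{m} \le (1+\lambda)^d e^{-\bar\alpha\ell_\Psi}\cdot(1+\lambda e^{-\beta})^{\ell_\Psi}/(1+\lambda e^{-\beta})^{\ell_\Psi}$; the clean statement is $(1+\lambda)^{m} = (1+\lambda)^d \cdot \big(\tfrac{1+\lambda e^{-\beta}}{1+\lambda}\big)^{\ell_\Psi}(1+\lambda e^{-\beta})^{-\ell_\Psi}(1+\lambda e^{-\beta})^{\ell_\Psi}$ — I will instead directly bound $(1+\lambda)^m \le (1+\lambda)^d e^{-\bar\alpha \ell_\Psi}$ by noting $(1+\lambda)^{-\ell_\Psi} \le \big(\tfrac{1+\lambda e^{-\beta}}{1+\lambda}\big)^{\ell_\Psi} = e^{-\bar\alpha\ell_\Psi}$, which is valid since $1+\lambda e^{-\beta}\le 1+\lambda$ forces $\tfrac{1+\lambda e^{-\beta}}{1+\lambda}\le 1$ but we need the reverse inequality $(1+\lambda)^{-\ell_\Psi}\le e^{-\bar\alpha\ell_\Psi}$, i.e. $1+\lambda \ge e^{\bar\alpha} = \tfrac{1+\lambda}{1+\lambda e^{-\beta}}$, i.e. $1+\lambda e^{-\beta}\ge 1$, which is true. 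Good — so $(1+\lambda)^m \le (1+\lambda)^d e^{-\bar\alpha\ell_\Psi}$, and then we still owe a factor $e^{-\frac12\bar\alpha\ell_\Psi}$... wait, the target exponent is $-\frac12\bar\alpha\ell_\Psi$, so $e^{-\bar\alpha\ell_\Psi}$ is already stronger than needed by a factor $e^{-\frac12\bar\alpha\ell_\Psi}$, giving room to absorb the small-$j$ and tail corrections $e^{d^{-4}}\le e^{1/d^3}$ and the $e^{O(\log(\lambda d)\log(d(1+\lambda))/\beta) - \bar\alpha d}$ term (negative by the $\bar\alpha$ hypothesis, since $\bar\alpha d \ge C\log(d(1+\lambda))\log(2+\lambda) \ge$ that error once $C$ is large). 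Assembling everything yields $Z(\Psi) \le (1+\lambda)^d e^{1/d^3 - \frac12\bar\alpha\ell_\Psi}$, as claimed.

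\medskip

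\textbf{Main obstacle.} The delicate point is the bookkeeping in the ``small $|\psi|$'' range: there are only $O(\log(\lambda d)/\beta)$ relevant weights $j$, but for each the number of sets $\binom{m}{j}$ and the fugacity factor $\lambda^j$ combine to something that must be shown negligible against the $e^{-\bar\alpha d}$ gain coming from $(1+\lambda e^{-\beta})^d$ versus $(1+\lambda)^d$. This forces the exact form of the two hypotheses in the lemma (the $\frac{\log(\lambda d^4)}{\beta d}$ and $\frac{\log(d(1+\lambda))\log(2+\lambda)}{\beta d}$ terms), and getting the constants to line up — in particular only losing a factor $e^{1/d^3}$ and only half of $\bar\alpha\ell_\Psi$ — is the part that requires care rather than ingenuity. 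Everything else is a routine split-and-bound.
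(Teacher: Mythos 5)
Your overall strategy (split the sum over $\Psi$ according to $|\psi|$, bound the large-$|\psi|$ tail by $(1+\lambda)^{d-\ell_\Psi}$, and use $e^{\bar\alpha}\le 1+\lambda$ to convert $(1+\lambda)^{-\ell_\Psi}$ into $e^{-\bar\alpha\ell_\Psi}$) is the right one, and your large-$|\psi|$ range is handled exactly as in the paper. The gap is in the small range. You take a single crossover $j_0\asymp\frac1\beta\log(\lambda d^5)$ and bound everything below it by brute-force counting, paying $\binom{m}{\le j_0}\max(\lambda,1)^{j_0}\le e^{O(j_0\log(d(1+\lambda)))}$, i.e.\ $e^{\Theta((\log d)^2/\beta)}$ for bounded $\lambda$, and you propose to absorb this into $e^{-\bar\alpha d}$ via the second hypothesis. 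But that hypothesis only supplies $\bar\alpha d\ge C\log d+\frac{C\log(d(1+\lambda))\log(2+\lambda)}{\beta}$, which for bounded $\lambda$ is of order $\frac{\log d}{\beta}$, not $\frac{(\log d)^2}{\beta}$: your absorption needs an extra factor $\log d$ that the hypothesis does not provide. Concretely, take $\lambda=1$ and $\beta=K\sqrt{(\log d)/d}$ with $K$ a large constant: both displayed hypotheses hold for large $d$ (and this regime is needed, since it satisfies the condition $\lambda(1-e^{-\beta})^2\ge\frac{C\log d}{d}$ of \cref{lem:Z-Psi-bound}, which is deduced from this lemma), yet your loss $e^{\Theta((\log d)^{3/2}\sqrt d/K)}$ overwhelms the available gain $e^{\bar\alpha d}=e^{\Theta(K\sqrt{\log d}\sqrt d)}$. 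So as written the plan proves only a weaker lemma with a strengthened hypothesis $\bar\alpha\gtrsim\frac{(\log d)^2}{\beta d}$ (enough for the paper's main theorems, where $\lambda(1-e^{-\beta})\ge\frac{C\log d}{d^{1/3}}$, but not for the statement at hand).

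The missing idea is a three-range split, which is how the paper calibrates the two hypotheses. Brute-force counting is done only for $|\psi|\le s':=\frac{C\log(2+\lambda)}{\beta}$ — note: no $\log d$ in the numerator — costing $(d(1+\lambda))^{s'}$, which is exactly what the second hypothesis absorbs against $e^{-\frac12\bar\alpha d}$ (this is also why the exponent $\ell$ is capped at $d/2$, giving the $\frac12\bar\alpha\ell_\Psi$ in the conclusion). The intermediate range $s'<|\psi|<s$ with $s:=\frac{d-\ell}{2}\frac{\lambda}{1+\lambda}$ is not counted set-by-set at all: one bounds the interaction factor by $(1+\lambda e^{-\beta s'})^d$ and the total fugacity weight of all $\psi$ with $|\psi|<s$ by $(1+\lambda)^{d-\ell}\Pr(\Bin(d-\ell,\tfrac{\lambda}{1+\lambda})<s)\le(1+\lambda)^{d-\ell}e^{-s/2}$, a Chernoff bound at half the mean. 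The first displayed hypothesis (on $\frac{\lambda}{1+\lambda}$), which your plan never actually uses, is precisely what makes $e^{-s/2}$ beat both $(1+\lambda e^{-\beta s'})^d$ and the polynomial factors. If you replace your single small-$j$ block by this Chernoff step plus the genuinely tiny-$|\psi|$ block with cutoff $s'$, the rest of your assembly (including the correct observation $(1+\lambda)^{-\ell_\Psi}\le e^{-\bar\alpha\ell_\Psi}$) goes through.
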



\begin{proof}
Denote $\ell := \min\{\ell_\Psi,d/2\}$ and set $s := \frac{d-\ell}2 \frac{\lambda}{1+\lambda}$ and $s':= \frac {C\log (2+\lambda)}\beta$.

We begin with the case when $|\psi| \ge s$. We have
\begin{align*}
	\sum_{\psi \in \Psi : |\psi| \ge s} \lambda^{|\psi|} \left(1+\lambda e^{-\beta |\psi|}\right)^d &\le (1+\lambda e^{-\beta s})^d \sum_{\psi \in \Psi} \lambda^{|\psi|} \\
	& \le  (1+\lambda e^{-\beta s})^d (1+\lambda)^{d-\ell_\Psi}\\
		& \le  (1+\lambda)^de^{-\bar\alpha \ell}(1+\lambda e^{-\beta s})^d,
\end{align*}
where we used in the last inequality that $\ell_\Psi\ge \ell$ and $\bar\alpha\le \log (1+\lambda)$.

Next we deal with the case when $s'<|\psi|<s$. We have
\begin{equation}\label{eq:Z-Psi-bound-mid}
\begin{aligned}
\sum_{\psi \in \Psi : s'<|\psi|<s} \lambda^{|\psi|} \left(1+\lambda e^{- \beta |\psi|}\right)^d 
 &\le (1+\lambda e^{- \beta s'})^d \sum_{\psi \in \Psi : |\psi|<s} \lambda^{|\psi|} \\
 &\le (1+\lambda e^{- \beta s'})^d (1+\lambda)^{d-\ell} \cdot \Pr\left(\Bin\big(d-\ell,\tfrac \lambda{1+\lambda}\big) < s\right) \\
 &\le (1+\lambda e^{- \beta s'})^d (1+\lambda)^{d-\ell} e^{-s/2}\\
 &\leq (1+\lambda)^de^{-\bar\alpha \ell}e^{-s/2}(1+\lambda e^{- \beta s'})^d , 
\end{aligned}
\end{equation}
where we used a Chernoff bound (and the definition of $s$) in the third inequality.

Finally, we deal with the case when $|\psi| \le s'$. Using that $|\psi|\ge 1$ for all $\psi \in \Psi$, we have
\begin{equation}\label{eq:Z-Psi-bound-small}
\begin{aligned}
\sum_{\psi \in \Psi : |\psi| \le s'} \lambda^{|\psi|} \left(1+\lambda e^{- \beta |\psi|}\right)^d 
 &\le (1+\lambda e^{-\beta})^d \sum_{\psi \in \Psi : |\psi| \le s'} \lambda^{|\psi|} \\
 &\le (1+\lambda e^{-\beta})^d (d(1+\lambda))^{s'} \\
 &= (1+\lambda)^d \left(1 - \tfrac{\lambda(1-e^{-\beta})}{1+\lambda}\right)^d (d(1+\lambda))^{s'}\\
 &\leq (1+\lambda)^de^{-\bar\alpha \ell}e^{-\frac 12\bar\alpha d}(d(1+\lambda))^{s'}.
\end{aligned}
\end{equation}

Together we get that
\[ Z(\Psi) \le (1+\lambda)^d e^{-\bar\alpha \ell} \left( (1+\lambda e^{-\beta s})^d + e^{-s/2}(1+\lambda e^{- \beta s'})^d + e^{-\frac 12\bar\alpha d}(d(1+\lambda))^{s'} \right) .\]
Plugging in the definitions of $s$ and $s'$ and using the assumption of the lemma, one checks that the parenthesis term is at most $e^{1/d^3}$, and the lemma follows.
\end{proof}

We are now ready to prove \cref{lem:bdOnSumForLargeF} and \cref{lem:bdOnSumForSmallF}. We will use the rather simple fact (see, e.g., the proof of~\cite[Lemma~5.2]{galvin2011threshold}) that if $(F,H)$ approximates a polymer $A \in \cG_1(a,b)$, then
\begin{equation}\label{eq:approx-size-def}
|H| \le |F| + \frac{3(b-a)}{d^{1/3}} \qquad\text{and}\qquad |E(H,N(A) \setminus F)| \le 3(b-a)d^{2/3}.
\end{equation}

\begin{proof}[Proof of \cref{lem:bdOnSumForLargeF}]

The main step of the proof is to bound the sum of weights of polymers with a given closure. Specifically, we claim that for any $A' \subset \cE$ with $|A'|=a$ and $|N(A')|=b$, we have
\begin{equation}\label{eq:bdOnGivenA}
\sum_{A \subset \cE : [A]=A'} \sum_{B \subset N(A)} {\lambda^{|A|+|B|} e^{-\beta |E(A,B)|}} \le (1+\lambda)^be^{b/d^4-c\alpha (b-a)} .
\end{equation}
To see this, define $T := A'\cup N(A')$ and note that
\[ \cF := \{ (A,B) : [A]=A',~ B \subset N(A) \} \]
can be naturally identified with a subset of $\{0,1\}^T$.
Using this identification, $\tilde\omega(\cF)$ is precisely the sum on the left-hand side of~\eqref{eq:bdOnGivenA}, and \cref{lem:weighted-shearer0} and \cref{lem:Z-Psi-bound} yield that
\[ \tilde\omega(\mathcal F)\leq \prod_{v\in N(A')}Z(\Psi_v)^{1/d} \le ((1+\lambda)e^{1/d^4})^{|N(A')|} \prod_{v\in N(A')} e^{-c\alpha \ell_{\Psi_v}/d} , \]
where $\Psi_v$ is as in \cref{lem:weighted-shearer0} and $\ell_{\Psi_v}$ was defined before \cref{lem:Z-Psi-bound} (note that $\bar 0 \notin \Psi_v$ since $[A]=A'$ so that \cref{lem:weighted-shearer0} is applicable).
Observe that
\[ \sum_{v \in N(A')} \ell_{\Psi_v} = \sum_{v \in N(A')} |N(v)\setminus A'| = \sum_{v \in N(A')} (d- |N(v)\cap A'|) = d(b-a) .\]
This establishes~\eqref{eq:bdOnGivenA}.

The lemma will immediately follow from~\eqref{eq:bdOnGivenA} and a union bound, once we bound the number of possible closures of polymers under consideration, showing that
\[ \#\Big\{ [A] : A \in \cG_1(a,b),~ A \approx (F,H) \Big\} \le \binom{2db} {b-|F|} .\]
To see this, note that by~\eqref{eq:approx-size-def} and~\eqref{eq:approx-contain-def} (assuming there exists $\gamma \in \cG_1(a,b)$ such that $\gamma \approx (F,H)$),
\begin{equation}\label{eq:N(H)-small}
|N(H)| \le d |H| \le d(|F| + 3(b-a)d^{-1/3}) \le 2db .
\end{equation}
Since any $A$ under consideration has $F \subset N(A) \subset N(H)$ by~\eqref{eq:approx-contain-def}, and since $N(A)$ determines $[A]$, the closure of $\gamma$ is determined by $N(A) \setminus F$, which is a subset of $N(H)$ of size $|N(A)|-|F|$.
\end{proof}

\begin{proof}[Proof of \cref{lem:bdOnSumForSmallF}]

Define $T:=H\cup N(H)$ and identify
\[ \cF := \{ (A,B): A \in \cG_1(a,b),~ B \subset N(A),~ A \approx (F,H) \} \]
with a subset of $\{0,1\}^T$ in the natural way. Observe that with this identification, $\cF$ is a collection of subsets of $T$
containing no isolated odd vertices (since $B \subset N(A)$) and
\[ \sum_{\gamma\in \cG_1(a,b),\gamma\approx (F,H)} \omega(\gamma) = \frac{\tilde\omega(\cF)}{(1+\lambda)^b} ,\]
Our goal now becomes to bound $\tilde\omega(\cF)$.
By \cref{lem:weighted-shearer},
\[ \tilde\omega(\mathcal F)\leq \prod_{v\in N(H)} Z(\Psi_v)^{\frac{p_v}d} \left(\tfrac 1{p_v}\right)^{\frac{p_v}d} \left(\tfrac1{1-p_v}\right)^{\frac{1-p_v}d} ,\]
where $\Psi_v$ and $p_v$ are as in \cref{lem:weighted-shearer}.
By \cref{lem:Z-Psi-bound} (and writing $\ell_v := \ell_{\Psi_v}$),
\[ Z(\Psi_v)^{1/d} \le (1+\lambda)e^{1/d^4} e^{-c\alpha \ell_v/d} .\]
Splitting $e^{-c\alpha \ell_v/d}$ into the product of two factors $e^{-c\alpha \ell_v/d}$ (with a modified constant $c$), we get
\begin{equation}\label{eq:Z-bound}
\tilde\omega(\cF) \le \left[\prod_{v\in N(H)} \left((1+\lambda)e^{1/d^4} e^{-c\alpha \ell_v/d}\right)^{p_v} \right] \cdot \left[ \prod_{v\in N(H)} e^{-c\alpha \ell_v p_v/d} \left(\tfrac 1{p_v}\right)^{\frac{p_v}d} \left(\tfrac1{1-p_v}\right)^{\frac{1-p_v}d} \right] .
\end{equation}
To obtain the lemma, it thus suffices to show that the first term on the right-hand side of~\eqref{eq:Z-bound} is at most $(1+\lambda)^b e^{b/d^4} e^{-c\alpha (b-|F| - 3(b-a)/d^{1/3})}$ and that the second term is at most $e^{bd \alpha e^{-c\alpha d}}$.

\smallskip

Consider the first term in~\eqref{eq:Z-bound}. The desired bound will follow once we show that
\[ \sum_{v \in N(H)} p_v = b \qquad\text{and}\qquad \sum_{v \in N(H)} p_v \ell_v \ge bd-|F|d - 3(b-a) d^{2/3} .\]
Since $p_v=\Pr(v \in N(A))$, both sums can be seen as expectations, namely, 
\[ \sum_{v \in N(H)} p_v = \E |N(A)| \qquad\text{and}\qquad \sum_{v \in N(H)} p_v \ell_v = \E \sum_{v \in N(A)} \ell_v .\]
Since every $A$ under consideration (namely, $(A) \in \cG_1(a,b)$ such that $(A) \approx (F,H)$) satisfies that $|N(A)|=b$, we have that $\sum_{v \in N(H)} p_v = b$. We claim that every such $A$ also satisfies that $\sum_{v \in N(A)} \ell_v \ge bd-|F|d - (b-a)d^{2/3}$. To see this, observe first that $\ell_v = \ell_{\Psi_v} \ge |N(v)\setminus H|$, so that $\sum_{v \in N(A)} \ell_v \ge |E(N(A),\cE \setminus H)|$. We have $|E(N(A),\cE)| = db$ and, by~\eqref{eq:approx-size-def},
\[ |E(N(A),H)| = |E(F,H)| + |E(N(A) \setminus F,H)| \le |F|d + 3(b-a)d^{2/3} .\]
We conclude that $|E(N(A),\cE \setminus H)| \ge db - |F|d - 3(b-a)d^{2/3}$. This establishes the desired bound on the first term in~\eqref{eq:Z-bound}. 

\smallskip

Consider now the second term in~\eqref{eq:Z-bound}.
Since $|N(H)| \le 2db$ by~\eqref{eq:N(H)-small}, it suffices to show that each term in the product is at most $e^{C\alpha e^{-c\alpha d}}$, or after taking logarithms, that
\[ \frac{H(p_v)}d - \frac{c_1\alpha \ell_v p_v}{d} \le C\alpha e^{-c_2\alpha d} .\]
This clearly holds when $p_v=1$. We may thus assume that $p_v<1$. In particular, $v \notin F$ since $|f_{N(v)}| = 0$ ($v \notin N(A)$) for some $f \in \cF$. Thus, by the definition of an approximation, at most $d^{2/3}$ neighbors of $v$ belong to $H$. That is, $\ell_v \ge |N(v) \setminus H| \ge d - d^{2/3} \ge 2d/3$. 
Thus, it suffices to show that
\[ \frac{H(p_v)}d - c_3\alpha p_v \le C\alpha e^{-c_2\alpha d} .\]
The left-hand side is negative when $p_v > e^{-c_3\alpha d/2}$ (when $p_v \ge 1/e$ we use that $H(p_v) \le \log 2$ and $\alpha \ge C/d$, and otherwise we use that $H(p_v) \le 2p_v \log \frac1{p_v}$). We may thus assume that $p_v \le e^{-c_3\alpha d/2} \le 1/e$, in which case, using that $x\log(1/x)$ is increasing on $(0,1/e)$, we have
\[ \frac{H(p_v)}d - c_3\alpha p_v \le \frac{H(p_v)}d \le \frac{2p_v \log \frac1{p_v}}d \le c_3 \alpha e^{-c_3\alpha d/2} . \qedhere \]
\end{proof}

\subsection{Bounding the weight of non-polymer configurations}\label{sec:bad-configs}
In this section, we prove \cref{lem:bad-configs}.
The $k=1$ case is given in the following lemma.

\begin{lemma}\label{lem:bad-configs-k=1}
Suppose that $\lambda \le \lambda_0$ and that $\lambda(1-e^{-\beta}) \ge \frac{C\log d}{d^{1/3}}$. Then
\[ \sum_{I \subset V(Q_d): |[I \cap \cE]|,|[I \cap \cO]| > \frac34 \cdot 2^{d-1}} \lambda^{|I|} e^{- \beta |E(I)|} \le Z_1 \cdot O(\exp(-2^d/d)) .\]
\end{lemma}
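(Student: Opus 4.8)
The plan is to combine the entropy method of Peled--Spinka (via \cref{lem:weighted-shearer0} and \cref{lem:Z-Psi-bound}) with the container method (\cref{lem:Lemma5.1+5.2}), following the same overall strategy as the proof of \cref{lem:bdOnSum}. Throughout set $\alpha := \lambda(1-e^{-\beta})$ and note that the hypotheses give $\beta \ge 1-e^{-\beta} \ge \alpha/\lambda_0 \ge \frac{c\log d}{d^{1/3}}$. Since the configurations contained in a single side of the hypercube already give $Z_1 \ge (1+\lambda)^{2^{d-1}}$, it suffices to show that the total weight
\[ W_{\mathrm{bad}} := \sum_{\substack{I \subset V(Q_d):\\ |[I\cap\cE]|,\, |[I\cap\cO]| > \frac34 \cdot 2^{d-1}}} \lambda^{|I|} e^{-\beta |E(I)|} \]
of non-polymer configurations is at most $(1+\lambda)^{2^{d-1}} \cdot O(e^{-2^d/d})$.

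First I would record the structure forced by $I$ being non-polymer. Writing $A := I\cap\cE$ and $W := \cO \setminus N(A)$, the bound $|[A]| > \frac34 2^{d-1}$ together with $|N(A)| \ge |[A]|$ gives $|W| < \frac14 2^{d-1}$; since moreover $\cE\setminus[A] = N(W)$, also $|N(W)| < \frac14 2^{d-1}$. Hence \cref{lem:isoperimetry} yields $|N(A)| - |[A]| = |N(W)| - |W| \ge \Omega(|W|/\sqrt d)$, and $|A| \ge |N(A)|/d > \frac{3\cdot 2^{d-1}}{4d}$. The analogous statements hold for $I\cap\cO$, producing a set $W' := \cE \setminus N(I\cap\cO)$ with $|W'|,|N(W')| < \frac14 2^{d-1}$.

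The core estimate bounds the weight of all non-polymer configurations that share a given value of $S := N(I\cap\cE)$ (equivalently, a given closure $[I\cap\cE]$, equivalently a given $W$). For such $I$ one has $I\cap\cE \subseteq H_S := \{u\in\cE : N(u)\subseteq S\}$ with $N(I\cap\cE) = S$, while $I\cap\cO$ is arbitrary; since the odd vertices outside $S$ then have all their edges to forced-empty even vertices, the weight factors as $(1+\lambda)^{2^{d-1}-|S|}$ times $\tilde\omega(\cF_S)$, where $\cF_S$ is the family of configurations on $T := S \cup N(S)$ with even part contained in $H_S$ and with $N(I\cap\cE) = S$. Applying \cref{lem:weighted-shearer0} to $\cF_S$ — legitimate because $\bar 0 \notin \Psi_v$ for every $v\in S$, as such $v$ has an occupied even neighbour — and then \cref{lem:Z-Psi-bound} with $\ell_{\Psi_v} \ge |N(v)\setminus H_S|$, together with $\sum_{v\in S}|N(v)\setminus H_S| = |E(S,\cE\setminus H_S)| = d(|S|-|H_S|)$, gives
\[ \sum_{I:\ N(I\cap\cE)=S} \lambda^{|I|}e^{-\beta|E(I)|} \ \le\ (1+\lambda)^{2^{d-1}}\exp\!\Big(\tfrac{2^d}{d^4} - \tfrac\alpha2\big(|N(W)|-|W|\big)\Big). \]

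It then remains to sum this over the admissible sets $S$. When $|N(W)|-|W|$ is not too small, one sums using containers: \cref{lem:Lemma5.1+5.2}, applied to $W$ (decomposed into $2$-linked components, so that only $2$-linkedness is needed), bounds the number of admissible $S$ with a given pair $(|W|,|N(W)|)$, and since $\alpha \gg \log d / d^{2/3}$ the per-class saving $e^{-\frac\alpha2(|N(W)|-|W|)}$ dominates the container overhead $\exp(O(|N(W)|\log d/d^{2/3}))$, just as in the proof of \cref{lem:Gab}. The delicate remaining case is when both $W$ and $W'$ are small, so that $N(I\cap\cE)$ and $N(I\cap\cO)$ are each nearly all of their respective sides; here the entropy bound above only recovers $(1+\lambda)^{2^{d-1}}$ up to a factor $e^{o(2^d)}$, and one must additionally exploit that in this regime every such configuration spans many edges — indeed $|E(I)| \ge |I\cap\cE| - |W'| \ge (2^{d-1}-|W|)/d - |W'| = \Omega(2^d/d)$ — so that the factor $e^{-\beta|E(I)|}$, combined with the (now cheap, since $W,W'$ are small) enumeration of $W$, supplies the missing $e^{-\Omega(2^d/d)}$. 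I expect this last step — interleaving the Shearer-type entropy bound with the edge penalty tightly enough to beat $(1+\lambda)^{2^{d-1}}$ by a genuine $e^{-2^d/d}$ factor, rather than letting either estimate alone govern — to be the main obstacle.
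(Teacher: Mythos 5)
Your reduction is set up correctly (the factorization over a fixed $S=N(I\cap\cE)$, the identity $|N(A)|-|[A]|=|N(W)|-|W|$, and the applicability of \cref{lem:weighted-shearer0} and \cref{lem:Z-Psi-bound} are all fine), and you have correctly located the hard case; but the mechanism you propose for it does not close the gap, and the shortfall is wider than you indicate. The two savings available in your scheme are (i) the entropy saving $e^{-c\alpha(|N(W)|-|W|)}$, which is capped by the actual closure deficit — by \cref{lem:isoperimetry} this is only guaranteed to be of order $|W|/\sqrt d$, and it is literally zero when $W=\emptyset$ — and (ii) the crude one-edge-per-occupied-vertex penalty $e^{-\beta\,\Omega(2^d/d)}$. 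In the regime allowed by the hypotheses where $\lambda\asymp 1$ and $p=1-e^{-\beta}\asymp \log d/d^{1/3}$, one has $\beta\cdot 2^d/d \asymp 2^d\log d/d^{4/3}=o(2^d/d)$, and the deficit saving is $o(2^d/d)$ whenever $|W|\lesssim 2^d/(d^{1/6}\log d)$; so throughout a large middle range of $|W|,|W'|$ (not only when both are tiny) neither mechanism, nor their sum, reaches the required $e^{-\Omega(2^d/d)}$ — and this is before paying the enumeration cost over $(W,W')$, which for the thresholds you would need is itself of order $e^{\Theta(|W|d)}$, and before addressing the fact that multiplying the Shearer bound by an extra $e^{-\beta|E(I)|}$ double-counts the edge weights (fixable by running \cref{lem:weighted-shearer0} at inverse temperature $\beta/2$, but that only changes constants). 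Bad configurations realizing this worst case exist: take $A$ a sparse dominating set of $\cO$ (so $W=\emptyset$, $|A|\approx 2^{d-1}/d$) and symmetrically on the other side, in which case only $\Theta(2^d/d)$ edges are forced and no deficit saving is available.

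The missing idea is the device the paper uses: a \emph{per-vertex occupancy dichotomy} at threshold $s\asymp\lambda d$ (half the typical number of occupied neighbours), implemented through the three-way version \cref{lem:weighted-shearer3}. An odd vertex with between $1$ and $s$ occupied neighbours is locally atypical and, via the Chernoff estimate inside \cref{lem:Z-Psi-bound-gen}, contributes a factor $e^{-c\alpha}$ after the $1/d$ Shearer exponent — a saving proportional to the \emph{number} of such vertices (potentially $\Theta(2^d)$ of them), not to the closure deficit; with at least $m=2^dd^{-2/3}$ such vertices this gives $e^{-c\alpha m}=e^{-\Omega(2^d\log d/d)}$. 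If instead fewer than $m$ such vertices exist on both sides, the structure forces many odd occupied vertices with \emph{more than} $s$ occupied neighbours, hence $|E(I)|\ge ms$ and $\beta ms\gtrsim 2^d\log d$. In both branches the gain over your scheme is exactly a factor of order $d$ (saving $\alpha$ per vertex rather than per deficit unit; $s\asymp\lambda d$ edges per vertex rather than one), and that factor is precisely what is needed to reach $e^{-2^d/d}$ at the boundary $\lambda p\asymp \log d/d^{1/3}$. Without something of this kind, your delicate case cannot be completed.
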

\begin{proof}
Define $m:=2^d d^{-2/3}$, $s := \frac d2 \frac{\lambda}{1+\lambda}$ and
\[ \cI_\cO := \big\{ I \subset V(Q_d) : \text{there are at least $m$ vertices $v \in \cO$ such that $1 \le |N(v) \cap I| \le s$} \big\} .\]
Define $\cI_\cE$ similarly.
Let $\cI'$ be the set of $I \notin \cI_\cE \cup \cI_\cO$ such that $|[I \cap \cE]|,|[I \cap \cO]| \ge \frac34 \cdot 2^{d-1}$. It suffices to show that each of $\tilde\omega(\cI_\cE)$, $\tilde\omega(\cI_\cO)$ and $\tilde\omega(\cI')$ is at most $Z_1 \cdot O(\exp(-2^d/d))$.

Let us begin with $\cI_\cO$ (the argument for $\cI_\cE$ is the same).
Let $\Psi_v$, $\Psi'_v$, $p_v$ and $p'_v$ be defined as in \cref{lem:weighted-shearer3}.
By \cref{lem:Z-Psi-bound},
\[ Z(\Psi'_v) \le (1+\lambda)^d e^{1/d^3} .\]
Denoting $\alpha := \lambda(1-e^{-\beta})$ and $s':= \frac {C\log (2+\lambda)}\beta$, by~\eqref{eq:Z-Psi-bound-mid} and~\eqref{eq:Z-Psi-bound-small} (taking $\ell=0$ there and recalling that $c\bar\alpha \le \alpha \le C\bar\alpha$ when $\lambda$ is bounded),
\[ Z(\Psi_v) \le (1+\lambda)^d e^{-s/2}(1+\lambda e^{- \beta s'})^d + (1+\lambda)^d e^{-\frac 12\alpha d}(d(1+\lambda))^{s'} \le (1+\lambda)^d e^{-c\alpha d} ,\]
where the second inequality follows from plugging in the definitions of $s$ and $s'$ and using the assumption on $\alpha$.
Thus, by \cref{lem:weighted-shearer3},
\[ \tilde\omega(\cI_\cO) \le (1+\lambda)^{2^{d-1}} \prod_{v \in \cO} e^{-c\alpha p_v} e^{p'_v / d^4} \left(\tfrac 1{p_v}\right)^{\frac{p_v}d} \left(\tfrac 1{p'_v}\right)^{\frac{p'_v}d} \left(\tfrac1{1-p_v-p'_v}\right)^{\frac{1-p_v-p'_v}d} .\]
Since $Z_1 \ge (1+\lambda)^{2^{d-1}}$, $\sum_{v \in \cO} p'_v \le 2^{d-1}$ and $\sum_{v \in \cO} p_v \ge m$ (by the definition of $\cI_\cO$), to deduce that $\tilde\omega(\cI_\cO) \le Z_1 \cdot O(\exp(-2^d/d))$, it suffices to show that
\[  e^{-c\alpha m} e^{2^d/d^4} \left[\prod_{v \in \cO} \left(\tfrac 1{p_v}\right)^{p_v} \left(\tfrac 1{p'_v}\right)^{p'_v} \left(\tfrac1{1-p_v-p'_v}\right)^{1-p_v-p'_v} \right]^{\frac1d} \le O(\exp(-2^d/d))  .\]
This indeed holds since $\alpha \ge Cd^{-1/3} \log d$, $m=2^d d^{-2/3}$ and each term in the product is at most 3 (since it is the exponential of the entropy of a random variable which takes at most 3 values).

We now proceed to bound $\tilde\omega(\cI')$.
We claim that each $I \in \cI'$ satisfies that $|E(I)| \ge ms$.
This will yield the lemma since
\[ \tilde\omega(\cI') \le (1+\lambda)^{2^d} e^{-\beta ms} = e^{O(2^d)} e^{-\Omega(2^d \log d)} = O(\exp(-2^d)) .\]
Let $I \in \cI'$. Using that $[I \cap \cE]$ and $[I \cap \cO]$ are each of size at least $\frac34 \cdot 2^{d-1}$, it follows that $[I]$ contains at least half of the edges of the hypercube, i.e., $|E([I])| \ge \frac d2 2^{d-1}$. Since the graph spanned by $E([I])$ has maximum degree at most $d$, we see that
\[ |[I] \cap N([I]) \cap \cO]| \ge \tfrac1d |E([I])| \ge 2^{d-2} .\]
Using that $N(I)=N([I])$ and $I \notin \cI_\cO$,
\[ |I \cap N(I) \cap \cE| \ge (|[I] \cap N(I) \cap \cO| - m) \tfrac sd \ge c\lambda 2^d \ge 2m .\]
Using that $I \notin \cI_\cE$, we conclude that $|E(I)| \ge (|I \cap N(I) \cap \cE|-m)s \ge ms$.
\end{proof}

We are now ready to prove \cref{lem:bad-configs}.

\begin{proof}[Proof of \cref{lem:bad-configs}]
Since $|E(I_1) \cup \cdots \cup E(I_k)| \ge \frac1k (|E(I_1)| + \cdots + |E(I_k)|)$, the sum in the lemma is easily bounded by
\[ k Z^{k-1}_1(\tfrac \beta k) \sum_{I \subset V(Q^d): |[I \cap \cE]|,|[I \cap \cO]| > \frac34 \cdot 2^{d-1}} \lambda^{|I|} e^{-\tfrac \beta k |E(I)|} ,\]
where $Z_1(\beta/k)$ is shorthand for $Z_1(Q_d,\lambda,\beta/k)$.
Using \cref{lem:bad-configs-k=1}, the above is bounded by
\[ k Z^k_1(\tfrac \beta k) O(\exp(-2^d/d)) .\]
Thus, it suffices to show that $Z^k_1(\tfrac \beta k) \le Z_k(\beta) e^{O(2^d/d^2)}$. By \cref{lem:weighted-shearer0,lem:Z-Psi-bound}, we obtain that
\[ Z_1(\tfrac \beta k) \le (1+\lambda)^{2^{d-1}} e^{2^{d-1}/d^4} \le Z_1(\beta) e^{2^{d-1}/d^4} .\]
Since $Z_1^k(\beta) \le Z_k(\beta)$, the lemma follows.
\end{proof}

\subsection{Improved bounds for small clusters}\label{sec:improved-bounds}

\cref{lem:Lemma15} gives a bound on the weight of clusters of size at least $n$, for any value of $n$, which may depend on $d$. For fixed $n$, the bound obtained in this manner is not optimal. In this section, we provide some improvements on this (namely, \cref{lem:cluster-tail,lem:cluster-tail-improved}). In particular, \cref{lem:cluster-convergence} follows immediately from \cref{lem:cluster-tail-improved}.

\begin{lemma}\label{lem:cluster-tail}
Assume~\eqref{eq:lambda-cond-k}. Then for any fixed $n \ge 1$,
\[ \sum_{\Gamma\in \cC_\cD : \|\Gamma\|\geq n}|\omega(\Gamma)|e^{\|\Gamma\| d^{-3/2}} = O\left( 2^d d^{2n-2} \lambda^n \tilde\alpha_k^{nd} \right) .\]
\end{lemma}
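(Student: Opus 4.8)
The plan is to split the sum according to whether $\|\Gamma\|=n$ or $\|\Gamma\|\ge n+1$. The contribution of clusters of size larger than $n$ will be handled by \cref{lem:Lemma15}, which is too lossy to give the desired bound when applied with the parameter $n$ (it produces a power of $d$ of order $d^{7n}$ rather than $d^{2n-2}$), but which becomes negligible once the parameter is increased by one. The contribution of clusters of size exactly $n$ will be handled by a direct enumeration, where the dominant term comes from clusters whose support has size exactly $n$.

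For the tail, I would apply \cref{lem:Lemma15} with $n+1$ in place of $n$; since $n$ is fixed, $n+1\le d/10$ for $d$ large, so
\[ \sum_{\Gamma\in\cC_\cD:\ \|\Gamma\|\ge n+1}|\omega(\Gamma)|e^{\|\Gamma\|d^{-3/2}}\le d^{-3/2}2^de^{-\tilde g(n+1)} = 2^d\tilde\alpha_k^{(n+1)d}\cdot\tilde\alpha_k^{-3(n+1)^2}d^{7(n+1)-3/2}. \]
Dividing by the target $2^dd^{2n-2}\lambda^n\tilde\alpha_k^{nd}$ leaves $\tilde\alpha_k^d$ times a fixed power of $d$ (using also $\lambda\ge Ck^2d^{-1/3}\log d$, so that $\lambda^{-n}\le d^{n/3}$, and $\tilde\alpha_k\ge\frac1{1+\lambda_0}$). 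Since $\tilde\alpha_k^{kd}=\alpha_k^d\le\alpha_1^d$ and $1-\alpha_1=\frac{\lambda(1-e^{-\beta})}{1+\lambda}\ge \frac{C\log d}{d^{1/3}}$ by~\eqref{eq:lambda-cond-k}, we have $\alpha_1^d=d^{-\omega(1)}$, hence $\tilde\alpha_k^d=d^{-\omega(1)}$ for fixed $k$; so this ratio tends to $0$ and the tail is $o$ (in particular $O$) of the target.

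For clusters of size exactly $n$, fix $\Gamma=(\gamma_1,\dots,\gamma_m)$ with $\sum_j\|\gamma_j\|=n$, so $m\le n$. I would first bound the weight by $|\omega(\Gamma)|\le|\phi(H_\Gamma)|\prod_j\omega(\gamma_j)$, where $|\phi(H_\Gamma)|$ is at most a constant $C_n$ depending only on $n$ (as $H_\Gamma$ is one of finitely many graphs on at most $n$ labelled vertices), and $\prod_j\omega(\gamma_j)\le\lambda^n\tilde\alpha_k^{\sum_j\|N(\gamma_j)\|}$ by \cref{lem:weight-of-polymer}. Since every block $A_i^j$ of every $\gamma_j$ has $|A_i^j|\le n\le d/10$, \cref{lem:isoperimetry} gives $|N(A_i^j)|\ge d|A_i^j|-2|A_i^j|^2$, hence $\sum_j\|N(\gamma_j)\|\ge dn-2\sum_j\|\gamma_j\|^2\ge dn-2n^2$; combined with $\tilde\alpha_k\ge\frac1{1+\lambda_0}$ this yields $|\omega(\Gamma)|\le C_n'\lambda^n\tilde\alpha_k^{nd}$ with $C_n'$ depending only on $n$ and $\lambda_0$. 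Next I would count such clusters. The support $S(\Gamma):=\bigcup_j S(\gamma_j)$ is a $2$-linked subset of $Q_d$ of size at most $n$: connectedness of each $H_{\gamma_j}$ and of the incompatibility graph $H_\Gamma$ propagates (exactly as in the remarks after the definition of a polymer) to the statement that $S(\Gamma)$ is connected in the enhanced graph on $V(Q_d)$ (edges between vertices at distance $\le 2$), while $|S(\Gamma)|\le\|\Gamma\|=n$. By \cref{lem:Lemma13}, the number of possible supports of size at most $n$ is at most $\sum_{s=1}^n 2^d(ed^2)^{s-1}=O(2^dd^{2n-2})$. For each fixed support $W$, a $\cD$-polymer supported inside $W$ is determined by choosing, for each $v\in W$, the (possibly empty) set of indices $i$ with $v\in\cD_i$ and $v\in A_i$, so there are at most $2^{kn}$ of them, and a cluster with $S(\Gamma)=W$ is an ordered tuple of at most $n$ such polymers, giving at most $C_n''$ clusters per support, with $C_n''$ depending only on $n$ and $k$. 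Multiplying the three bounds and absorbing the harmless factor $e^{nd^{-3/2}}=1+o(1)$ gives
\[ \sum_{\Gamma\in\cC_\cD:\ \|\Gamma\|=n}|\omega(\Gamma)|e^{\|\Gamma\|d^{-3/2}}=O\!\left(2^dd^{2n-2}\lambda^n\tilde\alpha_k^{nd}\right), \]
and adding the two parts yields the lemma.

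The main obstacle is purely bookkeeping: one must keep all the $n$- and $k$-dependent constants ($|\phi(H_\Gamma)|$, the number of polymers and clusters per support, the factor $\tilde\alpha_k^{-2n^2}$ coming from isoperimetry) under uniform control, verify that the isoperimetric inequality applies to every block $A_i^j$ rather than only to the supports, and observe that the power $d^{2n-2}$ is produced exactly by supports of size $n$ — multi-polymer clusters of total size $n$ are genuinely present at this same order of magnitude but are simply absorbed into the $O(\cdot)$, so no cancellation or finer analysis of the Ursell function is needed.
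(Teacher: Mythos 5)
Your proposal is correct and follows essentially the same route as the paper: the tail $\|\Gamma\|\ge n+1$ is handled by applying \cref{lem:Lemma15} with parameter $n+1$ (whose bound is then $o$ of the target since $\tilde\alpha_k^d=d^{-\omega(1)}$), and the size-$n$ clusters are counted via the 2-linkedness of the support and \cref{lem:Lemma13}, with weights bounded through \cref{lem:weight-of-polymer} and \cref{lem:isoperimetry}, the factor $\tilde\alpha_k^{-2n^2}$ being absorbed since $\tilde\alpha_k$ is bounded away from zero. You in fact spell out several details the paper leaves implicit (the negligibility of the tail, the 2-linkedness of $S(\Gamma)$, the per-support cluster count), so nothing is missing.
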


The exponential term $\tilde\alpha_k^{nd}$ is not optimal when $n$ is not a multiple of $k$. An improved exponential term is provided in \cref{lem:cluster-tail-improved} below.

\begin{proof}
We first apply \cref{lem:Lemma15} to obtain that
\[ \sum_{\Gamma \in \cC_\cD : \|\Gamma\|\ge n+1} |\omega(\Gamma)|e^{\|\Gamma\| d^{-3/2}} \le 
d^{7(n+1)-3/2} 2^{d-1} \tilde\alpha_k^{d(n+1)-3(n+1)^2} .\]
Since the right-hand side is $O\left(2^d d^{2n-2} \lambda^n \tilde\alpha_k^{dn} \right)$, it remains only to bound the contribution from clusters of size $n$.
	Define the support of $\Gamma$ to be $S(\Gamma) := \bigcup_{\gamma \in \Gamma} S(\gamma)$ and note that $|S(\Gamma)| \le n$.
	The number of ways to choose the support of a cluster of size $n$ is at most $O(2^d d^{2n-2})$ by \cref{lem:Lemma13}. For any $S$ of size at most $n$, there are at most a constant (depending on $k$ and $n$, which are fixed) number of clusters with support $S$.
	It follows that there are at most $O(2^d d^{2n-2})$ clusters of size $n$. By \cref{lem:weight-of-polymer}, the absolute weight of any such cluster $\Gamma$ satisfies
	\[ |\omega(\Gamma)| = |\phi(H_\Gamma)| \prod_{\gamma \in \Gamma}\omega(\gamma) \le C(n) \lambda^{\|\Gamma\|} \tilde\alpha_k^{\|N(\Gamma)\|} .\]
By \cref{lem:isoperimetry},
we have that $\|N(\Gamma)\| \ge dn - 2n^2$, so that
\[ \sum_{\Gamma \in \cC_\cD : \|\Gamma\|=n} |\omega(\Gamma)| \le O\left(2^d d^{2n-2} \lambda^n \tilde\alpha_k^{dn-2n^2} \right) ,\]
which is $O\left(2^d d^{2n-2} \lambda^n \tilde\alpha_k^{dn} \right)$, since $\lambda$ bounded implies that $\tilde\alpha_k$ is bounded away from zero.
\end{proof}

For a polymer $\gamma=(A_1,\dots,A_k)$, we define its \textbf{span} to be
\begin{equation}\label{eq:span-def}
\spn(\gamma) := \{ i \in [k] : A_i \neq \emptyset \} .
\end{equation}
We define the span of a cluster $\Gamma=(\gamma_1,\dots,\gamma_n)$ to be $\spn(\Gamma) := \spn(\gamma_1) \cup \cdots \cup \spn(\gamma_n)$.

\begin{lemma}\label{lem:cluster-tail-improved}
Assume~\eqref{eq:lambda-cond-k}. Then for any fixed $n$, writing $n=ak+b$ for $a \ge 0$ and $1 \le b \le k$, we have
\[ \sum_{\Gamma\in \cC_\cD : \|\Gamma\|\geq n}|\omega(\Gamma)|e^{\|\Gamma\| d^{-3/2}} = O\left( 2^d d^{2n-2} \lambda^n \alpha_k^{ad}\alpha_b^d \right) .\]
Furthermore, for any $1 \le \ell \le k$,
\[ \sum_{\Gamma\in \cC_\cD : |\spn(\Gamma)|\geq \ell}|\omega(\Gamma)|e^{\|\Gamma\| d^{-3/2}} = O\left( 2^d \lambda^\ell \alpha_\ell^d \right) .\]
\end{lemma}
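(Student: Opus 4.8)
The plan is to mimic the proof of \cref{lem:cluster-tail}, but to track more carefully the exponential factor coming from the bound $\omega(\gamma) \le \lambda^{\|\gamma\|} \tilde\alpha_k^{\|N(\gamma)\|}$ of \cref{lem:weight-of-polymer}, replacing the crude $\tilde\alpha_k$ by the sharper quantities $\tilde\alpha_j$ appropriate to how the mass of a polymer is distributed among its $k$ coordinates. Concretely, for a polymer $\gamma=(A_1,\dots,A_k)$ I would revisit the estimate on $\bar\omega(\gamma) = \prod_v \delta_{m_v(\gamma)}$ from the proof of \cref{lem:weight-of-polymer}: recall $m_v(\gamma)=|\{i: v\in N(A_i)\}|$, so $m_v(\gamma)$ is supported on the coordinates in $\spn(\gamma)$, hence $m_v(\gamma)\le|\spn(\gamma)|$. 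Using the super-multiplicativity $\delta_m\delta_n\le\delta_{m+n}$ and $\delta_m^{1/m}\le\delta_\ell^{1/\ell}$ for $m\le\ell$ (from \cref{cl:super-multiplicative}), one obtains the refinement $\omega(\gamma)\le\lambda^{\|\gamma\|}\tilde\alpha_{|\spn(\gamma)|}^{\|N(\gamma)\|}$, and then via \cref{lem:isoperimetry} ($\|N(\gamma)\|\ge d\|\gamma\|-2\|\gamma\|^2$) and $\|\gamma\|\ge|\spn(\gamma)|$, the bound $\omega(\gamma)\le C(n)\,\lambda^{\|\gamma\|}\,\alpha_{|\spn(\gamma)|}^{d}\,\alpha_{|\spn(\gamma)|}^{(\|\gamma\|/|\spn(\gamma)|-1)d - 2\|\gamma\|^2/|\spn(\gamma)|}$; since $\alpha_j$ is bounded away from $0$ and $1$ for fixed $j$ and $\|\gamma\|\le n$ fixed, the last factor is $\Theta(1)$, so $\omega(\gamma)\approx\lambda^{\|\gamma\|}\alpha_{|\spn(\gamma)|}^d$.

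For the first displayed bound, as in \cref{lem:cluster-tail} the tail over $\|\Gamma\|\ge n+1$ is controlled by \cref{lem:Lemma15} and is negligible, so it suffices to handle clusters of size exactly $n$. There are $O(2^d d^{2n-2})$ such clusters (by \cref{lem:Lemma13}, choosing the support, then $O_k(1)$ many clusters per support). For a cluster $\Gamma=(\gamma_1,\dots,\gamma_m)$ of size $n$, multiplicativity of the refined bound gives $|\omega(\Gamma)|=|\phi(H_\Gamma)|\prod_j\omega(\gamma_j)\le C(n)\lambda^n\prod_j\alpha_{|\spn(\gamma_j)|}^{d}$. The key combinatorial point is that since the polymers $\gamma_j$ are pairwise incompatible (their supports pairwise 2-linked through the incompatibility graph), the product $\prod_j\alpha_{|\spn(\gamma_j)|}^d$ is maximized — given the total size constraint $\sum_j\|\gamma_j\|=n$ and $|\spn(\gamma_j)|\le\|\gamma_j\|$ — exactly when the spans "stack up": by the same super-multiplicativity of $(\delta_j)$ (equivalently log-concavity making $\alpha_j^{1/j}$ decreasing), $\prod_j \alpha_{s_j} \le \alpha_k^a \alpha_b^1$ whenever $\sum_j s_j \le n = ak+b$ with each $s_j \le k$. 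This yields $|\omega(\Gamma)| \le C(n)\lambda^n\alpha_k^{ad}\alpha_b^d$ for every cluster of size $n$, and summing over the $O(2^d d^{2n-2})$ clusters gives the first claim.

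For the second displayed bound I would instead filter clusters by their span. Fix $\ell$ and split according to $\|\Gamma\|$: by \cref{lem:Lemma15} the contribution of clusters with $\|\Gamma\|$ large (say $\ge d^4$, or even $\gg\ell$) is $O(e^{-d^2})$, negligible. For $\|\Gamma\|$ bounded, a cluster with $|\spn(\Gamma)|\ge\ell$ has $\|\Gamma\|\ge\ell$, and by the refined weight bound applied to each polymer and then the same stacking inequality — now together with the observation that $\sum_j|\spn(\gamma_j)|\ge|\spn(\Gamma)|\ge\ell$ forces $\prod_j\alpha_{|\spn(\gamma_j)|}^d\le\alpha_\ell^d\cdot(\text{bounded})$ — one gets $|\omega(\Gamma)|\le C\lambda^{\|\Gamma\|}\alpha_\ell^d$ per cluster, uniformly over $\|\Gamma\|$ in the relevant range; there are $O(2^d)$ clusters of bounded size whose span contains a fixed index, and summing the geometric-type series over $\|\Gamma\|$ (using $\lambda$ bounded and $\alpha_j^d=d^{-\omega(1)}$) gives $O(2^d\lambda^\ell\alpha_\ell^d)$. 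I expect the main obstacle to be making the "stacking" inequality $\prod_j\alpha_{s_j}^d\le\alpha_k^{ad}\alpha_b^d$ fully rigorous — i.e., deriving from the log-concavity/super-multiplicativity of $(\delta_m)$ (\cref{cl:super-multiplicative}) the correct monotonicity of $\alpha_\cdot$ in this partition sense, and checking that all the $\Theta(1)$ correction factors (from the slack in \cref{lem:isoperimetry}, the Ursell function $\phi(H_\Gamma)$, and the bounded number of clusters per support) are genuinely absorbed into the $O(\cdot)$ since $n$, $\ell$, $k$ are fixed.
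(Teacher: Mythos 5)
There is a genuine gap, and it is in the heart of the first display. Your refined polymer bound $\omega(\gamma)\le\lambda^{\|\gamma\|}\tilde\alpha_{|\spn(\gamma)|}^{\|N(\gamma)\|}$ is correct, but recording only the span size discards the key information that the coordinate multiplicities sum to $\|\gamma\|$, and the ``stacking'' inequality you then invoke, $\prod_j\alpha_{s_j}\le\alpha_k^{a}\alpha_b$ whenever $\sum_j s_j\le n$, is false in the direction you need: every $\alpha_m<1$, so a deficient total makes the product \emph{larger} (already $\alpha_2\not\le\alpha_2\alpha_1$), and $\sum_j|\spn(\gamma_j)|$ is in general strictly smaller than $\|\Gamma\|=n$. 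Concretely, take $k=2$, $n=3$ and the cluster consisting of the single polymer $\gamma=(\{u,v\},\{u\})$ with $\dist(u,v)=2$: your estimate gives only $|\omega(\Gamma)|\lesssim\lambda^3\alpha_2^{3d/2}$ (or $\lambda^3\alpha_2^{d}$ after your simplification), whereas the lemma requires the per-cluster bound $O(\lambda^3\alpha_2^{d}\alpha_1^{d})$, smaller by the factor $(\tilde\alpha_1/\tilde\alpha_2)^{d}$ (resp.\ $\alpha_1^{d}$), which is $d^{-\omega(1)}$ under~\eqref{eq:lambda-cond-k} since $\tilde\alpha_\ell$ is strictly increasing (\cref{cor:alphaIncreasing}). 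The paper's proof avoids this by doing the bookkeeping \emph{per support vertex inside a single polymer}: it bounds $\bar\omega(\gamma)=\prod_v\delta_{m_v(\gamma)}$ in terms of the multiplicities $n_u(\gamma)=|\{i:u\in A_i\}|$, which sum exactly to $\|\gamma\|=n$, and applies the rearrangement $\delta_i\delta_j\le\delta_{i-1}\delta_{j+1}$ to the multiset $\{n_u(\gamma)\}_u$ to conclude $\omega(\gamma)\le\lambda^n\alpha_k^{ad}\alpha_b^{d}$ for \emph{every} polymer of size $n$; this per-vertex refinement is the missing idea, and without it the first display does not follow.

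Two further steps of your outline also do not go through as written. For the tail, \cref{lem:Lemma15} only gives, up to $d^{O(1)}$ factors, $2^{d}\tilde\alpha_k^{(n+1)d}$ for sizes $\ge n+1$, and this is \emph{not} $O(\alpha_k^{ad}\alpha_b^{d})$ when $b<k$ (e.g.\ $b=1$ and $k\ge5$ with $p$ near the threshold, where $(\tilde\alpha_k^{2}/\alpha_1)^{d}\to\infty$); the paper handles this by downward induction on $b$, with base case \cref{lem:cluster-tail}, using the already-proved case $b+1$ to control all clusters of size $>n$. For the second display, your count ``$O(2^d)$ clusters of bounded size'' is off by the factors $d^{2m-2}$ for clusters of size $m\ge2$, so a count-times-maximum argument over all bounded sizes yields only $O(2^{d}d^{O(1)}\lambda^\ell\alpha_\ell^{d})$. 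The paper removes these powers of $d$ by first using the (already established) first part to discard every cluster of size $>\ell$, after which the surviving clusters satisfy $\|\Gamma\|=|\spn(\Gamma)|=\ell$ and have essentially singleton support, of which there are only $O(2^d)$ — a reduction unavailable to you because it rests on the first part that your argument has not proved.
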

\begin{proof}

	We prove the first statement by inverse induction on $b$ (with $a$ fixed). The base case $b=k$ of the induction is precisely \Cref{lem:cluster-tail}. For the induction step, let $b<k$ and assume that the statement holds for $b+1$, so that, in particular,
		\[ \sum_{\Gamma\in \cC_\cD : \|\Gamma\|>n}|\omega(\Gamma)|e^{\|\Gamma\| d^{-3/2}} = O\left( 2^d d^{2n} \lambda^{n+1} \alpha_k^{ad}\alpha_{b+1}^d \right).\]
		Using that $\lambda(1-e^{-\beta})=\omega(\frac{\log d}d)$ and $\lambda$ is bounded, we see that the right-hand side is $o(2^d\lambda^n\alpha_k^{ad}\alpha_b^d)$.
	It remains to show that
	\[ \sum_{\Gamma\in \cC_\cD : \|\Gamma\|= n}|\omega(\Gamma)| = O\left( 2^d d^{2n-2} \lambda^n \alpha_k^{ad}\alpha_b^d \right) .\]
	Since there are $O(2^dd^{2n-2})$ clusters of size $n$, it suffices to show that 
	\[ \max_{\|\Gamma\|=n}|\omega(\Gamma)| = O(\alpha_k^{ad} \alpha_b^d) .\]
	As we have seen in the proof of the previous lemma, \cref{lem:weight-of-polymer} and \cref{lem:isoperimetry} imply that $|\omega(\Gamma)| \le C(n)\lambda^n \tilde\alpha_k^{dn-2n^2}$ for any cluster $\Gamma$ such that $\|\Gamma\|=n$. We require a stronger bound here (recall that $\tilde\alpha_\ell$ is increasing in $\ell$, so that $\tilde\alpha_k^{n} \ge \tilde\alpha_k^{ak}\tilde\alpha_b^{b}=\alpha_k^{a} \alpha_b$). To obtain the required bound, it suffices to improve \cref{lem:weight-of-polymer} to show that any polymer $\gamma$ with $\|\gamma\|=n$ satisfies
	\[ \omega(\gamma) \le \lambda^n \alpha_k^{ad} \alpha_b^d .\]
	(Actually we need to use this for polymers of size at most $n$, but we prefer not to introduce new notation and just continue using $n=ak+b$.)
	Following the proof of \cref{lem:weight-of-polymer} (and in the notation of that lemma), we have that $\omega(\gamma) \le \lambda^n (1+\lambda)^{-\|N(\gamma)\|} \bar\omega(\gamma)$ and
	\begin{align*}
	\bar\omega(\gamma) = \prod_v \delta_{m_v(\gamma)}.
	\end{align*}
	It suffices to show that $\bar\omega(\gamma) \le O(\delta_k^{ad}\delta_b^d)$.
	We only keep in the product those $v$ which have a unique neighbor in the support of $\gamma$ (these are all but $O(1)$ many vertices). For each $u$ in the support of $\gamma$, we consider the product $\prod_v \delta_{m_v}(\gamma)$ over all $v$ adjacent to $u$ which are not adjacent to any other vertex in the support. Then all $m_v(\gamma)$ in the product equal $n_u(\gamma) := |\{i: u \in A_i\}|$. Thus,
	\[ \bar\omega(\gamma) \le \left(\prod_u \delta_{n_u(\gamma)}\right)^{d-O(1)} .\]
	It thus suffices to show that $\prod_u \delta_{n_u(\gamma)} \le \delta_k^a \delta_b$. This will follow from the fact that $\delta_i \delta_j \le \delta_{i-1}\delta_{j+1}$ for any $0<i \le j<k$.
	Indeed, starting from the set of numbers $\{ n_u(\gamma) \}$, and repeatedly choosing a pair of numbers $\{i,j\}$ such that $0<i\le j<k$ and replacing it with the pair $\{i-1,j+1\}$, we eventually reach a set of numbers which are all 0 or $k$, except perhaps one number. Since their sum is preserved throughout this process, there must be exactly $a$ numbers which are $k$ and a single one which is $b$. Since this process only increased the product, we conclude that $\prod_u \delta_{n_u(\gamma)} \le \delta_k^a \delta_b$.

	We now prove the second part. By the the first part, the contribution from clusters of size larger than $\ell$ is negligible. We thus only need to show that
	\[ \sum_{\Gamma\in \cC_\cD : |\spn(\Gamma)|=\|\Gamma\|=\ell}|\omega(\Gamma)| = O\left( 2^d \lambda^\ell \alpha_\ell^d \right) ,\]
	There are $O(2^d)$ clusters in the sum (note that any cluster in the sum has a support which is a singleton), and by what we have just shown, each satisfies that $|\omega(\Gamma)| \le C(\ell)\lambda^\ell \alpha_\ell^d$.
\end{proof}

\section{The moments}\label{sec:moments}

In this section, we state and prove extensions of the main theorems stated in \cref{sec:intro}.
The proofs rely on \cref{thm:ClusterEx,lem:cluster-tail,lem:cluster-tail-improved}.

Denote $Z$ as shorthand for the partition function $Z(Q_{d,p},\lambda)$ of the hard-core model at fugacity $\lambda$ on the random subgraph $Q_{d,p}$. 
Recall that $p=1-e^{-\beta}$ and recall from \cref{sec:convergence} that we denote
\[ \alpha_\ell := \frac{1 + ((1+\lambda)^\ell-1)e^{-\beta}}{(1+\lambda)^\ell} .\]

The next theorem is an extension of \Cref{thm:expectation} to the hard-core model.

\begin{thm}\label{thm:expectation-general}
Suppose that $\lambda \le \lambda_0$ and $\lambda p \ge \frac{C\log d}{d^{1/3}}$. Then
\[ \E Z =
 	2(1+\lambda)^{2^{d-1}} \exp\left[ \tfrac\lambda 2 2^d \alpha_1^d + \left(a\tbinom d2 -\tfrac14 \right)\lambda^2 2^d \alpha_1^{2d} + O\left(d^4 \lambda^3 2^d \alpha_1^{3d}\right) \right], \]
where
\[ a := \frac{(1+\lambda)^2(1+\lambda (1-p)^2)^2}{4(1+\lambda (1-p))^4} - \frac14  .\]
\end{thm}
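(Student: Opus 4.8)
The plan is to feed the identity $\E Z = Z_1(Q_d,\lambda,\beta)$ from \cref{prop:relation} (with $k=1$ and $p=1-e^{-\beta}$) into the cluster expansion of \cref{thm:ClusterEx}, and then to evaluate the resulting cluster expansion series to the required order. For $k=1$ the defect side vector $\cD$ ranges over $\{\cE,\cO\}$ and the two corresponding polymer models are isomorphic under the even--odd symmetry of $Q_d$, so \cref{thm:ClusterEx} gives
\[ \E Z = 2(1+\lambda)^{2^{d-1}}\exp\Big(\sum_{\Gamma\in\cC_\cE}\omega(\Gamma)\Big)\cdot\big(1+O(\exp(-2^d/d^4))\big), \]
the series being absolutely convergent by \cref{lem:cluster-convergence}. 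Since $\lambda\le\lambda_0$ and $p\le1$ force $\alpha_1=1-\tfrac{\lambda p}{1+\lambda}\ge\tfrac1{1+\lambda_0}$, the multiplicative error $\exp(-2^d/d^4)$ is far smaller than the target additive error $O(d^4\lambda^32^d\alpha_1^{3d})$ and may be discarded. It thus remains to compute $\sum_{\Gamma\in\cC_\cE}\omega(\Gamma)$ up to an additive $O(d^4\lambda^32^d\alpha_1^{3d})$.

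I would organise the computation by cluster size $\|\Gamma\|$. A cluster of size $1$ is a single polymer $(\{v\})$ with $v\in\cE$; its Ursell weight is $1$ and (Scenario~I of \cref{sec:computational-examples}) its polymer weight is $\lambda\alpha_1^d$, so the $2^{d-1}$ of them contribute $\tfrac\lambda2 2^d\alpha_1^d$ in total. A cluster of size $2$ is of exactly one of two types: (a) a single polymer $(\{u,v\})$ with $\dist(u,v)=2$, of which there are $2^{d-2}\binom d2$, each of weight $\lambda^2\alpha_1^{2d}\cdot\frac{(1+\lambda)^2(1+\lambda(1-p)^2)^2}{(1+\lambda(1-p))^4}$ by Scenario~II; or (b) an ordered pair $(\gamma_1,\gamma_2)$ of size-$1$ polymers $\gamma_i=(\{u_i\})$ that are incompatible, i.e.\ $\dist(u_1,u_2)\le2$ (this includes $u_1=u_2$), of which there are $2^{d-1}(1+\binom d2)$, each with Ursell weight $-\tfrac12$ and hence cluster weight $-\tfrac12\lambda^2\alpha_1^{2d}$. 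Adding the contributions of (a) and (b), the coefficient of $\binom d2$ collapses, by the very definition of $a$, to $a\lambda^22^d\alpha_1^{2d}$, while the part of (b) not carrying a $\binom d2$ is $-\tfrac14\lambda^22^d\alpha_1^{2d}$; so clusters of size $2$ contribute $\big(a\binom d2-\tfrac14\big)\lambda^22^d\alpha_1^{2d}$.

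For clusters of size at least $3$, \cref{lem:cluster-tail} with $n=3$ and $k=1$ (so that $\tilde\alpha_1=\alpha_1$) bounds the total absolute weight by $O(2^dd^4\lambda^3\alpha_1^{3d})$. Combining the three estimates,
\[ \sum_{\Gamma\in\cC_\cE}\omega(\Gamma)=\tfrac\lambda2 2^d\alpha_1^d+\big(a\binom d2-\tfrac14\big)\lambda^22^d\alpha_1^{2d}+O\big(d^4\lambda^32^d\alpha_1^{3d}\big), \]
and substituting this into the displayed formula for $\E Z$ yields the theorem; \cref{thm:expectation} then follows as the special case $\lambda=1$ (where $\alpha_1=1-\tfrac p2$ and $a=a(p)$). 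All of the genuinely hard work — verifying the Koteck\'y--Preiss condition, bounding the total weight of large polymers through the container/entropy argument, and controlling the non-polymer configurations — is already packaged into \cref{thm:ClusterEx} and \cref{lem:cluster-tail}, so the only real labour here is the explicit low-order computation of the two middle paragraphs. The one step that genuinely needs care is the enumeration of the size-$2$ polymers and clusters without double counting (the support of a polymer does not always determine it, and clusters are ordered tuples of polymers possibly with repetitions), together with the algebraic verification that the $\binom d2$-contributions coming from case (a) and from case (b) combine into exactly $a\binom d2$. Extending to higher accuracy, as in \cref{re:betterAccuracy}, would require only the analogous computation of the contributions of clusters of size $3,4,\dots$.
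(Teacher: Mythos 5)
Your proposal is correct and follows essentially the same route as the paper's proof: combine \cref{prop:relation} with \cref{thm:ClusterEx}, compute the size-$1$ and size-$2$ cluster contributions exactly via Scenarios~I and~II (with the same enumeration $2^{d-2}\binom d2$ and $2^{d-1}(1+\binom d2)$ and Ursell factors $1$ and $-\tfrac12$), and bound clusters of size at least $3$ by \cref{lem:cluster-tail} with $n=3$, $k=1$. Your algebraic recombination into $\big(a\binom d2-\tfrac14\big)\lambda^2 2^d\alpha_1^{2d}$ and the disposal of the $\exp(-2^d/d^4)$ error are exactly as in the paper.
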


The next theorem gives a formula for the moments of $Z$, and in particular yields \Cref{thm:variance} and the first part of \Cref{thm:moments}.
\begin{thm}\label{thm:moments-general}
Let $k \ge 2$. Suppose that $\lambda \le \lambda_0$ and $\lambda p \ge \frac{Ck^2\log d}{d^{1/3}}$. Then
 \begin{align*}
 \frac{\E Z^k}{(\E Z)^k} =
 	2^{-k} \sum_{m=0}^k &\tbinom km \exp\left[ \tfrac{\lambda^2}2\big(\tbinom m2 + \tbinom{k-m}2\big) 2^d (\alpha_2^d - \alpha_1^{2d})\right] \\ &\quad\cdot \exp\left[\tfrac{dp(1-p)\lambda^4}{2(1+\lambda-p\lambda)^2} m(k-m)2^d \alpha_1^{2d} + O(\lambda^3 d^4 2^d \epsilon_k^d) \right] ,
 \end{align*}
 	where $\epsilon_2 := \alpha_1 \alpha_2$ and $\epsilon_k := \alpha_3$ for $k \ge 3$.
\end{thm}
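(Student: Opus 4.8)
The plan is to combine \cref{prop:relation} with the convergent cluster expansion of \cref{thm:ClusterEx}, reduce the $k$-system to the $1$-system, and then evaluate the resulting cluster series up to the stated precision. With $p=1-e^{-\beta}$ one has $\E Z^k=Z_k(Q_d,\lambda,\beta)$ and $\E Z=Z_1(Q_d,\lambda,\beta)$ by \cref{prop:relation}. Writing $C_\cD:=\sum_{\Gamma\in\cC_\cD}\omega(\Gamma)$, \cref{thm:ClusterEx} gives $Z_k=(1+\lambda)^{k2^{d-1}}\big(\sum_{\cD\in\{\cE,\cO\}^k}e^{C_\cD}\big)\big(1+O(e^{-2^d/d^4})\big)$, and, using the even--odd symmetry of $Q_d$ to identify $C_{(\cE)}$ with $C_{(\cO)}$, also $\E Z=2(1+\lambda)^{2^{d-1}}e^{C_{(\cE)}}\big(1+O(e^{-2^d/d^4})\big)$. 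Dividing,
\[ \frac{\E Z^k}{(\E Z)^k}=2^{-k}\sum_{\cD\in\{\cE,\cO\}^k}\exp\big(C_\cD-kC_{(\cE)}\big)\cdot\big(1+O(e^{-2^d/d^4})\big). \]
Because $\lambda\le\lambda_0$ keeps $\alpha_3$ and $\alpha_1\alpha_2$ bounded away from $0$, while $\lambda p\ge Ck^2\log d/d^{1/3}$ keeps $\lambda$ from being too small, the outer factor is $1+O(\lambda^3 d^4 2^d\epsilon_k^d)$, so it will suffice to show that $C_\cD-kC_{(\cE)}=A_{m(\cD)}+O(\lambda^3 d^4 2^d\epsilon_k^d)$, where $m(\cD):=|\{i\in[k]:\cD_i=\cE\}|$ and $A_m:=\tfrac{\lambda^2}{2}\big(\tbinom m2+\tbinom{k-m}2\big)2^d(\alpha_2^d-\alpha_1^{2d})+\tfrac{dp(1-p)\lambda^4}{2(1+\lambda-p\lambda)^2}m(k-m)2^d\alpha_1^{2d}$ is the exponent appearing in the statement.

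The key step is to decompose clusters by span. If $\Gamma\in\cC_\cD$ has $\spn(\Gamma)=\{i\}$ then all of its polymers are supported on coordinate $i$ only, and such clusters are in weight-preserving bijection with the clusters of the one-coordinate model on side $\cD_i$ --- the empty coordinates affect neither the polymer weights, nor the compatibility relation, nor the incompatibility graph, hence nor the Ursell function. Summing over $i\in[k]$, the span-one clusters contribute exactly $kC_{(\cE)}$ to $C_\cD$, so
\[ C_\cD-kC_{(\cE)}=\sum_{\Gamma\in\cC_\cD:\ |\spn(\Gamma)|\ge2}\omega(\Gamma). \]
By the second part of \cref{lem:cluster-tail-improved} the clusters with $|\spn(\Gamma)|\ge3$ contribute $O(2^d\lambda^3\alpha_3^d)=O(\lambda^3 d^4 2^d\epsilon_k^d)$, leaving only the span-two clusters.

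I would then sort the span-two clusters by the pair $\{i,j\}=\spn(\Gamma)$; by symmetry the total over clusters with $\spn(\Gamma)=\{i,j\}$ depends only on whether $\cD_i=\cD_j$, and there are $\tbinom m2+\tbinom{k-m}2$ such ``same-side'' pairs and $m(k-m)$ ``different-side'' pairs, $m=m(\cD)$. In either case the dominant part comes from size-two clusters, which are of exactly two kinds: a single polymer of size $2$ with span $\{i,j\}$, and an ordered pair of two size-one polymers (one in each of the coordinates $i,j$), whose incompatibility graph is a single edge and hence carries Ursell factor $-\tfrac12$. For a same-side pair the single polymer must have $A_i=A_j=\{v\}$ for an even $v$, of weight $\lambda^2\alpha_2^d$ by the computation of Scenario III, and the two size-one polymers must share that even support, each of weight $\lambda\alpha_1^d$ by Scenario I; summing over the $2^{d-1}$ choices of $v$ and the two orderings, and estimating the size-$\ge3$ remainder by the first part of \cref{lem:cluster-tail-improved} with $n=3$ (which is $O(\lambda^3 d^4 2^d(\alpha_1\alpha_2)^d)$ when $k=2$ and $O(\lambda^3 d^4 2^d\alpha_3^d)$ when $k\ge3$, i.e.\ $O(\lambda^3 d^4 2^d\epsilon_k^d)$ in both), the same-side total equals $\tfrac{\lambda^2}{2}2^d(\alpha_2^d-\alpha_1^{2d})+O(\lambda^3 d^4 2^d\epsilon_k^d)$. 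For a different-side pair the single polymer has $A_i=\{u\}$, $A_j=\{w\}$ with $u$ even, $w$ odd and $u\sim w$, of weight $\lambda^2\alpha_1^{2d}\cdot\tfrac{1+2\lambda e^{-\beta}+\lambda^2 e^{-\beta}}{(1+\lambda e^{-\beta})^2}$ by the computation of Scenario IV (rewritten via $(1+\lambda)\alpha_1=1+\lambda e^{-\beta}$), and the two size-one polymers have adjacent supports with joint weight $-\tfrac12\lambda^2\alpha_1^{2d}$; summing over the $d2^{d-1}$ even--odd edges and both orderings, the different-side total equals $d2^{d-1}\lambda^2\alpha_1^{2d}\big(\tfrac{1+2\lambda e^{-\beta}+\lambda^2 e^{-\beta}}{(1+\lambda e^{-\beta})^2}-1\big)+O(\lambda^3 d^4 2^d\epsilon_k^d)$.

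It remains to simplify and assemble. The algebraic identity
\[ \frac{1+2\lambda e^{-\beta}+\lambda^2 e^{-\beta}}{(1+\lambda e^{-\beta})^2}-1=\frac{\lambda^2 e^{-\beta}(1-e^{-\beta})}{(1+\lambda e^{-\beta})^2}=\frac{\lambda^2 p(1-p)}{(1+\lambda-\lambda p)^2} \]
(using $e^{-\beta}=1-p$) rewrites the different-side total as $\tfrac{dp(1-p)\lambda^4}{2(1+\lambda-p\lambda)^2}2^d\alpha_1^{2d}$ up to the same error. Substituting the same-side and different-side totals into the span decomposition gives $C_\cD-kC_{(\cE)}=A_{m(\cD)}+O(\lambda^3 d^4 2^d\epsilon_k^d)$, and inserting this into the displayed ratio and collecting the $\tbinom km$ vectors $\cD$ with $m(\cD)=m$ yields the theorem (the dichotomy $\epsilon_2=\alpha_1\alpha_2$ versus $\epsilon_k=\alpha_3$ being exactly the $k=2$ versus $k\ge3$ cases of the size-$\ge3$ bound). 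I expect the main obstacle to be the combinatorial bookkeeping of the span-two part: enumerating all size-two clusters of the $k$-component model, getting the Ursell signs and ordered-tuple multiplicities right, and verifying that every discarded contribution --- size-$\ge3$ clusters, span-$\ge3$ clusters, and the outer $1+O(e^{-2^d/d^4})$ factor --- really lies within $O(\lambda^3 d^4 2^d\epsilon_k^d)$, which rests on the elementary facts that $\alpha_1\alpha_2\le\alpha_3$ and that $\epsilon_k$ is bounded away from $0$ when $\lambda\le\lambda_0$.
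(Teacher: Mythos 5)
Your proposal is correct and follows essentially the same route as the paper: reduce via \cref{prop:relation} and \cref{thm:ClusterEx}, cancel the span-one clusters against $k$ copies of the one-component cluster series, compute the size-two/span-two clusters exactly using the Scenario I--IV weights and Ursell signs (yielding the same-side term $\tfrac{\lambda^2}{2}2^d(\alpha_2^d-\alpha_1^{2d})$ and the different-side term $\tfrac{dp(1-p)\lambda^4}{2(1+\lambda-p\lambda)^2}2^d\alpha_1^{2d}$), and absorb everything else into $O(\lambda^3 d^4 2^d\epsilon_k^d)$ via \cref{lem:cluster-tail-improved}. The only difference is minor bookkeeping (you split the remainder into span~$\ge 3$ and span-two size~$\ge 3$ pieces, while the paper bounds all size-$\ge 3$ clusters at once), which does not change the argument.
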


Finally, the next theorem gives a formula for the  central moments of $Z$, and in particular yields the second part of \Cref{thm:moments}.
\begin{thm}\label{thm:central-moments}
Let $k \ge 3$. Suppose that $\lambda \le \lambda_0$. Then for $\frac{(1+\lambda)^2}{2\lambda(2+\lambda)} + \omega(\frac1d) \le p \le 1 - 2^{-d/3 + \omega(\log d)}$, we have
\begin{equation}\label{eq:moment-asymptotics}
\frac{\E(Z-\E Z)^k}{(\E Z)^k} = \begin{cases}
(\frac \lambda 2)^k 2^d\alpha_k^d + \sigma^k(k-1)!! + o(\sigma^k + 2^d \alpha_k^d) &\text{if $k$ is even}\\
(\frac \lambda 2)^k 2^d\alpha_k^d  + o(\sigma^k + 2^d \alpha_k^d) &\text{if $k$ is odd}
\end{cases},
\end{equation}
where
\[ \sigma^2 := \frac14 2^d \lambda^2 \left( \alpha_2^d + \left(\frac{p(1-p) \lambda^2 d}{(1+\lambda-p\lambda)^2} - 1\right) \alpha_1^{2d}\right) .\]
Furthermore, for $p=\frac{(1+\lambda)^2}{2\lambda(2+\lambda)} \pm O(\frac1d)$, the left-hand side of~\eqref{eq:moment-asymptotics} is $\Theta(1)$, and for $p \ge \frac{(1+\lambda)^2}{2\lambda(2+\lambda)}$, it is $O((2\alpha_2)^{dk/2} + 2^d \alpha_k^d)$.
\end{thm}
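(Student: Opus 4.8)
The plan is to feed the cluster expansion of \cref{thm:ClusterEx} into the binomial expansion of the central moment and observe that almost everything cancels, leaving a sum over collections of clusters that jointly span all $k$ coordinates; these dominant collections are then analyzed with the tail estimates of \cref{lem:cluster-tail,lem:cluster-tail-improved}.

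First I would set up the reduction to this spanning sum. By \cref{prop:relation}, $\E Z^r=Z_r(Q_d,\lambda,\beta)$ with $p=1-e^{-\beta}$, and writing $\Phi_{\cD'}:=\sum_{\Gamma\in\cC_{\cD'}}\omega(\Gamma)$ and applying \cref{thm:ClusterEx} for each fixed $r\le k$ (legitimate since $\lambda$ is bounded and $p$ exceeds $\tfrac{(1+\lambda)^2}{2\lambda(2+\lambda)}$ by $\omega(1/d)$, so~\eqref{eq:lambda-cond-k} holds), one gets $\E Z^r/(\E Z)^r=G_r\,(1+O(\exp(-2^d/d^4)))$ with $G_r:=2^{-r}\sum_{\cD'\in\{\cE,\cO\}^r}e^{\Phi_{\cD'}-r\Phi_{(\cE)}}$. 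Grouping the clusters of $\Phi_{\cD'}$ by their span, and using that the weight of a cluster $\Gamma$ depends on $\cD'$ only through the restriction of $\cD'$ to $\spn(\Gamma)$ — in fact only through $|\spn(\Gamma)|$ and the smaller of the number of $\cE$'s and $\cO$'s of $\cD'$ there, by the symmetry of $Q_d$ — one writes $\Phi_{\cD'}=\sum_{\emptyset\ne S\subseteq[r]}\Psi_S(\cD'|_S)$ where $\Psi_S$ is \emph{intrinsic}, i.e.\ independent of $r$; the singleton terms cancel against $r\Phi_{(\cE)}$. Expanding $\exp(\sum_{|S|\ge 2}\Psi_S)$ into a sum over ordered tuples of clusters (justified by the absolute convergence in \cref{lem:cluster-tail-improved}) and summing out the coordinates outside the footprint $U:=\bigcup_i\spn(\Gamma_i)$, one obtains $G_r=\sum_{u\ge0}\binom ru B_u$, where $B_u$ is the intrinsic quantity equal to the sum, over ordered tuples $(\Gamma_1,\dots,\Gamma_t)$ of clusters all of span-size $\ge2$ whose spans cover a ground set of size $u$, of $\tfrac1{t!}\,2^{-u}\sum_{\cD''\in\{\cE,\cO\}^u}\prod_i\omega^{\cD''}(\Gamma_i)$; here $B_0=1$ and $B_1=0$. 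Plugging this into $\E(Z-\E Z)^k/(\E Z)^k=\sum_{r=0}^k\binom kr(-1)^{k-r}\E Z^r/(\E Z)^r$ and using $\sum_{r=u}^k\binom kr\binom ru(-1)^{k-r}=\binom ku(1-1)^{k-u}=\1_{\{u=k\}}$ (together with $G_r=O(1)$, which follows from $\sum_{|\spn(\Gamma)|\ge2}|\omega(\Gamma)|=O(2^d\lambda^2\alpha_2^d)=o(1)$ by the second part of \cref{lem:cluster-tail-improved}), all the terms collapse except $u=k$, giving
\[ \frac{\E(Z-\E Z)^k}{(\E Z)^k}=B_k+O(\exp(-2^d/d^4))=B_k+o(2^d\alpha_k^d), \]
where the final step uses $2^d\alpha_k^d=(2\alpha_k)^d\ge\exp(-Cd)$.

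The remaining task is to evaluate $B_k$, that is, to sum the weights of all ordered cluster-tuples whose spans cover $[k]$. I would isolate two dominant families. (1) A single cluster $\Gamma$ with $\spn(\Gamma)=[k]$: since $\|\Gamma\|\ge|\spn(\Gamma)|=k$, size-$k$ clusters dominate, and among those the polymer $(\{v\},\dots,\{v\})$ — which forces all coordinates onto one side, hence contributes for exactly $2$ of the $2^k$ sign patterns — gives, via the computation in Scenario III of \cref{sec:computational-examples}, the term $2^{-k}\cdot2\cdot2^{d-1}\lambda^k\alpha_k^d=(\tfrac\lambda2)^k2^d\alpha_k^d$; supermultiplicativity of $(\delta_m)$ (equivalently, monotonicity of $\tilde\alpha_\ell=\alpha_\ell^{1/\ell}$) shows every other size-$k$ cluster has strictly smaller exponential rate, and \cref{lem:cluster-tail-improved} bounds the size-$\ge k+1$ span-$[k]$ clusters by $O(d^{2k}\lambda^{k+1}2^d\alpha_k^d\alpha_1^d)=o(2^d\alpha_k^d)$. (2) For even $k$, a perfect matching of $[k]$ into $k/2$ pairs, each carrying a cluster of that span: the $\tfrac1{t!}$ absorbs the ordering, the sum over sign patterns factorizes over the disjoint pairs, and each matching contributes $2^{-k}(2\Psi_{2,0}+2\Psi_{2,1})^{k/2}=\big(\tfrac{\Psi_{2,0}+\Psi_{2,1}}2\big)^{k/2}=\sigma^k$ (here $\Psi_{2,0}=\Phi_{(\cE,\cE)}-2\Phi_{(\cE)}$ and $\Psi_{2,1}=\Phi_{(\cE,\cO)}-2\Phi_{(\cE)}$ are the size-$2$ span-sums, computed as in \cref{thm:moments-general}, and one checks $\sigma^2=\tfrac12(\Psi_{2,0}+\Psi_{2,1})$), so this family contributes $(k-1)!!\,\sigma^k$. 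Every other tuple covering $[k]$ is shown to be $o(\sigma^k+2^d\alpha_k^d)$: by $\sum_{|\spn(\Gamma)|\ge\ell}|\omega(\Gamma)|e^{\|\Gamma\|d^{-3/2}}=O(2^d\lambda^\ell\alpha_\ell^d)$, a tuple whose spans partition $[k]$ into $j$ blocks of sizes $s_1,\dots,s_j\ge2$ contributes, up to a constant depending on $k$, at most $\big(2^j\prod_i\alpha_{s_i}\big)^d$, and a direct comparison of the finitely many partitions (using supermultiplicativity of $(\delta_m)$, equivalently the quasi-convexity of $s\mapsto s^{-1}\log(2\alpha_s)$) shows the largest such value is attained either at the trivial partition (the $2^d\alpha_k^d$ term) or at the maximally refined partition (the $\sigma^k$ term); any overlap or oversized block, or more than $\lceil k/2\rceil$ clusters, costs a further factor $O(\lambda\alpha_1^d)=d^{-\omega(1)}$. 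For odd $k$ there is no perfect matching, so the same comparison leaves $(\tfrac\lambda2)^k2^d\alpha_k^d$ as the only named main term, the most refined partition $(2,\dots,2,3)$ being $o(\sigma^k)$; this proves~\eqref{eq:moment-asymptotics}.

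Finally, the ``furthermore'' statements are read off from the same expression for $B_k$. When $p=\tfrac{(1+\lambda)^2}{2\lambda(2+\lambda)}\pm O(1/d)$ one has $2\alpha_2=1+O(1/d)$, so $\sigma^2=\Theta(1)$ while $2\alpha_\ell<1$ strictly for $\ell\ge3$; hence $B_k=\Theta(1)$ (for odd $k$ the matching lower bound comes from covering $[k]$ by $(k+1)/2$ pairs with one overlap, which contributes $\Theta(\sigma^{k+1})=\Theta(1)$ with a positive coefficient and so is not cancelled), and the block-partition analysis gives the uniform bound $B_k=O((2\alpha_2)^{dk/2}+2^d\alpha_k^d)$ for all $p\ge\tfrac{(1+\lambda)^2}{2\lambda(2+\lambda)}$. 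I expect the main obstacle to be precisely this evaluation of $B_k$: one must show that no covering of $[k]$ by cluster-spans other than the two distinguished families contributes at leading order, and it is here that the tail bounds of \cref{lem:cluster-tail-improved} and the convexity/monotonicity of $\ell\mapsto\alpha_\ell$ (equivalently, supermultiplicativity of $(\delta_m)$ / superadditivity of $\tilde g$) do the real work; by contrast the combinatorial reduction to $B_k$ is a formal manipulation whose only delicate point is the interchange of summations, supplied by the absolute convergence in \cref{lem:cluster-tail-improved}.
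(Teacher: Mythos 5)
Your plan is correct and follows essentially the same route as the paper: your reduction to the spanning sum $B_k$ is the paper's \cref{lem:central-moment-cluster-formula} (binomial expansion plus cluster expansion, with only coverings of $[k]$ surviving the coefficient collapse), and your evaluation of $B_k$ — the single span-$[k]$ cluster giving $(\tfrac\lambda2)^k2^d\alpha_k^d$, the perfect matchings of pairs giving $(k-1)!!\,\sigma^k$, and the comparison of all other block structures via tail bounds and the unimodality of $s\mapsto(2\alpha_s)^{1/s}$ — is exactly how the paper proceeds, using \cref{lem:cluster-tail-improved}, \cref{lem:alpha-bounds} and \cref{cor:alpha-product-bound}, including the same treatment of the ``furthermore'' regime and the odd-$k$ lower bound via $(k+1)/2$ overlapping pairs. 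The only caveats are quantitative details you defer (e.g.\ the extra factor from an overlapping pair is $O(\sigma^2)$ rather than $O(\lambda\alpha_1^d)$, and the strict domination of intermediate partitions, where the upper bound on $p$ enters, needs the sharper estimates of \cref{lem:alpha-bounds}\eqref{lem:alpha-bound-a2}--\eqref{lem:alpha-bound-cl}), but these are supplied by the paper's machinery and do not change the argument.
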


Observe that $\frac{(1+\lambda)^2}{2\lambda(2+\lambda)}$ is always greater than $\frac12$ and it is greater than 1 when $\lambda<\sqrt 2 -1$, so that the assumptions of (say the first part of) \cref{thm:central-moments} can only hold when $p>\frac12$ and $\lambda>\sqrt 2 -1$.
We remark that $\alpha_2=\frac12$ when $p=\frac{(1+\lambda)^2}{2\lambda(2+\lambda)}$, and that $\sigma$ is of constant order when $p=\frac{(1+\lambda)^2}{2\lambda(2+\lambda)} \pm O(\frac1d)$ and is $o(1)$ for larger $p$.

The following is an extension of \Cref{thm:normal} and is a corollary of the previous theorems.
Recall that the $k$-th moment of a standard normal random variable is $(k-1)!!$ for $k$ even and zero for $k$ odd.

\begin{cor}
\label{cor:normal-general}
Suppose that $\lambda \le \lambda_0$, $p=1-o(1)$ and $\frac{(1+\lambda)^2}{2\lambda(2+\lambda)} + \omega(\frac1d) \le p \le 1 - 2^{-d/3 + \omega(\log d)}$.
Then for any fixed $k \ge 1$,
\[ \E\left(\frac{Z-\E Z}{\sqrt{\Var(Z)}}\right)^k \to \E N^k \qquad\text{as }d \to \infty, \]
where $N$ is a standard normal random variable. In particular, the standardization of $Z$ converges in distribution to $N$.
\end{cor}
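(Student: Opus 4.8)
The plan is to establish convergence of all moments and then invoke the method of moments. Concretely, I will show that for every fixed $k\ge 1$,
\[ \E\!\left(\frac{Z-\E Z}{\sqrt{\Var(Z)}}\right)^{\!k}\xrightarrow[d\to\infty]{}\E N^k, \]
which equals $(k-1)!!$ for even $k$ and $0$ for odd $k$. Since the standard normal distribution is determined by its moments, the Fr\'echet--Shohat theorem then upgrades moment convergence to convergence in distribution, giving the final assertion. The cases $k=1,2$ are trivial (the left-hand side is $0$ and $1$ respectively), so fix $k\ge3$ and write
\[ \E\!\left(\frac{Z-\E Z}{\sqrt{\Var(Z)}}\right)^{\!k}=\frac{\E(Z-\E Z)^k/(\E Z)^k}{\big(\Var(Z)/(\E Z)^2\big)^{k/2}}. \]

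The numerator is supplied by \cref{thm:central-moments}, whose hypotheses are exactly those assumed in the corollary, and the denominator by \cref{thm:moments-general} with $k=2$ (the hardcore form of \cref{thm:variance}). The proof therefore reduces to two asymptotic facts, both using the hypothesis $p=1-o(1)$: (i) $\Var(Z)/(\E Z)^2=\sigma^2(1+o(1))$, where $\sigma$ is the quantity in \cref{thm:central-moments} (note $\sigma^2>0$, since $\alpha_2>\alpha_1^2$ and the coefficient of $\alpha_1^{2d}$ inside $\sigma^2$ is positive); and (ii) $2^d\alpha_k^d=o(\sigma^k)$ for every fixed $k\ge3$, i.e.\ the ``Poisson-type'' term $(\tfrac\lambda2)^k2^d\alpha_k^d$ in \cref{thm:central-moments} is negligible next to $\sigma^k$. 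Granting (i) and (ii), \cref{thm:central-moments} gives that the numerator equals $\sigma^k(k-1)!!(1+o(1))$ for even $k$ and $o(\sigma^k)$ for odd $k$, while the denominator equals $\sigma^k(1+o(1))$ by (i); dividing yields the limit $\E N^k$ in both cases.

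Fact (i) follows by expanding to first order the two exponentials appearing in \cref{thm:moments-general} with $k=2$: their exponents tend to $0$ in this range, since $p=1-o(1)$ forces $\alpha_1\to(1+\lambda)^{-1}$, $\alpha_2\to(1+\lambda)^{-2}$, and the hypotheses force $\lambda>\sqrt2-1$ (else the range is empty), so that $2\alpha_1^2,2\alpha_2$ are eventually bounded by a constant $<1$. This gives $\Var(Z)/(\E Z)^2=\sigma^2+O(d^4\lambda^3 2^d(\alpha_1\alpha_2)^d)$, and the error is $o(\sigma^2)$ because $\sigma^2\gtrsim 2^d\lambda^2(\alpha_2^d-\alpha_1^{2d})$, so the error-to-main ratio is of order $d^4(\alpha_2/\alpha_1)^d/((\alpha_2/\alpha_1^2)^d-1)$, whose numerator decays geometrically at rate $(1+\lambda)^{-1}<2^{-1/3}$ while Bernoulli's inequality bounds the denominator below by a constant times $(1-p)d\ge 2^{-d/3+\omega(\log d)}$, so the ratio is $o(1)$. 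Fact (ii) is the main computation: one has $2^d\alpha_k^d=2^d(1+\lambda)^{-kd}e^{O((1-p)d)}$ and $\sigma^2\gtrsim (1-p)d\cdot 2^d(1+\lambda)^{-2d}$ (again via Bernoulli), whence
\[ \frac{2^d\alpha_k^d}{\sigma^k}=O\!\left(\frac{2^{d(1-k/2)}\,e^{O((1-p)d)}}{((1-p)d)^{k/2}}\right). \]
Since $1-p=o(1)$, the factor $e^{O((1-p)d)}$ is subexponential; splitting into the cases $(1-p)d\to0$ and $(1-p)d\to\infty$ and using $1-p\ge 2^{-d/3+\omega(\log d)}$ (so that $((1-p)d)^{-k/2}\le 2^{dk/6}d^{-\omega(1)}$), one checks in each case that the displayed quantity tends to $0$ for all $k\ge3$ — with the borderline exponent $k=3$, where $2^{d(1-k/2)}2^{dk/6}=2^{d(1-k/3)}=1$, relying exactly on the super-polynomial gain $d^{\omega(1)}$ encoded in the rate.

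The main obstacle is precisely this last step: controlling, for all admissible rates $1-p$ simultaneously, the competition between the exponential saving $2^{d(1-k/2)}$ gained from taking a $k$-th power, the subexponential distortions $e^{O((1-p)d)}$ present when $(1-p)d\to\infty$, and the (super-)polynomial factors $((1-p)d)^{-k/2}$ present when $(1-p)d\to0$. This is also where the hypothesis $p=1-o(1)$ is essential: for $p$ bounded away from $1$ the term $2^d\alpha_k^d$ dominates $\sigma^k$, so $Z$ is not asymptotically normal in that regime — consistent with \cref{cor:fluctuation-bound} and the discussion following \cref{thm:moments}.
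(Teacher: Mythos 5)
Your proposal is correct and follows essentially the same route as the paper: the method of moments applied to the quotient of the central-moment asymptotics from \cref{thm:central-moments} and the variance asymptotics extracted from \cref{thm:moments-general}, with your two key facts ($\Var(Z)=(1+o(1))(\sigma\,\E Z)^2$ and $2^d\alpha_k^d=o(\sigma^k)$ for fixed $k\ge3$) being exactly what the paper cites from parts (iv)--(vi) of \cref{lem:alpha-bounds}, and your hand computations re-derive those special cases correctly, including the borderline $k=3$ case via the $d^{\omega(1)}$ margin in $1-p\ge 2^{-d/3+\omega(\log d)}$. One cosmetic point: since $\lambda$ may vary with $d$ (only $\lambda\le\lambda_0$ and the constraint on $p$ are assumed), $2\alpha_2$ need not be bounded by a constant below $1$ when $\lambda$ approaches $\sqrt2-1$, but the weaker facts you actually need -- that $(2\alpha_2)^d\to0$, hence the exponents in \cref{thm:moments-general} are $o(1)$ and $\sigma=o(1)$, which also absorbs the quadratic terms of the expansion that you left implicit -- follow directly from $p\ge\frac{(1+\lambda)^2}{2\lambda(2+\lambda)}+\omega(\frac1d)$ as in parts (iii)--(iv) of \cref{lem:alpha-bounds}.
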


Before going into the proofs, the reader may find it helpful to recall the computations done in \cref{sec:computational-examples}.
To ease notation throughout the section (recall the error term from \cref{thm:ClusterEx}), we write $a \simeq b$ as shorthand for $a/b=1+O(\exp(-2^d/d^4))$.
We also sometimes write $\cC_{k,m}$ for $\cC_\cD$ where $\cD \in \{\cE,\cO\}^k$ is the vector given by $\cD_i=\cE$ for $1 \le i \le m$ and $\cD_i=\cO$ otherwise (recall that this would be essentially the same for any $\cD$ with $m$ or $k-m$ coordinates equal to $\cE$; see the last remark in \cref{sec:remarks}).

\begin{proof}[Proof of \cref{thm:expectation-general}]
By \cref{prop:relation,thm:ClusterEx},
\begin{equation}\label{eq:formula-for-Z1}
\E Z = Z_1 \simeq 2(1+\lambda)^{2^{d-1}} \exp\left(\sum_{\Gamma \in \cC_{1,0}} \omega(\Gamma)\right) .
\end{equation}
Thus, we are just left with computing the cluster expansion series. For our desired accuracy, we will compute the exact contribution from clusters of size 1 and 2, and only upper bound the contribution from larger clusters. The latter is done by using \cref{lem:cluster-tail}, which gives that
\[ \sum_{\Gamma \in \cC_{1,0} : \|\Gamma\|\ge3} |\omega(\Gamma)| = O(d^4) 2^d \lambda^3 \alpha_1^{3d} .\]
There is only one type of cluster of size 1, that consisting of a single polymer of size 1, and its Ursell function is 1. There are $2^{d-1}$ such clusters and each has weight $\lambda \alpha_1^d$ (recall the computation of scenario~I in \cref{sec:computational-examples}).
Thus,
\[ \sum_{\Gamma \in \cC_{1,0} : \|\Gamma\|=1} \omega(\Gamma) = 2^{d-1} \lambda \alpha_1^d .\]
There are two types of clusters of size 2: those consisting of a single polymer of size 2, whose Ursell function is 1, and those consisting of two polymers of size 1, whose Ursell function is $-\frac12$.
There are $2^{d-2} \binom d2$ clusters of the former type and $2^{d-1} (1+\binom d2)$ of the latter type. The former type clusters have weight $\lambda^2 \alpha_1^{2d-4} \frac{(1+\lambda e^{-2\beta})^2}{(1+\lambda)^2}$ (recall scenario~II in \cref{sec:computational-examples}) and the latter have weight $-\frac12 \lambda^2 \alpha_1^{2d}$.
Thus,
\[ \sum_{\Gamma \in \cC_{1,0} : \|\Gamma\|=2} \omega(\Gamma) = 2^{d-2} \lambda^2 \alpha_1^{2d} \left( \binom d2 \left( \frac{(1+\lambda)^2(1+\lambda e^{-2\beta})^2}{(1+\lambda e^{-\beta})^4} - 1\right) - 1 \right). \]
Putting these together yields the theorem.
\end{proof}

\begin{remark}\label{re:betterAccuracy}
	The proof of \cref{thm:expectation-general} can be modified to obtain better accuracy. Specifically, for any fixed $n \ge 1$, using \cref{lem:cluster-tail} and~\eqref{eq:formula-for-Z1}, we see that
	\begin{equation}
	 \E Z = Z_1 = 2(1+\lambda)^{2^{d-1}} \exp\left(\sum_{\Gamma \in \cC_{1,0}: \|\Gamma\|<n} \omega(\Gamma) + O(d^{2n-2}) 2^d \lambda^n \alpha_1^{nd} \right) .
	\end{equation}
	Thus, by computing the contribution to the cluster expansion from clusters of size less than $n$ (as we have done for clusters of size 1 and 2 in the proof above), one may obtain an explicit formula for $\E Z$. This will show that $\sum_{\Gamma \in \cC_{1,0}: \|\Gamma\|<n} \omega(\Gamma)$ has the form $2^d \sum_{i=1}^{n-1} f_i(d,\lambda,p) \alpha_1^{di}$, where $f_1,\dots,f_{n-1}$ are polynomials in $d$ (with $f_i$ having degree $2i-2$) with coefficients depending on $\lambda$ and $p$. For example, to obtain \cref{thm:expectation-general} we calculated the first two polynomials, showing that $f_1 = \frac\lambda2$ and $f_2 = \lambda^2(a\binom d2 - \frac14)$ with $a=a(p)$ as in \cref{thm:expectation-general}.
\end{remark}

We now move on to compute the higher moment of $Z$.
For this, as well as for the central moments later on, it is useful for us to be able to view $k'$-systems as embedded in the $k$-system when $k'<k$. We make this precise via the notion of the span of a polymer/cluster; recall the definition from~\eqref{eq:span-def}.
Observe that the set of clusters $\Gamma \in \cC_{k,m}$ whose span is contained in a given subset $S \subset [k]$ can be identified with $\cC_{|S|,|S \cap \{1,\dots,m\}|}$. For example, the set of clusters $\Gamma \in \cC_{k,m}$ whose span is a given singleton is identified with $\cC_{1,0}$, which is itself identifiable by even-odd symmetry with $\cC_{1,1}$. This will allow us to easily compare the $k$-th moment of $Z$ with the $k$-th power of its expectation.



\begin{proof}[Proof of \cref{thm:moments-general}]
By \cref{prop:relation,thm:ClusterEx},
\[ \E Z^k = Z_k \simeq (1+\lambda)^{k 2^{d-1}} \sum_{m=0}^k \binom km \exp\left( \sum_{\Gamma \in \cC_{k,m}} \omega(\Gamma) \right) .\]
Since for any $m$ and $i\in[k]$, we have that $\sum_{\Gamma \in \cC_{k,m} : \spn(\Gamma)=\{i\}} \omega(\Gamma) = \sum_{\Gamma \in \cC_{1,0}} \omega(\Gamma)$, and using~\eqref{eq:formula-for-Z1}, we obtain that
\begin{equation}\label{eq:moment-ratio-cluster-expansion}
\frac{\E Z^k}{(\E Z)^k} \simeq 2^{-k} \sum_{m=0}^k \binom km \exp\left( \sum_{\Gamma \in \cC_{k,m},~|\spn(\Gamma)|>1} \omega(\Gamma) \right) .
\end{equation}
Thus, similarly to before, we are left with computing the cluster expansion series to some desired accuracy. We will compute the exact contribution from clusters of size 2, and upper bound the contribution from larger clusters (note that all clusters in the sum have size at least 2 since their span has size at least 2). Indeed, by \cref{lem:cluster-tail-improved},
\[ \sum_{\Gamma \in \cC_{k,m} : \|\Gamma\|\ge3} |\omega(\Gamma)| = O(d^42^d \lambda^3) \cdot \begin{cases}
 \alpha_2^d \alpha_1^d &\text{if }k=2\\
 \alpha_3^d &\text{if }k\ge3
\end{cases} .\]

It remains to do the exact computation regarding clusters $\Gamma \in \cC_{k,m}$ having $\|\Gamma\|=|\spn(\Gamma)|=2$. There are $\binom k2$ ways to choose the span of $\Gamma$. However, there is some lack of symmetry between the choices. There are $m(k-m)$ choices in which the chosen coordinates of $[k]$ are associated to different sides of the hypercube, and there are $\binom m2 + \binom{k-m}2$ choices in which the chosen coordinates are associated to the same side of the hypercube. Thus,
\[ \sum_{\substack{\Gamma \in \cC_{k,m},~\|\Gamma\|=2,\\|\spn(\Gamma)|=2}} \omega(\Gamma) = \left(\tbinom m2 + \tbinom{k-m}2\right) A_{\text{same}} + m(k-m) A_{\text{diff}} ,\]
where
\begin{equation}\label{eq:A-same-diff-def}
A_{\text{same}} := \sum_{\substack{\Gamma \in \cC_{2,0},~\|\Gamma\|=2,\\\spn(\Gamma)=\{1,2\}}} \omega(\Gamma) \qquad\text{and}\qquad A_{\text{diff}} := \sum_{\substack{\Gamma \in \cC_{2,1},~\|\Gamma\|=2,\\\spn(\Gamma)=\{1,2\}}} \omega(\Gamma) .
\end{equation}

Let us first compute $A_{\text{same}}$. Consider a cluster $\Gamma \in \cC_{2,0}$ such that $\|\Gamma\|=2$ and $\spn(\Gamma)=\{1,2\}$. There are two different types of such clusters:
\begin{itemize}
 \item $\Gamma$ consists of a single polymer of size 2 which spans $\{1,2\}$ and whose support is a singleton.
 \item $\Gamma$ consists of two polymers of size 1, one of which spans $\{1\}$ and the other $\{2\}$.
\end{itemize}
There are $2^{d-1}$ clusters of the first type, each having weight $\lambda^2 \alpha_2^d$ (recall the computation of scenario~III in \cref{sec:computational-examples} and that the Ursell function of a vertex is 1).
There are $2 \cdot 2^{d-1}$ clusters of the second type, each having weight $-\frac12 \lambda^2 \alpha_1^{2d}$
(recall that the Ursell function of an edge is $-\frac12$).
Thus,
\begin{equation}\label{eq:A-same-formula}
 A_{\text{same}} = 2^{d-1} \lambda^2 (\alpha_2^d - \alpha_1^{2d}) .
\end{equation}

Let us now compute $A_{\text{diff}}$.
Consider a cluster $\Gamma \in \cC_{2,1}$ such that $\|\Gamma\|=2$ and $\spn(\Gamma)=\{1,2\}$. There are two different types of clusters:
\begin{itemize}
 \item $\Gamma$ consists of a single polymer of size 2 which spans $\{1,2\}$ and whose support has size 2.
 \item $\Gamma$ consists of two polymers of size 1, one of which spans $\{1\}$ and the other $\{2\}$.
\end{itemize}
There are $2^{d-1} d$ clusters of the first type, each having weight $\lambda^2 \alpha_1^{2d-2} \cdot \frac{1+2\lambda e^{-\beta} + \lambda^2 e^{-\beta}}{(1+\lambda)^2}$ (recall the computation of scenario~IV in \cref{sec:computational-examples}). There are $2^d d$ clusters of the second type, each having weight $-\frac12 \lambda^2 \alpha_1^{2d}$.
Thus,
\begin{equation}\label{eq:A-diff-formula}
 A_{\text{diff}} = 2^{d-1} d \lambda^4 \alpha_1^{2d} \cdot \frac{e^{-\beta}(1-e^{-\beta})}{(1+\lambda e^{-\beta})^2} .
\end{equation}
This completes the proof of the theorem.
\end{proof}

We now turn to computing the normalized central moments of $Z$. These are the moments of
\[ X := \frac{Z}{\E Z}-1 .\]
The first step toward proving \cref{thm:central-moments} is to establish an asymptotic formula for the $k$-th moment of $X$ in terms of the cluster expansion of the $k$-system. This can be formulated directly via certain sequences of clusters, but we find it more convenient here to work instead with sequences of sets which indicates the spans of these clusters.

Fix $k \ge 2$.
For $D \subset [k]$, we write $\cC_D$ for $\cC_\cD$ where $\cD \in \{\cE,\cO\}^k$ satisfies $\cD_i=\cE$ for $i \in D$ and $\cD_i=\cO$ for $i \in [k] \setminus D$.
For $S \subset [k]$, define
\[ \omega(S;D) := \sum_{\Gamma \in \cC_D : \spn(\Gamma)=S} \omega(\Gamma) .\]
For $S_1,\dots,S_n \subset [k]$, we also define $\omega(S_1,\dots,S_n;D):=\frac1{n!} \prod_{i=1}^n \omega(S_i;D)$.

\begin{lemma}\label{lem:central-moment-cluster-formula}
For any fixed $k \ge 2$,
\[ \E X^k \simeq 2^{-k} \sum_{D \subset [k]} \sum_{\substack{(S_1,\dots,S_n):\\|S_1|,\dots,|S_n| \ge 2\\S_1\cup\cdots\cup S_n=[k]}} \omega(S_1,\dots,S_n;D) ,\]
where the second sum is absolutely convergent.
\end{lemma}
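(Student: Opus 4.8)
## Proof proposal for Lemma \ref{lem:central-moment-cluster-formula}

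The plan is to start from the cluster-expansion formula for the moments, expand the binomial defining the central moment, and then reorganize the resulting multiple sum by the \emph{span} of the clusters involved, showing that all the terms whose spans fail to cover $[k]$ cancel.

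First I would write $\E X^k = \sum_{j=0}^k \binom kj (-1)^{k-j}\, \E Z^j/(\E Z)^j$, using $X = Z/\E Z - 1$. By \cref{prop:relation} and \cref{thm:ClusterEx}, for each $j$ we have $\E Z^j/(\E Z)^j \simeq 2^{-j}\sum_{m=0}^j \binom jm \exp(\sum_{\Gamma\in\cC_{j,m},\, |\spn(\Gamma)|\ge 1}\omega(\Gamma))$ after dividing out the singleton-span contributions exactly as in~\eqref{eq:moment-ratio-cluster-expansion} (the singleton-span clusters contribute a factor $\exp(\sum_{\Gamma\in\cC_{1,0}}\omega(\Gamma))$ per coordinate, cancelling against the corresponding factor in $(\E Z)^j$). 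The key observation is that a $j$-system with defect vector $\cD\in\{\cE,\cO\}^j$ can be viewed, for each injection of $[j]$ into $[k]$, as the sub-system of the $k$-system on those coordinates: clusters of $\cC_{j,m}$ are in weight-preserving bijection with clusters in $\cC_D$ (for suitable $D\subset[k]$) whose span is contained in the chosen $j$-subset. This lets me rewrite everything as a single sum over $D\subset[k]$ and over (unordered, with multiplicity) collections of clusters in $\cC_D$, weighted by the Ursell-type combinatorics, and then expand each exponential $\exp(\sum_\Gamma \omega(\Gamma)) = \sum_{n\ge 0}\frac1{n!}\sum_{(\Gamma_1,\dots,\Gamma_n)}\prod\omega(\Gamma_i)$, replacing each cluster $\Gamma_i$ by its span $S_i:=\spn(\Gamma_i)$ and summing $\omega(\cdot;D)$ over clusters with that span. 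At this stage each term is indexed by $D\subset[k]$ and a tuple $(S_1,\dots,S_n)$ of subsets of $[k]$ each of size $\ge 2$, contributing $\omega(S_1,\dots,S_n;D)=\frac1{n!}\prod_i\omega(S_i;D)$.

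The heart of the argument is the cancellation. Group the resulting terms by the union $U:=S_1\cup\cdots\cup S_n\subseteq[k]$. For fixed $U$, summing over all the sub-systems and defect vectors that produce span-union exactly $U$, together with the alternating binomial coefficients coming from the expansion of $(Z/\E Z-1)^k$ and the $\sum_m\binom jm$ factors, I claim the coefficient of a collection with span-union $U$ is $\sum_{j}\binom{k}{j}(-1)^{k-j}[\,U\subseteq\{\text{chosen }j\text{-subset}\}\,]$-type expressions which vanish by inclusion–exclusion unless $U=[k]$. Concretely: a collection $(S_1,\dots,S_n)$ with union $U$ arises in the expansion of $\E Z^j/(\E Z)^j$ precisely when $U$ is contained in the relevant $j$-subset of coordinates, and summing the signed binomial weights over all $j$ gives $\sum_{j=|U|}^k \binom{k-|U|}{j-|U|}(-1)^{k-j}=0$ whenever $|U|<k$, and $=1$ when $U=[k]$. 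Carrying out this bookkeeping carefully — keeping track of the defect vector $D$ and the factor $2^{-k}$ (one factor $1/2$ per coordinate from the $\sum_\cD$ averaging, with the singleton-span normalization already absorbed) — yields exactly $\E X^k \simeq 2^{-k}\sum_{D\subset[k]}\sum_{(S_1,\dots,S_n):\,|S_i|\ge 2,\,\bigcup S_i=[k]}\omega(S_1,\dots,S_n;D)$.

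Finally I would address absolute convergence of the remaining (infinite) sum over tuples $(S_1,\dots,S_n)$. This follows from \cref{lem:cluster-tail-improved}: $\sum_{\Gamma\in\cC_D,\,|\spn(\Gamma)|\ge 2}|\omega(\Gamma)|e^{\|\Gamma\|d^{-3/2}} = O(2^d\lambda^2\alpha_2^d)$, which tends to $0$ under~\eqref{eq:lambda-cond-k} (with $k$ fixed), so $\sum_S |\omega(S;D)| \le \sum_{\Gamma:\,|\spn(\Gamma)|\ge 2}|\omega(\Gamma)| = o(1)$; hence $\sum_{n}\frac1{n!}(\sum_S|\omega(S;D)|)^n=\exp(o(1))<\infty$, and the subsum over tuples whose union is exactly $[k]$ is dominated by this. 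The main obstacle I anticipate is purely combinatorial rather than analytic: organizing the triple bookkeeping — the alternating binomial from the central-moment expansion, the $\sum_m\binom jm$ defect-count from \cref{thm:ClusterEx}, and the identification of sub-systems via spans — so that the cancellation of all terms with $\bigcup S_i\subsetneq[k]$ is transparent, and verifying that the surviving constant is exactly $2^{-k}$ with the right $\sum_{D\subset[k]}$ (rather than, say, $\sum_m\binom km$). This is essentially an inclusion–exclusion identity dressed up in cluster-expansion notation, and writing it so the reader sees the telescoping clearly is the delicate part.
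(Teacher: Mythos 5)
Your proposal follows essentially the same route as the paper's proof: binomially expand $\E X^k$ in terms of $\E(1+X)^\ell$, use the cluster-expansion moment formula~\eqref{eq:moment-ratio-cluster-expansion} with sub-systems identified via spans, Taylor-expand the exponentials into sequences $(S_1,\dots,S_n)$, and observe that after accounting for the defect vectors on coordinates outside $S_1\cup\cdots\cup S_n$ the signed binomial sum vanishes unless the union is $[k]$, leaving the coefficient $2^{-k}$; absolute convergence comes from \cref{lem:cluster-tail-improved} exactly as you say. (Only a cosmetic slip: the exponent in your moment formula should run over clusters with $|\spn(\Gamma)|\ge 2$, not $\ge 1$, which is clearly what you intend.)
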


The lemma roughly says that the $k$-th moment of $X$ is effectively governed by sequences of clusters (of span size at least 2) which together span all of $[k]$, and in this sense do not appear (jointly) in any proper subsystem of the $k$-system.



\begin{proof}
Recalling~\eqref{eq:moment-ratio-cluster-expansion}, we have
\[ \E(1+X)^k \simeq 2^{-k} \sum_{D \subset [k]} \exp\left(\sum_{S \subset [k] : |S|>1} \omega(S;D) \right) .\]
Using the binomial expansion, we get
\[ \E X^k = \sum_{\ell=0}^k \binom k\ell (-1)^{k-\ell} \E(1+X)^\ell \simeq \sum_{T \subset [k]} (-1)^{k-|T|} 2^{-|T|} \sum_{D \subset T} \exp\left(\sum_{S \subset T, |S|>1} \omega(S;D) \right) .\]
Expanding the exponential via its Taylor series, we may write it as a sum of $\frac1{n!}\omega(S_1;D)\cdots\omega(S_n;D)$ over all sequences $(S_1,\dots,S_n)$ with $n \ge 0$ and each $S_i$ a subset of $T$ of size greater than 1. Thus,
\[ \E X^k \simeq \sum_{T \subset [k]} (-1)^{k-|T|} 2^{-|T|} \sum_{D \subset T} \sum_{\substack{S_1,\dots,S_n \subset T:\\|S_1|,\dots,|S_n| \ge 2}} \omega(S_1,\dots,S_n;D) .\]
Since the sums are absolutely convergent, we may now change the order of summation.
Fix a sequence $(S_1,\dots,S_n)$ appearing in the last sum and denote $T_0 := S_1\cup\cdots\cup S_n$.
Observe that $\omega(S_1,\dots,S_n;D)=\omega(S_1,\dots,S_n;D')$ whenever $D$ and $D'$ are such that $D \cap T_0 = D' \cap T_0$. We gather these terms together in the sum and identify them with a canonical representative $\omega(S_1,\dots,S_n;D_0)$ with $D_0 \subset T_0$. Then the effective coefficient of $\omega(S_1,\dots,S_n;D_0)$ is
\[ \sum_{\substack{D \subset T \subset [k]\\T_0 \subset T,~ D_0 \subset D \subset D_0 \cup (T\setminus T_0)}} (-1)^{k-|T|} 2^{-|T|} = \sum_{T_0 \subset T \subset [k]} (-1)^{k-|T|} 2^{-|T_0|} = \begin{cases} 2^{-k} &\text{if }T_0=[k] \\0&\text{otherwise} \end{cases} .\]
Thus, only sequences with $S_1\cup\cdots\cup S_n=[k]$ remain, and their coefficient is $2^{-k}$.
\end{proof}


Before proving \cref{thm:central-moments}, we collect some facts we will require.
\begin{lemma}\label{lem:alpha-bounds}
Suppose that $\lambda = \Theta(1)$ and fix $a \ge 1$.
\begin{enumerate}[(i)]
 \item\label{lem:alpha-bound-a1} If $p \gg \frac{\log d}d$, then $\alpha_1^d d^a \ll 1$.
 \item\label{lem:alpha-bound-a0} If $p=\Omega(1)$, then $\alpha_1^{2d} d(1-p) = O(\alpha_2^d)$.
 \item\label{lem:alpha-bound-f1} If $p \ge \frac{(1+\lambda)^2}{2\lambda(2+\lambda)}+\omega(\frac1d)$, then $(2\alpha_k)^d \ll 1$ for any fixed $k \ge 2$.
 \item\label{lem:alpha-bound-sigma} If $p \ge \frac{(1+\lambda)^2}{2\lambda(2+\lambda)}+\omega(\frac1d)$, then $\sigma \ll 1$.
 \item\label{lem:alpha-bound-a2} If $\frac{\log d}d \ll p \le 1 - (1+\lambda-\Omega(1))^{-d}$, then $(2\alpha_1\alpha_2)^d d^a \ll \sigma^2$.
 \item\label{lem:alpha-bound-p-near-1} If $p \to 1$ and $p \le 1 - d^{\omega(1)} 2^{-d/3}$, then $(2\alpha_k)^d d^a \ll \sigma^k$ for any fixed $k \ge 3$.
 \item\label{lem:alpha-bound-f} If $\Omega(1) \le p \le 1 - d^{\omega(1)} 2^{-d/3}$, then $(2\alpha_m)^{d/m} d^a \ll (2\alpha_k)^{d/k} \vee \sigma$ for any fixed $k>m \ge 3$.
 \item\label{lem:alpha-bound-cl} If $\Omega(1) \le p \le 1 - d^{\omega(1)} 2^{-d/3}$, then $(2\alpha_{m_1}\cdots\alpha_{m_t})^d d^a \ll (2\alpha_k)^d \vee \sigma^k$ for any fixed $k \ge 3$, $t \ge 2$ and $m_1,\dots,m_t \ge 1$ such that $m_1+\cdots+m_t = k$.
\end{enumerate}
\end{lemma}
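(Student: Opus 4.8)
The strategy is to treat the lemma as a sequence of elementary estimates about the explicit function $\alpha_\ell = 1 - p\big(1-(1+\lambda)^{-\ell}\big)$ and the derived quantity $\sigma$, organized around a few structural facts. Recall that $\alpha_\ell$ is decreasing in $\ell$ while $\tilde\alpha_\ell=\alpha_\ell^{1/\ell}$ is increasing in $\ell$ (both noted in the paper), that $\alpha_\ell\ge 1-p$, and that $\tilde\alpha_\ell<1$. From the super-multiplicativity of $\delta_\ell=\alpha_\ell(1+\lambda)^\ell$ used in the proof of \cref{lem:weight-of-polymer}, a short computation gives the sharper identity $\delta_{i+j}=\delta_i\delta_j+\big((1+\lambda)^i-1\big)\big((1+\lambda)^j-1\big)p(1-p)$, hence $\alpha_{i+j}/(\alpha_i\alpha_j)\ge 1+c\,p(1-p)$ and, iterating, $\alpha_{m_1}\cdots\alpha_{m_t}\le\alpha_{m_1+\cdots+m_t}$ with a gap of $(1+cp(1-p))^{-(t-1)}$; the special case $\alpha_1^2\le\alpha_2$ will be used constantly. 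It is also convenient to record $1+\lambda-p\lambda=(1+\lambda)\alpha_1$, that $\tfrac{(1+\lambda)^2}{2\lambda(2+\lambda)}$ is exactly the value of $p$ at which $2\alpha_2=1$, and — writing $\eta:=\alpha_2/\alpha_1^2-1=\tfrac{\lambda^2 p(1-p)}{(1+\lambda(1-p))^2}$, so that the coefficient $\tfrac{p(1-p)\lambda^2 d}{(1+\lambda-p\lambda)^2}$ appearing in $\sigma^2$ equals $d\eta$ — the clean reduction
\[ \sigma^2 = \tfrac14 2^d\lambda^2\alpha_1^{2d}\big((1+\eta)^d-1+d\eta\big) \asymp 2^d\big(\alpha_2^d-\alpha_1^{2d}\big), \]
where the last step uses $0\le d\eta\le(1+\eta)^d-1$ (Bernoulli) and $\lambda=\Theta(1)$.

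Items (i) and (ii) are then immediate: for (i), $\alpha_1=1-\tfrac{\lambda p}{1+\lambda}\le e^{-\lambda p/(1+\lambda)}$, so $\alpha_1^d\le e^{-\lambda pd/(1+\lambda)}=d^{-\omega(1)}$ under $p\gg\tfrac{\log d}d$; for (ii), $\alpha_2/\alpha_1^2=1+\eta\ge 1+c(1-p)$ when $p=\Omega(1)$, and Bernoulli gives $(1+c(1-p))^d\ge cd(1-p)$, i.e.\ $d(1-p)\le c^{-1}(\alpha_2/\alpha_1^2)^d$. For (iii) and (iv): the $\omega(1/d)$ gap above the threshold forces $2\alpha_2\le 1-\omega(1/d)$, hence $(2\alpha_k)^d\le(2\alpha_2)^d\to 0$ for any fixed $k\ge 2$ since $\alpha_k\le\alpha_2$; and $\sigma^2\asymp 2^d(\alpha_2^d-\alpha_1^{2d})\le 2^d\alpha_2^d=(2\alpha_2)^d\ll 1$ using $\alpha_1^2\le\alpha_2$, which is (iv).

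The substance is (v)--(viii), where $\sigma$ can be small through near-cancellation, so one works from $\sigma^2\asymp 2^d(\alpha_2^d-\alpha_1^{2d})=2^d\alpha_1^{2d}\big((1+\eta)^d-1\big)$ and case-splits on the size of $d(1-p)$. When $d(1-p)$ is not too small: either $\alpha_1^{2d}\le\tfrac12\alpha_2^d$, so $\sigma^2\gtrsim 2^d\alpha_2^d$ and dividing the target quantity by $\sigma^2$ leaves a power of $\alpha_1^d=d^{-\omega(1)}$ that kills the polynomial $d^a$ (this settles (v) and the analogous step in (vi)--(viii)); or the factor $d\eta\asymp dp(1-p)$ inside $\sigma^2$ directly beats $d^a$ (for (viii) together with the super-multiplicativity gap $e^{-cd(t-1)p(1-p)}d^a\ll 1$). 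When $d(1-p)$ is small, the hypotheses force $p\to 1$, so $\tilde\alpha_\ell\to(1+\lambda)^{-1}$ for every fixed $\ell$ and $(1+\lambda(1-p))^{\ell d}$ is subexponential; the comparisons then reduce to inequalities among $2^d$, $(1+\lambda)^d$ and $d(1-p)$, and the assumption $k\ge 3$ (so $2^{d(k/2-1)}\ge 2^{d/2}$) together with the lower bounds $1-p\ge(1+\lambda-\Omega(1))^{-d}$ in (v) and $1-p\ge d^{\omega(1)}2^{-d/3}$ in (vi)--(viii) supply exactly the needed slack for $\sigma$ (or $(2\alpha_k)^{d/k}$) to dominate. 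For (vii) one additionally uses $(2\alpha_\ell)^{1/\ell}=2^{1/\ell}\tilde\alpha_\ell<2^{1/\ell}$ to bound the $m$-term, and for (viii) that $\alpha_{m_1}\cdots\alpha_{m_t}$ and $\alpha_k$ differ only by $e^{O(d(1-p)^2)}=1+o(1)$ in the small-$d(1-p)$ regime.

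The main obstacle is bookkeeping rather than any single hard inequality: in (vii) and (viii) one must verify the exponent comparisons uniformly over all fixed $k>m\ge 3$ and all compositions $m_1+\cdots+m_t=k$, and in particular decide, in each parameter sub-regime, which of $(2\alpha_k)^{d/k}$ and $\sigma$ is the relevant competitor — this is the same competition between $(\tfrac\lambda2)^k 2^d\alpha_k^d$ and $\sigma^k$ that produces the thresholds $p_k^*,p_k^{**}$ discussed after \cref{thm:moments}. Arranging a single case-split on $d(1-p)$ that works across (v)--(viii), and tracking which of the hypotheses ($p\gg\log d/d$, $p=\Omega(1)$, or the exponential lower bound on $1-p$) is invoked where, is the part that needs the most care.
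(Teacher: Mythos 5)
Most of your outline follows the paper's route: your identity $\sigma^2=\tfrac14 2^d\lambda^2\alpha_1^{2d}\bigl((1+\eta)^d-1+d\eta\bigr)\asymp 2^d(\alpha_2^d-\alpha_1^{2d})$ is equivalent to the paper's reduction $\sigma^2\asymp(2\alpha_2)^d\bigl(1\wedge p(1-p)d\bigr)$, items (i)--(v) are handled the same way, your small-$d(1-p)$ computation for (vi) contains the same $k=3$ refinement the paper needs near $p=1$, and your quantitative merge gap $\alpha_{m_1}\cdots\alpha_{m_t}\le\alpha_k(1+cp(1-p))^{-(t-1)}$ would even replace the paper's compactness argument for (viii) when $p$ is bounded away from $1$.

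There is, however, a genuine gap at item (vii) in the regime where $p$ is bounded away from $1$ (and, relatedly, your blanket claim that dividing by $\sigma^2$ ``leaves a power of $\alpha_1^d$'' is literally true only for (v)). For (vii) you must show $(2\alpha_m)^{1/m}$ is below $(2\alpha_2)^{1/2}\vee(2\alpha_k)^{1/k}$ by a uniform constant factor, and none of your stated tools gives this: monotonicity of $\alpha_\ell^{1/\ell}$, the bound $(2\alpha_m)^{1/m}<2^{1/m}$, and the merge inequality $\alpha_i\alpha_j\le\alpha_{i+j}$ all fail --- e.g.\ from $\alpha_m^2\le\alpha_{2m}$ one only gets $(2\alpha_m)^{1/m}\le 2^{1/(2m)}(2\alpha_{2m})^{1/(2m)}$, with the extra factor on the wrong side of $1$. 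Moreover the $\sigma$-competitor alone genuinely does not suffice: for $\lambda=1$, $p=0.9$ one has $(2\alpha_{10})^{1/10}\approx 0.85>(2\alpha_2)^{1/2}\approx 0.81$, so with $m=10<k=20$ one must prove $(2\alpha_{10})^{1/10}<(2\alpha_{20})^{1/20}$, a three-point statement about $\ell\mapsto(2\alpha_\ell)^{1/\ell}$. The paper obtains it from the unimodality of $x\mapsto(2\alpha_x)^{1/x}$ (\cref{cl:f-inc-dec}) together with continuity and compactness in $(p,\lambda)$; within your framework the missing input is the swap inequality $\delta_i\delta_j\le\delta_{i-1}\delta_{j+1}$ (equivalently the strict convexity of $\log\alpha_x$, \cref{cl:super-multiplicative}), iterated to give $(2\alpha_m)^{k-2}\le(2\alpha_2)^{k-m}(2\alpha_k)^{m-2}$ with a $(1-cp(1-p))$-type gap. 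Until one of these is supplied, (vii) is unproved. A smaller point: your claim that $\alpha_{m_1}\cdots\alpha_{m_t}$ and $\alpha_k$ differ only by $e^{O(d(1-p)^2)}$ in the small-$d(1-p)$ regime is incorrect (the discrepancy of the $d$-th powers is $e^{-\Theta(dp(1-p))}$), but this slip is harmless since only the trivial direction $\alpha_{m_1}\cdots\alpha_{m_t}\le\alpha_k$ is needed there before invoking (vi).
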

\begin{proof}
\emph{\eqref{lem:alpha-bound-a1}.}
It suffices to show that $\alpha_1 \le 1 - \omega(\frac{\log d}d)$. Since $\alpha_1 = 1 - \frac{p\lambda}{1+\lambda}$, this is immediate.

\smallskip\noindent
\emph{\eqref{lem:alpha-bound-a0}.} It suffices to show that $1-\frac{\alpha_1^2}{\alpha_2} = \Omega(1-p)$. Plugging in the definitions of $\alpha_1$ and $\alpha_2$, this is easily verified.

\smallskip\noindent
\emph{\eqref{lem:alpha-bound-f1}.} Since $\alpha_k$ is decreasing in $k$, it suffices to show that $(2\alpha_2)^d \ll 1$. Equivalently, $2\alpha_2 \le 1 - \omega(\frac1d)$. Plugging in the definition of $\alpha_2$, we see that this is the same as $2p \ge \frac{(1+\lambda)^2}{\lambda(2+\lambda)}(1+\omega(\frac1d))$.

\smallskip\noindent
\emph{\eqref{lem:alpha-bound-sigma}.}
Using only that $\lambda=\Theta(1)$, one checks that
\begin{equation}\label{eq:sigma-expr}
\sigma^2 = (2\alpha_2)^d \cdot \Omega(1 \wedge p(1-p)d) \qquad\text{and}\qquad \sigma^2 = (2\alpha_2)^d \cdot O\left(1 + \tfrac{\alpha_1^{2d} d(1-p)}{\alpha_2^d}\right).
\end{equation}
The claim now follows from~\eqref{lem:alpha-bound-f1} and~\eqref{lem:alpha-bound-a0}.

\smallskip\noindent
\emph{\eqref{lem:alpha-bound-a2}.}
By~\eqref{eq:sigma-expr}, it suffices to show that $\alpha_1^d d^a \ll 1 \wedge (1-p)d$. For $p$ bounded away from 1, this follows from~\eqref{lem:alpha-bound-a1}. When $p \to 1$, this follows using that $\alpha_1 = \frac{1+o(1)}{1+\lambda}$ and the upper bound on $p$.

\smallskip\noindent
\emph{\eqref{lem:alpha-bound-p-near-1}.}
Denote $\gamma := \frac{(2\alpha_k)^{1/k}}{(2\alpha_2)^{1/2}}$.
By~\eqref{eq:sigma-expr} and noting that $1-p \le 1 \wedge p(1-p)d$, it suffices to show that $\gamma^{2d} d^{2a/k} \ll 1-p$.
Observe that $p \to 1$ implies that $\gamma \to 2^{1/k-1/2}$. For $k>3$, this is strictly less than $2^{-1/6}$ and the claim follows easily using that $1-p=\Omega(2^{-d/3})$. For $k=3$, the claim similarly follows when $1-p \ge (2-\Omega(1))^{-d/3}$. When $1-p$ is smaller than this, we argue that $\gamma \le 2^{-1/6}(1+O(1-p))$, so that $\gamma^{2d} = O(2^{-d/3})$ and the claim follows since $1-p = \omega(d^{2a/k} 2^{-d/3})$ by assumption.
To show that $\gamma \le 2^{-1/6}(1+O(1-p))$, first note that it is equivalent to $\frac{(\alpha_3)^{1/3}}{(\alpha_2)^{1/2}} \le 1+O(1-p)$. Now observe that $(1+\lambda)(\alpha_\ell)^{1/\ell} = 1+ h(\ell) (1-p)(1+o(1))$, where $h(x) := \frac{(1+\lambda)^x-1}{x}$. Since $h(2)$, $h(3)$ and $h(3)-h(2)$ are all $\Theta(1)$ (note that $h$ is strictly increasing and continuous in both $x$ and $\lambda$), we have that $\frac{(\alpha_3)^{1/3}}{(\alpha_2)^{1/2}} = 1 + (h(3)-h(2))(1-p)(1+o(1)) = 1 + \Theta(1-p)$.

\smallskip\noindent
\emph{\eqref{lem:alpha-bound-f}.}
The claim follows from~\eqref{lem:alpha-bound-p-near-1} when $p \to 1$, and so we may assume that $p$ is bounded away from~1.
Let $g \colon (0,1)\times(0,\infty) \to [0,\infty)$ be the function $(p,\lambda) \mapsto \frac{(2\alpha_m)^{1/m}}{(2\alpha_2)^{1/2} \vee (2\alpha_k)^{1/k}}$.
By~\eqref{eq:sigma-expr}, it suffices to show that $g(p,\lambda)^d \ll 1$.
Note that $g$ is continuous and that $g<1$ by \cref{cl:f-inc-dec}. Thus, $g(p,\lambda)$ is bounded away from 1 (since $p$ is bounded away from 0 and 1), and the claim follows.

\smallskip\noindent
\emph{\eqref{lem:alpha-bound-cl}.}
Using \cref{cl:super-multiplicative}, we see that $\alpha_{m_1} \cdots \alpha_{m_t} < \alpha_k$. The claim thus follows from~\eqref{lem:alpha-bound-p-near-1} when $p \to 1$, and so we may assume that $p$ is bounded away from 1. 
Let $g \colon (0,1)\times(0,\infty) \to [0,\infty)$ be the function $(p,\lambda) \mapsto \alpha_{m_1} \cdots \alpha_{m_t} / \alpha_k$. Note that $g$ is continuous and that $g<1$. Thus, $g(p,\lambda)$ is bounded away from 1, and the claim follows.
\end{proof}

We are now ready to prove \cref{thm:central-moments}.
The idea behind the proof is that the main contribution to the second sum  in \cref{lem:central-moment-cluster-formula} is from one of two cases: either there is a single set $S_1$ (which must equal $[k]$), or there are $k/2$ sets $S_1,\dots,S_n$ all of size 2. The clusters contributing non-negligibly in the former case are of size $k$ and in the latter case of size 2. We will first upper bound the absolute contribution from all other cases. We then compute the contribution from these two main cases.

\begin{proof}[Proof of \cref{thm:central-moments}]
Fix $k \ge 3$. We begin by proving the first part of the theorem, and thus suppose that $\frac{(1+\lambda)^2}{2\lambda(2+\lambda)} + \omega(\frac1d) \le p \le 1 - 2^{-d/3 + \omega(\log d)}$.

Our starting point is \cref{lem:central-moment-cluster-formula} which says that
\[ \E X^k \simeq 2^{-k} \sum_{D \subset [k]} \sum_{(S_1,\dots,S_n) \in \cS} \omega(S_1,\dots,S_n;D) ,\]
where $\cS$ is the collection of all sequences $(S_1,\dots,S_n)$ of subsets of $[k]$ of size at least 2 satisfying that $S_1 \cup \cdots \cup S_n = [k]$. Let $\cS^* \subset \cS$ be the collection of those sequences which partition $[k]$ into sets of size 2.
We will first compute the contribution from these sequences, showing that
\begin{equation}\label{eq:partition-sum}
2^{-k} \sum_{(S_1,\dots,S_n) \in \cS^*} \sum_{D \subset [k]} \omega(S_1,\dots,S_n;D) = \begin{cases} (1+o(1)) \cdot \sigma^k (k-1)!! &\text{if $k$ is even}\\0 &\text{if $k$ is odd} \end{cases}.
\end{equation}
We will next compute the contribution from the single sequence $([k])$, showing that
\begin{equation}\label{eq:cm0}
2^{-k} \sum_{D \subset [k]} \omega([k];D) = (\tfrac \lambda2)^k2^d\alpha_k^d + o(\sigma^k \vee 2^d \alpha_k^d) .
\end{equation}
We will then show that the contribution from all other sequences is negligible:
\begin{equation}\label{eq:cm-negligible}
2^{-k} \sum_{D \subset [k]} \sum_{\substack{(S_1,\dots,S_n) \in \cS \setminus \cS^*\\n \ge 2}} \omega(S_1,\dots,S_n;D) = o(\sigma^k \vee 2^d \alpha_k^d) .
\end{equation}
This will yield the first part of the theorem.

%

Let us begin with~\eqref{eq:partition-sum}.
When $k$ is odd, $\cS^*$ is empty, and \eqref{eq:partition-sum} is immediate. Suppose that $k$ is even and set $n := k/2$.
There are $|\cS^*|=n! \cdot (k-1)!!$ many ordered partitions of $[k]$ into sets of size~2.
Since any such partition contributes the same, and since its contribution factorizes over the $n$ pairs, to obtain~\eqref{eq:partition-sum}, it suffices to show that
\[ \sum_{D \subset \{1,2\}} \omega(\{1,2\};D) = (1+o(1)) 4\sigma^2 .\]
Thus, we are left with a computation concerning clusters of the $2$-system. Specifically, the sum of weights of clusters which span $\{1,2\}$. Since clusters of size larger than 2 are negligible in comparison to $\sigma^2$ by \cref{lem:cluster-tail-improved} and \cref{lem:alpha-bounds}\eqref{lem:alpha-bound-a2}, it suffices to show that
\[ \sum_{D \subset \{1,2\}} \sum_{\substack{\Gamma \in \cC_D: \|\Gamma\|=2\\\spn(\Gamma)=\{1,2\}}} \omega(\Gamma) = 4\sigma^2 .\]
In fact, we have already done this computation. Indeed, the left-hand side is $2A_{\text{same}}+2A_{\text{diff}}$, where $A_{\text{same}}$ and $A_{\text{diff}}$ were defined in~\eqref{eq:A-same-diff-def}. These were subsequently computed in~\eqref{eq:A-same-formula} and~\eqref{eq:A-diff-formula}, from which we see that $2A_{\text{same}}+2A_{\text{diff}}=4\sigma^2$. We note for later use that both $A_{\text{same}}$ and $A_{\text{diff}}$ are non-negative, so that $|A_{\text{same}}|,|A_{\text{diff}}| \le 2\sigma^2$.
This establishes~\eqref{eq:partition-sum}.

We now move on to showing~\eqref{eq:cm0}.
Recall that $\omega([k];D) = \sum_{\Gamma \in \cC_D : \spn(\Gamma)=[k]} \omega(\Gamma)$.
The total contribution to $\omega([k];D)$ from clusters of size larger than $k$ is negligible in comparison to $2^d\alpha_k^d$ by \cref{lem:cluster-tail-improved} and \cref{lem:alpha-bounds}\eqref{lem:alpha-bound-a1}. Consider a cluster $\Gamma$ with $\spn(\Gamma)=[k]$ and $\|\Gamma\|=k$.
Suppose first that $D=\emptyset$ or $D=[k]$ (all defects on the same side). Then the support of $\Gamma$ must be a singleton.
Recall from scenario~III in \cref{sec:computational-examples} that the weight of a polymer $\gamma$ with singleton support and $|\spn(\gamma)|=\ell$ is $\lambda^\ell \alpha_\ell^d$.
There are numerous types of clusters with singleton support, according to the number of polymers and their sizes. There are $2^{d-1}$ clusters of weight $\lambda^k\alpha_k^d$ (those which consist of a single polymer), and all other clusters have combined weight $o(\sigma^k \vee 2^d \alpha_k^d)$ by \cref{lem:alpha-bounds}\eqref{lem:alpha-bound-cl}. Thus,
\[ \omega([k];D) = 2^{d-1}\lambda^k\alpha_k^d + o(\sigma^k \vee 2^d \alpha_k^d) .\]
Suppose now that $D \neq \emptyset,[k]$. In this case, the support of $\Gamma$ need not be a singleton, but it is easy to see that there are at most $2^d d^{O(1)}$ ways to choose the support. It follows that there are at most $2^d d^{O(1)}$ ways to choose the cluster $\Gamma$. \cref{lem:weight-of-polymer} implies that any polymer $\gamma$ with $|\spn(\gamma)|=\ell$ has weight at most $\lambda^{\|\gamma\|} (\tilde\alpha_\ell)^{\|N(\gamma)\|}$, where $\tilde\alpha_\ell = (\alpha_\ell)^{1/\ell}$ was defined at the beginning of \cref{sec:convergence}. Since $\|N(\gamma)\| \ge d\ell$, the weight of $\gamma$ is at most $\lambda^{\|\gamma\|} \alpha_\ell^d$. Finally, \cref{lem:alpha-bounds}\eqref{lem:alpha-bound-cl} yields that $\omega([k];D) = o(\sigma^k \vee 2^d \alpha_k^d)$.
This establishes~\eqref{eq:cm0}.

It remains to show~\eqref{eq:cm-negligible}.
To this end, we may fix $D$ and show that
\[ \sum_{\substack{(S_1,\dots,S_n) \in \cS \setminus \cS^*\\n \ge 2}} \omega(S_1,\dots,S_n;D) = o(\sigma^k \vee 2^d\alpha_k^d) .\]
Fix $(S_1,\dots,S_n) \in \cS \setminus \cS^*$ with $n \ge 2$ and denote $\ell_i := |S_i|$.
By \cref{lem:cluster-tail-improved},
\[ |\omega(S_1,\dots,S_n;D)| \le \tfrac{O(1)^n}{n!} 2^{dn} \left(\alpha_{\ell_1}\cdots\alpha_{\ell_n}\right)^d .\]
Define $f(\ell) := (2\alpha_\ell)^{1/\ell}$. Using \cref{cor:alpha-product-bound}, we obtain that
\[ |\omega(S_1,\dots,S_n;D)| \le \tfrac{O(1)^n}{n!} (f(2)\vee f(k))^{(\ell_1+\cdots+\ell_n)d} .\]
This bound will not suffice for us, and we need to tweak it by separating those $\ell_i$ which are 2 or $k$ from the rest. Write $n=t+r+m$, where $t$ and $r$ are the number of $\ell_i$ which are $2$ and $k$, respectively, and let $3 \le \ell'_1,\dots,\ell'_m \le k-1$ be the remaining elements and denote $\ell := \ell'_1+\dots+\ell'_m$. Using that $|\omega(S_i;D)| \le 2\sigma^2$ for those $S_i$ of size 2 (since $\omega(S_i;D)$ equals either $A_{\text{same}}$ or $A_{\text{diff}}$), using \cref{lem:cluster-tail-improved} for larger $S_i$, and applying \cref{cor:alpha-product-bound} to $\ell'_1,\dots,\ell'_m$, we obtain that
\begin{align*}
|\omega(S_1,\dots,S_n;D)| 
 &\le \tfrac{O(1)^n}{n!} \sigma^{2t} (2\alpha_k)^{rd} (2^m \alpha_{\ell'_1}\cdots\alpha_{\ell'_m})^d \\
 &\le \tfrac{O(1)^n}{n!} \sigma^{2t} f(k)^{krd} (f(3)\vee f(k-1))^{\ell d} .
\end{align*}
Since for a given $n$, there are at most $2^{kn}$ sequences $(S_1,\dots,S_n)$, in order to obtain the desired bound, it suffices to show that, uniformly in $(t,r,\ell)$,
\[ \star := \sigma^{2t} f(k)^{krd} (f(3)\vee f(k-1))^{\ell d} = o(\sigma^k \vee f(k)^{kd}) .\]
Since $\star$ only decreases when increasing $t$, $r$ or $\ell$, it suffices to show this for any fixed $(t,r,\ell)$.
By \cref{lem:alpha-bounds}\eqref{lem:alpha-bound-f1}-\eqref{lem:alpha-bound-sigma}, each of $\sigma, f(2)^d, f(3)^d, \dots, f(k)^d$ is $o(1)$, and we assume that $d$ is large enough so that they are all at most 1.
If $r \ge 2$, then $\star \le f(k)^{2kd} \ll f(k)^{kd}$. If $r=1$, then either $t \ge 1$ or $\ell \ge 1$ (since $n \ge 2$), and again we have that $\star \ll f(k)^{kd}$. Now suppose that $r=0$. If $\ell=0$, then $t>k/2$ (since $(S_1,\dots,S_n) \notin \cS^*$) so that $\star \le \sigma^{k+1} \ll \sigma^k$. If $\ell \ge 1$ (note that $2t+\ell \ge k$), then $\star \le (\sigma \vee f(k)^d)^{2t} (f(3) \vee f(k-1))^{\ell d} \ll (\sigma \vee f(k)^d)^{2t+\ell} \le (\sigma \vee f(k)^d)^k$ by \cref{lem:alpha-bounds}\eqref{lem:alpha-bound-f}. This establishes~\eqref{eq:cm-negligible}, and thus completes the proof of the first part of the theorem.


\medskip

To see the claim regarding $p=\frac{(1+\lambda)^2}{2\lambda(2+\lambda)} \pm O(\frac1d)$, we first note that in this regime, we have that $\sigma=\Theta(1)$ and $f(k)^{kd} \ll 1$.
In particular, \cref{lem:alpha-bounds}\eqref{lem:alpha-bound-f1}-\eqref{lem:alpha-bound-sigma} hold for this $p$ if one replaces ``$\ll 1$'' with ``$=O(1)$''. Consequently, the bounds in \eqref{eq:partition-sum} and \eqref{eq:cm0} remain unchanged, while the bound in \eqref{eq:cm-negligible} becomes $O(\sigma^k) \pm O(2^d\alpha_k^d) = O(1)$ for $k$ even (and still 0 for $k$ odd). To see the latter, note that $\star$ is at most $O(f(k)^{kd}) + o(\sigma^k) = o(1)$ when $r+\ell \ge 1$; when $r=\ell=0$ and $t>k/2$, we use that $0\le \omega(S_i;D) \le 2\sigma^2$ to obtain that $\omega(S_1,\dots,S_n;D) = \frac{O(1)^n}{n!}$. Putting these together yields that $\E X^k = \Theta(\sigma^k) \pm O(2^d\alpha_k^d) = \Theta(1)$ for $k$ even. For $k$ odd, this gives that $\E X^k = O(1)$, and to see that it is $\Theta(1)$, it suffices to note that $\omega(S_1,\dots,S_n;D)$ is $\Theta(1)$ for some sets $S_1,\dots,S_n$ of size $2$ and some $D$.
This yields the statement of the theorem in this case.

To see the claim regarding $p \ge \frac{(1+\lambda)^2}{2\lambda(2+\lambda)}$, we note that \cref{lem:alpha-bounds}\eqref{lem:alpha-bound-a2}-\eqref{lem:alpha-bound-f} remain true without the upper bound on $p$ if one replaces $\sigma$ with $f(2)^d=(2\alpha_2)^{d/2}$.
The bounds in \eqref{eq:partition-sum}, \eqref{eq:cm0} and \eqref{eq:cm-negligible} then all become $O((2\alpha_2)^{kd/2} \vee 2^d\alpha_k^d)$.
This yields the statement of the theorem in this case.
\end{proof}

\begin{proof}[Proof of \cref{cor:normal-general}]
\cref{lem:alpha-bounds}\eqref{lem:alpha-bound-p-near-1} implies that $\sigma^k \gg (2\alpha_k)^d$ for any $k \ge 3$. \cref{thm:central-moments} now yields that
\[ \E\left(\frac{Z-\E Z}{\sigma \E Z}\right)^k \to \begin{cases} (k-1)!! &\text{if $k$ is even}\\0 &\text{if $k$ is odd} \end{cases}. \]
Since the right-hand side equals $\E N^k$, and since $\Var(Z) = (1+o(1)) (\sigma \E Z)^2$ by \cref{thm:moments-general} (note that $\sigma=o(1)$ by \cref{lem:alpha-bounds}\eqref{lem:alpha-bound-sigma}), we conclude the first part of the corollary. Since normal random variables are determined by their moments, we also conclude that $(Z-\E Z)/\sqrt{\Var(Z)}$ converges in distribution to $N$.
\end{proof}

Let us now explain how the theorems stated at the beginning of this section imply the theorems stated in the introduction.

\begin{proof}[Proof of \cref{thm:expectation}]
Immediate from \cref{thm:expectation-general} by plugging in $\lambda=1$ and $\alpha_1=1-\frac p2$.
\end{proof}

\begin{proof}[Proof of \cref{thm:normal}]
Immediate from \cref{cor:normal-general} by plugging in $\lambda=1$.
\end{proof}

\begin{proof}[Proof of \cref{thm:variance}]
Immediate from \cref{thm:moments-general} by plugging in $\lambda=1$, $\alpha_1=1-\frac p2$ and $\alpha_2=1-\frac{3p}4$.
\end{proof}

\begin{proof}[Proof of \cref{thm:moments}]
The first part follows from \cref{thm:moments-general} by plugging in $\lambda=1$, $\alpha_1=1-\frac p2$ and $\alpha_2=1-\frac{3p}4$, and using that $d^3 \epsilon_k^d \ll \alpha_1^{2d}$, which is simple to check using only that $p \gg \frac{\log d}{d}$.

For the second part, suppose first that $k \ge 4$ is even and $\frac23 + \omega(\frac1d) \le p \le 1 - \omega(\frac1d)$. Using the upper bound on $p$, it is not hard to check that $\alpha_1^{2d} \ll \alpha_2^d$ and $(1-p)d\alpha_1^{2d} \ll \alpha_2^d$. Thus, $\sigma = \frac12 2^{d-1} \alpha_1^d (1+o(1))$ and the formula stated in the theorem follows from \cref{thm:central-moments}. Suppose now that $k \ge 4$ and $p = \frac23 \pm O(1)$. In this case, $(2-\frac{3p}2)^{kd/2} = \Theta(1)$ and $2^d(1-p+p2^{-k})^d \ll 1$, and the claimed result follows from \cref{thm:central-moments}. The remaining two cases also follow from \cref{thm:central-moments} since $(2-\frac{3p}2)^{kd/2} \le 1$ and $2^d(1-p+p2^{-k})^d \ll 1$ when $k \ge 3$ and $p \ge \frac23$.
\end{proof}

\section{Discussion and open questions}\label{sec:open}

We have introduced several results on the partition function of the hard-core model (and, in particular, on the number of independent sets) in a random subgraph of the hypercube. 
For a wide range of $p$ we have found precise asymptotics for the expected value and higher moments. 
For values of $p$ tending to 1, we further established a normal limiting distribution result, whereas for $p>\frac23$, we have a concentration result which yields estimates on the partition function which hold with high probability.

Our work raises several natural questions. 
Our results can be interpreted as results about the partition function of a family of positive-temperature models on the hypercube (recall \cref{prop:relation}).
Interestingly, our results allow to extract information about the structure of a random configuration chosen from the positive-temperature model on the hypercube. Indeed, we have established a convergent cluster expansion representation (see \cref{thm:ClusterEx}) from which it is rather standard to deduce such structural information. For example (for appropriate parameters of the model), one may deduce from our results that the probability that two given vertices, one even and one odd, belong to such a configuration is at most $C\lambda \left(\frac{1+\lambda e^{-\beta}}{1+\lambda}\right)^d$. For $p>\frac23$ (say, constant), it should also be possible to deduce such probabilistic information about the typical independent sets (i.e., the hard-core model with $\lambda=1$) in the random graph $Q_{d,p}$, using that the relevant quantities are concentrated in this regime (as demonstrated by \cref{cor:fluctuation-bound}), but we have not pursued this here. For other $\lambda$, the value $\frac23$ becomes $\frac{(1+\lambda)^2}{2\lambda(2+\lambda)}$ (see \cref{sec:moments}). It would be interesting to study the structure of a typical configuration in the hardcore model on $Q_{d,p}$ for smaller values of $p$.
For non-random graphs, such results were established on the hypercube~\cite{kahn2001entropy,galvin2011threshold,jenssen2020independent} and also on the closely related $\Z^d$ lattice~\cite{galvin2004phase,peled2014odd} (and for positive temperature in~\cite{peled2020long}).

We have seen that for $p>\frac 23+\omega\left(\frac 1d\right)$, $i(Q_{d,p})$ is concentrated around its mean, in the sense that with high probability  $i(Q_{d,p})=(1+o(1))\E(i(Q_{d,p}))$. On the other hand, 
we have seen that in $p=\frac 23$ it is non-concentrated (around its mean or otherwise). This raises the question of whether or not there is concentration for smaller values of $p$, for example for $p=\frac 12$ (recall~\eqref{eq:Expectation1/2} and~\eqref{eq:Variance1/2}). More generally, it is interesting to determine the typical order of magnitude of $i(Q_{d,p})$, specifically if it is close to its mean either in the sense of $\Theta(\E(i(Q_{d,p})))$ or as in the sense of the right hand side of~\eqref{eq:expectation-simple}.

Another possible approach to studying the hard-core model on $Q_{d,p}$ (different than the approach taken in this paper) is to try to apply the machinery of~\cite{sapozhenko1987onthen,galvin2011threshold,jenssen2020independent} directly to the random graph. As Galvin noted in~\cite{galvin2011threshold}, this machinery relies on only few properties of the hypercube, specifically the fact that it is a regular bipartite graph with certain isoperimetric bounds. However, $Q_{d,p}$ is typically very non-regular, and perhaps more crucially, while certain isoperimetric bounds for $Q_{d,p}$ are known (see~\cite{erde2021expansion} and references therein), these do not seem to be suited for the problem at hand.

We have seen that for $p$ tending to 1 (not too fast), there is a normal limiting behavior for $i(Q_{d,p})$. It is natural to ask what the limiting behavior is for other values of $p$, e.g., for constant $p \in (0,1)$.
\cref{thm:moments} implies that (see the discussion after the theorem) the central moments of $i(Q_{d,p})$ do \emph{not} behave asymptotically like those of a normal random variable. While this does not necessarily preclude the possibility of a normal limit, it might suggest a different limiting distribution, perhaps log-normal.

Some of our results are in the regime where $p$ is at least $\frac{C\log d}{d^{1/3}}$. It is natural to wonder how small $p$ can be for these results to hold. For example, when does the expected number of independent sets behave as in~\eqref{eq:expectation-simple}? It is not hard to see that this fails for $p=o(\frac1d)$ (or even for $p \le \frac cd$ for small $c>0$). Indeed, by considering subsets of one bipartition class of the hypercube and the isolated vertices in the other bipartition class, one sees that for $p=o(\frac 1d)$, with high probability, $i(Q_{d,p})=2^{2^{d-1}(2-o(1))}$, and thus also in expectation. Similarly, for $p=O(\frac1d)$, with high probability, $i(Q_{d,p}) = 2^{2^{d-1}(1+\Omega(1))}$.
This further shows that
\begin{equation}\label{eq:ExpectationHigh}
\E i(Q_{d,p}) = 2^{2^{d-1}(1+o(1))}
\end{equation} 
does not hold for $p=O(\frac1d)$, whereas~\eqref{eq:expectation-simple} implies that it does hold for $p \ge \frac{C\log d}{d^{1/3}}$.
In fact, it is not too hard to show that this weaker form of~\eqref{eq:expectation-simple} holds for $p=\omega(\frac{\log d}d)$. This can be seen by using~\cref{lem:weighted-shearer0} (or alternatively~\cite[Theorem~1.3]{galvin2012bounding}) to obtain that $\E i(Q_{d,p}) \le Z(K_{d,d},1,\beta)^{\frac1d 2^{d-1}}$, and then following the proof of~\cref{lem:Z-Psi-bound-gen} (and noting that $Z(\{\bar0\})=2^d$) to deduce that $Z(K_{d,d},1,\beta) \le 2^{d(1+o(1))}$.
In this context, we mention that $\tfrac 1d$ is the threshold for the appearance of a giant component in $Q_{d,p}$ (see, e.g., \cite{van2017hypercube}), and that there is a positive proportional of vertices of bounded degree when $p=O(\frac1d)$ but not when $p=\omega(\frac1d)$.

Lastly, we note that independent sets can be seen as a special case of graph homomorphisms. In this direction, results about graph homomorphisms were established in~\cite{engbers2012h,jenssen2020homomorphisms,peled2020long} and for the special case of $q$-colorings in~\cite{kahn2020number,peled2018rigidity}.
It is likely that the techniques in this paper, together with those of previous works, can be extended to tackle the problem of counting more general homomorphisms in $Q_{d,p}$. In this context, we note that the relation given in \cref{prop:relation} easily extends to any (weighted) homomorphism model.

\bigskip
\noindent\textbf{Acknowledgments.}
This work was carried out alongside two \emph{independent} and \emph{hypercute} babies, Nur and Barr, to which we are grateful for inspiring us and helping us stay awake at night to think about this problem. This paper is dedicated to them.
	Research of GK was supported by the European Union’s Horizon 2020 research and innovation programme under the Marie Sk\l odowska Curie grant agreement No.\ 101030925.
	Research of YS was supported in part by NSERC of Canada.

\appendix
\section{}
Fix $p \in (0,1)$ and $c>0$.
For $x>0$, define
\[ \alpha_x := 1-p(1-e^{-cx}) \qquad\text{and}\qquad f(x) := (2\alpha_x)^{1/x} .\]

\begin{claim}\label{cl:super-multiplicative}
The function $x \mapsto \log \alpha_x$ is strictly convex. In particular, $\alpha_x \alpha_y > \alpha_{x-t}\alpha_{y+t}$ for $y \ge x \ge t>0$.
\end{claim}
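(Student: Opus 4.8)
The plan is to prove strict convexity of $x \mapsto \log \alpha_x$ directly by differentiating twice, and then deduce the product inequality from a standard convexity argument. First I would set $h(x) := \log \alpha_x = \log\big(1 - p + p e^{-cx}\big)$. Writing $u(x) := pe^{-cx}$, so that $\alpha_x = (1-p) + u$ and $u' = -cu$, I compute $h'(x) = u'/\alpha_x = -cu/\alpha_x$ and then
\[
h''(x) = \frac{-cu' \alpha_x + cu\,\alpha_x'}{\alpha_x^2} = \frac{c^2 u \alpha_x + cu(-cu)}{\alpha_x^2} = \frac{c^2 u(\alpha_x - u)}{\alpha_x^2} = \frac{c^2 u (1-p)}{\alpha_x^2}.
\]
Since $c>0$, $u = pe^{-cx} > 0$, and $1 - p > 0$ (as $p \in (0,1)$), we get $h'' > 0$ everywhere, so $h$ is strictly convex on $(0,\infty)$. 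I expect this computation to be entirely routine; there is no real obstacle here, just care with signs.

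Next I would deduce the product inequality. Fix $y \ge x \ge t > 0$. The claim $\alpha_x \alpha_y > \alpha_{x-t}\alpha_{y+t}$ is equivalent, after taking logarithms, to
\[
h(x) + h(y) > h(x-t) + h(y+t).
\]
This is the statement that the ``spread'' operation $(x,y) \mapsto (x-t, y+t)$, which keeps the sum fixed while pushing the points apart, strictly decreases $h(x)+h(y)$ — i.e., that $h$ is strictly \emph{midpoint-concave}-like in the opposite direction, which is exactly strict convexity. Concretely, since $x - t \le x \le y \le y+t$ and $x + y = (x-t) + (y+t)$, I can write $x = \lambda(x-t) + (1-\lambda)(y+t)$ and $y = (1-\lambda)(x-t) + \lambda(y+t)$ for a suitable $\lambda \in (0,1]$ (explicitly $\lambda = 1 - \frac{t}{y - x + 2t}$ when $x<y$, and $\lambda = \tfrac12$ when $x = y$; note $t>0$ forces $x - t < y + t$ so the two base points are distinct). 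Applying strict convexity of $h$ to each of these two convex combinations and adding gives $h(x) + h(y) < \lambda\big(h(x-t)+h(y+t)\big) + (1-\lambda)\big(h(x-t)+h(y+t)\big)$, wait — summing the two inequalities the $\lambda$ and $1-\lambda$ coefficients combine to give exactly $h(x)+h(y) \le h(x-t)+h(y+t)$, with strict inequality because the base points are distinct and $h$ is strictly convex. Hmm, I need to double-check the direction: strict convexity gives $h(x) \le \lambda h(x-t) + (1-\lambda)h(y+t)$ strictly unless $x$ is an endpoint, and similarly for $y$; summing yields $h(x) + h(y) < h(x-t) + h(y+t)$ — good, this is the direction we want, so $\alpha_x\alpha_y > \alpha_{x-t}\alpha_{y+t}$. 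Care is needed in the edge case $x = y$ where one should just invoke strict convexity at the midpoint directly.

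The only point requiring a little attention is the degenerate case: when $x = y = t$, the left point is $x - t = 0$, which lies on the boundary of the domain $(0,\infty)$. In the application (\cref{lem:weight-of-polymer}, via the super-multiplicativity of $(\delta_m)$), the relevant statement is about integer indices $m \ge 0$ with $\delta_0 = 1$, so one wants $\delta_m \delta_n \le \delta_{m-\ell}\delta_{n+\ell}$ including $m - \ell = 0$; here $\alpha_0 = 1$ and the inequality still holds by continuity of $h$ up to the boundary (or simply by checking $\alpha_t \cdot \alpha_t \ge 1 \cdot \alpha_{2t}$ directly, which follows from the strict convexity of $h$ on $(0, 2t]$ together with $h(0) = 0$). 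I would state the claim for $y \ge x \ge t > 0$ as written and remark that the boundary case follows by continuity. The main obstacle, such as it is, is purely bookkeeping: making sure the convex-combination coefficients are set up so the inequality comes out in the asserted direction, and handling $x=y$ and the boundary point $x-t=0$ cleanly rather than sloppily.
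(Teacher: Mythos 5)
Your convexity computation is correct and is essentially the paper's own proof: the paper likewise differentiates, writing $g=\log h$ with $h=\alpha_x$ and checking $g''h^2=h''h-(h')^2=(1-p)pc^2e^{-cx}>0$, which is the same quantity you obtain (up to the factor $\alpha_x^2$). Your remark on the boundary point $x-t=0$, handled by continuity with $\alpha_0=1$, is also fine and matches how the claim is used for the sequence $(\delta_m)$.

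The problem is the very last step of your deduction. Your convex-combination argument correctly yields $h(x)+h(y)<h(x-t)+h(y+t)$; exponentiating preserves order, so this says $\alpha_x\alpha_y<\alpha_{x-t}\alpha_{y+t}$, not $>$ as you conclude --- the sentence ``this is the direction we want'' is exactly where you silently reverse the inequality to match the claim as printed. In fact the displayed inequality in \cref{cl:super-multiplicative} appears to be a misprint: for a strictly log-convex function, spreading the arguments at fixed sum strictly \emph{increases} the product (e.g.\ strict convexity together with $\log\alpha_0=0$ gives $\alpha_1^2<\alpha_2=\alpha_0\alpha_2$), and the direction the paper actually needs and uses is precisely $\alpha_x\alpha_y<\alpha_{x-t}\alpha_{y+t}$: in \cref{lem:weight-of-polymer} it is invoked as $\delta_m\delta_n\le\delta_{m-\ell}\delta_{n+\ell}$ (equivalent after dividing by $(1+\lambda)^{m+n}$), and in the proof of \cref{thm:central-moments} as $\alpha_{m_1}\cdots\alpha_{m_t}<\alpha_k$. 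So the substance of your argument proves the correct (and needed) statement; you should state the conclusion with the inequality reversed rather than forcing it to agree with the stated sign. Note also that the paper does not spell out the ``in particular'' step at all, so your convex-combination derivation is a useful addition once the direction is fixed.
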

\begin{proof}
Define $h(x) := \alpha_x$ and $g(x) := \log h(x)$ for $x>0$. Let us show that $g''>0$. Indeed, $h>0$ and $g''h^2=h''h-(h')^2 = (1-p)pc^2e^{-cx} \ge 0$.
\end{proof}

\begin{cor}\label{cor:alphaIncreasing}
	The function $x \mapsto \alpha_x^{1/x}$ is increasing.
\end{cor}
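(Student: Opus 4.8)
The plan is to reduce the statement to a monotonicity property of the convex function $g(x) := \log \alpha_x$, which \Cref{cl:super-multiplicative} has just established to be strictly convex on $(0,\infty)$. Since $\alpha_x^{1/x} = \exp\!\big(g(x)/x\big)$ and $t\mapsto e^t$ is increasing, it will suffice to prove that $x \mapsto g(x)/x$ is strictly increasing on $(0,\infty)$.

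First I would record the behaviour near the origin: from $\alpha_x = 1 - p(1-e^{-cx})$ we get $\alpha_x \to 1$ as $x\to 0^+$, so $g$ extends continuously to $[0,\infty)$ with $g(0)=0$. Given this, the simplest route is the secant-line argument: for any $0<x<y$, write $x$ as the convex combination $x = \big(1-\tfrac xy\big)\cdot 0 + \tfrac xy\cdot y$ and apply strict convexity of $g$ to obtain $g(x) < \tfrac xy\, g(y)$; dividing by $x>0$ gives $g(x)/x < g(y)/y$, and exponentiating yields $\alpha_x^{1/x} < \alpha_y^{1/y}$.

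The version I would actually write out, to sidestep any concern about whether strict convexity is preserved when one endpoint of the combination is pushed to $0$, is the derivative argument. Set $\phi(x) := x g'(x) - g(x)$ on $(0,\infty)$. The proof of \Cref{cl:super-multiplicative} shows $g$ is $C^2$ with $g''>0$, so $\phi'(x) = x g''(x) > 0$ and $\phi$ is strictly increasing. Moreover $g(x)\to 0$ and $x g'(x) = \tfrac{-pc\,x e^{-cx}}{\alpha_x}\to 0$ as $x\to 0^+$, so $\phi(x)\to 0$, whence $\phi(x)>0$ for every $x>0$. Therefore $\big(g(x)/x\big)' = \phi(x)/x^2 > 0$, giving the desired monotonicity of $g(x)/x$ and hence of $\alpha_x^{1/x}$.

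I do not expect a genuine obstacle here: the only mild point is that $\alpha_x$ is not defined at $x=0$, which both approaches dispose of via the trivial limit $\alpha_x\to 1$; the remainder is the classical fact that $g(x)/x$ inherits (strict) monotonicity from (strict) convexity of $g$ together with $g(0)=0$.
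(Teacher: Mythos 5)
Your proposal is correct and follows essentially the same route as the paper: both reduce the claim to showing that $g(x)/x$ is increasing, where $g(x)=\log\alpha_x$ is convex with $g(0^+)=0$, via the inequality $g'(x)\ge g(x)/x$ (equivalently the positivity of $xg'(x)-g(x)$). Your write-up merely makes the limiting behaviour at $0$ and the derivative computation more explicit than the paper's one-line version.
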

\begin{proof}
	Since $g(x)=\log \alpha_x$ is convex and $g(0)=0$, we have that $g'(x)\geq \frac {g(x)}{x}$ and hence $(\frac 1x g(x))' \ge 0$. Thus, $\frac 1x g(x)$ is increasing and, in particular, so is $e^{\frac 1x g(x)}=\alpha_x^{1/x}$.
\end{proof}

\begin{claim}\label{cl:f-inc-dec}
If $p \le \frac12$, then $f$ is strictly decreasing on $(0,\infty)$.
If $p>\frac12$, then there exists $x_* \in (0,\infty)$ such that $f$ is strictly decreasing on $(0,x_*]$ and strictly increasing on $[x_*,\infty)$.
\end{claim}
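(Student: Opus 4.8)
The plan is to analyze the sign of the derivative of $\log f(x) = \frac{1}{x}\log(2\alpha_x)$ via an auxiliary function. Writing $\phi(x) := x(\log f(x))' \cdot x = x g'(x) - g(x)$ where $g(x) := \log(2\alpha_x)$ would be one route, but a cleaner choice is to set $g(x) := \log(2\alpha_x)$ and study $\psi(x) := x g'(x) - g(x)$, since $(\log f)'(x) = \frac{x g'(x) - g(x)}{x^2}$ has the same sign as $\psi(x)$. First I would compute $g'(x) = \frac{\alpha_x'}{\alpha_x} = \frac{-pce^{-cx}}{1-p(1-e^{-cx})}$, which is strictly negative on $(0,\infty)$. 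Then $\psi'(x) = x g''(x)$, and from \Cref{cl:super-multiplicative} we already know $g''(x) = (\log\alpha_x)'' > 0$ (note $g$ and $\log\alpha_x$ differ by the constant $\log 2$, so they have the same second derivative). Hence $\psi$ is strictly increasing on $(0,\infty)$.

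Next I would pin down the boundary behavior of $\psi$. As $x \to 0^+$: $\alpha_x \to 1$, so $g(x) \to \log 2$, while $x g'(x) \to 0$ (since $g'(x) \to -pc$ is finite); thus $\psi(0^+) = -\log 2 < 0$. As $x \to \infty$: $\alpha_x \to 1-p$, so $g(x) \to \log(2(1-p))$, and $x g'(x) \to 0$ (exponentially, since $g'(x) \sim -pce^{-cx}$); thus $\psi(x) \to -\log(2(1-p))$. Now the dichotomy falls out: if $p \le \tfrac12$ then $2(1-p) \ge 1$, so the limiting value $-\log(2(1-p)) \le 0$; combined with $\psi$ strictly increasing and starting below $0$, we conclude $\psi < 0$ throughout $(0,\infty)$ — wait, one must be slightly careful when $p = \tfrac12$ since then $\psi(\infty) = 0$, but strict monotonicity of $\psi$ forces $\psi(x) < 0$ for all finite $x$ anyway. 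Hence $(\log f)' < 0$ on $(0,\infty)$, so $f$ is strictly decreasing. If $p > \tfrac12$ then $2(1-p) < 1$, so $\psi(\infty) = -\log(2(1-p)) > 0$; since $\psi$ is continuous, strictly increasing, negative near $0$ and positive near $\infty$, there is a unique $x_* \in (0,\infty)$ with $\psi(x_*) = 0$, $\psi < 0$ on $(0,x_*)$ and $\psi > 0$ on $(x_*,\infty)$. Translating back, $(\log f)' < 0$ on $(0,x_*)$ and $(\log f)' > 0$ on $(x_*,\infty)$, which gives that $f$ is strictly decreasing on $(0,x_*]$ and strictly increasing on $[x_*,\infty)$.

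The main obstacle I anticipate is nothing deep — it is purely the bookkeeping of the limits and making sure the strictness is handled correctly at the borderline $p = \tfrac12$ and at the boundary points $x \to 0^+, x\to\infty$ (where $\psi$ attains its infimum/supremum only in the limit, not at an interior point). Everything else reduces to the already-established convexity $g'' > 0$ from \Cref{cl:super-multiplicative}. I would write the proof as: (1) reduce to the sign of $\psi(x) = xg'(x) - g(x)$; (2) note $\psi' = xg'' > 0$ by \Cref{cl:super-multiplicative}; (3) compute $\lim_{x\to0^+}\psi = -\log 2$ and $\lim_{x\to\infty}\psi = -\log(2(1-p))$; (4) split on the sign of $\log(2(1-p))$, i.e. on whether $p \le \tfrac12$ or $p > \tfrac12$, and read off the conclusion.
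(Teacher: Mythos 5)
Your proof is correct and follows essentially the same route as the paper: up to the positive factor $1/\alpha_x$, your auxiliary function $\psi(x)=xg'(x)-g(x)$ is exactly the function $x\alpha'_x-\alpha_x\log(2\alpha_x)$ whose sign the paper analyzes, and both arguments conclude by showing this function is strictly increasing, computing its limits $-\log 2$ at $0^+$ and (a positive multiple of) $-\log(2(1-p))$ at $\infty$, and splitting on whether $p\le\frac12$. The only place you diverge is the monotonicity step: you obtain $\psi'=xg''>0$ directly from the convexity already established in \cref{cl:super-multiplicative}, whereas the paper differentiates its auxiliary function explicitly and verifies $\alpha_x>\frac12 e^{-cx}$; your shortcut reuses existing work and is, if anything, slightly cleaner, and your handling of the borderline case $p=\frac12$ (strictness via the supremum being attained only in the limit) is also correct.
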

\begin{proof}
We prove both parts simultaneously, taking $x_*=\infty$ when $p \le \frac12$.
It suffices to show that $f'$ vanishes at a unique point $x_*$ and that it is negative on $(0,x_*)$ and positive on $(x_*,\infty)$. The derivative of $f$ is
\[ f'(x) = f(x) \left(- \frac{\log(2\alpha_x)}{x^2} + \frac{\alpha'_x}{x\alpha_x}\right) = \frac{f(x)g(x)}{x^2 \alpha_x} ,\]
where $g(x) := x\alpha'_x - \alpha_x \log(2\alpha_x)$.
Since $f(x)$ and $\alpha_x$ are positive, it suffices to show that $g$ vanishes at a unique point $x_*$ and that it is negative on $(0,x_*)$ and positive on $(x_*,\infty)$. In fact, $g(0)=-\log2<0$ and $g(\infty)=-(1-p)\log(2-2p)$ is positive exactly when $p>\frac12$, and we claim that $g$ is strictly increasing on $(0,\infty)$. Indeed, its derivative is
\[ g'(x) = \alpha'_x + x\alpha''_x - \alpha'_x\log(2\alpha_x) - \alpha'_x = -\alpha'_x (cx + \log(2\alpha_x)) .\]
Since $\alpha'_x$ is negative, it suffices to show that $cx + \log(2\alpha_x)$ is positive for all $x>0$, or equivalently, that $\alpha_x>\frac12e^{-cx}$. This is straightforward to verify.
\end{proof}

\begin{cor}\label{cor:alpha-product-bound}
For any $n \ge 1$ and $x' \ge x_1,\dots,x_n \ge x>0$, we have
\[ \prod_{i=1}^n (2\alpha_{x_i}) \le \max\{ f(x),f(x') \}^{x_1+\cdots+x_n} .\]
\end{cor}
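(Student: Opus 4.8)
The plan is to prove Corollary~\ref{cor:alpha-product-bound} by reducing it to Claim~\ref{cl:f-inc-dec} together with Corollary~\ref{cor:alphaIncreasing}. Write $M := \max\{f(x),f(x')\}$. Taking logarithms, the desired inequality is equivalent to
\[ \sum_{i=1}^n \log(2\alpha_{x_i}) \le \left(\sum_{i=1}^n x_i\right) \log M . \]
Since $\log(2\alpha_{x_i}) = x_i \log f(x_i)$ by the definition of $f$, it suffices to show that $\log f(x_i) \le \log M$ for each $i$, i.e. that $f(x_i) \le M$ for every $x_i \in [x,x']$. In other words, the whole statement follows once we show that $f$ is bounded on the interval $[x,x']$ by its value at one of the two endpoints.

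First I would invoke Claim~\ref{cl:f-inc-dec}. In the case $p \le \tfrac12$, $f$ is strictly decreasing on $(0,\infty)$, so on $[x,x']$ it attains its maximum at the left endpoint $x$, giving $f(x_i) \le f(x) \le M$. In the case $p > \tfrac12$, there is a point $x_*$ such that $f$ is decreasing on $(0,x_*]$ and increasing on $[x_*,\infty)$; a unimodal (valley-shaped) function on a closed interval attains its maximum at one of the endpoints, so $f(x_i) \le \max\{f(x),f(x')\} = M$ for all $x_i \in [x,x']$. Either way the pointwise bound $f(x_i) \le M$ holds, and summing $x_i \log f(x_i) \le x_i \log M$ over $i$ and exponentiating yields the claim. (One should note that the hypotheses of Claim~\ref{cl:f-inc-dec} are exactly those in force here, namely $p \in (0,1)$ and $c>0$ fixed as at the top of the appendix, so no extra assumptions are needed; and Corollary~\ref{cor:alphaIncreasing} is not strictly required for this argument, though it confirms the monotonicity of $x \mapsto \alpha_x^{1/x}$ which is consistent with the $f$ behaviour.)

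There is essentially no obstacle here: the only substantive input is the shape of $f$, which is precisely the content of Claim~\ref{cl:f-inc-dec}, already proved. The one point deserving a word of care is the elementary fact that a function which first decreases and then increases on a closed interval is maximized at an endpoint of that interval — this is immediate since on $[x, x_* \wedge x']$ the max is $f(x)$ and on $[x_* \vee x, x']$ the max is $f(x')$, and the interval $[x,x']$ is covered by these two pieces (when $x_* \le x$ or $x_* \ge x'$ only one piece is relevant). Assembling these observations gives the corollary in a few lines.
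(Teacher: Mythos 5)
Your proof is correct and follows essentially the same route as the paper: both reduce the inequality to the pointwise bound $f(x_i) \le \max\{f(x),f(x')\}$ on $[x,x']$, which follows from the monotone/unimodal shape of $f$ given by \cref{cl:f-inc-dec} (the paper phrases this as bounding a weighted geometric mean of the $f(x_i)$ by their maximum, which is your logarithmic argument in multiplicative form).
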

\begin{proof}
This is equivalent to
$\prod_{i=1}^n f(x_i)^{x_i/(x_1+\cdots+x_n)} \le \max\{ f(x),f(x') \}$.
The left-hand side is at most $\max\{f(x_1),\dots,f(x_n)\}$, which is at most the right-hand side by \cref{cl:f-inc-dec}.
\end{proof}

\bibliographystyle{amsplain}
\bibliography{hardcore}

\providecommand{\bysame}{\leavevmode\hbox to3em{\hrulefill}\thinspace}
\providecommand{\MR}{\relax\ifhmode\unskip\space\fi MR }
\providecommand{\MRhref}[2]{%
  \href{http://www.ams.org/mathscinet-getitem?mr=#1}{#2}
}
\providecommand{\href}[2]{#2}
\begin{thebibliography}{10}

\bibitem{ajtai1982largest}
Mikl{\'o}s Ajtai, J{\'a}nos Koml{\'o}s, and Endre Szemer{\'e}di, \emph{Largest
  random component of a k-cube}, Combinatorica \textbf{2} (1982), no.~1, 1--7.

\bibitem{bollobas1983evolution}
B{\'e}la Bollob{\'a}s, \emph{The evolution of the cube}, North-Holland
  Mathematics Studies, vol.~75, Elsevier, 1983, pp.~91--97.

\bibitem{bollobas1990complete}
\bysame, \emph{Complete matchings in random subgraphs of the cube}, Random
  Structures \& Algorithms \textbf{1} (1990), no.~1, 95--104.

\bibitem{bollobas1994diameter}
B{\'e}la Bollob{\'a}s, Yoshiharu Kohayakawa, and T~{\L}uczak, \emph{On the
  diameter and radius of randon subgraphs of the cube}, Random Structures \&
  Algorithms \textbf{5} (1994), no.~5, 627--648.

\bibitem{bollobas1992evolution}
B{\'e}la Bollob{\'a}s, Yoshiharu Kohayakawa, and Tomasz {\L}uczak, \emph{The
  evolution of random subgraphs of the cube}, Random Structures \& Algorithms
  \textbf{3} (1992), no.~1, 55--90.

\bibitem{borgs2006random}
Christian Borgs, Jennifer~T Chayes, Remco Van~der Hofstad, Gordon Slade, and
  Joel Spencer, \emph{Random subgraphs of finite graphs: Iii. the phase
  transition for the n-cube}, Combinatorica \textbf{26} (2006), no.~4,
  395--410.

\bibitem{brydges1984short}
David~C Brydges, \emph{A short course on cluster expansions}, Les Houches
  (1984), no.~PART I.

\bibitem{burtin1977probability}
Ju~D Burtin, \emph{The probability of connectedness of a random subgraph of an
  $n$-dimensional cube}, Problemy Peredachi Informatsii \textbf{13} (1977),
  90--95.

\bibitem{condon2021hamiltonicity}
Padraig Condon, Alberto~Espuny D{\'\i}az, Ant{\'o}nio Girao, Daniela K{\"u}hn,
  and Deryk Osthus, \emph{Hamiltonicity of random subgraphs of the hypercube},
  Proceedings of the 2021 ACM-SIAM Symposium on Discrete Algorithms (SODA),
  SIAM, 2021, pp.~889--898.

\bibitem{engbers2012h}
John Engbers and David Galvin, \emph{H-coloring tori}, Journal of Combinatorial
  Theory, Series B \textbf{102} (2012), no.~5, 1110--1133.

\bibitem{erde2021expansion}
Joshua Erde, Mihyun Kang, and Michael Krivelevich, \emph{Expansion in
  supercritical random subgraphs of the hypercube and its consequences}, arXiv
  preprint arXiv:2111.06752 (2021).

\bibitem{erdos1979evolution}
Paul Erd{\"o}s and Joel Spencer, \emph{Evolution of the $n$-cube}, Computers \&
  Mathematics with Applications \textbf{5} (1979), no.~1, 33--39.

\bibitem{galvin2011threshold}
David Galvin, \emph{A threshold phenomenon for random independent sets in the
  discrete hypercube}, Combinatorics, Probability and Computing \textbf{20}
  (2011), 27--51.

\bibitem{galvin2012bounding}
\bysame, \emph{Bounding the partition function of spin-systems}, arXiv preprint
  arXiv:1206.3200 (2012).

\bibitem{galvin2019independent}
\bysame, \emph{Independent sets in the discrete hypercube}, arXiv preprint
  arXiv:1901.01991 (2019).

\bibitem{galvin2004phase}
David Galvin and Jeff Kahn, \emph{On phase transition in the hard-core model on
  {${\bf Z}^d$}}, Combinatorics, Probability and Computing \textbf{13} (2004),
  no.~2, 137--164.

\bibitem{galvin2004weighted}
David Galvin and Prasad Tetali, \emph{On weighted graph homomorphisms}, DIMACS
  Series in Discrete Mathematics and Theoretical Computer Science \textbf{63}
  (2004), 97--104.

\bibitem{hulshof2020slightly}
Tim Hulshof and Asaf Nachmias, \emph{Slightly subcritical hypercube
  percolation}, Random Structures \& Algorithms \textbf{56} (2020), no.~2,
  557--593.

\bibitem{janson2018critical}
Svante Janson and Lutz Warnke, \emph{On the critical probability in
  percolation}, Electronic Journal of Probability \textbf{23} (2018), 1--25.

\bibitem{jenssen2020homomorphisms}
Matthew Jenssen and Peter Keevash, \emph{Homomorphisms from the torus}, arXiv
  preprint arXiv:2009.08315 (2020).

\bibitem{jenssen2020independent}
Matthew Jenssen and Will Perkins, \emph{Independent sets in the hypercube
  revisited}, Journal of the London Mathematical Society \textbf{102} (2020),
  no.~2, 645--669.

\bibitem{kahn2001entropy}
Jeff Kahn, \emph{An entropy approach to the hard-core model on bipartite
  graphs}, Combinatorics, Probability and Computing \textbf{10} (2001), no.~3,
  219--237.

\bibitem{kahn2020number}
Jeff Kahn and Jinyoung Park, \emph{The number of 4-colorings of the {H}amming
  cube}, Israel Journal of Mathematics \textbf{236} (2020), no.~2, 629--649.

\bibitem{Korshunov1983Th}
Aleksej~D. Korshunov and Alexander~A. Sapozhenko, \emph{The number of binary
  codes with distance 2}, Problemy Kibernet (Russian) \textbf{40} (1983),
  no.~1, 111--130.

\bibitem{kostochka1993radius}
Alexandr~V. Kostochka, Alexander~A. Sapozhenko, and K~Weber, \emph{Radius and
  diameter of random subgraphs of the hypercube}, Random Structures \&
  Algorithms \textbf{4} (1993), no.~2, 215--229.

\bibitem{kotecky1986cluster}
Roman Koteck{\`y} and David Preiss, \emph{Cluster expansion for abstract
  polymer models}, Communications in Mathematical Physics \textbf{103} (1986),
  no.~3, 491--498.

\bibitem{kulich2012diameter}
Tom{\'a}{\v{s}} Kulich, \emph{The diameter of a random subgraph of the
  hypercube}, Random Structures \& Algorithms \textbf{41} (2012), no.~2,
  282--291.

\bibitem{mcdiarmid2021component}
Colin McDiarmid, Alex Scott, and Paul Withers, \emph{The component structure of
  dense random subgraphs of the hypercube}, Random Structures \& Algorithms
  (2021).

\bibitem{peled2014odd}
Ron Peled and Wojciech Samotij, \emph{Odd cutsets and the hard-core model on
  {${\bf Z}^d$}}, Annales de l'IHP Probabilit{\'e}s et statistiques, vol.~50,
  2014, pp.~975--998.

\bibitem{peled2018rigidity}
Ron Peled and Yinon Spinka, \emph{Rigidity of proper colorings of {${\bf
  Z}^d$}}, arXiv preprint arXiv:1808.03597 (2018).

\bibitem{peled2020long}
\bysame, \emph{Long-range order in discrete spin systems}, arXiv preprint
  arXiv:2010.03177 (2020).

\bibitem{samotij2015counting}
Wojciech Samotij, \emph{Counting independent sets in graphs}, European Journal
  of Combinatorics \textbf{48} (2015), 5--18.

\bibitem{sapozhenko1987onthen}
Alexander.~A. Sapozhenko, \emph{On the number of connected subsets with given
  cardinality of the boundary in bipartite graphs}, Metody Diskretnogo Analiza
  (Russian) \textbf{45} (1987).

\bibitem{scott2005repulsive}
Alexander~D Scott and Alan~D Sokal, \emph{The repulsive lattice gas, the
  independent-set polynomial, and the {L}ov{\'a}sz local lemma}, Journal of
  Statistical Physics \textbf{118} (2005), no.~5, 1151--1261.

\bibitem{soshnikov2003largest}
Alexander Soshnikov and Benny Sudakov, \emph{On the largest eigenvalue of a
  random subgraph of the hypercube}, Communications in mathematical physics
  \textbf{239} (2003), no.~1, 53--63.

\bibitem{van2017hypercube}
Remco van~der Hofstad and Asaf Nachmias, \emph{Hypercube percolation}, Journal
  of the European Mathematical Society \textbf{19} (2017), no.~3, 725--814.

\end{thebibliography}

\end{document}